\DeclareDocumentCommand \addpic{O{0.4\textwidth} m g}{\parpic[r]{%
\begin{minipage}{#1}
    \includegraphics[width=\textwidth]{#2}%
    \IfNoValueTF{#3}{}{\captionof{figure}{\footnotesize #3}}
\end{minipage}
}}
\newtheorem{thm}{Theorem}[section]
\newtheorem*{thm*}{Theorem}
\newtheorem{cor}[thm]{Corollary}
\newtheorem*{cor*}{Corollary}
\newtheorem{lem}[thm]{Lemma}
\newtheorem*{lem*}{Lemma}
\newtheorem{prop}[thm]{Proposition}
\newtheorem*{prop*}{Proposition}
\theoremstyle{definition}
\newtheorem{defn}{Definition}[section]
\newtheorem*{defn*}{Definition}
\newtheorem*{conjecture*}{Conjecture}
\newtheorem{conjecture}{Conjecture}[section]
\theoremstyle{remark}
\newtheorem{rem}{Remark}[section]
\newtheorem*{rem*}{Remark}
\newtheorem{example}{Example}[section]
\newtheorem*{problem*}{Problem}
\newcommand{\Q}{\mathbb Q}
\newcommand{\N}{\mathbb N}
\newcommand{\Z}{\mathbb Z}
\renewcommand{\AA}{\mathbb A}
\newcommand{\DD}{\mathbb D}
\newcommand{\eps}{\epsilon}
\DeclareMathOperator{\Id}{Id}
\DeclareMathOperator{\touch}{touch}
\DeclareMathOperator{\Dinv}{Dinv}
\DeclareMathOperator{\dinv}{dinv}
\DeclareMathOperator{\pExp}{Exp}
\DeclareMathOperator{\Sym}{Sym}
\DeclareMathOperator{\inv}{inv}
\DeclareMathOperator{\Inv}{Inv}
\DeclareMathOperator{\area}{area}
\DeclareMathOperator{\Area}{Area}
\DeclareMathOperator{\bounce}{bounce}
\renewcommand{\Im}{\mathrm{Im}}
\def\acts{\mathrel{\reflectbox{$\righttoleftarrow$}}}
\title[Lecture Notes on Shuffle Conjecture]{Lecture notes on the Carlsson-Mellit proof of the shuffle conjecture}
\author{James Haglund$^1$ and Guoce Xin$^2$}
\address{ $^1$Department of Mathematics, University of Pennsylvania, Philadelphia, PA 19104
 \\
$^2$School of Mathematical Sciences, Capital Normal University,
Beijing 100048, PR China}
\email{$^1$\texttt{jhaglund@math.upenn.edu}\ \  \& $^2$\texttt{guoce.xin@gmail.com}}
\date{May 22, 2017}
\begin{document}

\begin{abstract}
  This note is based on the original proof of the shuffle conjecture by Carlsson and Mellit (arXiv:1508.06239, version 2), which seems to be too concise for the combinatorial community.
James Haglund spent a semester to check through the proof line by line and took explicit note. Guoce Xin was asked by Adriano Garsia to elaborate the proof. The whole paper is thoroughly checked and filled in with details. \textbf{Section 1 and the abstract of the original paper is included. Section 2 is rewritten for clarity. Other sections are filled with detailed proof and computation.} We hope this note is accessible to graduate students.
\end{abstract}

\maketitle

\vspace{-5mm}
\tableofcontents
\section*{Original Abstract}
We present a proof of the compositional shuffle conjecture
\cite{haglund2012compositional},
which generalizes the famous shuffle conjecture
for the character of the diagonal coinvariant algebra
\cite{haglund2005diagcoinv}.
We first formulate the
combinatorial side of the conjecture in terms
of certain operators on a graded vector space $V_*$
whose degree zero part is the ring of symmetric
functions $\Sym[X]$ over $\mathbb{Q}(q,t)$.
We then extend these operators to
an action of an algebra $\tilde{\AA}$ acting on this space,
and interpret the right generalization
of the $\nabla$ using an involution of the algebra
which is antilinear with
respect to the conjugation $(q,t)\mapsto (q^{-1},t^{-1})$.

\section{Original Introduction}

The shuffle conjecture
of Haglund, Haiman, Loehr, Remmel, and Ulyanov
\cite{haglund2005diagcoinv} predicts a combinatorial
formula for the Frobenius
character $\mathcal{F}_{R_n}(X;q,t)$ of the diagonal coinvariant
algebra $R_n$ in $n$ pairs of variables, which is a symmetric function
in infinitely many variables with coefficients in
 $\mathbb{Z}_{\geq 0}[q,t]$.
By a result of Haiman \cite{Hai02},
the Frobenius character is given explicitly by
\[\mathcal{F}_{R_n}(X;q,t)=(-1)^n \nabla e_n[X],\]
where up to a sign convention,
$\nabla$ is the operator which is diagonal
in the modified Macdonald basis defined in
\cite{Bergeron99identitiesand}.
The original shuffle conjecture states
\begin{equation}
\label{shuffconj}
(-1)^n \nabla e_n[X] =
\sum_{\pi} \sum_{w \in \mathcal{WP}_\pi} t^{\area(\pi)} q^{\dinv(\pi,w)} x_w.
\end{equation}
Here $\pi$ is a Dyck path of length $n$, and $w$ is some
extra data called a ``word parking function''
depending on $\pi$. The functions $(\area,\dinv)$
are statistics associated to a Dyck path and a parking function, and $x_w$
is a monomial in the variables $x$.
They proved that this sum,
denoted $D_n(X;q,t)$, is symmetric in the
$x$ variables and so does at least define a symmetric
function.
They furthermore showed that it included many previous
conjectures and results about the $q,t$-Catalan numbers,
and other special cases
\cite{garsia1996remarkable,
garsia2002catalan,haglund2003conjectured,
egge2003sch,haglund2004sch}.
Remarkably, $D_n(X;q,t)$ had not even been
proven to be symmetric in the $q,t$ variables until now,
even though the symmetry of $\mathcal{F}_{R_n}(X;q,t)$ is obvious.
For a thorough introduction to this topic, see
Haglund's book \cite{haglund2008catalan}.


In \cite{haglund2012compositional}
Haglund, Morse, and Zabrocki conjectured
a refinement of the original conjecture which partitions
$D_n(X;q,t)$ by specifying the points where the Dyck
path touches the diagonal called the ``compositional
shuffle conjecture.''
The refined conjecture states
\begin{equation}
\label{compshuff}
\nabla \left(C_{\alpha}[X;q]\right)=
\sum_{\touch(\pi)=\alpha} \sum_{w \in \mathcal{WP}_\pi} t^{\area(\pi)}q^{\dinv(\pi,w)} x_w.
\end{equation}
Here $\alpha$ is a composition, i.e. a finite list of positive
integers specifying the gaps between the touch points of $\pi$.
The function $C_{\alpha}[X;q]$ is defined below as a
composition of creation operators for Hall-Littlewood polynomials
in the variable $1/q$. They proved that
\[\sum_{|\alpha|=n} C_\alpha[X;q]=(-1)^n e_n[X],\]
implying that \eqref{compshuff} does indeed generalize
\eqref{shuffconj}.
The right hand side of \eqref{compshuff} will be denoted
by $D_{\alpha}(X;q,t)$.
A desirable approach to proving
\eqref{compshuff} would be to determine a recursive formula
for $D_{\alpha}(X;q,t)$, and interpret the result in terms of
some commutation relations for $\nabla$. Indeed, this
approach has been applied in some important special cases, see
\cite{garsia2002catalan,hicks2012sharpening}.
Unfortunately, no such recursion is known in the general
case, and so an even more refined function is needed.

In this paper, we will construct the desired refinement
as an element of a larger vector space $V_k$
of symmetric functions
over $\Q(q,t)$ with $k$ additional variables $y_i$ adjoined,
where $k$ is the length of the composition $\alpha$,
\[N_{\alpha} \in V_k=\Sym[X][y_1,...,y_k].\]
In our first result, (Theorem \ref{thm:recN}),
we will explain how to recover $D_\alpha(X;q,t)$ from $N_\alpha$.
and prove that $N_\alpha$ satisfies
a recursion that completely determines it.
We then define a pair of algebras $\mathbb{A}$
and $\mathbb{A}^*$
which are isomorphic by an antilinear isomorphism
with respect to the conjugation
$(q,t)\rightarrow (q^{-1},t^{-1})$, as well as an explicit action
of each
on the direct sum $V_*=\bigoplus_{k\geq 0} V_k$.
We will then prove that there is an
antilinear involution $N$ on $V_*$
which intertwines the two actions (Theorem \ref{mainthm}),
and represents an involutive automorphism on
a larger algebra $\AA,\AA^*\subset \tilde{\AA}$.
This turns out to be the essential fact that relates the $N_\alpha$
to $\nabla$.

The compositional shuffle conjecture (Theorem \ref{shuffthm}),
then follows as a simple corollary from the following properties:
\begin{enumerate}
\item There is a surjection coming from $\mathbb{A},\mathbb{A}^*$
\[d_-^{k} : V_k \rightarrow V_0=\Sym[X]\]
which maps a monomial $y_\alpha$ in the $y$ variables
to an element $B_{\alpha}[X;q]$ which is similar to
$C_\alpha[X;q]$, and maps $N_\alpha$ to $D_{\alpha}(X;q,t)$, up to a sign.
\item The involution $N$
commutes with $d_-$, and maps $y_\alpha$ to $N_\alpha$. 
\item The restriction of $N$ to $V_0=\Sym[X]$
agrees with $\nabla$ composed with
a conjugation map which essentially exchanges the
$B_{\alpha}[X;q]$ and $C_{\alpha}[X;q]$.
\end{enumerate}
It then becomes clear that these properties imply \eqref{compshuff}.

While the compositional shuffle conjecture
is clearly our main application,
the shuffle conjecture has been further generalized in
several remarkable directions such as the
rational compositional shuffle conjecture, and relationships to
knot invariants, double affine Hecke algebras,
and the cohomology of the affine Springer fibers, see
\cite{bergeron2014rational,
gorsky2014torus,gorsky2015refined,negut2013shuffle,hikita2014affine,
shiff2011elliptic, shiff2013elliptic}.
We hope that future applications to
these fascinating topics will be forthcoming.

\subsection{Acknowledgments}
The authors would like to thank Fran\c{c}ois Bergeron, Adriano Garsia,
Mark Haiman, Jim Haglund, Fernando Rodriguez-Villegas and Guoce Xin
for many valuable discussions on this and related topics.
The authors also acknowledge
the International Center for Theoretical Physics, Trieste, Italy,
at which most of the research for this paper was performed.
Erik Carlsson was also supported by the
Center for Mathematical Sciences and Applications at Harvard University
during some of this period, which he gratefully acknowledges.

\def\Q{\mathbb{Q}}

\section{The Compositional shuffle conjecture}

\subsection{Plethystic operators}
Here we only introduce the basic concepts of $\lambda$-rings. A $\lambda$-ring is a ring $R$ with a family of ring endomorphisms $(p_i)_{i\in \Z_{>0}}$ satisfying
\[p_1[x]=x,\quad p_m[p_n[x]]=p_{mn}[x],\quad (x\in R,\quad m,n\in\Z_{>0}).\]

There are special elements $k\in R$ satisfying $p_i[k]=k$ for all $i \in \Z_{>0}$. Such elements form a subring $K\supseteq \Z$, called the base ring. Here we give a simple proof. Firstly, since $p_i$ are ring endomorphisms, $p_i[0]=0$ and $p_i[1]=1$, so that $0,1\in K$.
Assume $k,k'\in K$. Then i) $0=p_i[k+(-k)]=p_i[k]+p_i[-k]$. This implies $p_i[-k]=-p_i[k]=-k$ and hence $-k\in K$; ii)
$p_i[k+k']=p_i[k]+p_i[k']=k+k'$. This implies that $k+k'\in K$; iii) similarly $p_i[kk']=p_i[k]p_i[k']=kk'$. This implies that $kk'\in K$.
It follows that $K$ is a subring of $R$. It contains $\Z$ because $0,1\in K$.

Elements in $K$ will be simply called numbers.

Similarly the set of elements $x$ satisfying $p_i[x]=x^i$ for all $i$ forms a monoid. Such elements will be called monomials.

For example, $\Q(q,t)$ is a $\lambda$-ring if we use the endomorphism generated by the rules $p_i[k]=k, \ k\in \Q$ and $p_i[q]=q^i, p_i[t]=t^i$.

Unless stated otherwise, the endomorphisms are defined by $p_n(x)=x^n$ for each generator $x$, and every variable in this paper is considered a generator. The ring of symmetric functions over the $\lambda$-ring $\Q(q,t)$
is a free $\lambda$-ring with
generator in $X=x_1+x_2+\cdots$, and will be denoted $\Sym[X]$. To be more precise $p_i$ are ring endomorphisms generated by the rules
$p_i[k]=k$ for $k\in \Q$, $p_i[y]=y^i$ for $y=q,t,x_1,x_2,\dots$.

We will employ the standard notation used for
plethystic substitution defined as follows:
given an element $F\in \Sym[X]$ and $A$ in some $\lambda$-ring $R$,
the plethystic substitution $F[A]$ is the image of the homomorphism
from $\Sym[X]\rightarrow R$ defined by replacing $p_n$ by $p_n(A)$.
This is well-defined since $p_\lambda=p_{\lambda_1}\cdots p_{\lambda_r}$ forms
a basis of $\Sym[X]$ when $\lambda$ ranges over all partitions.
For instance, we would have
\[p_1 p_2[X/(1-q)]=p_1[X] p_2[X](1-q)^{-1}(1-q^2)^{-1}.\]
See \cite{haiman2001polygraphs} for a reference. Note that $X$ is identified with $p_1[X]=x_1+x_2+\cdots$ in plethystic substitution.

We emphasize that normal substitution does not commute with the plethystic substitution. For example,
$$ p_2[qX] \Big|_{q=2}=q^2 p_2[X] \Big|_{q=2}=4 p_2[X],$$
while $p_2[qX \Big|_{q=2}] =p_2[2X]=2p_2[X].$

In order to make normal substitution within plethystic substitution, we need to introduce variables $\epsilon_k$ for $k\in K$, which is treated as a variable inside bracket and $k$ outside of bracket. In this way we will have $p_i[\epsilon_k]=\epsilon_k^i =k^i$.
In particular $\epsilon_1=1$ always holds true, and we may set $\epsilon=\epsilon_{-1}$ for short. (This paragraph is irrelevant to the proof).


The following well-known result will be frequently used.
\begin{lem}\label{l-h1-u}
  If $u$ and $v$ are monomials, then we have
\begin{align}
  h_n[(1-u)v]&=\left\{
                 \begin{array}{ll}
                   1, & \mbox{ if  } n=0; \\
                   (1-u)v^n, & \mbox{ if } n\ge 1.
                 \end{array}
               \right.
\end{align}
\end{lem}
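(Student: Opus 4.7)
My plan is to use the plethystic generating function for the complete homogeneous symmetric functions and then extract coefficients. Recall that
\[\sum_{n\ge 0} h_n[A]\, z^n \eq \exp\!\left(\sum_{k\ge 1} \frac{p_k[A]}{k}\, z^k\right),\]
which follows from the classical identity $\sum_{n\ge 0} h_n(X) z^n = \exp(\sum_k p_k(X)z^k/k)$ and the definition of plethystic substitution (replace each $p_k$ by $p_k$ evaluated at $A$).

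First I compute the power sums at $A=(1-u)v$. Since $u$ and $v$ are monomials, $p_k[u]=u^k$ and $p_k[v]=v^k$, and because each $p_k$ is a ring endomorphism,
\[p_k[(1-u)v] \eq (1-u^k)v^k.\]
Plugging into the generating function, the exponent telescopes:
\[\sum_{k\ge 1} \frac{(1-u^k)v^k}{k}z^k \eq -\log(1-vz) + \log(1-uvz),\]
so that
\[\sum_{n\ge 0} h_n[(1-u)v]\, z^n \eq \frac{1-uvz}{1-vz}.\]

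Finally I expand $(1-uvz)\sum_{m\ge 0}(vz)^m$ and read off coefficients: the constant term is $1$, and for $n\ge 1$ the coefficient of $z^n$ is $v^n - u v^n = (1-u)v^n$. This gives the two cases of the lemma. The only mildly delicate point is justifying $p_k[(1-u)v]=(1-u^k)v^k$, but this is immediate from the fact that $p_k$ is a ring endomorphism together with the monomial hypothesis on $u,v$; there is no real obstacle beyond that bookkeeping.
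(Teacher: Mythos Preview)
Your proof is correct but proceeds differently from the paper. The paper expands $h_n[(1-u)v]=h_n[v+(-uv)]$ via the convolution formula $h_n[A+B]=\sum_{a=0}^n h_a[A]\,h_{n-a}[B]$, then uses $h_m[-uv]=(-1)^m e_m[uv]$ together with the observation that for a single monomial $e_m[uv]=0$ once $m\ge 2$, so only the terms $a=n$ and $a=n-1$ survive. You instead package everything through the plethystic exponential, obtaining the closed form $\sum_{n\ge 0} h_n[(1-u)v]\,z^n=(1-uvz)/(1-vz)$ and reading off coefficients. Both arguments are short and standard; your generating-function route is slightly more automatic (it would extend verbatim to, say, $h_n[(1-u_1)\cdots(1-u_r)v]$), while the paper's term-by-term approach makes explicit exactly which plethystic identities are being invoked. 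Neither has any real advantage for this particular lemma.
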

\begin{proof} We have
\begin{align*}
  h_n[(1-u)v] &=\sum_{a=0}^n h_a[v] h_{n-a} [-uv] \\
                &= \sum_{a=0}^n h_a[v](-1)^{n-a} e_{n-a}[uv] \\
(\text{when } n\ge 1)                &=v^n -v^{n-1}\cdot uv =(1-u) v^n.
\end{align*}

\end{proof}

The modified Macdonald polynomials \cite{garsia1998explicit} will be
denoted
\[H_\mu=t^{n(\mu)} J_{\mu}[X/(1-t^{-1});q,t^{-1}] \in\Sym[X]\]
where $J_\mu$ is the integral form of the Macdonald polynomial
\cite{Mac}, and
\[n(\mu)=\sum_{i} (i-1)\mu_i.\]
The operator $\nabla:\Sym[X]\to\Sym[X]$ is defined by
\begin{equation}
\label{nabdef}
\nabla H_\mu = H_\mu[-1] H_\mu = (-1)^{|\mu|} q^{n(\mu')} t^{n(\mu)} H_\mu.
\end{equation}
Note that our definition differs from the usual one from
\cite{Bergeron99identitiesand}
by the sign $(-1)^{|\mu|}$.
We also have the sequences of operators
$B_r, C_r:\Sym[X]\rightarrow\Sym[X]$ given by the following formulas:
$$
(B_r F)[X] = F[X-(q-1)z^{-1}] \pExp[-z X]\big|_{z^r},
$$
$$
(C_r F)[X] = -q^{1-r} F[X+(q^{-1}-1)z^{-1}] \pExp[z X]\big|_{z^r},
$$
where $\pExp[X]=\sum_{n=0}^\infty h_n[X]$ is the plethystic exponential and $\vert_{z^r}$ denotes the operation of taking the coefficient of $z^r$ of a Laurent power series. Our definition again
differs from the one in \cite{haglund2012compositional} by a factor $(-1)^r$.
For any composition $\alpha$ of length $l$, let $C_\alpha$ denote the composition $C_{\alpha_1}\cdots
C_{\alpha_l}$, and similarly for $B_\alpha$.

Finally we denote by $x \mapsto \bar{x}$ the involutive
automorphism of $\Q(q,t)$ obtained by sending $q,t$ to $q^{-1},t^{-1}$. We denote by $\omega$ the $\lambda$-ring automorphism of $\Sym[X]$ obtained by sending $X$ to $-X$ and by $\bar\omega$ its composition with $\bar{*}$, i.e.
\[
(\omega F)[X] = F[-X],\quad (\bar\omega F)[X] = \bar F[-X].
\]

\newcommand\ce[1]{\fbox{$#1$}}
\subsection{Parking functions}
We now recall the combinatorial background to state the Shuffle
conjecture, for which we refer to Haglund's book \cite{haglund2008catalan}.
We consider the infinite grid on the top right quadrant of the plane. Its intersection points are denoted as $(i,j)$ with $i,j\in\Z$.
The $(i,j)$-cell, denoted $\ce{(i,j)}$, is the cell of the grid whose top right corner has coordinate $(i,j)$. Thus $i=1,2,\ldots$ indexes the columns and $j=1,2,\ldots$ indexes the rows.
Let $\DD$ be the set of Dyck paths of all lengths. A Dyck path of length $n$ is a grid path from $(0,0)$ to $(n,n)$ consisting of North and East steps that stays weakly
above the main diagonal $i=j$. For $\pi\in\DD$ we denote by $|\pi|$ its
length $n$ and let
\[\area(\pi):=\#\Area(\pi),\quad
\Area(\pi):=\left\{(i,j): i<j,\ \ce{(i,j)} \mbox{ is under } \pi\right\}.\]
This is the set of cells between the path and the diagonal.
Let $a_j$ denote the number of cells $(i,j) \in\Area(\pi)$ in row $j$. The \emph{area sequence} is the sequence $a(\pi)=(a_1,a_2,\ldots,a_n)$ and we have $\area(\pi)=\sum_{j=1}^n a_n$.

Let $\ce{(x_1,1)}, \ce{(x_2,2)},\ldots, \ce{(x_n, n)}$ be the cells immediately to the right of the North steps of $\pi$. (These are the places we will put cars $w_i$ for parking functions.)

The sequence $x(\pi)=(x_1,x_2,\ldots,x_n)$ is called the \emph{coarea sequence}\footnote{More precisely $(x_1-1,x_2-1,\ldots,x_n-1)$ is the coarea sequence, so that coarea and area adds to max area $n(n-1)/2$.} and we have $a_j+x_j=j$ for all $j$.

We have the $\dinv$ statistic and the $\Dinv$ set defined by
\[\dinv(\pi):=\#\Dinv(\pi),\quad \Dinv(\pi):= \Dinv^0(\pi) \cup \Dinv^1(\pi),\]
where $\Dinv^0(\pi)$ is the primary dinv and $\Dinv^1(\pi)$ is the secondary dinv given by
\begin{align*}
\Dinv^0(\pi) &=\left\{(j,j'):1\leq j<j'\leq n,\ a_j=a_{j'}\right\},\\
\Dinv^1(\pi)&=\left\{(j,j'):1\leq j'<j\leq n,\ a_{j'}=a_j+1\right\}.
\end{align*}
For $(j,j')\in\Dinv(\pi)$ we say that $\ce{(x_j, j)}$ \emph{attacks} $\ce{(x_{j'}, j')}$.


For any $\pi$, the set of \emph{word parking functions}
associated to $\pi$ is defined by
\[\mathcal{WP}_\pi:=\left\{{w} \in \Z_{>0}^n :
w_{j}>w_{j+1} \;\mbox{whenever}\; x_{j}(\pi)=x_{j+1}(\pi)\right\}.\]
In other words, the elements of $\mathcal{WP}_\pi$
are $n$-tuples $w$ of positive integers which, when
written from bottom to top to the right of each North step,
are strictly decreasing on cells such that one is on top
of the other. For any $w$, let
\[\dinv(\pi,w):=\#\Dinv(\pi,w),\quad
\Dinv(\pi,w):=\left\{(j,j') \in \Dinv(\pi):w_j>w_{j'}\right\}.\]

We note that both of these conditions differ from the usual notation
in which parking functions are expected to increase rather than
decrease, and in which the inequalities are reversed in the definition
of $\dinv$. This corresponds to choosing the opposite total ordering on
$\Z_{>0}$ everywhere, which does not affect the final answer,
and is more convenient for the purposes of this paper.

Let us call $\alpha=(\alpha_1,...,\alpha_k)=\touch(\pi)$
the \emph{touch composition} of $\pi$ if $\alpha_1,...,\alpha_k$ are the
lengths of the gaps between the points where $\pi$ touches the
main diagonal starting at the lower left. Equivalently, $\sum_{i=1}^k \alpha_i=n$ and the numbers $1$, $1+\alpha_1$, $1+\alpha_1+\alpha_2$, \ldots $1+\alpha_1+\cdots+\alpha_{k-1}$ are the positions of the $0$'s in the area sequence $a(\pi)$.

\begin{example}
\label{dyckex}
Let $\pi$ be the Dyck path of length $8$ described in Figure \ref{fig:PF-concepts1}. 
Then we have
\[\Area(\pi)=\left\{
(2,3),(2,4),(3,4),(3,5),(3,6),(4,5),(4,6),(5,6),(7,8)\right\},\]
\[\Dinv(\pi)=\left\{(1,2),(1,7),(2,7),(3,8),(4,5)\right\} \cup
\left\{(7,3),(8,4),(8,5)\right\},\]
\[\touch(\pi)=(1,5,2),\quad a(\pi)=(0,0,1,2,2,3,0,1),\]
\[x(\pi)=(1,2,2,2,3,3,7,7)\]
whence $\area(\pi)=9$, $\dinv(\pi)=5+3=8$.
The labels shown above correspond to the vector
$w=(9,5,2,1,5,2,3,2)$, which we can see is an element of
$\mathcal{WP}_\pi$ because we have $5>2>1$, $5>2$, $3>2$.
We then have
\[\Dinv(\pi,w)=\left\{(1,2),(1,7),(2,7)\right\}\cup
\left\{(7,3),(8,4)\right\},\]
giving $\dinv(\pi,w)=5$.

\begin{figure}[ht]
$$\includegraphics[width=8cm]{
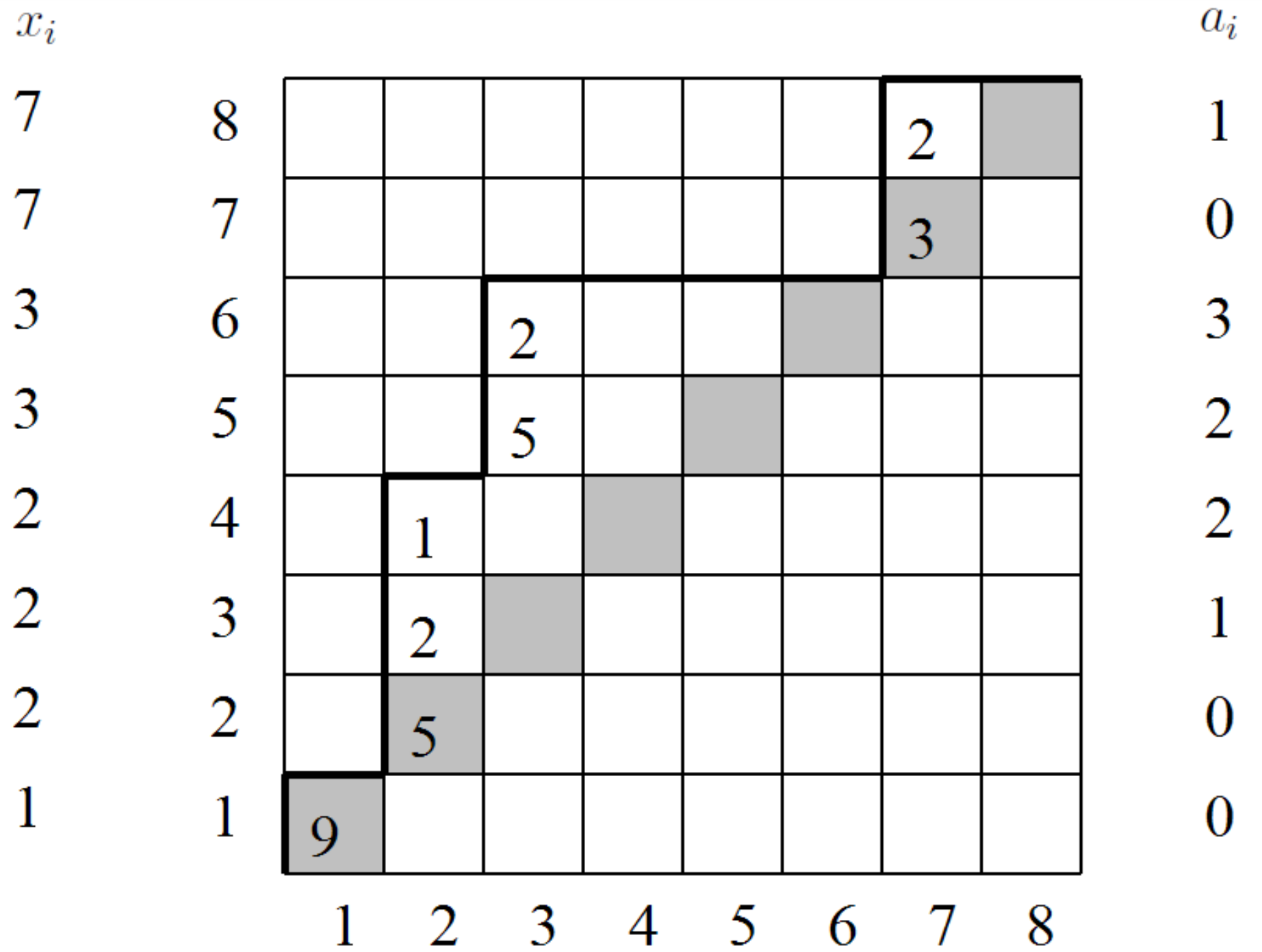} $$
\caption{A parking function of size 8.}
\label{fig:PF-concepts1}
\end{figure}
\end{example}

\subsection{The shuffle conjectures}
Let $X=\{x_1,x_2,...\}$ be any infinite set of variables. Denote by $x_w=x_{w_1}\cdots x_{w_n}$.
In this notation, the original shuffle conjecture \cite{haglund2005diagcoinv}
states
\begin{conjecture}[\cite{haglund2005diagcoinv}] We have
\[
(-1)^n \nabla e_n=
\sum_{|\pi|=n} t^{\area(\pi)}\sum_{w\in \mathcal{WP}_\pi}
q^{\dinv(\pi,w)} x_{w}.
\]
In particular, the right hand side is symmetric in the $x_i$,
and in $q,t$.
\end{conjecture}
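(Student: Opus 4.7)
The plan is to deduce the theorem from the compositional refinement $\nabla(C_\alpha[X;q]) = D_\alpha(X;q,t)$ (the statement that will appear as Theorem \ref{shuffthm}), summed over all compositions $\alpha$ with $|\alpha|=n$, using the identity $\sum_{|\alpha|=n} C_\alpha[X;q] = (-1)^n e_n[X]$ already recorded in the text. So I first fix a composition $\alpha$ and aim at this refined statement, which packages the Dyck paths according to their touch composition.

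To prove the compositional version, I would follow the three-step outline of the introduction. First, enlarge the ambient space to $V_* = \bigoplus_k V_k$ with $V_k = \Sym[X][y_1,\dots,y_k]$, where the $y$-variables track the touch-gap data. Introduce a lowering map $d_-: V_k \to V_{k-1}$ whose $k$-fold iterate sends each monomial $y_\alpha$ to $B_\alpha[X;q]$, and define refined polynomials $N_\alpha \in V_k$ via a recursion (Theorem \ref{thm:recN}) mirroring the ``first touch'' decomposition of Dyck paths, so that $d_-^k N_\alpha = \pm D_\alpha(X;q,t)$ reduces to a direct combinatorial check. Next, construct the two algebras $\mathbb{A}$ and $\mathbb{A}^*$ acting on $V_*$, related by an antilinear isomorphism induced from $(q,t) \mapsto (q^{-1},t^{-1})$; the operators $B_r,C_r$ on $\Sym[X]$ arise as the images of generators of $\mathbb{A},\mathbb{A}^*$ under $d_-^k$. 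The crux is Theorem \ref{mainthm}: there exists an antilinear involution $N: V_* \to V_*$ intertwining the two actions, commuting with $d_-$, and restricting on $V_0 = \Sym[X]$ to $\nabla$ composed with a conjugation $\bar\omega$-type map that exchanges $B_\alpha$ with $C_\alpha$.

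Given $N$ with these properties, the compositional conjecture is a formal consequence: applying $d_-^k \circ N$ to $y_\alpha$ gives $\pm D_\alpha(X;q,t)$ by property (i), and by the commutation of $N$ with $d_-$ also equals $N$ applied to $B_\alpha[X;q]$, which by property (iii) equals $\nabla C_\alpha[X;q]$ up to the appropriate conjugation. Summing over $|\alpha|=n$ and invoking $\sum C_\alpha = (-1)^n e_n$ then yields the conjectured equality. The hard part is the construction of $N$: one must give explicit formulas for the generators of $\mathbb{A}$ and $\mathbb{A}^*$ as plethystic operators, verify the defining algebra relations (which encode nontrivial plethystic identities), and produce the antilinear map exchanging them that restricts to $\nabla$ on $V_0$. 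The careful bookkeeping of $q,t$-signs under conjugation and the plethystic identities needed to make $N$ well-defined are where the bulk of the Carlsson--Mellit argument resides; everything else (the surjection $d_-^k$, the commutation $[N,d_-]=0$, and the combinatorial $N_\alpha$-recursion) is designed around these algebraic structures and then largely bookkeeping.
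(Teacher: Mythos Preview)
Your proposal is correct and follows essentially the same route as the paper: the original shuffle conjecture is obtained by summing the compositional version (Theorem \ref{shuffthm}) over all $\alpha\models n$ via $\sum_{|\alpha|=n} C_\alpha = (-1)^n e_n$, and your outline of the proof of Theorem \ref{shuffthm}---the spaces $V_k$, the recursion for $N_\alpha$ giving $d_-^k N_\alpha = D_\alpha$, the two $\AA,\AA^*$-actions, and the antilinear involution $\mathcal{N}$ commuting with $d_-$ and restricting to $\nabla\bar\omega$ on $V_0$---matches the paper's argument step for step (modulo the scalar $q^{|\alpha|-k}$ in $\mathcal{N}(y_\alpha)=q^{|\alpha|-k}N_\alpha$ and the sign in $d_-^k y_\alpha = (-1)^k B_\alpha(1)$, which you flagged as ``up to sign/conjugation'').
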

The stronger compositional shuffle conjecture \cite{haglund2012compositional}
states
\begin{conjecture}[\cite{haglund2012compositional}]
\label{conj-refined-shuffle} For any composition $\alpha$,
we have
\begin{equation} \label{compshuffeq}
(-1)^n \nabla C_{\alpha}(1)=
\sum_{\touch(\pi)=\alpha} t^{\area(\pi)}\sum_{w\in \mathcal{WP}_\pi}
q^{\dinv(\pi,w)} x_w.
\end{equation}
\end{conjecture}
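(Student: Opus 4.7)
The plan is to follow the blueprint sketched in the introduction, which reduces the conjecture to the construction of a single antilinear involution on an enlarged space. First, for each composition $\alpha$ of length $k$, I would introduce a refined combinatorial object $N_\alpha \in V_k = \Sym[X][y_1,\ldots,y_k]$ whose image under a surjection $d_-^k : V_k \to V_0$ recovers the right-hand side of \eqref{compshuffeq} up to sign. Concretely, $N_\alpha$ should package all parking functions with touch data partially recorded in the $y$-variables, weighted by $t^{\area(\pi)}q^{\dinv(\pi,w)}x_w$, so that removing or merging the lowest touch-block produces a recursion (Theorem~\ref{thm:recN}) that completely determines $N_\alpha$. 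The key combinatorial input is to check this recursion directly from the definitions of $\Area$, $\Dinv$, and $\mathcal{WP}_\pi$ by analyzing how the lowest touch-block of a Dyck path interacts with the attack relation and the $\mathcal{WP}_\pi$ inequalities.

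Next, I would define two algebras $\mathbb{A}$ and $\mathbb{A}^*$ acting on $V_* = \bigoplus_{k \geq 0} V_k$ related by an antilinear isomorphism with respect to $(q,t) \mapsto (q^{-1},t^{-1})$. The generators on the $\mathbb{A}$-side should include plethystic operators modeled on $B_r$ and $C_r$ together with raising and lowering operators $d_\pm$ on the $y$-degree; the corresponding generators of $\mathbb{A}^*$ must mirror them in a way dictated by the recursion for $N_\alpha$. I would then construct the antilinear involution $N : V_* \to V_*$ of Theorem~\ref{mainthm}, intertwining the two actions via $N a = a^* N$ for each generator $a \in \mathbb{A}$ and its partner $a^* \in \mathbb{A}^*$. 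This is where I expect the real difficulty to lie: the intertwining relations must be checked on every generator and involve careful manipulation of plethystic expressions of the form $F[X - (q-1)z^{-1}]\pExp[-zX]\big|_{z^r}$ and their conjugates, with Lemma~\ref{l-h1-u} serving as the workhorse identity for simplifying the $h_n[(1-u)v]$ factors that repeatedly appear.

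Granted the involution $N$, the three bulleted properties from the introduction combine to yield the conjecture. Property~(ii), $N(y_\alpha) = N_\alpha$, follows by induction on the length of $\alpha$: writing $y_\alpha$ as a product of elementary generators applied to $1 \in V_0$, applying $N$, and using the intertwining relations reproduces exactly the recursion that characterizes $N_\alpha$. Property~(iii), that $N|_{V_0}$ agrees with $\nabla$ composed with a conjugation swapping $B_\alpha \leftrightarrow C_\alpha$, can be checked on the modified Macdonald basis $\{H_\mu\}$ using the eigenvalue description \eqref{nabdef} together with the transformation rules for $\omega$ and $\bar\omega$. Finally, applying $d_-^k$ to the identity $N(y_\alpha) = N_\alpha$, and combining with properties (i) and (iii), collapses to $(-1)^n \nabla C_\alpha(1) = D_\alpha(X;q,t)$ as in \eqref{compshuffeq}, modulo tracking the sign conventions introduced by the $(-1)^r$ discrepancy with \cite{haglund2012compositional}. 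The decisive bottleneck of the whole program is the verification of the commutation relations defining $N$; everything else is a bookkeeping exercise once that theorem is in hand.
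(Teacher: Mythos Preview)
Your outline is essentially the paper's strategy, but two of your technical steps diverge from what the paper actually does, and in each case the paper's route is the one that works cleanly.

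First, you propose to verify the recursion for $N_\alpha$ ``directly from the definitions of $\Area$, $\Dinv$, and $\mathcal{WP}_\pi$ by analyzing how the lowest touch-block of a Dyck path interacts with the attack relation.'' The paper does \emph{not} work on the $(\area,\dinv)$ side. It first applies the $\zeta$ bijection to pass to the $(\bounce,\area')$ model, where $D_\alpha$ becomes a sum over $\DD'_\alpha$ of $t^{\bounce(\pi)}\chi(\pi,0)$. In that picture the recursion is the elementary decomposition $\DD'_{a,\alpha}=\bigsqcup_{\beta\models a-1}\gamma_l(\DD'_{\alpha\beta})$, and $\gamma_l$ (adding a North step and one East step at the bottom-left) corresponds exactly to the operator combination $\tfrac{1}{q-1}[d_-,d_+]d_-^{l(\beta)-1}$ via Corollary~\ref{chi0cor}. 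Trying to see this recursion on the $\dinv$ side is substantially harder; the $\zeta$ map is precisely the device that aligns the combinatorics with the $d_\pm$ operators.

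Second, you suggest checking $\mathcal{N}|_{V_0}=\nabla\bar\omega$ on the modified Macdonald basis using the eigenvalue description \eqref{nabdef}. This would require an a priori formula for $\mathcal{N}(H_\mu)$, but $\mathcal{N}$ is defined only through the abstract algebra involution on $\tilde{\AA}$, so no such formula is available. The paper instead identifies $d_-d_+=\underline{e}_1$ and $-d_-d_+^*=D_1$ on $V_0$, deduces $\mathcal{N}D_1=-\underline{e}_1\mathcal{N}$ and $\mathcal{N}\underline{e}_1=-D_1\mathcal{N}$ from the intertwining relations, and then invokes the result of \cite{garsia1998explicit} that these commutation relations together with $\nabla(1)=1$ characterize $\nabla$. (Also note the normalization: the paper obtains $\mathcal{N}(y_\alpha)=q^{|\alpha|-k}N_\alpha$, not $\mathcal{N}(y_\alpha)=N_\alpha$; this factor of $q$ is what converts $B_\alpha$ into $C_\alpha$ in the final line.)
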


The inner sum of \eqref{compshuffeq} is a symmetric function in the $x$'s for each Dyck path $\pi$. This symmetry is known as the consequence of a
general result of the symmetry of the well-known LLT-polynomials: it is an LLT product of vertical strips. See Chapter 6 (especially Remark~6.5) of \cite{haglund2008catalan} for more information on LLT polynomials. If we release the sum to ranges over all words, that is, let
\begin{align}\label{e-bar-chi}
  \bar\chi(\pi)= \sum_{w\in \Z_{>0}^n}
q^{\dinv(\pi,w)} x_w,
\end{align}
then $\bar \chi(\pi)$ can be seen to be an LLT product of single cells (called unicellular LLT-polynomials) and is hence symmetric.
We will see under the $\zeta$ map, $\bar\chi(\pi)$ is naturally transformed to $\chi(\pi')$.

\subsection{The $\zeta$ Map: From $(\area,\dinv)$ to $(\bounce,\area')$}
Our next task is to prove an equivalent version of Conjecture \ref{conj-refined-shuffle}
by using the bijection $\zeta$ in \cite{haglund2008catalan} which is known to send
the statistics $(\area,\dinv)$ to another statistics $(\bounce,\area')$.

We shall use a different description of the $\zeta$ map that comes naturally from analysis of the attack relations.
An important property of this construction is that it has a natural lift from Dyck paths to parking functions.
Moreover, many known results are easier to prove under this model.

In order to define the map $\zeta$ from $\DD_n$ to itself we need the some concepts.
Given a Dyck path $\pi \in \DD_n$, we put
the (diagonal) \emph{reading
order} labels $\sigma_j$ into the cells $\ce{(x_j, j)}$ (where we put cars for parking functions) such that:
the labels are increasing when read them by diagonals from bottom to top, and from left to right in each diagonal.
The labels will always be $1,2,\dots, n$ (if not specified) and $\sigma=\sigma_1\cdots \sigma_n \in S_n$ is called the reading order permutation of $\pi$.
To be more precise, $\sigma_i<\sigma_j$ if $a_i<a_j$ or $a_i=a_j$ and $i<j$.
For instance, for the path $\pi$ in the left picture of Figure \ref{fig:PF-concepts2}, we have put reading order labels in the cells $\ce{(x_j,j)}$.
Reading these labels from bottom to top gives the reading order permutation $\sigma=1   2   4   6   7  8   3   5 $. This is the unique permutation satisfying $\dinv(\pi,\sigma)=0$.

The map $\zeta: \pi \to \pi'$ is defined by the formula:
\[\Area(\pi')=\sigma(\Dinv(\pi))=
\left\{(\sigma_i,\sigma_{j}):(i,j)\in \Dinv(\pi)\right\}.\]
In words we can construct $\pi'$ as follows. We will simply say the reading label $i$ attacks the reading label $j$ if the corresponding cells attacks each other. Observe that for each $j=1,\ldots,n$ the cell $(x_j,j)$ with label $\sigma_j$ attacks all the subsequent cells (with labels $\sigma_j+1,\sigma_j+2,\dots$) that are either in the same diagonal and to the right of it or in the next diagonal and to the left of the cell $(x_j, j+1)$ on top of it. It follows that
if reading label $i$ attacks $j$ then it also holds that $i+1$ attacks $j$ and $i$ attacks $j-1$. This implies that these attack relations forms the area cells of a unique Dyck path $\pi'$. Figure
\ref{fig:PF-concepts2} illustrates an example of the $\zeta$ map.
\begin{figure}
$$\includegraphics[height=6cm]{
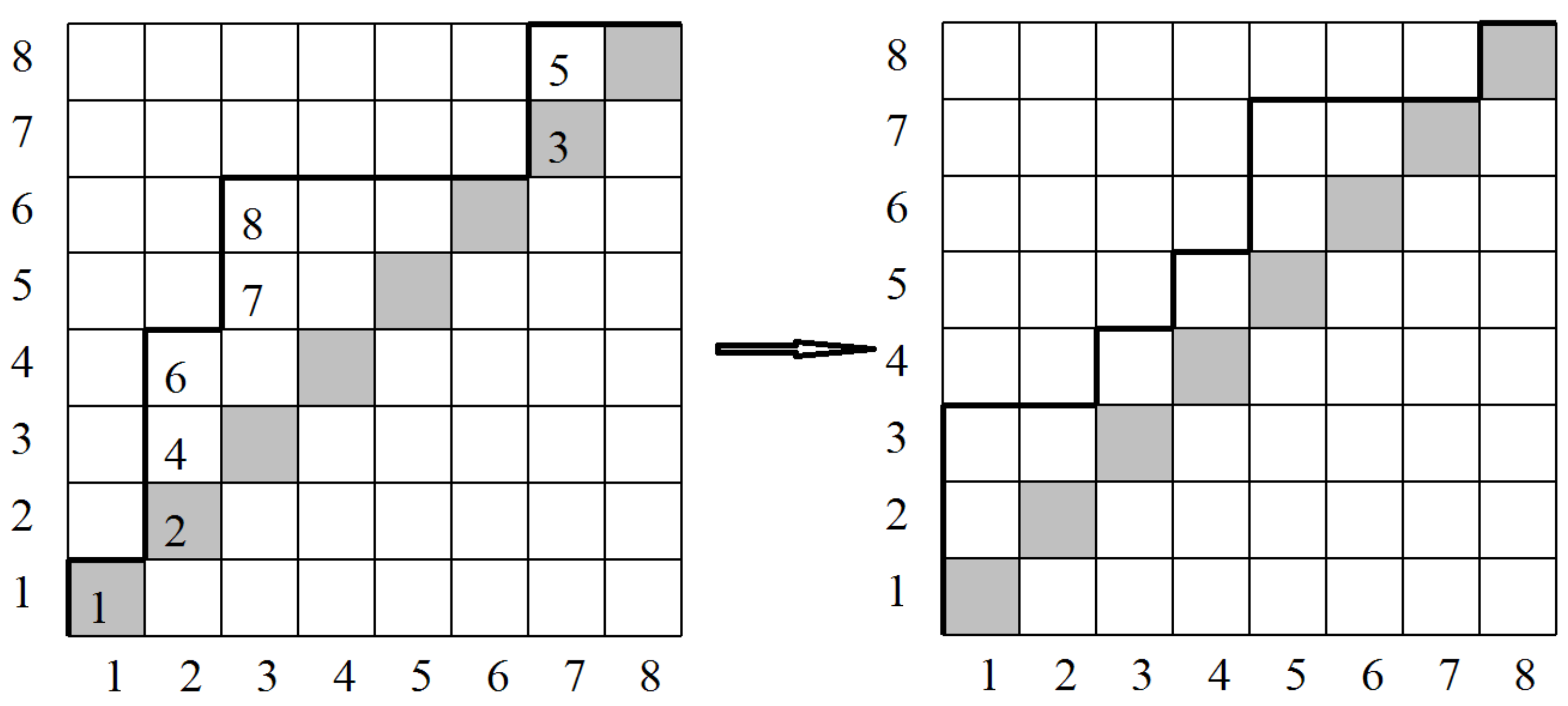} $$
\caption{The left picture illustrate the diagonal reading order of $\pi$. The attack relations of $\pi$ are: 1 attacks 2 and 3; 2 attacks 3;
 3 attacks $4$; 4 attacks 5; 5 attacks 6 and 7;  6 attacks 7. They become the cells in $\Area(\pi')$.}
\label{fig:PF-concepts2}
\end{figure}

On the left picture, we have labelled the cells $\ce{(x_j,j)}$ using their reading order, from $1$ to $n$.
The reading label $1$ attacks $2$ and $3$, but not $4$ or later; the reading label $2$ attacks $3$ but not $4$; and so on. In summary, the attack relations are given by
$$\{(1,2), (1,3), (2,3), (3,4), (4,5), (5,6), (5,7), (6,7) .\}   $$
This is $\Area(\pi')$ for the Dyck path $\pi'$ in the right picture.

It is clear from the construction that
$\dinv(\pi)=\area(\pi')$.
We will explain in the next subsection that: i) the bijectivity of $\zeta$, ii)
the property $\area(\pi)=\bounce(\pi')$, and iii) define $\touch'(\pi')=\touch(\pi)$ directly using $\pi'$.

With the help of the reading order permutation $\sigma$, the $\zeta$ map naturally extends for word parking functions:
From any pair $(\pi,w)$ with $\pi\in\DD$, $w\in\mathcal{WP}_\pi$, define $\zeta(\pi,w)=(\pi',w')$ by
$\pi'=\zeta(\pi)\in \DD$ and $w'=\sigma(w)=w_{\sigma_1}\cdots w_{\sigma_n} \in \mathcal{WP'}_{\pi'}$,
where $\mathcal{WP'}_{\pi'}$ will be defined after analyzing some statistics.

The translation of the $\dinv$ statistic is straightforward.
For any $w'\in \Z_{>0}^n$, let
\[\inv(\pi',w'):=\#\Inv(\pi',w'),\quad
\Inv(\pi',w'):=\left\{(i,j) \in \Area(\pi'),\ w'_i>w'_j\right\},\]
so that
\[\Inv(\pi',w')=\sigma\left(\Dinv(\pi,w)\right),\quad
w'_{\sigma_i}=w_{i}.\]

\begin{figure}[ht]
$$\includegraphics[width=12cm]{
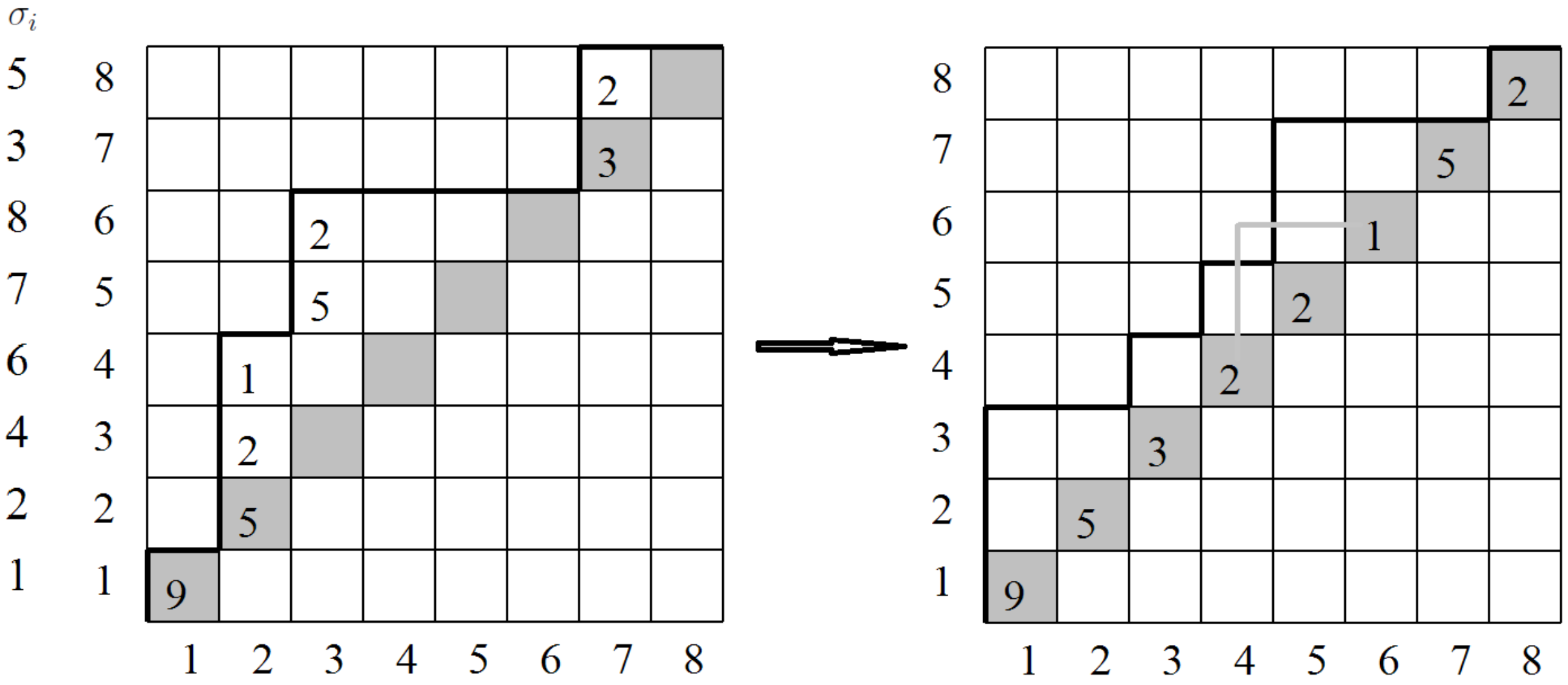} $$
\caption{An example of the $\zeta$ map on parking function.}
\label{fig:zetaw}
\end{figure}

Figure \ref{fig:zetaw} illustrates the extension of the $\zeta$ map for parking functions. Now in the left picture we have put ``cars" $w_i$ in and moved
the reading order permutation to the left. For the right picture we have put $w_i'$ into the diagonal cell $(i,i)$. The geometric meaning for a pair of cars to contribute to $\Inv(\pi',w')$ is: the cell in the column of the lower car and in the row of the higher car is under $\pi'$, and the lower car is greater than the higher car.
For instance the $4$-th car and the $6$-th car will not contribute even if they form an inversion. Indeed we have
\[\Inv(\pi',w')=\{(1,2),(1,3),(2,3),(3,4),(5,6)\}.\]
These pairs of cars form an inversion and they determine a cell under $\pi'$.
In particular, $\inv(\pi',w')=\dinv(\pi,w)=5$.

Finally, we reconstruct the word parking function condition.
A cell $(i,j)$ is called a {\em corner} of $\pi'$
if it is above the path, but both its Southern and Eastern neighbors
are below the path. Denote the set of corners by $c(\pi')$.
For instance, from example in Figure \ref{fig:zetaw},  we have
$c(\pi')=\{(2,4),(3,5),(4,6),(7,8)\}$. We conclude:
\begin{prop}
  We have
\[
c(\pi') = \{(\sigma_r, \sigma_{r+1}):\;1\leq r<n,\;x_r(\pi)=x_{r+1}(\pi)\}.
\]
In words, a corner $(i,j)$ in $\pi'$ corresponds to the cell with reading label $j$ lying on top of the cell
with reading label $i$ in $\pi$.
\end{prop}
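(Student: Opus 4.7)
The plan is to rephrase the corner condition on $\pi'$ as a condition on reading-label attacks in $\pi$, and then to match it to $x_r = x_{r+1}$ via the structure of the reading order and the monotonicity $a_{k+1} \le a_k + 1$ of the area sequence. For each label $i$ denote by $M_i$ the largest label attacked by $i$ (setting $M_i = i$ if $i$ attacks nothing), and dually let $m_j$ be the smallest label attacking $j$ (so $m_j = j$ if nothing attacks). The observation preceding the proposition says the attacked set of $i$ is the consecutive block $\{i+1, \ldots, M_i\}$ and the attacker set of $j$ is $\{m_j, \ldots, j-1\}$. A routine case split on whether $j > i + 1$ or $j = i + 1$ --- using that on-diagonal cells $(i,i)$ are always weakly below the path --- shows
\[
(i,j) \in c(\pi') \iff M_i = j - 1 \text{ and } m_j = i + 1.
\]

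To unpack these in the reading order, set $r = \sigma^{-1}(i)$ and $d = a_r$: the geometric description of attacks in the text gives that $\sigma_r$ attacks, in reading order, first the rows of diagonal $d$ past $r$ and then the rows of diagonal $d+1$ below $r$. Hence $M_i + 1 = \sigma_{r^*}$, where $r^*$ is the smallest row of diagonal $d+1$ strictly exceeding $r$ (and when no such $r^*$ exists, $M_i + 1$ begins the next nonempty diagonal, which is then $\ge d+2$). A symmetric description holds for $m_j$. The backward direction follows immediately: if $x_r = x_{r+1}$ then $a_{r+1} = d+1$, so $r+1$ is the smallest row of diagonal $d+1$ exceeding $r$, giving $\sigma_{r+1} = M_{\sigma_r} + 1$ and symmetrically $\sigma_r + 1 = m_{\sigma_{r+1}}$.

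For the forward direction, suppose $(i,j) \in c(\pi')$, set $s = \sigma^{-1}(j)$, $e = a_s$; since $i$ does not attack $j$, $d < e$. If $e \ge d+2$, then $m_j$ lies in a diagonal $\ge d+1$; the only borderline configuration compatible with $m_j = i+1$ is $e = d+2$ with $m_j$ equal to the first label of diagonal $d+1$, but this is ruled out by $a_{k+1} \le a_k + 1$, which forces the first row of diagonal $d+1$ to precede (not follow) the first row of diagonal $d+2 \ni s$. Hence $e = d+1$ and $s = r^*$. Lastly, $m_j = i+1$ forbids any row of diagonal $d$ strictly between $r$ and $s$, and $r^* = s$ excludes rows of diagonal $d+1$ there too; so if $s > r+1$ we would have $a_{r+1} \le d-1$, and climbing to $a_s = d+1$ under the bound $a_{k+1} \le a_k + 1$ would produce an intermediate row with $a_k = d$ inside $(r,s)$ --- a contradiction. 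Hence $s = r+1$, $a_{r+1} = d+1$, and $x_r = x_{r+1}$. The main obstacle is this forward direction: eliminating the skipped-diagonal case $e \ge d+2$ and then deploying area-sequence monotonicity to force $s = r+1$.
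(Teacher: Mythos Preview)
Your proof is correct and follows essentially the same approach as the paper: both arguments translate the corner condition into the three attack conditions ($i$ attacks $j-1$, $i+1$ attacks $j$, $i$ does not attack $j$) and then use the area-sequence constraint $a_{k+1}\le a_k+1$ to force the desired conclusion. The only difference is organizational: the paper splits the forward direction on whether the cell on top of label $i$ lies under $\pi$ and then on the position of label $i+1$, whereas you package the attack data via $M_i$ and $m_j$ and split instead on the diagonal $e$ of $j$; both routes arrive at the same monotonicity contradiction.
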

\begin{proof}
On one hand, if reading label $j$ is on top of reading label $i$, then clearly $i$ attacks $j-1$,
$i+1$ attack $j$, and $i$ does not attack $j$. This shows that the cell $(i,j)$ is a corner of $\pi'$.

On the other hand, if $(i,j)$ is a corner of $\pi'$ then reading label $i+1$ (in $\pi$) attacks $j$,
$i$ attacks $j-1$ but not $j$. The latter condition forces label $j$ being in the cell on top of $i$ if that cell is under $\pi$.
So assume label $i$ is on diagonal $a$\footnote{A cell $(r,s)$ is said to be on diagonal $s-r$.} and the cell on top of $i$ is above $\pi$. Case 1: $i+1$ is to the right of $i$ and
on the same diagonal $a$. Observe that all labels to the right of $i$ and to the left of $i+1$ must lie below diagonal $a$.
Then there can be no $j$ attacked by $i+1$ and not attacked by $i$. A contradiction. Case 2: $i+1$ is the leftmost label on diagonal $a+1$ and it is to the left of $i$. Observe that all labels to the right of $i$ must lie below diagonal $a$, and that all labels to the left of $i+1$ lie weakly below diagonal $a$. Then there can be no $j$ attacked by $i+1$ and not attacked by $i$. A contradiction. This proves the proposition.
\end{proof}

We therefore define
\begin{equation}
\label{WP1}
\mathcal{WP}'_{\pi'}:=\left\{w' \in \Z_{>0}^n :
w'_i>w'_j \mbox{ for } (i,j) \in c(\pi')\right\},
\end{equation}
so that the condition $w\in \mathcal{WP}_\pi$ is equivalent to $w'\in \mathcal{WP}'_{\pi'}$.

Putting these together, we have
\begin{prop}
For any composition $\alpha$ we have
\begin{equation}\label{eq:Dalpha}
D_{\alpha}(q,t)=
\sum_{\touch'(\pi)=\alpha} t^{\bounce(\pi)}\sum_{w \in \mathcal{WP}'_\pi}
q^{\inv(\pi,w)}
\end{equation}
where $D_\alpha(q,t)$ is the right hand side of \eqref{compshuffeq}.
\end{prop}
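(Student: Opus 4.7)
The plan is to verify \eqref{eq:Dalpha} by applying the $\zeta$ map term by term to the defining sum of $D_\alpha$ on the left-hand side, using the lift of $\zeta$ to pairs $(\pi,w)$ described above. This reduces the proposition to checking that $\zeta$ gives a bijection between the index set of the compositional shuffle sum and the index set of the right-hand side, and that every statistic appearing in the summand is preserved.

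First I would collect the identities already established in the preceding discussion. The equality $\inv(\pi',w') = \dinv(\pi,w)$ is immediate from the identity $\Inv(\pi',w') = \sigma(\Dinv(\pi,w))$ that is built into the definition of the lifted $\zeta$. The monomial identity $x_{w'} = x_w$ holds (implicitly on the right-hand side of \eqref{eq:Dalpha}) because $w' = w_{\sigma_1}\cdots w_{\sigma_n}$ is a rearrangement of $w$. The equivalence $w\in \mathcal{WP}_\pi \Leftrightarrow w' \in \mathcal{WP}'_{\pi'}$ is exactly the Proposition on corners proved just before, combined with the definition \eqref{WP1}.

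The three remaining ingredients are precisely the ones the authors announce will be treated in the next subsection: (i) bijectivity of $\zeta: \DD_n \to \DD_n$; (ii) the identity $\area(\pi) = \bounce(\pi')$; and (iii) the definition of $\touch'(\pi')$, arranged so that $\touch'(\pi') = \touch(\pi)$. Granting these, the proof becomes a direct reindexing: in the right-hand side of \eqref{compshuffeq} substitute $t^{\area(\pi)} = t^{\bounce(\pi')}$, $q^{\dinv(\pi,w)} = q^{\inv(\pi',w')}$, and $x_w = x_{w'}$, and then change variables from $(\pi, w)$ to $(\pi', w') = \zeta(\pi, w)$. Because $\zeta$ is a bijection compatible with both the touch composition and the word-parking-function condition, the new sum ranges over exactly the pairs with $\touch'(\pi') = \alpha$ and $w' \in \mathcal{WP}'_{\pi'}$, which yields the right-hand side of \eqref{eq:Dalpha}.

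The main obstacle is not inside this proposition itself but in the three deferred facts, especially $\area(\pi) = \bounce(\pi')$: this requires carefully tracking how the diagonals of $\pi$ correspond to the bounce steps of $\pi'$, and it carries the nontrivial geometric content of the $\zeta$ bijection. Bijectivity should be recoverable by inverting the attack-relation construction: given $\pi'$, the area cells of $\pi'$ determine all attack relations among the diagonal reading labels, and these in turn reconstruct the area sequence $a(\pi)$, hence $\pi$. Once these pieces are in place the proposition is formally just a change of summation variable.
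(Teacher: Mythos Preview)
Your proposal is correct and matches the paper's approach exactly: the proposition is stated there with no proof beyond ``Putting these together,'' and what you have written is precisely the unpacking of that phrase, deferring the three facts (bijectivity of $\zeta$, $\area(\pi)=\bounce(\pi')$, and $\touch'(\pi')=\touch(\pi)$) to the next subsection as the paper does. One small aside: your closing speculation about recovering $\pi$ from the attack relations is not how the paper actually proves bijectivity---it instead gives a recursive inverse via $\pi=\psi_{l-1}\circ\zeta^{-1}\circ\bar\gamma(\pi')$---but that is outside the scope of this proposition.
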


\begin{rem}
  Readers familiar with the sweep map may find this bijection is different from the sweep map: here the sweep map is to sort the steps of $\pi$ according to their rank of starting points. The sweep order is from bottom to top, and on each diagonal, from right to left. Indeed this bijection is a variation of the sweep: we need first transpose $\pi$ and then apply the sweep map. See Figure \ref{fig:SweepMap}.
\begin{figure}[ht]
$$\includegraphics[height=4cm]{
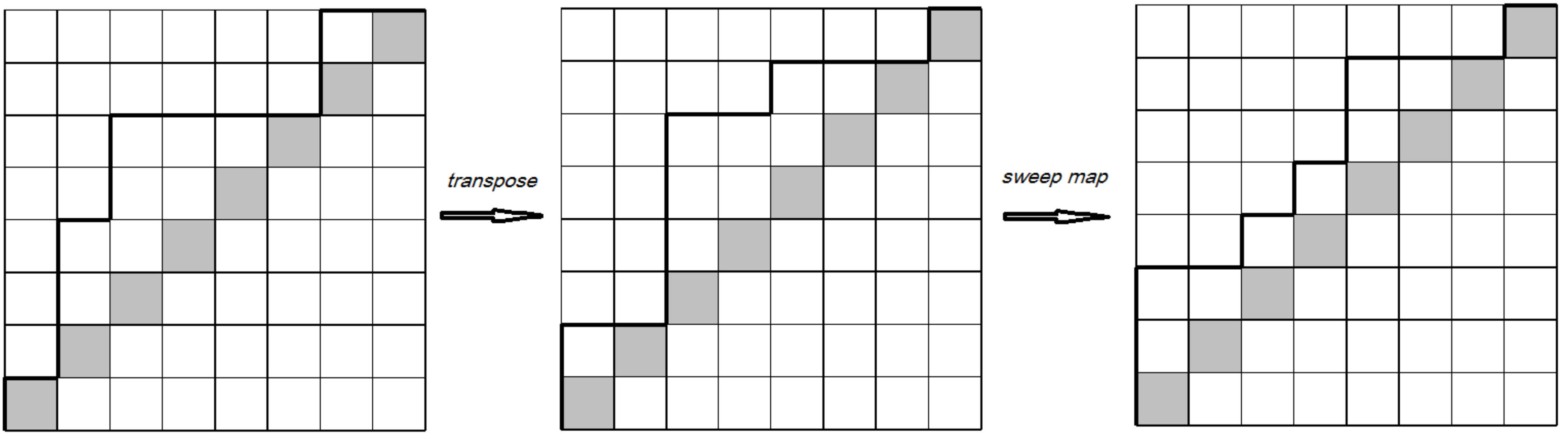} $$
\caption{An alternative way for the bijection.}
\label{fig:SweepMap}
\end{figure}
\end{rem}

\subsection{The Bounce Statistic and the Bijectivity of the $\zeta$ Map}

For any path $\pi'$, we obtain a new Dyck path called the ``bounce path'' as
follows: start at the origin $(0,0)$, and begin moving North until
contact is made with the first East step of $\pi$. Then start moving East until
contacting the diagonal. Then move North until contacting the path again,
and so on. Note that contacting the path means running into the left
endpoint of an East step, but passing by the rightmost endpoint does
not count, as illustrated in the right picture of Figure \ref{fig:bounce}. The bounce path splits the main diagonal into
the \emph{bounce blocks}. We number the bounce blocks starting from $0$ and define the
\emph{bounce sequence} $b(\pi)=(b_1, b_2, \ldots, b_n)$ in such a way that for any $i$ the cell $(i,i)$ belongs to the $b_i$-th block.
We then define
\[\bounce(\pi'):=\sum_{i=1}^n b_i.\]
In our running example we have
We have
\[b(\pi')=(0,0,0,1,1,2,2,3),\quad \bounce(\pi')=9=\area(\pi).\]
Indeed, we have
\begin{figure}[ht]
$$\includegraphics[height=5cm]{
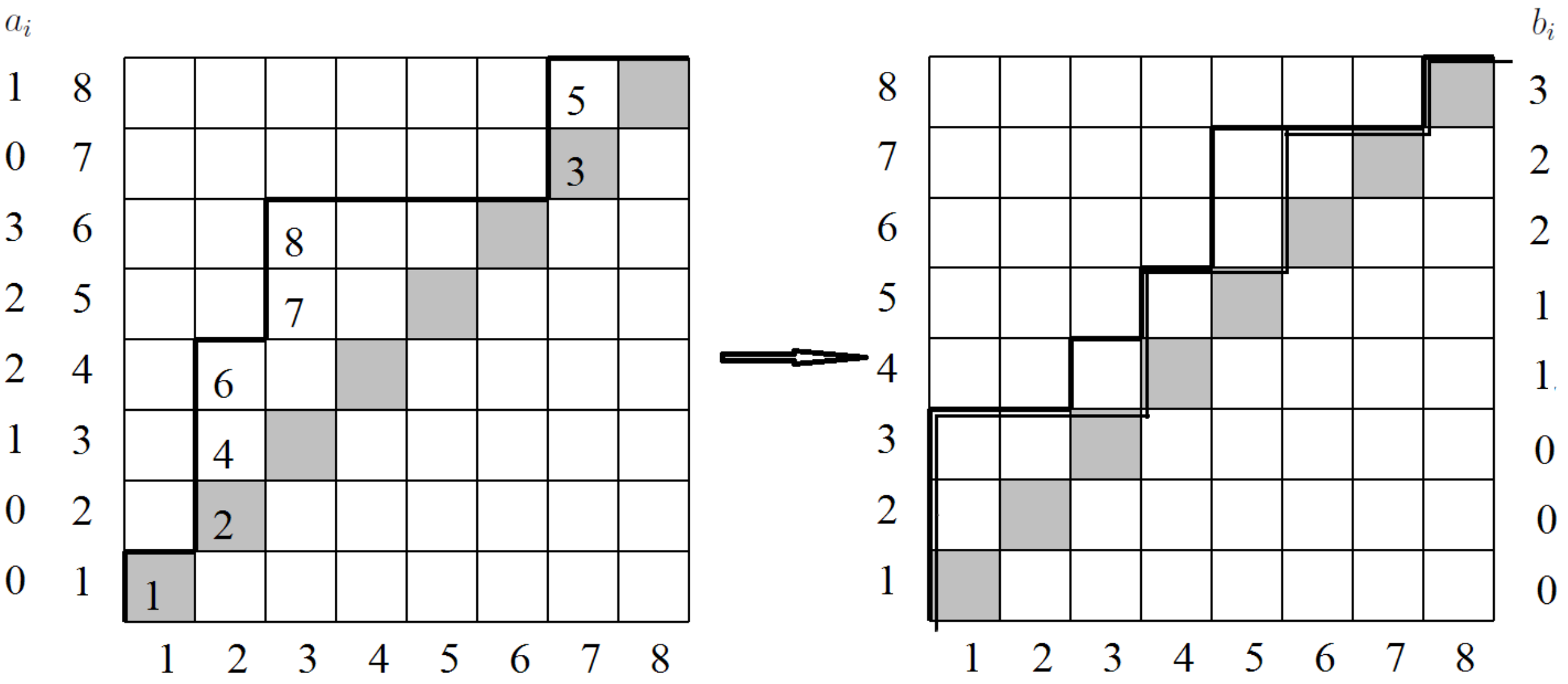} $$
\caption{The area sequence on the left picture maps to the bounce sequence on the right picture.}
\label{fig:bounce}
\end{figure}

\begin{prop}\cite{haglund2008catalan}
Suppose $\zeta(\pi)=\pi'$, $\sigma$ is the reading permutation of $\pi$, $a_i$ is the area sequence of $\pi$ and $b_i$ is the bounce sequence of $\pi'$. Then $b_{\sigma_i} = a_i$ for all $i$.
\end{prop}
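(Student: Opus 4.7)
The plan is to prove the statement by induction on $k$, showing that the $k$-th bounce block of $\pi'$ coincides with the image under $\sigma$ of the set of diagonal-$k$ cells of $\pi$. Writing $m_k = |\{i : a_i = k\}|$ and $M_k = m_0 + \cdots + m_k$, the reading-order convention assigns the cells on diagonal $k$ the consecutive labels $\{M_{k-1}+1, \ldots, M_k\}$, so the claim $b_{\sigma_i} = a_i$ is equivalent to saying that the $(k+1)$-st north run of the bounce path of $\pi'$ has length exactly $m_k$.

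Under the induction hypothesis the bounce sits at $(M_{k-1}, M_{k-1})$ and moves north, stopping at $(M_{k-1}, y^*)$, where $y^*$ is the unique row for which $\pi'$ has the east step $(M_{k-1}, y^*) \to (M_{k-1}+1, y^*)$. Since the row columns $x_y(\pi')$ are weakly increasing, $y^*$ is characterized as the largest $y$ with $x_y(\pi') \le M_{k-1}+1$, which for $y > M_{k-1}+1$ is equivalent to $(M_{k-1}+1, y) \in \Area(\pi')$. The inductive step therefore reduces to checking two cell-membership conditions: (A) $(M_{k-1}+1, y) \in \Area(\pi')$ for every $y$ with $M_{k-1}+1 < y \le M_k$, and (B) $(M_{k-1}+1, M_k+1) \notin \Area(\pi')$ when $M_k < n$.

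Using the defining relation $\Area(\pi') = \sigma(\Dinv(\pi))$, both conditions translate into statements about $\Dinv(\pi)$ involving the leftmost diagonal-$k$ cell $p_k := \sigma^{-1}(M_{k-1}+1)$ of $\pi$. For (A), each $y$ in the stated range has $\sigma^{-1}(y) = q$ a diagonal-$k$ cell with $q > p_k$; since $a_{p_k} = a_q = k$, primary dinv gives $(p_k, q) \in \Dinv^0(\pi)$, which under $\sigma$ yields $(M_{k-1}+1, y) \in \Area(\pi')$. For (B), $\sigma^{-1}(M_k+1) = p_{k+1}$ is the leftmost diagonal-$(k+1)$ cell, and a case analysis over the four possible dinv configurations on $\{p_k, p_{k+1}\}$ leaves only $(p_k, p_{k+1}) \in \Dinv^1$ as a live possibility; this would force $p_{k+1} < p_k$, but the Dyck-path constraint $a_{i+1} \le a_i + 1$ forces the first occurrence of value $k+1$ to strictly follow the first occurrence of value $k$, giving $p_k < p_{k+1}$ and ruling that case out.

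The main conceptual hurdle is the initial reformulation of $y^*$: one must turn the geometric bouncing rule (``move north until hitting the left endpoint of an east step'') into a condition on whether specific cells in column $M_{k-1}+1$ of $\pi'$ belong to $\Area(\pi')$, using monotonicity of the $x_y(\pi')$. Once that translation is in place, the conditions (A) and (B) are settled mechanically by the dinv case analysis, with the inequality $p_k < p_{k+1}$ providing the only essential input from the Dyck-path property of $\pi$.
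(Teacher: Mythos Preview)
Your proof is correct and follows essentially the same approach as the paper. Both arguments show by induction that the $k$-th bounce block of $\pi'$ has size $m_k$ by identifying the leftmost reading label on diagonal $k$ (your $M_{k-1}+1$, the paper's $f_k$), observing it attacks exactly the other labels on diagonal $k$, and using the Dyck constraint $a_{i+1}\le a_i+1$ to conclude it does not attack the leftmost label on diagonal $k+1$ (your $p_k<p_{k+1}$, the paper's ``$f_{j+1}$ is weakly to the right of $f_j$''). Your version is more explicit about translating the bounce rule into the cell-membership conditions (A) and (B), while the paper phrases the same step via the geometric attack description; the underlying logic is identical.
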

\begin{proof}
Assume the area sequence $(a_1,\dots, a_n)$ of $\pi$ consists of $\beta_1$ 0's, $\beta_2$ 1's, and so on.
Then $f_j=1+\beta_1+\cdots +\beta_{j}$ is the left most reading label on diagonal $j$ for $j\ge 0$. Observe
that $f_{j+1}$ is weakly to the right of $f_{j}$ for all $j\ge 0$. This is because the Dyck path starts at $(0,0)$,
and consecutively reaches higher diagonal. It follows that
$1=f_0$ attacks $2,\dots, f_{1}-1$, and hence in $\pi'$ we reaches the first peak at $(0,f_{1}-1)$ and bounce to the diagonal point
$(f_1-1, f_1-1)$. Similarly $f_{1}$ attacks $f_1+1,\dots, f_2-1$ so that $(f_1-1,f_2-1)$ is a peak of $\pi'$ and bounce to the
diagonal point $(f_2-1,f_2-1)$. Continue this way we see that the bounce blocks are of sizes $(f_1-1,f_2-f_1,\dots)=(\beta_1,\beta_2,\dots)$.
This completes the proof.
\end{proof}

\begin{rem}
From the proof we see that $\pi'$ pass through the peaks of its bounce path, say $p_1,p_2,\dots$. It is easy to see that the steps
from $p_i$ to $p_{i+1}$ is uniquely determined by the (natural path) order of the reading labels on diagonal $i-1$ and $i$, which is given by
the area sequence $(a_1,\dots, a_n)$ restricted to $i-1$ and $i$. This leads to the original definition of $\zeta$ in \cite[Page ~50]{haglund2008catalan}: Place a pen at
the second peak $(\beta_1,\beta_1+\beta_2)$. Start at the end of the area sequence and travel left. Trace a south step with your pen
 whenever encounter a $1$ and trace a west step whenever encounter a $0$. Similarly draw the path from $p_i$ to $p_{i-1}$ for all $i$. It is not hard to see that the resulting path is the same as our $\pi'$.
\end{rem}

Next we introduce a sequence of injections $\psi_r: \DD_{n-1} \to \DD_{n}$ and their left inverse $\bar\psi: \DD_n \to \DD_{n-1}$ for $n\ge 0$ as follows.
Let $\alpha=(\alpha_1,\dots, \alpha_l)$ be a composition. We denote by $\DD_\alpha =\{\pi \in \DD_n: \touch(\pi)=\alpha\}.$
Suppose $\pi \in \DD_\alpha$. i) For any $r$ with $0\le r \le l$, split $\pi=\pi_2 \pi_1$ at its $r$-th touch point and define
$\psi_r(\pi)=N\pi_1 E \pi_2$. Note that we count $(0,0)$ as the $0$-th touch point, so that $|\pi_2|=\alpha_1+\cdots+\alpha_{r}$;
ii) For the same $\pi$ we split it as $N\pi_1E\pi_2$ so that $\pi_2$ starts at its first touch point $(\alpha_1,\alpha_1)$, and define $\bar\psi \pi=\pi_2\pi_1$. Note that $\pi_1$ and $\pi_2$ are both (possibly empty) Dyck paths.
See Figure \ref{fig:psi} for an example.
In Figure \ref{fig:psi}, we have put the reading order labels for both $\pi$ and $\bar \psi(\pi)$, and we have decreased the reading labels by $1$ in the left picture for comparison.
\begin{figure}[ht]
$$\includegraphics[height=5cm]{
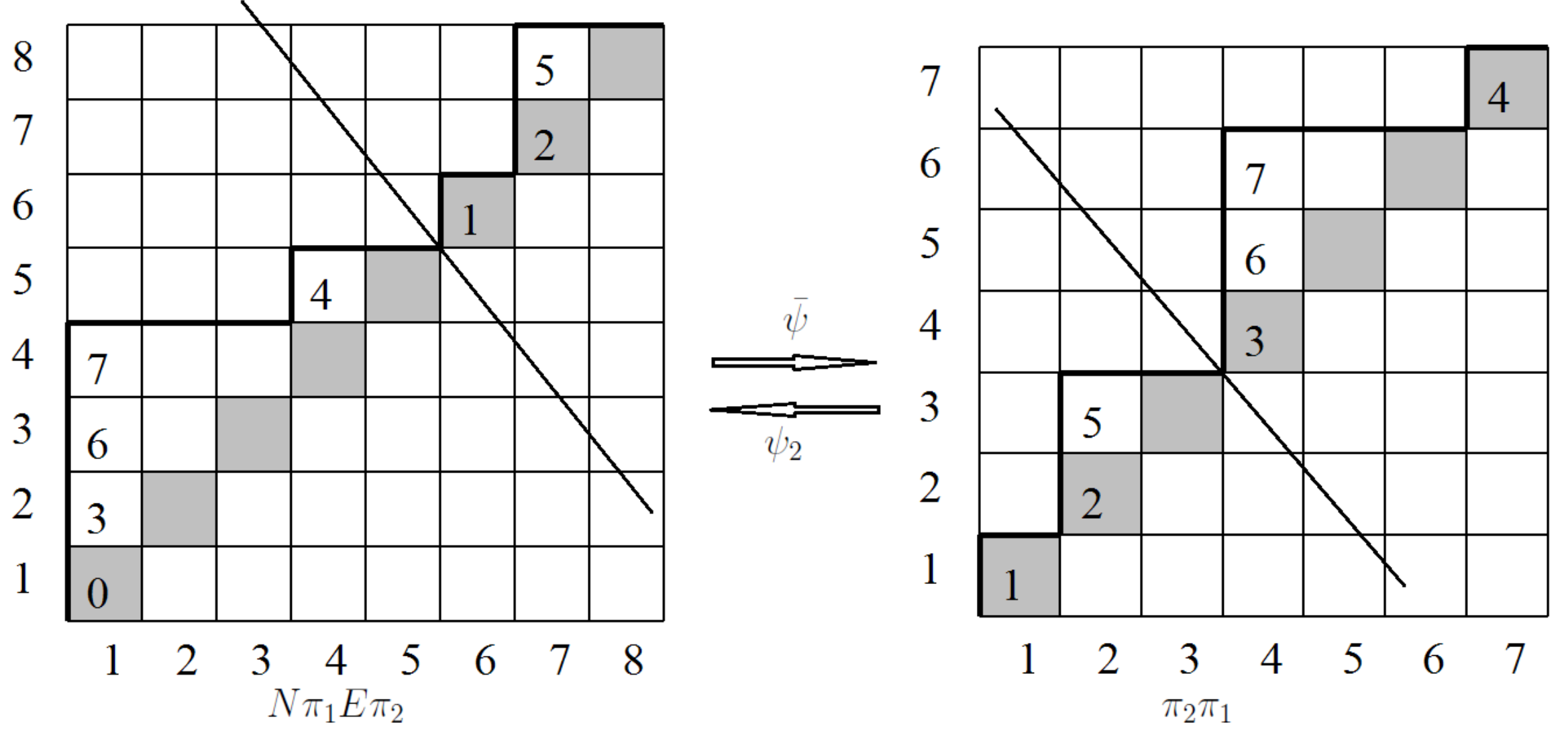} $$
\caption{A Dyck path $\pi$ and its image $\bar \psi(\pi)$.}
\label{fig:psi}
\end{figure}

From this example, the following properties of $\bar \psi$ are obvious.
\begin{prop} \label{p-psi}
Suppose $\alpha=(\alpha_1,\dots, \alpha_l)$ is a composition of $n\ge 1$. Then we have the following properties.
\begin{enumerate}
  \item If $\pi \in \DD_\alpha$ then $\touch(\bar \psi(\pi))=(\alpha_2,\dots, \alpha_l, \beta)$ for some composition
$\beta$ of $\alpha_1-1$.

\item The restriction of $\bar \psi$ on $\DD_\alpha$ is a bijection from $\DD_\alpha$ to the disjoint union $\bigsqcup_{\beta \models \alpha_1-1} \DD_{\alpha_2,\dots, \alpha_l,\beta}$. Its left inverse map is $\psi_{l-1}$.

\item $\area(\bar \psi(\pi))=\area(\pi)-\alpha_1+1$.

\item The attack relations of $\pi$ is the union of the attack relations of $\psi(\pi)$ and $\{(0,1),(0,2),\dots,(0,l-1)\}$.
\end{enumerate}
\end{prop}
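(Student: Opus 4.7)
My plan is to work throughout with the explicit decomposition $\pi = N\pi_1 E\pi_2$, where $\pi_2$ starts at the first touch point $(\alpha_1,\alpha_1)$; then $\pi_1$ is a Dyck path of length $\alpha_1-1$ (after translating down one row) and $\pi_2$ is a Dyck path of length $n-\alpha_1$ whose touch composition is exactly $(\alpha_2,\ldots,\alpha_l)$. Part (1) is then immediate, since $\bar\psi(\pi)=\pi_2\pi_1$ is a concatenation whose touch composition is $\touch(\pi_2)$ followed by $\touch(\pi_1)$, and the latter is some $\beta \models \alpha_1-1$. For part (2), I would verify that $\psi_{l-1}$ splits $\bar\psi(\pi)$ exactly at the interface between $\pi_2$ and $\pi_1$ (its $(l-1)$-th touch point is at $(n-\alpha_1,n-\alpha_1)$) and restores the original $N\,\cdot\,E$ wrapping, so $\psi_{l-1}\circ\bar\psi=\mathrm{id}$ on $\DD_\alpha$; the converse direction is symmetric.

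Part (3) reduces to the row-by-row identity that if $\pi_1$ has area sequence $(c_1,\ldots,c_{\alpha_1-1})$, then $N\pi_1 E$ has area sequence $(0,c_1+1,\ldots,c_{\alpha_1-1}+1)$, so $\area(N\pi_1 E)=\area(\pi_1)+(\alpha_1-1)$. Since $\pi_2$ contributes identically to $\pi$ and to $\bar\psi(\pi)$, the claimed difference of $\alpha_1-1$ drops out.

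For part (4), I would introduce the row bijection $\tau:\{2,\ldots,n\}\to\{1,\ldots,n-1\}$ sending $j\in\{\alpha_1+1,\ldots,n\}$ to $j-\alpha_1$ (with area unchanged) and $j\in\{2,\ldots,\alpha_1\}$ to $(j-1)+(n-\alpha_1)$ (with area decreased by one). A short case check then establishes two things simultaneously: (a) the ordering used to assign reading labels is preserved by $\tau$, so reading labels of $\pi$ shifted down by one agree with the standard reading labels of $\bar\psi(\pi)$ under $\tau$, and row $1$ of $\pi$ acquires the label $0$; and (b) the set of attacking row-pairs is preserved, even though the $\Dinv^0/\Dinv^1$ classification swaps for cross-group pairs. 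Finally, row $1$ of $\pi$ has $a_1=0$, the minimum possible area, so its dinv partners are exactly the $l-1$ subsequent touch-point rows, whose shifted reading labels are $1,\ldots,l-1$; these contribute precisely the extra set $\{(0,1),\ldots,(0,l-1)\}$.

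The main obstacle will be part (4). Three processes run in parallel — the row reordering $\tau$, the area shift on $\pi_1$'s rows, and the reading-label shift by one — and one must verify they interact compatibly. The subtlest point is the cross-group case, where the asymmetry between $\Dinv^0$ (which requires $j<j'$) and $\Dinv^1$ (which requires $j'<j$) must exactly absorb the reversal of row order under $\tau$ combined with the one-unit area shift, so that the set of attacking pairs (as opposed to their $\Dinv^0/\Dinv^1$ classification) is preserved.
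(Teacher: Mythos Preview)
Your proposal is correct and essentially follows the same route as the paper, which simply states that these properties are ``obvious'' from the example in Figure~\ref{fig:psi}. Your argument is a careful, fully explicit version of what the paper leaves to the reader: the same decomposition $\pi = N\pi_1 E\pi_2$, the same touch-composition bookkeeping, the same area-sequence shift $(0,c_1+1,\ldots,c_{\alpha_1-1}+1)$ for $N\pi_1 E$, and the same observation that row $1$ attacks precisely the other $l-1$ touch-point rows. Your treatment of part~(iv), in particular the row bijection $\tau$ and the verification that cross-group pairs swap between $\Dinv^0$ and $\Dinv^1$ while preserving the underlying attacking pair, is exactly the content the paper implicitly relies on but does not spell out. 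So: same approach, just made rigorous.
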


We also have the following properties for $\psi_r$.
\begin{prop}\label{p-psi-r}
Suppose $\alpha=(\alpha_1,\dots, \alpha_l)$ is a composition of $n\ge 0$. Then we have the following properties.
\begin{enumerate}
\item If $\pi \in \DD_\alpha$ then $\touch(\psi_r(\pi))=(1+\alpha_{r+1}+\cdots+\alpha_l, \alpha_1,\alpha_2,\dots, \alpha_r)$.

\item $\area(\psi_r(\pi))=\area(\pi)+\alpha_{r+1}+\cdots+\alpha_l$ with $\area(\psi_l(\pi))=\area(\pi)$ and $\area(\psi_0(\pi))=\area(\pi)+|\alpha|.$

\item The attack relations of $\psi_r(\pi)$ is the union of the attack relations of $\pi$ and $\{(0,1),(0,2),\dots,(0,r)\}$.
\end{enumerate}
\end{prop}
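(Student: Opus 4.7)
My plan is to unfold the definition $\psi_r(\pi)=N\pi_1 E\pi_2$, where $\pi=\pi_2\pi_1$ is split at the $r$-th touch point so that $|\pi_2|=\alpha_1+\cdots+\alpha_r$ and $|\pi_1|=\alpha_{r+1}+\cdots+\alpha_l$, and then read off each of (1)--(3) from the explicit area sequence of the new path. This parallels the proof of Proposition \ref{p-psi} for the left inverse $\bar\psi$.

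For (1), I would note that $N\pi_1 E$ is a primitive Dyck block of length $1+|\pi_1|$: it starts at $(0,0)$, stays strictly above the diagonal throughout because $\pi_1$ is embedded one row higher than a genuine Dyck path, and returns to the diagonal only at $(1+|\pi_1|,1+|\pi_1|)$. The factor $\pi_2$ is attached at this diagonal touch point and contributes its own touch composition $(\alpha_1,\dots,\alpha_r)$, since the split $\pi=\pi_2\pi_1$ was made at a touch point. Concatenating yields the claimed formula. For (2), the shifted copy of $\pi_1$ inside $N\pi_1 E$ has area cells splitting into the shifted image of $\Area(\pi_1)$ (contributing $\area(\pi_1)$) together with the $|\pi_1|$ cells $\{(i,i+1):0\le i<|\pi_1|\}$ filling the strip between the diagonal and $y=x+1$. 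The appended $\pi_2$ sits on the diagonal so its area contribution is unchanged, giving $\area(\psi_r(\pi))=\area(\pi)+|\pi_1|$; the boundary cases $r=l$ and $r=0$ follow from $|\pi_1|=0$ and $|\pi_1|=|\alpha|$.

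For (3), I would first record the area sequence of $\psi_r(\pi)$ explicitly as a leading $0$, followed by the area sequence of $\pi_1$ with every entry incremented by $1$, followed by the area sequence of $\pi_2$ unchanged. The key structural observation is that this shift preserves the reading order of the common rows, so that the reading label of each old row in $\psi_r(\pi)$ is one more than its reading label in $\pi$. Indeed, the relative order of two rows on the same side ($\pi_1$--$\pi_1$ or $\pi_2$--$\pi_2$) is obviously preserved; for a $\pi_2$-row of area $a$ against a $\pi_1$-row of area $a'$, the reversal of their positional order between $\pi=\pi_2\pi_1$ and $\psi_r(\pi)=N\pi_1 E\pi_2$ is exactly compensated by the $+1$ shift that turns the $\pi_1$-area from $a'$ into $a'+1$. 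Once this is in hand, I would classify attack pairs of $\psi_r(\pi)$ by the locations of their endpoints: pairs internal to $\pi_1$ or to $\pi_2$ match attack pairs of $\pi$ tautologically; cross-block pairs also match, with the roles of $\Dinv^0$ and $\Dinv^1$ interchanged because of the positional swap of $\pi_1$ and $\pi_2$; and row $1$ (with $a_1=0$) contributes exactly one primary dinv pair with each later area-zero row, which are precisely the $r$ area-zero rows of $\pi_2$ carrying reading labels $1,\dots,r$ in $\pi$. Relabeling the new row as $0$ identifies these new pairs with $\{(0,1),\dots,(0,r)\}$.

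The main obstacle is the cross-block bookkeeping: one must confirm that the $+1$ shift on the $\pi_1$-areas, together with the left/right exchange of $\pi_1$ and $\pi_2$, sends each $\Dinv^0$ pair straddling the $\pi_1/\pi_2$ interface in $\pi$ to a $\Dinv^1$ pair in $\psi_r(\pi)$ (and conversely), so that no cross-block attack is spuriously created or destroyed. With the area sequence in hand, this reduces to a short case check on the conditions $a_j=a_{j'}$ versus $a_{j'}=a_j+1$.
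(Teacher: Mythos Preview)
Your proposal is correct. The paper does not actually prove this proposition: it simply states the three properties after the phrase ``We also have the following properties for $\psi_r$,'' treating them (like the preceding Proposition~\ref{p-psi}) as evident from the definition of $\psi_r$ and the example in Figure~\ref{fig:psi}. Your argument is precisely the kind of direct verification one would write to fill in those details, and the key step for part~(iii)---that the positional swap of $\pi_1$ and $\pi_2$ is exactly compensated by the $+1$ area shift on $\pi_1$, so that reading labels of old rows are preserved and cross-block $\Dinv^0$ and $\Dinv^1$ pairs are exchanged---is handled correctly.

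One minor indexing slip: the strip of new area cells you describe in part~(ii) should be $\{(i,i+1):1\le i\le |\pi_1|\}$ rather than $\{(i,i+1):0\le i<|\pi_1|\}$, since the first row of $N\pi_1 E$ has area $0$ and the extra cells sit in rows $2$ through $|\pi_1|+1$. This does not affect the count or the argument.
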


Property (iii) implies that the $\zeta$ map images of $\pi$ and that of $\psi_r(\pi)$ are very similar since the $\zeta$ map sends attack relations to area cells. See Figure \ref{fig:psizeta}.
\begin{figure}[ht]
$$\includegraphics[height=5cm]{
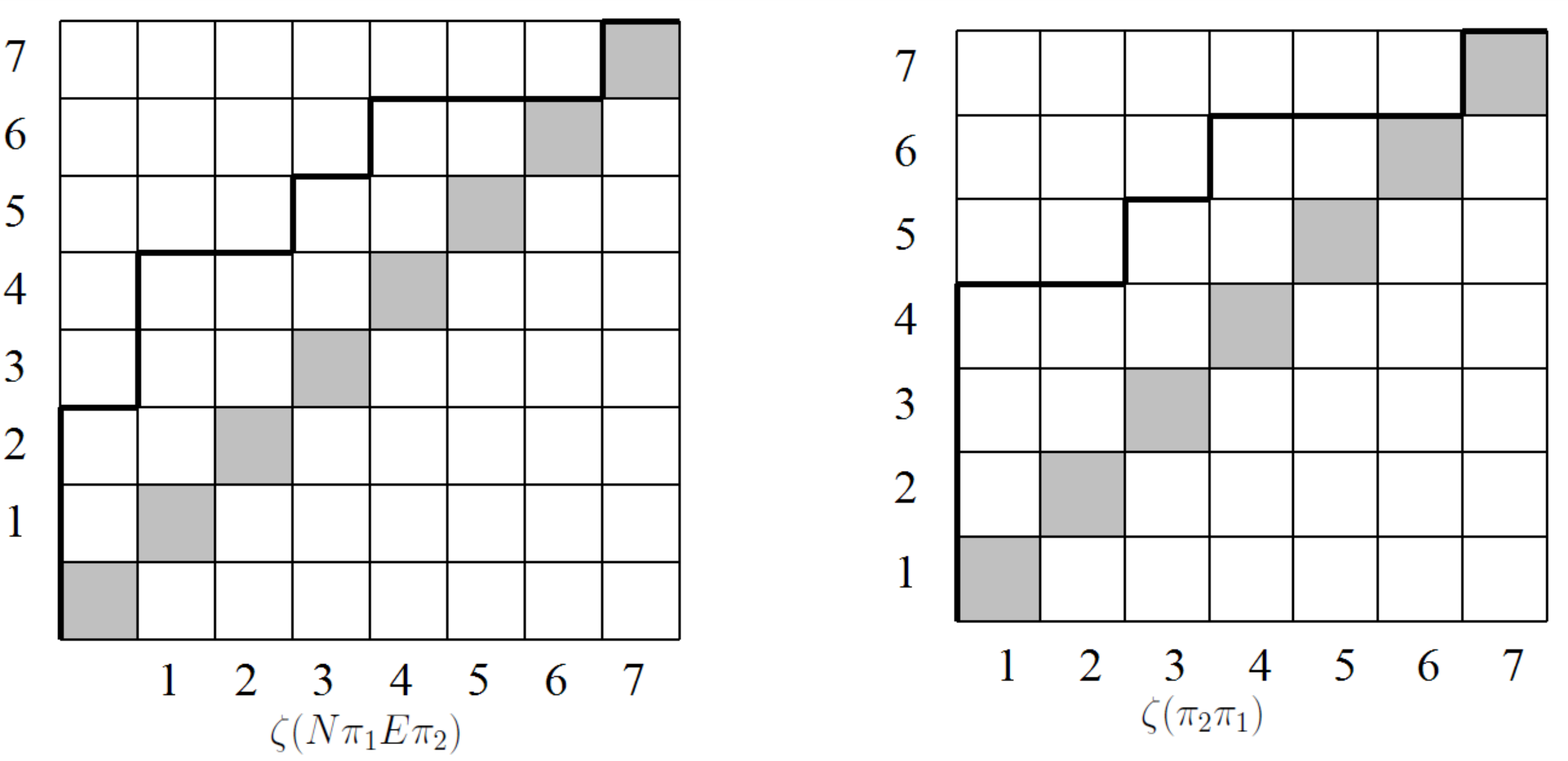} $$
\caption{The $\zeta$ map images of the two Dyck paths in Figure \ref{fig:psi}.}
\label{fig:psizeta}
\end{figure}

Let $\pi' \in \DD_n$ with $n\ge 1$. Then we can uniquely write
$\pi'=N^lE\tilde{\pi}'$, i.e., $\pi'$ starts with $l$ North steps
from the origin, followed by an East step, followed by
the steps of $\tilde{\pi}'$.
Define $\gamma_r (\pi')=N^{r+1} E N^{l-r} \tilde{\pi}'$. This corresponds to adding area cells $(0,1),\dots, (0,r)$.
Clearly $|\gamma_{ r}(\pi')| = |\pi'|+1$.
Define $\bar\gamma(\pi')=N^{l-1}\tilde{\pi}'$. This corresponds to removing the area cells in the first column.

\begin{cor}\label{c-psi-gamma}
If $\pi'=\zeta(\pi)$ can be written as $N^l E \tilde{\pi}'$, then we have
\begin{align}
  \zeta\circ \psi_r (\pi) &= \gamma_{r} (\pi')=\gamma_r \circ \zeta (\pi), \qquad 0\le r \le l; \label{e-psi-gamma}\\
\zeta\circ \bar \psi (\pi) &= \bar \gamma (\pi')=\bar \gamma \circ \zeta (\pi). \label{e-psibar-gamma}
\end{align}
\end{cor}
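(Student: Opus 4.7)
Both equalities on each line reduce to checking the first one, since $\pi'=\zeta(\pi)$ makes the second trivial. The plan is to exploit the defining property of $\zeta$: reading labels $i,j$ attack each other in a path $\pi$ if and only if $(i,j)$ is an area cell of $\zeta(\pi)$. So to prove the path identities $\zeta(\psi_r(\pi))=\gamma_r(\pi')$ and $\zeta(\bar\psi(\pi))=\bar\gamma(\pi')$, it suffices to identify the corresponding attack-pair sets with area-cell sets as subsets of $\Z^2$.

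For the first identity, adopt the convention that labels the new cell of $\psi_r(\pi)$ by $0$ and keeps $\pi$'s reading labels $1,\ldots,n$ on the inherited cells. Proposition \ref{p-psi-r}(iii) then reads
\[
\Dinv(\psi_r(\pi)) \;=\; \Dinv(\pi)\;\sqcup\;\{(0,1),(0,2),\ldots,(0,r)\}.
\]
The definition of $\gamma_r$ gives the parallel decomposition on the area-cell side,
\[
\Area(\gamma_r(\pi')) \;=\; \Area(\pi')\;\sqcup\;\{(0,1),(0,2),\ldots,(0,r)\},
\]
where the new first column of $\gamma_r(\pi')$ is indexed by $0$. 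Since $\zeta(\pi)=\pi'$ identifies $\Dinv(\pi)$ with $\Area(\pi')$, these two decompositions match term by term and one concludes $\zeta(\psi_r(\pi))=\gamma_r(\pi')$.

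The second identity is proved in exactly the same fashion: Proposition \ref{p-psi}(iv) and the definition of $\bar\gamma$ together show that the attack set of $\pi$ and the area set of $\pi'$ both lose the same $(l-1)$-element subset $\{(0,1),\ldots,(0,l-1)\}$ when passing to $\bar\psi(\pi)$ and to $\bar\gamma(\pi')$ respectively, and what remains is already identified by $\zeta$.

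The only delicate point, and the one warranting care, is checking that the labels $(0,k)$ on the two sides really point to corresponding combinatorial data. On the dinv side, the pair $(0,k)$ records that the new cell of $\psi_r(\pi)$ attacks the cell carrying $\pi$'s reading label $k$; on the area side, the cell $(0,k)$ of $\gamma_r(\pi')$ sits in the new first column of $\gamma_r(\pi')$ at the row corresponding to the very same label $k$. This alignment becomes immediate once one observes that, on inherited cells, the reading labels of $\psi_r(\pi)$ differ from those of $\pi$ by the uniform shift $i\mapsto i+1$ (no reshuffling occurs), and dually for $\bar\psi$ and $\bar\gamma$, so that $\zeta$ is compatible with the shifted labeling convention used in Propositions \ref{p-psi} and \ref{p-psi-r}.
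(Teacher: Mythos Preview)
Your argument is correct and is essentially an unpacked version of the paper's one-line proof. The paper simply says the corollary is a direct consequence of Propositions~\ref{p-psi} and~\ref{p-psi-r} together with the fact that $\zeta$ sends attack relations to area cells; you carry out exactly this reasoning, matching the decomposition of attack relations from Proposition~\ref{p-psi-r}(iii) (resp.~\ref{p-psi}(iv)) with the decomposition of area cells given by the definition of $\gamma_r$ (resp.~$\bar\gamma$), and you make explicit the labeling convention that the paper leaves implicit. One small notational remark: what you write as $\Dinv(\psi_r(\pi))$ is really the set of attack relations in terms of reading labels, i.e.\ $\sigma(\Dinv)$ in the paper's notation for the original $\Dinv$; since Propositions~\ref{p-psi} and~\ref{p-psi-r} are already stated in terms of reading labels this causes no problem, but it is worth being precise.
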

\begin{proof}
These are direct consequence of Propositions \ref{p-psi} and \ref{p-psi-r} and the fact that the $\zeta$ map takes attack relations to area cells.
\end{proof}

\begin{thm}
  The $\zeta$ map is invertible.
\end{thm}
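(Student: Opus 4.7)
The plan is to prove invertibility by induction on $n$, leveraging the commutativities $\zeta\circ\psi_r = \gamma_r\circ\zeta$ and $\zeta\circ\bar\psi = \bar\gamma\circ\zeta$ of Corollary \ref{c-psi-gamma} to peel off the first column of the image at a time. The base case $n=0$ (the empty path) is trivial.

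For the inductive step I would construct a right inverse $\eta$ to $\zeta$ on $\DD_n$ as follows. Given $\pi'\in\DD_n$ with $n\geq 1$, decompose it uniquely as $\pi' = N^l E \tilde\pi'$ with $l\geq 1$, so that $\bar\gamma(\pi') = N^{l-1}\tilde\pi' \in \DD_{n-1}$. By the inductive hypothesis there is a unique $\bar\pi \in \DD_{n-1}$ with $\zeta(\bar\pi) = \bar\gamma(\pi')$, and I set $\eta(\pi') := \psi_{l-1}(\bar\pi)$. Applying \eqref{e-psi-gamma} gives
\[
\zeta(\eta(\pi')) \eq \gamma_{l-1}(\zeta(\bar\pi)) \eq \gamma_{l-1}(\bar\gamma(\pi')) \eq \pi',
\]
the last equality expressing the tautology that $\gamma_{l-1}$ reattaches the first column that $\bar\gamma$ has just removed. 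Since $\DD_n$ is finite and $\zeta$ is a self-map admitting a right inverse, $\zeta$ is a bijection and $\zeta^{-1} = \eta$.

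The main bookkeeping obstacle --- mild but worth isolating --- is checking that $\psi_{l-1}$ can legitimately be applied to $\bar\pi$, i.e.\ that $\touch(\bar\pi)$ has length at least $l-1$. This follows from the observation (already implicit in the construction of $\zeta$) that the reading label $1$ of any $\pi$ attacks precisely the labels $2,\ldots,\mathrm{length}(\touch(\pi))$, so the first column of $\zeta(\pi)$ contains exactly $\mathrm{length}(\touch(\pi))-1$ area cells. Applied to $\bar\pi$, this says $\touch(\bar\pi)$ has length at least $l-1$, which is all the input to $\psi_{l-1}$ requires by Proposition \ref{p-psi-r}. The same observation furnishes an alternative direct injectivity proof: if $\zeta(\pi_1)=\zeta(\pi_2)=\pi'$ then both $\pi_i$ have touch composition of length $l$; \eqref{e-psibar-gamma} and the inductive hypothesis force $\bar\psi(\pi_1)=\bar\psi(\pi_2)$, and Proposition \ref{p-psi}(ii) recovers $\pi_1=\pi_2$. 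This serves as a sanity check of the construction of $\eta$.
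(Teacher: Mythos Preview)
Your argument is correct and follows essentially the same route as the paper: peel off the first column via $\bar\gamma$, invert $\zeta$ on $\DD_{n-1}$ by induction, then reattach via $\psi_{l-1}$, invoking Corollary \ref{c-psi-gamma} and Proposition \ref{p-psi}(ii). The paper phrases this as an injectivity proof (a preimage, if it exists, must equal $\psi_{l-1}\circ\zeta^{-1}\circ\bar\gamma(\pi')$) rather than as the construction of a right inverse, but the content is identical; your added verification that $\psi_{l-1}$ is actually applicable to $\bar\pi$ (via the observation that the number of initial North steps of $\zeta(\pi)$ equals $\ell(\touch(\pi))$) fills in a detail the paper leaves implicit.
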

\begin{proof}
We need only prove the injectivity of the zeta map $\zeta$.
Given $\pi'\in \DD_n$ as in Corollary \eqref{c-psi-gamma} with a pre-image $\pi$. Then $l$ is known in advance.
By Equation \eqref{e-psibar-gamma} and Proposition \ref{p-psi}(ii), we must have
$$ \pi=\psi_{l-1}\circ \zeta^{-1} \circ \bar \gamma (\pi'),$$
where $\zeta^{-1}$ is well-defined by induction on the length of $\pi$. Indeed, this formula gives a recursive construction of $\pi$:
We first construct $\bar \gamma(\pi')$ which is in $\DD_{n-1}$. Thus by induction we can recursively construct $\zeta^{-1}\circ \bar \gamma(\pi')$. Finally applying $\psi_{l-1}$ gives $\pi$, which has $l$ touch points.
\end{proof}

Define numbers $t_i$ by
\[t_r:=\bounce \gamma_r(\pi')=\bounce \left(N^{r+1}EN^{l-r}E \tilde{\pi}\right),\quad
0\leq r \leq l.\]
Set $\touch'(\pi')=(t_0-t_1,t_1-t_2,\dots, t_{l-1}-t_l)$.
\begin{prop}
For every Dyck path $\pi$ and $\pi'=\zeta(\pi)$, we have
\[\touch'(\pi')=\touch(\pi).\]
\end{prop}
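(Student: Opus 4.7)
My plan is to reduce the claim to a direct computation using three ingredients already established in the text: (a) the fact that $\zeta$ exchanges area and bounce, i.e.\ $\bounce(\zeta(\pi)) = \area(\pi)$, which follows from the preceding proposition that $b_{\sigma_i}=a_i$; (b) the intertwining relation $\zeta\circ\psi_r = \gamma_r\circ\zeta$ from Corollary \ref{c-psi-gamma}; and (c) the explicit area formula in Proposition \ref{p-psi-r}(ii).

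Concretely, write $\alpha=\touch(\pi)=(\alpha_1,\ldots,\alpha_l)$. For $0\le r\le l$, Corollary \ref{c-psi-gamma} gives $\gamma_r(\pi')=\gamma_r(\zeta(\pi))=\zeta(\psi_r(\pi))$. Applying (a) to the Dyck path $\psi_r(\pi)$, I get
\[
t_r \;=\; \bounce(\gamma_r(\pi')) \;=\; \bounce(\zeta(\psi_r(\pi))) \;=\; \area(\psi_r(\pi)).
\]
Now Proposition \ref{p-psi-r}(ii) evaluates this last quantity as $\area(\pi)+\alpha_{r+1}+\cdots+\alpha_l$, with the boundary cases $t_l=\area(\pi)$ and $t_0=\area(\pi)+|\alpha|$ matching the extremes.

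Taking successive differences then yields, for $1\le r\le l$,
\[
t_{r-1}-t_r \;=\; \bigl(\area(\pi)+\alpha_r+\alpha_{r+1}+\cdots+\alpha_l\bigr) - \bigl(\area(\pi)+\alpha_{r+1}+\cdots+\alpha_l\bigr) \;=\; \alpha_r,
\]
so $\touch'(\pi')=(t_0-t_1,\ldots,t_{l-1}-t_l)=(\alpha_1,\ldots,\alpha_l)=\touch(\pi)$, as desired.

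There is essentially no obstacle once the preparatory commutation diagram between $\psi_r$ and $\gamma_r$ under $\zeta$ has been assembled; the entire proof is an unwinding of the definitions. The one place that deserves a quick sanity check is the indexing convention in Proposition \ref{p-psi-r}(ii), namely that the empty sum at $r=l$ correctly gives $t_l=\area(\pi)$, since this is what makes the telescoping produce exactly the composition $\alpha$ rather than a shifted version of it.
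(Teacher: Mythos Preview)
Your proof is correct and follows exactly the same approach as the paper's: the paper's proof is a one-line citation of Proposition \ref{p-psi-r}, Corollary \ref{c-psi-gamma}, and the fact $\bounce(\zeta(\pi))=\area(\pi)$, and you have simply spelled out the telescoping computation these references imply. The only implicit point you rely on is that the number $l$ of initial North steps of $\pi'$ coincides with the length of $\touch(\pi)$, but this was established in the proof of the preceding proposition (the first bounce block has size $\beta_1$, the number of $0$'s in the area sequence of $\pi$).
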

\begin{proof}
By Proposition \ref{p-psi-r} and Corollary \ref{c-psi-gamma} and the fact $\bounce(\zeta(\pi))=\area(\pi)$.
\end{proof}

Now we summarize as follows in a compact way:
\begin{align*}
  \psi_r &= \zeta^{-1} \circ \gamma_r \circ \zeta, \qquad \bar \psi =\zeta^{-1} \circ \bar \gamma \circ \zeta\\
  \bar\gamma \circ \gamma_r &=id, \qquad \gamma_r \circ \gamma (\pi) = \pi \quad \text{if } l(\touch'(\pi))=r+1 \\
  \bar\psi \circ \psi_r &=id, \qquad \psi_r \circ \psi (\pi) = \pi \quad \text{if } l(\touch(\pi))=r+1 \\
\end{align*}

Denote by $\DD'_{\alpha}=\{\pi': \touch'(\pi')=\alpha\}=\zeta(\DD_\alpha)$.
\begin{prop}
\label{p-D'-rec}
Let $\alpha$ be a composition of length $l$ and $a$ be a positive integer. Then we have the following recursion.
\begin{align}
  \label{e-D'-rec}
\DD'_{a,\alpha}=  \bigsqcup_{\beta \models a-1} \gamma_l(\DD'_{\alpha\beta}).
\end{align}
\end{prop}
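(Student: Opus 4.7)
The plan is to lift the recursion through the $\zeta$ bijection and reduce it to the already-established recursion on $\DD_\alpha$ coming from $\bar\psi$ and $\psi_{l}$. Since $\zeta$ is a bijection and $\DD'_\gamma = \zeta(\DD_\gamma)$ for every composition $\gamma$, applying $\zeta^{-1}$ to \eqref{e-D'-rec} and using the intertwiner $\zeta\circ\psi_l = \gamma_l\circ\zeta$ from Corollary \ref{c-psi-gamma} reduces the claim to the disjoint-union decomposition
\[
\DD_{a,\alpha} \;=\; \bigsqcup_{\beta \models a-1} \psi_l\!\left(\DD_{\alpha,\beta}\right).
\]

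First I would unpack Proposition \ref{p-psi}(ii) for the composition $(a,\alpha_1,\dots,\alpha_l)$, which has length $l+1$: it says $\bar\psi$ restricts to a bijection from $\DD_{a,\alpha}$ onto $\bigsqcup_{\beta\models a-1}\DD_{\alpha_1,\dots,\alpha_l,\beta}$, whose left inverse is $\psi_{(l+1)-1}=\psi_l$. A left inverse of a bijection is automatically the two-sided inverse, so $\psi_l$ is itself a bijection in the reverse direction. This gives exactly the displayed decomposition of $\DD_{a,\alpha}$ above.

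Next I would transport this through $\zeta$. Applying $\zeta$ to both sides and using bijectivity (from the theorem above) yields
\[
\DD'_{a,\alpha}
\;=\; \zeta\!\left( \bigsqcup_{\beta\models a-1} \psi_l(\DD_{\alpha,\beta}) \right)
\;=\; \bigsqcup_{\beta\models a-1} (\zeta\circ\psi_l)(\DD_{\alpha,\beta})
\;=\; \bigsqcup_{\beta\models a-1} \gamma_l(\DD'_{\alpha,\beta}),
\]
where the last equality is $\zeta\circ\psi_l = \gamma_l\circ\zeta$ from Corollary \ref{c-psi-gamma}. The one point to verify carefully is that the hypothesis of that corollary holds here: for $\pi''\in\DD_{\alpha,\beta}$ the path $\zeta(\pi'')$ begins with $l+\ell(\beta)$ North steps (since $\touch'(\zeta(\pi'')) = \touch(\pi'') = (\alpha,\beta)$ has length $l+\ell(\beta)$), so $r=l$ is within the allowed range $0\le r\le l+\ell(\beta)$.

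The remaining point is disjointness of the union on the right. If $\gamma_l(\pi'_1)=\gamma_l(\pi'_2)$ with $\pi'_i\in\DD'_{\alpha,\beta_i}$, then applying $\bar\gamma$ and using $\bar\gamma\circ\gamma_l=\id$ from the summary box forces $\pi'_1=\pi'_2$; since the sets $\DD'_{\alpha,\beta}$ are disjoint for distinct $\beta$ (they have different $\touch'$), this forces $\beta_1=\beta_2$. I expect the only mildly subtle point in the whole argument to be this index bookkeeping (that the $l$ in Proposition \ref{p-psi}(ii) matches the $l$ in $\gamma_l$ and lies in the admissible range of Corollary \ref{c-psi-gamma}); once that is pinned down the proposition falls out immediately from the earlier results.
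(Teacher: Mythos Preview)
Your proof is correct and follows essentially the same route as the paper: transfer the $\bar\psi/\psi_l$ recursion on $\DD_{a,\alpha}$ through the bijection $\zeta$ using the intertwiner identities of Corollary~\ref{c-psi-gamma}. The only cosmetic difference is that the paper first applies $\zeta$ to $\bar\psi(\DD_{a,\alpha})=\bigsqcup_\beta \DD_{\alpha\beta}$ (obtaining $\bar\gamma(\DD'_{a,\alpha})=\bigsqcup_\beta \DD'_{\alpha\beta}$) and then hits both sides with $\gamma_l$, whereas you first invert $\bar\psi$ by $\psi_l$ and then push through $\zeta$; these are the same computation read in opposite order, and your version has the virtue of making the range check $0\le l\le l+\ell(\beta)$ and the disjointness explicit.
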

\begin{proof}
 By Proposition \ref{p-psi}, we have the identity
$$ \bar \psi( \DD_{a,\alpha}) = \bigsqcup_{\beta \models a-1} \DD_{\alpha\beta}.$$
Applying $\zeta$ to this equality and using \eqref{e-psi-gamma} gives
$$ \bar\gamma (\DD'_{a,\alpha}) = \bigsqcup_{\beta \models a-1} \DD'_{\alpha\beta}.$$
Finally applying $\gamma_l$ gives equation \eqref{e-D'-rec}.
\end{proof}

\section{Characteristic functions of Dyck paths}

\subsection{Simple characteristic function}
We are going to study the summand in $D_{\alpha}(q,t)$ as a
function of $\pi$. It is convenient to first introduce a simpler object where we drop the assumption $w\in\mathcal{WP}'_\pi$ and instead sum over all labellings.
\begin{defn}\label{defn.chi}
For $\pi\in \DD_n$, the characteristic function of $\pi$ is defined by
$$
\chi(\pi): = \sum_{w \in \Z_{>0}^{n}} q^{\inv(\pi, w)} x_w,
$$
where $\inv(\pi,w)=|\Inv(\pi,w)|$ with $\Inv\{(i,j): w_i>w_j, (i,j)\in \Area(\pi) \}$.
\end{defn}
If $i<j$ and $(i,j)$ is under $\pi$, i.e. $(i,j)\in\Area(\pi)$ we say that $i$ and $j$ \emph{attack} each other.
A visual description is to put $w_i$ into the cell $\ce{(i,i)}$. Then two labels contribute a dinv if and only if i) lower label
is greater than the higher label, ii) the cell in the column of the lower label and in the row of the higher label lies under $\pi$.

It is clear that $\chi(\pi)=\bar\chi(\zeta^{-1}(\pi))$ (defined in \eqref{e-bar-chi}) under the $\zeta$ map. Thus
\begin{prop}\label{prop.sym}
The expression for $\chi(\pi)$ above is symmetric in the variables $x_1, x_2, x_3,\ldots$, so that Definition \ref{defn.chi} correctly defines an element of $\Sym[X]$.
\end{prop}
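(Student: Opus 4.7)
The plan is to identify $\chi(\pi)$ with the already-symmetric function $\bar\chi(\zeta^{-1}(\pi))$ from equation \eqref{e-bar-chi}, using the extension of the $\zeta$-map to parking functions.

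First, fix $\pi' \in \DD_n$ and set $\pi = \zeta^{-1}(\pi')$; let $\sigma$ be the reading order permutation of $\pi$. The extension of the $\zeta$-map to parking functions established in the excerpt sends $(\pi,w) \mapsto (\pi', w')$ where $w'_{\sigma_i} = w_i$, together with the translation
\[
\Inv(\pi', w') = \sigma(\Dinv(\pi, w)), \qquad \inv(\pi', w') = \dinv(\pi, w).
\]
For fixed $\pi$ (hence fixed $\sigma$), the assignment $w \mapsto w' = (w_{\sigma_1},\dots,w_{\sigma_n})$ is a bijection from $\Z_{>0}^n$ to itself, and the monomial is preserved: $x_{w'} = x_{w'_1}\cdots x_{w'_n} = x_{w_{\sigma_1}}\cdots x_{w_{\sigma_n}} = x_{w_1}\cdots x_{w_n} = x_w$.

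Therefore, reindexing the sum defining $\chi(\pi')$ along this bijection gives
\[
\chi(\pi') = \sum_{w' \in \Z_{>0}^n} q^{\inv(\pi', w')}\, x_{w'}
          = \sum_{w \in \Z_{>0}^n} q^{\dinv(\pi, w)}\, x_w
          = \bar\chi(\pi) = \bar\chi(\zeta^{-1}(\pi')).
\]
The right-hand side is exactly the unicellular LLT-polynomial associated to $\pi$, which, as noted just after \eqref{e-bar-chi}, is symmetric by the general symmetry of LLT-polynomials (Haglund's book, Chapter 6, Remark 6.5). Hence $\chi(\pi')$ is symmetric in $x_1, x_2, \ldots$, which is what was to be shown.

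The only real content of the argument is bookkeeping through the definitions; the symmetry itself is imported as a black box from the LLT theory. The main potential pitfall is making sure the bijection $w \leftrightarrow w'$ is handled correctly and that $x_w = x_{w'}$ genuinely holds as a monomial identity (it does, because $\sigma$ just permutes the factors of a commutative product). No deeper combinatorial or algebraic obstacle needs to be overcome here.
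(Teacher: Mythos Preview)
Your argument is correct and in fact coincides with the one-line observation the paper makes immediately before stating the proposition: it notes that $\chi(\pi)=\bar\chi(\zeta^{-1}(\pi))$ and hence symmetry follows from the LLT symmetry cited after \eqref{e-bar-chi}. Your write-up just spells out the bijection $w\leftrightarrow w'$ carefully, which is fine.

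However, the paper then presents a second, self-contained proof (labelled ``Proof by reduction'') precisely because $\chi(\pi)$ is so central that the authors did not want its symmetry to rest on an imported black box. That proof first reduces to the two-variable case $X=x_1+x_2$ by a standard restriction argument, and then proceeds by induction on $n$ and on $\area(\pi)$: removing a peak cell $(a,b)$ yields $\chi(\pi)-\chi(\pi')=(q-1)q^{b-a-1}x_1x_2\,\chi(\bar\pi)$ for a smaller Dyck path $\bar\pi$, and the symmetry follows inductively. So your route is the quick one the paper acknowledges, while the paper's written-out proof is an elementary induction that avoids any appeal to LLT theory; the trade-off is brevity versus self-containment.
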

Since $\chi(\pi)$ is the fundamental element in our construction, we
present here a proof, which is similar to the proof of Lemma 10.2 from \cite{haglund2005combinatorial}.
\begin{proof}[Proof by reduction]
We first reduce the proof of the proposition to the case $X=x_1+x_2$.
It is sufficient to prove the symmetry in $x_i$ and $x_{i+1}$ for all $i$.

Let $\psi(w)$ be obtained from $w$ by replacing $i+1$ by $i$. Then we have
\begin{align*}
  \chi(\pi)&=\sum_{\bar w} \sum_{\psi(w)=\bar w} q^{\inv(\pi, w)} x_w,
\end{align*}
where the first sum ranges over all $\bar w$ without label $i+1$. Thus it is sufficient to show the symmetry of the inner sum.

For given $w$ in $\psi^{-1}(\bar w)$, let $(\pi', w')$ be obtained by removing all labels
not equal to $i$ and $i+1$, together with the corresponding rows and columns. It should be clear that
$\inv(\pi,w)=\inv(\pi, \bar w)+ \inv(\pi',w')$. This implies it is sufficient to prove the symmetry of
$$ \sum_{w'} q^{\inv(\pi',w')} x_{w'}, $$
where $w'$ contains only $i$ and $i+1$. This reduce the proposition to the $X=x_1+x_2$ case.

For the two variables case, we prove the proposition by induction on $n$ and area of $\pi$. The base case is when $\area(\pi)=0$. Clearly we have
$\chi(\pi)=(x_1+x_2)^n$ since there are no attack relations. To show the symmetry of $\chi(\pi)$ for $\area(\pi)>0$, consider Dyck path $\pi'$ obtained from $\pi$ by removing a peak area cell $\ce{(a,b)}$.
Then we have
$$ \chi(\pi)-\chi(\pi')= \sum_{w, w_a=2, w_b=1} (q-1) q^{\inv(\pi',w)} x_w,$$
where the sum ranges over all $w$ with $w_a=2, w_b=1$, since all the other terms cancel.
Now the inversion involves $w_a$ and $w_b$ is $b-a-1$, all comes from $w_i$ for $a<i<b$. Let $(\bar \pi, \bar w)$ be obtained from $(\pi',w)$ by removing rows $a,b$, columns $a,b$. Then we have
$$ \chi(\pi)-\chi(\pi')= \sum_{\bar w} (q-1) q^{b-a-1}x_1x_2  q^{\inv(\bar{\pi},\bar{w})} x_{\bar w} =(q-1) q^{b-a-1}x_1x_2 \chi(\bar \pi).$$
By the  induction hypothesis $\chi(\pi')$ and $\chi(\bar \pi)$ are both symmetric in $x_1$ and $x_2$. The symmetry of $\chi(\pi)$ then follows.
\end{proof}

(The above induction proof suggests a direct combinatorial proof. It might be interesting to describe it clearly.)

Another way to formulate this property is as follows: for a composition $c_1+c_2+\cdots+c_k=n$ consider the multiset $M_c = 1^{c_1} 2^{c_2} \ldots k^{c_k}$. Consider the sum
$$
\sum_{w \text{\; a permutation of $M_c$}} q^{\inv(\pi, w)}.
$$
Proposition \ref{prop.sym} simply says that this sum does not depend on the order of the numbers $c_1, c_2, \ldots, c_k$, or equivalently on the linear order on the set of labels. If $\lambda$ is the partition with components $c_1,c_2,\ldots,c_k$, then this sum computes the coefficient of the monomial symmetric function $m_\lambda$ in $\chi(\pi)$,
so we have (set $h_c=h_{c_1}\cdots h_{c_k}$)
\begin{equation}\label{eq.sym}
(\chi(\pi), h_c) = \sum_{w \text{\; a permutation of $M_c$}} q^{\inv(\pi, w)}.
\end{equation}

We list here a few properties of $\chi$ so that the reader has a feeling of what kind of object it is.

For a Dyck path $\pi$ denote by $\pi^{op}$ the reversed Dyck path, i.e. the path obtained by replacing each North step by East step and each East step by North step and reversing the order of steps. By reversing also the order of the components of $c$ in (\ref{eq.sym}) we see

\begin{prop}
$$
\chi(\pi) = \chi(\pi^{op}).
$$
\end{prop}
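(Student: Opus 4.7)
The plan is to use the expansion~(\ref{eq.sym}), which writes the pairing $(\chi(\pi), h_c)$ as a sum over permutations of the multiset $M_c = 1^{c_1} 2^{c_2} \cdots k^{c_k}$. Since $\{h_\lambda\}$ is a basis of $\Sym[X]$, it suffices to show that $(\chi(\pi), h_c) = (\chi(\pi^{op}), h_c)$ for every composition $c$ of $n$.

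The heart of the argument will be an $\inv$-preserving bijection between permutations of $M_c$ and permutations of the multiset $M_{\bar c}$ associated to the reversed composition $\bar c = (c_k, c_{k-1}, \ldots, c_1)$. Geometrically, $\pi \mapsto \pi^{op}$ is rotation of the $n \times n$ grid by $180^\circ$ about its center, which sends the cell $(i,j)$ with $i < j$ to the cell $(n+1-j,\,n+1-i)$ and yields a bijection $\Area(\pi) \longleftrightarrow \Area(\pi^{op})$. To convert the inequality $w_i > w_j$ into one that matches the rotated geometry I will reverse positions and simultaneously complement labels: for a permutation $w$ of $M_c$, set
\[
\phi(w)_i \;=\; (k+1) - w_{n+1-i}.
\]
Then $\phi(w)$ is a permutation of $M_{\bar c}$, and a direct substitution shows $\inv(\pi^{op}, \phi(w)) = \inv(\pi, w)$: the two order reversals (on positions and on labels) cancel to restore the original inequality, while the cell correspondence matches up the area cells.

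Summing this identity over $w$ yields $(\chi(\pi), h_c) = (\chi(\pi^{op}), h_{\bar c})$. To finish, I invoke Proposition~\ref{prop.sym}: since $\chi(\pi^{op}) \in \Sym[X]$, the pairing $(\chi(\pi^{op}), h_c)$ depends only on the partition obtained by sorting $c$, and $c$ and $\bar c$ sort to the same partition. Hence $(\chi(\pi^{op}), h_{\bar c}) = (\chi(\pi^{op}), h_c)$, which gives $(\chi(\pi), h_c) = (\chi(\pi^{op}), h_c)$ for every $c$, and therefore $\chi(\pi) = \chi(\pi^{op})$.

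The main obstacle I expect is the geometric bookkeeping: verifying carefully that the $180^\circ$ rotation really does send $\Area(\pi)$ bijectively onto $\Area(\pi^{op})$ via $(i,j) \mapsto (n+1-j, n+1-i)$, given that cells are indexed by their top-right corners and $\pi^{op}$ is defined by reversing the step sequence and swapping $N \leftrightarrow E$. This is elementary but easy to botch, so it warrants a careful check before invoking it in the inversion count.
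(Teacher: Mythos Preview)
Your proof is correct and follows essentially the same approach as the paper: the paper's one-line proof (``By reversing also the order of the components of $c$ in (\ref{eq.sym}) we see'') is exactly your argument of reversing positions and complementing labels to obtain $(\chi(\pi),h_c)=(\chi(\pi^{op}),h_{\bar c})$, then invoking the symmetry of $\chi(\pi^{op})$ to replace $h_{\bar c}$ by $h_c$. Your write-up simply makes the bijection $\phi(w)_i=(k+1)-w_{n+1-i}$ and the geometric $180^\circ$ rotation explicit, which the paper leaves implicit.
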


By applying
Theorem 6.10 of \cite{haglund2008catalan} to our case, we have
\begin{prop}\label{p-superization}
Let $\pi\in \DD_n$, $X$ and $Y$ be two alphabets. Then
$$\chi(\pi)[X-Y] = \sum_{w \in \mathcal{A}^n} q^{\inv'(\pi,w)}  z_w ,  $$
where $\mathcal{A}=\Z_{>0}\bigcup \Z_{<0}$, $z_i=x_i$, $z_{\bar i}=-y_i$ for $i>0$ with $\bar i=-i$,
and for any fixed total ordering on $\mathcal{A}$ (still denoted ``$<$"),
$$\inv'(\pi,w)=|\Inv'(\pi,w)|, \quad \Inv'(\pi,w)=\{(a,b)\in \Area(\pi) : w_a> w_b \text{ or } w_a=w_b<0 \}.$$
\end{prop}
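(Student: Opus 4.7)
The strategy is to recognize $\chi(\pi)$ as an LLT polynomial and then invoke the existing superization formula for LLT polynomials (Theorem 6.10 of \cite{haglund2008catalan}) essentially as a black box, verifying that in the single-cell (unicellular) case the general super-inversion statistic specializes to our $\inv'$.

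First, I would place $n$ single cells in rows $1, 2, \ldots, n$ with contents chosen so that two cells are attacking (in Haglund's LLT sense, i.e.\ their diagonals differ by $0$ or $1$ with the appropriate left/right position) if and only if the corresponding pair $(a,b)$ lies in $\Area(\pi)$. Under the bijection discussed around equation \eqref{e-bar-chi} this is exactly the LLT tuple whose $\bar\chi$ corresponds to $\chi(\pi)$ under $\zeta$, so $\chi(\pi)$ is a (unicellular) LLT polynomial. A filling of this tuple is just a word $w \in \Z_{>0}^n$, and the LLT $q$-inversion statistic restricted to single cells reduces to the number of $(a,b) \in \Area(\pi)$ with $w_a > w_b$, which is precisely $\inv(\pi,w)$ from Definition \ref{defn.chi}.

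Next, I would apply Theorem 6.10 of \cite{haglund2008catalan} to this LLT polynomial. That theorem expresses $G[X-Y;q]$, for a general LLT polynomial $G$, as a signed sum over \emph{super fillings}: fillings with entries in a signed alphabet $\mathcal{A}=\Z_{>0}\sqcup\Z_{<0}$ under a fixed total order, with each positive entry contributing $x_i$ and each negative entry contributing $-y_{|i|}$, and with the super-inversion statistic counting an attacking pair as inverted when either the entries are strictly out of order, or when they are equal and both negative. Specializing to our unicellular case, a super filling is exactly a word $w \in \mathcal{A}^n$, the product of cell weights becomes $z_w$ (absorbing the sign for each negative letter), and the super-inversion statistic becomes exactly $\inv'(\pi,w)$ as defined in the proposition. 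Collecting, one obtains the claimed formula for $\chi(\pi)[X-Y]$.

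The real content is Theorem 6.10 itself, whose proof is carried out via super Schur functions in Haglund's book; once it is invoked, the only remaining work is bookkeeping. The main obstacle in that bookkeeping is matching conventions: one must check that the total order on $\mathcal{A}$ chosen here is consistent with the order used in Haglund's theorem (arbitrary orders are allowed, since both sides are independent of the choice), that the sign convention $z_{\bar i} = -y_i$ correctly reproduces the $(-1)^{|T^-|}$ appearing in the superization, and most delicately that the extra contribution ``or $w_a = w_b < 0$'' in the definition of $\Inv'$ is exactly the strict-versus-weak swap for negative letters that arises when plethystically substituting $p_n \mapsto p_n[X] - p_n[Y]$ inside the monomial expansion of $\chi(\pi)$. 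This last point is the standard ``$\epsilon$-correction'' in super-tableau combinatorics and, once unwound, makes the two statistics coincide term by term.
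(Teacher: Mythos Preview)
Your proposal is correct and is essentially the same as the paper's own justification: the paper simply introduces the proposition with the sentence ``By applying Theorem~6.10 of \cite{haglund2008catalan} to our case, we have'' and gives no further argument. Your write-up supplies the bookkeeping (identifying $\chi(\pi)$ with a unicellular LLT polynomial via the discussion around~\eqref{e-bar-chi}, then matching the super-inversion statistic with $\inv'$) that the paper leaves implicit, so if anything you have written more than the paper does.
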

The total ordering we are going to use is $1<\bar 1<2<\bar 2<\cdots$.

For example, if $\pi$ is given by $NNENEE$, or equivalently $\Area(\pi)=\{(1,2),(2,3)\}$.
We can compute the coefficient of $m_\mu$ in $\chi(\pi)$ directly:
i) $\mu=(3)$, we need the coefficient of $x_1^3$, so the $w$ has to be $111$, i.e., $w_1=1,w_2=1,$ and $w_3=1$. We use the notation
$w\to q^{\inv(\pi,w)}$ so that $111 \to 1$. Thus $\langle \chi(\pi), h_3 \rangle =1$;
ii) $\mu=(2,1)$, we need the coefficient of $x_1^2x_2$, so $w$ has three choices: $112\to 1$, $121\to q$, $211\to q$ (recall $(1,3)$ is not an attack relation). Thus $\langle \chi(\pi), h_{2}h_1 \rangle =1+2q$;
iii) $\mu=(1,1,1)$, we need the coefficient of $x_1x_2x_3$. Then $w$ consists of all $6$ permutations:
$123 \to 1$, $132\to q$, $213\to q$, $231\to q$, $312\to q$, and $321\to q^2$. Thus $\langle \chi(\pi), h_1^3 \rangle =1+4q+q^2$.
Taking the sum gives
\begin{multline*}
  \chi(\pi)=m_3+(1+2q)m_{21}+(1+4q+q^2)m_{111} \\
= \left( {q}^{2}-2q+1\right) p_3/3+ \left( -{q}^{2}
+1 \right) p_1p_2/2+ \left( {q}^{2}+4q+1
 \right) p_1^{3}/6.
\end{multline*}
The formula of $\chi(\pi)[X-Y]$ is too lengthy to be put here. Here we only evaluate its coefficient in $x_1y_1^2$.
Clearly the coefficient of $x_1y_1^2$ in $p_3, p_1p_2, p_1^2$ are respectively $0, -1, 3$. Thus
$$[x_1 y_1^2]\; \chi(\pi)[X-Y]=-\left( -{q}^{2}
+1 \right)/2+ \left( {q}^{2}+4q+1
 \right)/2=q^2+2q .   $$
On the other hand, we have three possible $w$ consisting of $1, \bar 1, \bar 1$: $1\bar 1 \bar1\to q$, $\bar 1 1, \bar 1 \to q$, $\bar 1 \bar 1 1\to q^2$. This agree with our computation.


Applying Proposition \ref{p-superization} to the case $X=0$ gives a summation over only negative labels.
Clearly, the result is equivalent to the following.
$$
(\chi(\pi)[-X], h_c[X]) = (-1)^{|\pi|} \sum_{w \text{\; a permutation of $M_c$}} q^{\inv'(\pi, w)},
$$
where $\inv'(\pi, w)$ is the number of non-strict inversions of $w$ under the path,
\[\inv'(\pi,w):=\#\left\{(i,j) \in \Area(\pi),\ w_i\geq w_j\right\}.\]
Equivalently we have
$$
(\chi(\pi), e_c) = \sum_{w \text{\; a permutation of $M_c$}} q^{\inv'(\pi, w)}.
$$

If we reverse the order of labels, we have
$$
(\chi(\pi)[-X], h_c[X]) = (-1)^{|\pi|} \sum_{w \text{\; a permutation of $M_c$}} q^{\area(\pi) - \inv(\pi, w)},
$$
which implies
\begin{prop}
$$
\bar\omega \chi(\pi) = (-1)^{|\pi|} q^{-\area(\pi)} \chi(\pi).
$$
\end{prop}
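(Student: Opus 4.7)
My plan is to read off the identity directly from the two formulas the paper has just established, using the Hall inner product against the basis $\{h_c\}$ as a detection device.

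The starting point is the formula
\[
(\chi(\pi)[-X],\, h_c[X]) = (-1)^{|\pi|} \sum_{w} q^{\area(\pi)-\inv(\pi,w)}
\]
displayed immediately before the proposition. First I would apply the bar conjugation $q\mapsto q^{-1}$ to both sides. On the left, since $h_c$ is a $q$-free element of $\Sym[X]$, one has
\[
\overline{(\chi(\pi)[-X],\, h_c[X])} = (\overline{\chi(\pi)}[-X],\, h_c[X]) = (\bar\omega\,\chi(\pi),\, h_c[X])
\]
by the very definition $(\bar\omega F)[X]=\bar F[-X]$, together with the fact that the plethystic substitution $X\mapsto -X$ commutes with the scalar conjugation $q\mapsto q^{-1}$ (they act on disjoint sets of generators). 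On the right, the sign and summation indices are bar-invariant while $q^{\area(\pi)-\inv(\pi,w)}$ becomes $q^{-\area(\pi)+\inv(\pi,w)}$, so I end up with
\[
(\bar\omega\,\chi(\pi),\, h_c) = (-1)^{|\pi|} q^{-\area(\pi)} \sum_w q^{\inv(\pi,w)}.
\]

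Next I would invoke the earlier identity $(\chi(\pi),h_c)=\sum_w q^{\inv(\pi,w)}$ from \eqref{eq.sym} to rewrite the right side as $(-1)^{|\pi|} q^{-\area(\pi)}\,(\chi(\pi),h_c)$. Thus for every composition $c$ of $|\pi|$,
\[
\bigl(\bar\omega\,\chi(\pi),\, h_c\bigr) = \bigl((-1)^{|\pi|} q^{-\area(\pi)}\chi(\pi),\, h_c\bigr).
\]
Since $\{h_c\}$ (equivalently $\{h_\lambda\}$) is a basis of $\Sym[X]$ and the Hall pairing is non-degenerate in each homogeneous degree, this forces the claimed equality $\bar\omega\,\chi(\pi) = (-1)^{|\pi|} q^{-\area(\pi)} \chi(\pi)$.

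There is no real obstacle; the only point requiring care is the commutativity of the bar operation $q\mapsto q^{-1}$ with the plethystic substitution $X\mapsto -X$, which is clear because $\bar{\phantom{x}}$ is a $\lambda$-ring map fixing the $x_i$ and $\omega$ is a $\lambda$-ring map fixing $q,t$. Everything else is just collecting the two preceding formulas and dividing.
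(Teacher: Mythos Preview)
Your proposal is correct and is exactly the argument the paper has in mind: the paper simply writes ``which implies'' after the displayed identity for $(\chi(\pi)[-X],h_c)$, and your write-up supplies precisely the missing step of applying the bar involution and comparing with \eqref{eq.sym} against the basis $\{h_c\}$.
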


The following is also an application of Proposition \ref{p-superization}.
\begin{prop}\label{prop.q1}
$$
\chi(\pi)[(q-1)X] = (q-1)^{|\pi|} \sum_{w \in \Z_{>0}^{|\pi|} \text{no attack}} q^{\inv(\pi, w)} x_w,
$$
where ``no attack'' means that the summation is only over vectors $w$ such that $w_i\neq w_j$ for $(i,j)\in\Area(\pi)$.
\end{prop}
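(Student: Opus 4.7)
The plan is to apply Proposition \ref{p-superization} with the plethystic substitution $X\mapsto qX$, $Y\mapsto X$, so that the formal difference $X-Y$ specializes to $qX-X=(q-1)X$. Under this specialization the new $z$-variables become $z_j=qx_j$ for positive $j$ and $z_{\bar j}=-x_j$ for negative $\bar j$, so the superization identity reads
\[
\chi(\pi)[(q-1)X] \eq \sum_{w\in\mathcal{A}^n} q^{\inv'(\pi,w)}\, z_w.
\]
I would then group the sum by the absolute-value word $\tilde w=(|w_1|,\ldots,|w_n|)\in\Z_{>0}^n$, and for each $c\geq 1$ let $S_c=\{i:\tilde w_i=c\}$.

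A case check in the super-order $1<\bar 1<2<\bar 2<\cdots$ shows that for $(a,b)\in\Area(\pi)$ with $\tilde w_a\neq\tilde w_b$, the condition defining $\inv'$ reduces to $\tilde w_a>\tilde w_b$ (independent of signs), while for $\tilde w_a=\tilde w_b$ it holds iff $w_a$ is negative. Writing $d_a=\#\{b\in S_{\tilde w_a}:(a,b)\in\Area(\pi)\}$, and summing over all sign patterns (parametrized by the set $N$ of negative-sign positions, contributing $q^{n-|N|}(-1)^{|N|}q^{\sum_{a\in N}d_a}$), the contribution at fixed $\tilde w$ factors as
\[
x_{\tilde w}\cdot q^{\inv(\pi,\tilde w)}\cdot q^n\prod_{c\geq 1}\prod_{a\in S_c}(1-q^{d_a-1}).
\]

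The main step is then to show that this product vanishes whenever some $S_c$ contains an internal attack. For this I invoke the Dyck-path structure of $\Area(\pi)$: if $a\in S_c$ attacked two distinct $b_1<b_2\in S_c$, the row-contiguity of $\Area(\pi)$ (the cells of row $b_2$ form a block touching the diagonal, so $(a,b_2)\in\Area(\pi)$ with $a<b_1<b_2$ forces $(b_1,b_2)\in\Area(\pi)$) would make $b_1$ attack $b_2$ inside $S_c$. Taking the largest $a\in S_c$ with $d_a\geq 1$ therefore forces $d_a=1$, and the factor $(1-q^0)=0$ kills the product. Only ``no attack'' words $\tilde w$ survive, and for these every $d_a=0$, producing $q^n\prod_c(1-q^{-1})^{|S_c|}=(q-1)^n$. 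Collecting gives exactly the claimed identity.

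The chief obstacle is the vanishing step: one has to combine the inclusion-exclusion identity with the specific combinatorial structure of $\Area(\pi)$ for a Dyck path, since for a generic poset of attacking pairs the analogous sign-sum need not vanish even in the presence of an internal collision.
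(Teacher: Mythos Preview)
Your argument is correct and follows the same superization route as the paper. The only difference is cosmetic: the paper cancels the contribution of a word $\tilde w$ with an internal attack by exhibiting a sign-reversing involution (flip the sign at a carefully chosen position $a$), whereas you expand the full sign sum as the product $q^n\prod_a(1-q^{d_a-1})$ and then locate a factor equal to zero. Both presentations hinge on the same Dyck-path fact (row-contiguity of $\Area(\pi)$: if $(a,b_2)\in\Area(\pi)$ and $a<b_1<b_2$ then $(b_1,b_2)\in\Area(\pi)$), which forces the existence of some $a$ with $d_a=1$; your ``largest $a\in S_c$ with $d_a\ge 1$'' and the paper's ``largest $b$, then largest $a$'' are just two ways of pinning down that position. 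Your product formulation is arguably a bit cleaner, since it avoids having to verify that the involution is weight-preserving on the remaining pairs.
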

\begin{proof}
Apply Proposition \ref{p-superization} to the case $X=qX, Y=-X$. Then the weight of positive label $i$ is $q x_i$ and that of $\bar i$ is $-x_i$.

Let us divide the words into the disjoint union $W_1 \bigsqcup W_2$, where $W_1$ consists of all words $w$ such that there is no pair $(w_a,w_b)$ $(a<b)$ such that $|w_a|=|w_b|$.

For words $w\in W_1$, consider $\tilde w$ obtained from $w$ by replacing each $w_a$ by $|w_a|$, so $\tilde w$ has only positive labels and is ``no attack". There are $2^{|\pi|}$ possible $w$ with the same $\tilde w$. All of these words have the same inversion number, and hence
$$ \sum_{w \in W_1} q^{\inv'(\pi, w)} x_w =(q-1)^{|\pi|} \sum_{w \in \Z_{>0}^{|\pi|} \text{no attack}} q^{\inv(\pi, w)} x_w.
$$

It remains to construct a sign reversing involution on $W_2$. The very simple case reveals how to construct the involution.
Assume $w_a$ attacks $w_b$ ($a<b$) with $|w_a|=|w_b|$, say equal to $i$, and ignore their attack relations with other labels. We group four possible such pairs: $(i,i)\to q^2 x_i^2$; $(\bar i, i)\to -q^2 x_i^2$, where one $q$ comes from $i$ and the other comes from the inversion, and the minus sign comes from $\bar i$; $(i,\bar i)\to -q x_i^2$; and $(\bar i, \bar i)\to q x_i^2$. This special case leads to the following involution on $W_2$:

Among the pairs $(w_a, w_b),\; a<b$ with $|w_a|=|w_b|$, chose such a pair so that $b$ is the largest, and then chose $a$ to be the largest. Change $w_a$ to $-w_a$. This operation may only change the inversion involving $a$: i) For $a<c$ and $|w_a|=|w_c|$, we must have $c=b$ by our choice of $b$ and $a$. The inversion changes as desired. ii) For $c<a$ and $|w_a|=|w_c|=i$, there will be no change of inversion number: either $w_c=i$ then $(i,i)$ and $(i, \bar i)$ are both not inversions, or $w_c=\bar i$ then $(\bar i, i)$ and $(\bar i, \bar i)$ are both inversions.
\end{proof}

For an elementary proof, see Appendix.

\subsection{Weighted characteristic function}
To study the summand of $D_{\alpha}(q,t)$ in (\ref{eq:Dalpha}) as a
function of $\pi$ we introduce a more general characteristic function.
Given a function $\mathrm{wt}:c(\pi)\to R$ on the set of corners of
some Dyck path $\pi$ of size $n$, let
\begin{equation}
\label{chiwtdef}
\chi(\pi, \mathrm{wt}) := \sum_{w \in \Z_{>0}^{n}} q^{\inv(\pi,w)}
\left(\prod_{(i,j)\in c(w),\;w_i\leq w_j} \mathrm{wt}(i,j) \right)x_w,
\end{equation}
so in particular $(\ref{eq:Dalpha})$ becomes
\[D_{\alpha}(q,t)=\sum_{\touch'(\pi)=\alpha} t^{\bounce(\pi)} \chi(\pi,0).\]
For a constant function $\mathrm{wt}=1$ we recover the simpler characteristic function
\begin{equation}
\label{chidef}
\chi(\pi,1)=\chi(\pi).
\end{equation}
It turns out that we can express the weighted characteristic function $\chi(\pi, \mathrm{wt})$ in terms the unweighted one evaluated at different paths. In particular this implies that $\chi(\pi, \mathrm{wt})$ is symmetric too.

\begin{prop}
We have that $\chi(\pi,{\mathrm{wt}})$ is symmetric in the $x_i$
variables, and so defines an element of $\Sym[X]$.
\end{prop}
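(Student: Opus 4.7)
The plan is to realise $\chi(\pi,\mathrm{wt})$ as a $\mathbb{Q}(q,t)$-linear combination of ordinary (unweighted) characteristic functions $\chi(\pi^{S})$, where $S$ ranges over subsets of $c(\pi)$ and $\pi^{S}$ denotes the Dyck path obtained from $\pi$ by pushing each corner in $S$ into the area (i.e.\ exchanging the local east--north bend for a north--east bend). Once this representation is in hand, symmetry of $\chi(\pi,\mathrm{wt})$ is immediate from Proposition~\ref{prop.sym}, the symmetry of each unweighted $\chi(\pi^{S})$.

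The key is the single-corner identity: for any $(i,j)\in c(\pi)$ and any $w$,
\[
[w_i>w_j] + \mathrm{wt}(i,j)\,[w_i\le w_j] \;=\; \frac{1-\mathrm{wt}(i,j)}{q-1}\,q^{[w_i>w_j]} \;+\; \frac{q\,\mathrm{wt}(i,j)-1}{q-1},
\]
verified by a direct check in the two cases $w_i>w_j$ and $w_i\le w_j$. Taking the product of this identity over all $(i,j)\in c(\pi)$, inserting it into the definition of $\chi(\pi,\mathrm{wt})$, and expanding, each subset $S\subseteq c(\pi)$ contributes a term in which the $\mathrm{wt}$-dependent scalar factors pull outside and the weight on $w$ becomes $q^{\inv(\pi,w)+\sum_{(i,j)\in S}[w_i>w_j]}x_w$.

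The main obstacle is recognising the modified exponent as an honest $\inv$-statistic on a Dyck path. For this I would first observe that no two corners of $\pi$ are edge-adjacent: if $(i,j)$ and $(i,j+1)$ were both corners, then $(i,j)$ would have to be simultaneously above $\pi$ (as the first corner) and below $\pi$ (as the southern neighbour of the second), and the remaining three edge-adjacencies are ruled out symmetrically. Consequently the operation of pushing in a single corner is local and the corners in $S$ can be pushed in independently, producing a bona fide Dyck path $\pi^{S}$ with $\Area(\pi^{S})=\Area(\pi)\sqcup S$. This gives the identity $\inv(\pi^{S},w)=\inv(\pi,w)+\sum_{(i,j)\in S}[w_i>w_j]$, and assembling the pieces yields
\[
\chi(\pi,\mathrm{wt})\;=\;\sum_{S\subseteq c(\pi)}\;\prod_{(i,j)\in S}\frac{1-\mathrm{wt}(i,j)}{q-1}\;\prod_{(i,j)\in c(\pi)\setminus S}\frac{q\,\mathrm{wt}(i,j)-1}{q-1}\;\chi(\pi^{S}),
\]
a $\mathbb{Q}(q,t)$-linear combination of symmetric functions, which proves the proposition.
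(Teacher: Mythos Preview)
Your proof is correct and follows essentially the same approach as the paper. The paper establishes the single-corner identity
\[
\chi(\pi,\mathrm{wt})=\frac{q\,\mathrm{wt}(i,j)-1}{q-1}\,\chi(\pi,\mathrm{wt}_1)+\frac{1-\mathrm{wt}(i,j)}{q-1}\,\chi(\pi',\mathrm{wt}_2)
\]
(your identity rewritten at the level of generating functions) and then applies it recursively, one corner at a time, until only unweighted characteristic functions remain; you instead multiply the per-corner identities together and expand, obtaining in one stroke the closed form $\chi(\pi,\mathrm{wt})=\sum_{S\subseteq c(\pi)}(\text{scalar})\,\chi(\pi^S)$ that the paper reaches only after unrolling the recursion (cf.\ the special case \eqref{chi02chi1}). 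The extra care you take in checking that the simultaneous flip $\pi^S$ is well-defined is a nice touch, though the essential reason is simply that distinct corners correspond to disjoint $EN$ pairs of consecutive steps, so the local $EN\mapsto NE$ swaps do not interfere.
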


\begin{proof}
Let $\pi$ be a Dyck path, and let $(i,j)\in c(\pi)$ be one of its
corners. We denote by $\mathrm{wt}_1$ the weight on $\pi$ which is
obtained from $\mathrm{wt}$ by setting the weight of $(i,j)$ to
$1$. Let $\pi'$ be the Dyck path obtained from $\pi$ by turning the
corner inside out, in other words the Dyck path of smallest area
which is both above $\pi$ and above $(i,j)$. Let $\mathrm{wt}_2$ be
the weight on $\pi'$ which coincides with $\mathrm{wt}$ on all corners
of $\pi'$ which are also corners of $\pi$ and is $1$ on other
corners. We claim that
\begin{equation}
\label{cornereq}
\chi(\pi, \mathrm{wt}) = \frac{q \mathrm{wt}(i,j) - 1}{q-1} \chi(\pi, \mathrm{wt}_1) + \frac{1 - \mathrm{wt}(i,j)}{q-1} \chi(\pi', \mathrm{wt}_2).
\end{equation}
The formula is easily seen to hold for the contribution from any particular $w\in\mathbb{Z}_{>0}$:
if $w_{i}>w_{j}$ then $\inv(\pi', w)=\inv(\pi,w)+1$;
if $w_i\leq w_j$ then $\inv(\pi',w)=\inv(\pi,w)$.

The result now follows because we may recursively express any
$\chi(\pi,{\mathrm{wt}})$ in terms of $\chi(\pi)$, which we have
already proved to be symmetric.
\end{proof}



\begin{example}
\label{easydyck}
In particular, we can use this to extract $\chi(\pi,0)$ from
$\chi(\pi',1)$ for all $\pi'$.
If $S\subset c(\pi)$ is any
subset of the set of corners, let $\pi_S\in \mathbb{D}$ denote the path
obtained by flipping the corners that are in $S$. Then
equation \eqref{cornereq} implies that
\begin{equation}
\label{chi02chi1}
\chi(\pi,0)=(1-q)^{-|c(\pi)|} \sum_{S \subset c(\pi)} (-1)^{|S|}\chi(\pi_S,1).
\end{equation}
For instance, let $\pi$ be the Dyck path in Figure \ref{easydyckfig}.
\begin{figure}
\begin{tikzpicture}
\draw[help lines] (0,0) grid (3,3);
\draw[dashed, color=gray] (0,0)--(3,3);
\draw[->,very thick] (0,0)--(0,1);
\draw[->,very thick] (0,1)--(0,2);
\draw[->,very thick] (0,2)--(1,2);
\draw[->,very thick] (1,2)--(2,2);
\draw[->,very thick] (2,2)--(2,3);
\draw[->,very thick] (2,3)--(3,3);
\end{tikzpicture}
\caption{}
\label{easydyckfig}
\end{figure}

We can compute $\chi(\pi)$ directly:
1) for $m_3$, there is only one word $111\to 1$;
2) for $m_{2,1}$, there are 3 words $112 \to 1$, $1\underline{21} \to 1$, $211 \to q$;
3) for $m_{1,1,1}$, there are 6 words $123 \to 1$, $1\underline{32} \to 1$, $213 \to q$, $2\underline{31} \to 1$, $312 \to q$, $3\underline{21} \to q$. Where we have underlined those words counted by $\chi(\pi,0)$. Thus
\[\chi(\pi)=m_3+(2+q)m_{21}+(3+3q)m_{111}=s_3+(q+1)s_{21}+qs_{111}.\]
and
\[\chi(\pi,0)=m_{21}+(2+q)m_{111}=s_{21}+qs_{111}.\]
Similarly, if $\pi'=\pi_{\{(2,3)\}}$ we have
\[\chi(\pi')=m_3+(1+2q)m_{21}+(1+4q+q^2)m_{111}=s_3+2qs_{21}+q^2s_{111}.\]
By formula \eqref{chi02chi1}, we obtain
\[\chi(\pi,0)=(1-q)^{-1}\left(\chi(\pi)-\chi(\pi')\right)=s_{21}+qs_{111}.\]
This agree with our direct computation.


\end{example}

\begin{example}
We can check that the Dyck path from Example \ref{easydyck} is the
unique one satisfying $\touch'(\pi)=(1,2)$, and that
$\bounce(\pi)=1$.
Therefore, using the calculation that followed we have that
\[D_{(1,2)}(q,t)=t\chi(\pi,0)=ts_{21}+qts_{111}\]
which can be seen to agree with $\nabla C_1 C_2(1)$.
\end{example}

\begin{example}

Though we will not need it, this weighted characteristic function
can be used to describe an interesting reformulation
of the formula for the modified Macdonald polynomial given in \cite{haglund2005combinatorial}.
This is because one of the statistic is defined by inversion according to attack relations, which can be transformed to
attack relation for a Dyck path.

Let $\mu=(\mu_1\geq \mu_2 \geq \cdots \geq \mu_l)$ be a partition of size $n$. Let us list the cells of $\mu$ in the reading order:
$$
(l,1), (l,2), \ldots, (l, \mu_l), (l-1,1), \ldots, (l-1, \mu_{l-1}) ,\ldots, (1,1),\ldots, (1,\mu_1).
$$
Denote the $m$-th cell in this list by $(i_m, j_m)$.

We say that a cell $(i,j)$ attacks all cells which are after $(i,j)$ and before $(i-1,j)$. Thus $(i,j)$ attacks precisely $\mu_i-1$ following cells if $i>1$ and all following cells if $i=1$. Next construct a Dyck path $\pi_\mu$ of length $n$ in such a way that $(m_1, m_2)$ with $m_1<m_2$ is under the path if and only if $(i_{m_1}, j_{m_1})$ attacks $(i_{m_2}, j_{m_2})$. (Check that this really defines a Dyck path).
More specifically, the path begins with $\mu_l$ North steps, then it has $\mu_l$ pairs of steps East-North, then $\mu_{l-1}-\mu_l$ North steps followed by $\mu_{l-1}$ East-North pairs and so on until we reach the point $(n-\mu_1, n)$. We complete the path by performing $\mu_1$ East steps.

Note that the corners of $\pi_\mu$ precisely correspond to the pairs of
cells $(i,j), (i-1, j)$. We set the weight of such a corner to
$q^{\mathrm{arm}(i,j)} t^{-1-\mathrm{leg}(i,j)}$ and denote the weight
function thus obtained by $\mathrm{wt}_\mu$. Note that in our
convention for $\chi(\pi,\mathrm{wt})$ we should count non-inversions in
the corners, while in \cite{haglund2005combinatorial} they count
``descents,'' which translates to counting inversions in the
corners. Taking this into account, we obtain a translation of their
 Theorem 2.2:
$$
H_\mu = q^{-n(\mu')+\binom{\mu_1}{2}} t^{n(\mu)} \chi(\pi_\mu, \mathrm{wt}_\mu).
$$
\end{example}




\section{Raising and lowering operators}

Now let $\DD_{k,n}$ be the set of Dyck paths from $(0,k)$ to
$(n,n)$, which we will call partial Dyck paths, and let $\DD_k$ be their union
over all $n$. For
$\pi\in\DD_{k,n}$ let $|\pi|=n-k$ denote the number of North steps.
Unlike $\DD=\DD_0$, the union of the sets $\DD_k$ over all $k$ is closed under the operation of
adding a North or East step to the beginning of the path (we do not allow adding a North step to a path in $\DD_0$), and
any Dyck path may be created in such a way starting with the empty path
in $\DD_0$. This is the set of paths that we will develop a recursion for.
More precisely, we will define an extension of the function
$\chi$ to a map from $\DD_k$ to a new vector space $V_k$,
and prove that certain operators on these vector spaces commute with
adding North and East steps.

\subsection{The $\Delta_{uv}$ and $\Delta_{uv}^*$ operators}
We shall introduce two important linear operators $\Delta_{uv}$ and $\Delta_{uv}^*$ as follows.
Given a polynomial $P$ depending on variables $u, v$ define
\[(\Delta_{uv} P)(u,v) = \frac{(q-1) v P(u,v) + (v - q u) P(v,u)}{v - u},\]
\[(\Delta_{uv}^* P)(u,v) = \frac{(q-1) u P(u,v) + (v - q u) P(v,u)}{v - u}.\]
We shall also establish some properties of these operators for later use.

\begin{lem}\label{Dellem}
If $Q(u,v)$ is symmetric in $u,v$ then
\[(\Delta_{uv} Q)(u,v) =q Q(u,v), \qquad  (\Delta^*_{uv} Q)(u,v) = Q(u,v).   \]
In other words, $\Delta -q$ and $\Delta^*_{uv}-1$ acts by $0$ on polynomials symmetric in $u,v$.

The two operators commutes with multiplication by polynomials symmetric in $u,v$. To be precise,
for any $P(u,v)$ we have
\[(\Delta_{uv} QP)(u,v) =Q(u,v) (\Delta_{uv} P)(u,v),\qquad  (\Delta^*_{uv} QP)(u,v) =Q(u,v) (\Delta^*_{uv} P)(u,v).   \]
\end{lem}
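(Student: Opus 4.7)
The plan is direct substitution, exploiting the symmetry $Q(u,v)=Q(v,u)$. The key observation is that in both $\Delta_{uv}P$ and $\Delta^*_{uv}P$, the polynomial $P$ appears twice: once as $P(u,v)$ with coefficient $(q-1)v$ or $(q-1)u$, and once as $P(v,u)$ with coefficient $v-qu$. When $P$ is replaced by a symmetric $Q$, these two evaluations coincide, and one can simply factor $Q(u,v)$ out of the numerator and simplify the remaining rational function in $u,v$.

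First, to establish the first identity, I would plug $Q$ into the definition of $\Delta_{uv}$ and use $Q(v,u)=Q(u,v)$ to pull $Q(u,v)$ out as a common factor, leaving the scalar
\[
\frac{(q-1)v + (v-qu)}{v-u} = \frac{qv - qu}{v-u} = q.
\]
For $\Delta^*_{uv}Q$ the analogous computation gives
\[
\frac{(q-1)u + (v-qu)}{v-u} = \frac{v-u}{v-u} = 1.
\]

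Next, for the commutation with multiplication by a symmetric $Q$, I would substitute $QP$ into the definitions. In the term involving $(QP)(v,u) = Q(v,u)P(v,u)$, use $Q(v,u)=Q(u,v)$ so that $Q(u,v)$ is common to both terms of the numerator. Factoring it out yields exactly $Q(u,v)$ times the original defining expression for $\Delta_{uv}P$ (respectively $\Delta^*_{uv}P$).

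There is essentially no obstacle here: the whole lemma is a one-line verification for each of the four claims, and the only thing to watch is that the symmetry of $Q$ is used specifically to identify $Q(u,v)$ with $Q(v,u)$ in the second summand of each numerator. No cancellations beyond the obvious factorization of $v-u$ are needed, and the result does not require $P$ to be a polynomial as opposed to a rational function in $u,v$, so the proof extends verbatim to the setting in which these operators are later applied.
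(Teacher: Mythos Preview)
Your proof is correct and is essentially the same approach as the paper's, which simply says ``Obvious by definition.'' You have just written out explicitly the direct substitution and factorization that the paper leaves to the reader.
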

\begin{proof}
Obvious by definition.
\end{proof}

We can recognize these operators as a simple modification of
Demazure-Lusztig operators. 

\begin{prop} We have the following relations:
\label{Delprop}
$$\Delta_{uv}^* = q \Delta_{uv}^{-1}, \qquad \Delta_{uv}^* = \Delta_{uv}+1-q.$$
$$
(\Delta_{uv} - q) (\Delta_{uv} + 1) = 0,\qquad (\Delta_{uv}^* - 1) (\Delta_{uv}^* + q) = 0,
$$
$$
\Delta_{uv}\Delta_{vw}\Delta_{uv}=\Delta_{vw}\Delta_{uv}\Delta_{vw},\qquad
\Delta_{uv}^*\Delta_{vw}^*\Delta_{uv}^*=\Delta_{vw}^*\Delta_{uv}^*\Delta_{vw}^*.
$$

\end{prop}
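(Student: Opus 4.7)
The plan is to handle the linear relation first, use it together with the lemma to reduce the quadratic identities to a one-line computation, and then attack the braid relation last, which is the most involved.

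First, the identity $\Delta_{uv}^* = \Delta_{uv} + 1 - q$ is immediate from the definitions: the two numerators differ by $(q-1)(u-v)P(u,v)$, so after dividing by $v-u$ one gets $(\Delta_{uv}^* - \Delta_{uv})P = (1-q)P$.

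Next, for the quadratic relation $(\Delta_{uv}-q)(\Delta_{uv}+1) = 0$, I would exploit the factorization
\[(\Delta_{uv}-q)P = \frac{v-qu}{v-u}\bigl(P(v,u) - P(u,v)\bigr).\]
The factor $P(v,u)-P(u,v)$ is antisymmetric in $u,v$, hence divisible by $v-u$, and the quotient $Q := (P(v,u)-P(u,v))/(v-u)$ is symmetric in $u,v$. So $(\Delta_{uv}-q)P = (v-qu)Q$ with $Q$ symmetric. By Lemma \ref{Dellem} the operator $\Delta_{uv}$ commutes with multiplication by $Q$, so $\Delta_{uv}((v-qu)Q) = Q\cdot\Delta_{uv}(v-qu)$, and a direct one-line calculation gives $\Delta_{uv}(v-qu) = -(v-qu)$. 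Hence $(\Delta_{uv}+1)(\Delta_{uv}-q)P = 0$ for every $P$. The remaining starred identities then follow algebraically: expanding the quadratic relation yields $\Delta_{uv}(\Delta_{uv}+1-q) = q$, i.e.\ $\Delta_{uv}\Delta_{uv}^* = q$, so $\Delta_{uv}^* = q\Delta_{uv}^{-1}$; and substituting $\Delta_{uv}^* = \Delta_{uv} + 1 - q$ into $(\Delta_{uv}^*-1)(\Delta_{uv}^*+q)$ reduces it to the quadratic relation already proved.

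For the braid relation $\Delta_{uv}\Delta_{vw}\Delta_{uv} = \Delta_{vw}\Delta_{uv}\Delta_{vw}$ I would decompose each factor as $\Delta_{uv} = A_{uv} + B_{uv}\sigma_{uv}$ with $A_{uv} = (q-1)v/(v-u)$ and $B_{uv} = (v-qu)/(v-u)$, where $\sigma_{uv}$ is the swap operator; similarly for $\Delta_{vw}$. Using the commutation rule $\sigma_{uv} f = f^{\sigma_{uv}}\sigma_{uv}$ (where the superscript denotes the corresponding permutation of variables), I expand both triple products in the free $\Q(q)(u,v,w)$-module spanned by $S_3$, yielding six scalar identities to verify. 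Four of these collapse immediately after noting the structural coincidences $A_{uv}^{\sigma_{uv}\sigma_{vw}} = A_{vw}$, $B_{uv}^{\sigma_{uv}\sigma_{vw}} = B_{vw}$, and $B_{vw}^{\sigma_{uv}} = B_{uv}^{\sigma_{vw}}$; the first two force the coefficients of $\sigma_{uv}\sigma_{vw}$ and $\sigma_{vw}\sigma_{uv}$ to match by mere commutativity, while the third gives the coefficient of the long element on both sides. The remaining two cases (the coefficients of $e$, $\sigma_{uv}$, and $\sigma_{vw}$) reduce after clearing denominators to polynomial identities in $u,v,w$ of bounded total degree, verifiable by collecting monomials; the coefficient of the identity permutation is the hardest, and this monomial bookkeeping is the main computational obstacle of the whole proposition. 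Finally, the braid relation for $\Delta^*$ follows automatically: inverting $\Delta_{uv}\Delta_{vw}\Delta_{uv} = \Delta_{vw}\Delta_{uv}\Delta_{vw}$ and multiplying by $q^3$, and then using $\Delta_{uv}^* = q\Delta_{uv}^{-1}$, yields $\Delta_{uv}^*\Delta_{vw}^*\Delta_{uv}^* = \Delta_{vw}^*\Delta_{uv}^*\Delta_{vw}^*$.
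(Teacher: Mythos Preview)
Your proof is correct and follows essentially the same route as the paper's: the linear relation by direct subtraction, the quadratic via Lemma~\ref{Dellem} (the paper applies $\Delta_{uv}+1$ first and observes the result is symmetric in $u,v$, while you apply $\Delta_{uv}-q$ first and observe the result is $(v-qu)$ times a symmetric function---dual versions of the same eigenspace argument), and the remaining identities by algebra. For the braid relation the paper offers only the sentence ``it seems that the best way is by direct computation,'' so your group-algebra expansion over $S_3$ is in fact more explicit than what the paper provides.
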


\begin{proof}
We first show the second equation, which is equivalent to $\Delta_{uv}+1=\Delta^*_{uv}+q$. Indeed, we have
\begin{align*}
(\Delta_{uv}+1) P(u,v) &= \left(\frac{(q-1)vP(u,v)+(v-qu)P(v,u) }{v-u}+ P(u,v)\right)\\
  &=\left(\frac{(qv-u)P(u,v)+(v-qu)P(v,u) }{v-u}\right)\\
\end{align*}
and
\begin{align*}
(\Delta^*_{uv}+q) P(u,v) &=  \frac{(q-1) u P(u,v) + (v - q u) P(v,u)}{v - u}+ qP(u,v)\\
  &=\frac{(qv-u)P(u,v)+(v-qu)P(v,u) }{v-u}.
\end{align*}
The second equation thus follows and the resulting polynomial is a symmetric polynomial in $u,v$.
The third and the fourth equation then follows by Lemma \ref{Dellem}.

Now the third equation can be written as
\begin{align*}
  q=\Delta_{uv}(\Delta_{uv}+1-q) = \Delta_{uv} \Delta^*_{uv}.
\end{align*}
This is equivalent to the first equation.

The equivalence of the fifth and the sixth equation follows from the first equation. %
%

It remains to show the fifth equation. It seems that the best way is by direct computation.
\end{proof}

We need the following representations.
\begin{lem}
For nonnegative integers $a,b$, we have
\begin{align}
\Delta_{uv}^* u^a v^b =\left\{
                         \begin{array}{ll}
                           u^av^a \left(qu h_{b-a-1}[u+v]-uv h_{b-a-2}[u+v]\right), & \hbox{if $a\le b$;} \\
                           u^bv^b \left(h_{a-b}[u+v]-qu h_{a-b-1}[u+v]\right), & \hbox{if $a\ge b$.} \rule{0pt}{16pt}
                         \end{array}
                       \right.
\label{e-Delta*s}
\end{align}
In particular we have
\begin{align}
\Delta_{uv}^* 1&=1, \qquad  \Delta_{uv}^* u =(1-q)u+v, \qquad \Delta_{uv}^* v =qu \tag{\ref{e-Delta*s}a }\\
\Delta_{uv}^* u^a &=v^a+(1-q)\sum_{i=0}^{a-1}v^i u^{a-i},
\tag{\ref{e-Delta*s}b }\\
\Delta_{uv}^* v^a &=u^a -(1-q)\sum_{i=0}^{a-1}v^i u^{a-i}.\tag{\ref{e-Delta*s}c }
\end{align}
\end{lem}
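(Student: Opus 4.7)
The plan is to substitute $P(u,v) = u^a v^b$ (so $P(v,u) = v^a u^b$) directly into the definition of $\Delta_{uv}^*$, which gives
\[\Delta_{uv}^*(u^a v^b) \eq \frac{(q-1)u^{a+1}v^b + u^b v^{a+1} - q u^{b+1} v^a}{v - u}.\]
Evaluating the numerator at $v = u$ yields $(q-1)u^{a+b+1} + u^{a+b+1} - q u^{a+b+1} = 0$, so the quotient is a polynomial, and the remaining work is purely rearrangement into the two claimed closed forms.

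For the case $a \le b$, I would pull the common factor $u^{a+1} v^a$ out of the numerator and, setting $m := b-a$, reduce to dividing $(q-1)v^m + u^{m-1}v - q u^m$ by $v-u$. Using the identity $v^m - u^m = (v-u)\,h_{m-1}[u+v]$ (with the convention $h_{-1}=0$), this expression becomes $(v-u)\bigl[(q-1) h_{m-1}[u+v] + u^{m-1}\bigr]$. After cancelling $v-u$, I would apply the elementary identity $h_{m-1}[u+v] = u^{m-1} + v\,h_{m-2}[u+v]$ to rewrite the bracket as $q\,h_{m-1}[u+v] - v\,h_{m-2}[u+v]$, matching the first case once the prefactor $u^{a+1}v^a$ is split as $u^a v^a \cdot u$.

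For $a \ge b$ the procedure is symmetric: factor $u^b v^b$ out instead, set $n := a-b$, and reduce to $(q-1)u^{n+1} + v^{n+1} - q u v^n$. The identity $v^{n+1}-u^{n+1} = (v-u)\,h_n[u+v]$ together with a one-line regrouping (write $(q-1)u^{n+1} = qu^{n+1} - u^{n+1}$ and combine with $v^{n+1}-u^{n+1}$ and $-quv^n = -qu(v^n-u^n) - qu^{n+1}$) produces $(v-u)\bigl(h_n[u+v] - q u\,h_{n-1}[u+v]\bigr)$, which yields the second case. The three ``In particular'' evaluations then fall out immediately by specialising to $b=0$ or $a=0$ and expanding $h_k[u+v] = \sum_{i=0}^k u^i v^{k-i}$.

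I do not anticipate any real obstacle: the argument is elementary polynomial bookkeeping. The only care needed is the boundary behaviour at $m,n \in \{0,1\}$, which is absorbed by the conventions $h_{-1}[u+v] = 0$ and $h_0[u+v] = 1$; in each such degenerate case the two cases of the formula agree at $a = b$, as they must. One could streamline the presentation by proving both cases in parallel, but keeping them separate makes the match with the stated expression more transparent.
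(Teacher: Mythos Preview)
Your approach is correct and essentially the same as the paper's: both substitute into the definition of $\Delta_{uv}^*$ and reduce via the identity $v^k-u^k=(v-u)h_{k-1}[u+v]$. The only cosmetic difference is that the paper first invokes the symmetric-commutation lemma to write $\Delta_{uv}^* u^a v^b = (uv)^{\min(a,b)}\,\Delta_{uv}^* (\text{single power})$ and then substitutes, whereas you substitute first and factor afterward; the algebra is identical.

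One small correction: your closing remark that ``the two cases of the formula agree at $a=b$'' is false. At $a=b$ the first branch gives $u^a v^a\bigl(qu\,h_{-1}-uv\,h_{-2}\bigr)=0$, while the second gives $u^a v^a\bigl(h_0-qu\,h_{-1}\bigr)=u^a v^a$, which is the correct value. This does not damage your argument, since your derivation of the first branch tacitly needs $m=b-a\ge 1$ (otherwise the factor $u^{m-1}$ is not polynomial) and the case $a=b$ is legitimately handled by the second branch; just drop or amend that sentence.
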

\begin{proof}

We first prove \eqref{e-Delta*s}.
If $a\le b$, then
\begin{align*}
 \Delta_{uv}^* u^a v^b &= u^a v^a \Delta_{uv}^* v^{b-a} =u^av^a \frac{(q-1) u v^{b-a} + (v - q u) u^{b-a}}{v - u}\\
  &= u^av^a \frac{(vu^{b-a}-uv^{b-a})+qu(v^{b-a}-u^{b-a}}{v-u} \\
  &=u^av^a \left(qu h_{b-a-1}[u+v]-uv h_{b-a-2}[u+v]\right).
\end{align*}
If $a\ge b$, then
\begin{align*}
 \Delta_{uv}^* u^a v^b &= u^b v^b \Delta_{uv}^* u^{a-b} =u^bv^b \frac{(q-1) u u^{a-b} + (v - q u) v^{a-b}}{v - u}\\
  &= u^bv^b \frac{(v^{a-b+1}-u^{a-b+1})+qu(u^{a-b}-v^{a-b}}{v-u} \\
  &=u^bv^b \left(h_{a-b}[u+v]-qu h_{a-b-1}[u+v]\right).
\end{align*}
Direct computation shows:
\begin{align*}
 \Delta_{uv}^* 1 &=\frac{(q-1) u  + (v - q u) }{v - u} =1,\\
\Delta_{uv}^* u &= \frac{(q-1) u\cdot u + (v - q u) v}{v - u} =(1-q)u+v,\\
\Delta_{uv}^* v &= \frac{(q-1) u v + (v - q u) u}{v - u} =qu.
\end{align*}
Note that once we obtain the third equality, we can also obtain the second one by
$$\Delta_{uv}^* u =\Delta_{uv}^* (u+v) - \Delta_{uv}^* v =u+v-qu.$$
Similar by using $\Delta_{uv}^* (u^a+v^a)=u^a+v^a$, we only need to prove (\ref{e-Delta*s}b):
\begin{align*}
 \Delta_{uv}^* u^a &= \left(h_{a}[u+v]-qu h_{a-1}[u+v]\right) \\
 &= \sum_{i=0}^a u^{i} v^{a-i} -qu \sum_{i=0}^{a-1} u^i v^{a-1-i} \\
&=v^a+ \sum_{i=1}^a u^{i} v^{a-i} -q \sum_{i=0}^{a-1} u^{i+1} v^{a-1-i} \\
&=v^a +(1-q)\sum_{i=1}^a u^{i} v^{a-i}\\
&=v^a+(1-q)\sum_{i=0}^{a-1}v^i u^{a-i}.
\end{align*}
\end{proof}

Let $m$ be a positive integer and $W^m$ be the linear space of homogeneous polynomials of total degree $m$ in $u,v$.
Since $\Delta_{uv}^* $ preserves the total degree in $u,v$, it can be treated as a linear transformation on
$W^m$. Its minimal polynomial is $(x+q)(x-1)$.
\begin{lem}\label{l-im-ker}
We have the direct sum $W^m=Im(\Delta_{uv}^*+q)\bigoplus ker(\Delta_{uv}^*+q)$, where
$$ Im(\Delta_{uv}^*+q)=ker(\Delta_{uv}^*-1)=\{Q(u,v)\in W^m : Q(u,v)=Q(v,u)\}, $$
$$ ker(\Delta_{uv}^*+q)=Im(\Delta_{uv}^*-1)=\{(qu-v)Q(u,v)\in W^m : Q(u,v)=Q(v,u)\}. $$
\end{lem}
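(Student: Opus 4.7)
The backbone of the proof is the relation $(\Delta_{uv}^*-1)(\Delta_{uv}^*+q)=0$ from Proposition \ref{Delprop}, together with a single explicit computation. Starting from the definition, one calculates directly that for any $P\in W^m$,
\[
(\Delta_{uv}^*-1)P(u,v) \;=\; \frac{(qu-v)\bigl(P(u,v)-P(v,u)\bigr)}{v-u}.
\]
The key observation is that $P(u,v)-P(v,u)$ is antisymmetric, hence divisible by $u-v$, and the quotient $R(u,v):=\bigl(P(v,u)-P(u,v)\bigr)/(u-v)\in W^{m-1}$ is symmetric in $u,v$.

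I would then extract both characterizations from this one formula. Reading it one way: if $(\Delta_{uv}^*-1)P=0$ then $(qu-v)(P(u,v)-P(v,u))=0$ in the polynomial ring, forcing $P(u,v)=P(v,u)$; combined with the reverse direction which is immediate from Lemma \ref{Dellem}, this gives $\ker(\Delta_{uv}^*-1)=\{Q\in W^m:Q(u,v)=Q(v,u)\}$. Reading it the other way: every element of $\mathrm{Im}(\Delta_{uv}^*-1)$ has the form $(qu-v)R$ with $R$ symmetric of degree $m-1$; for the reverse inclusion, given such an $R$, take $P:=vR$, so that $P(u,v)-P(v,u)=(v-u)R$ and the formula yields $(\Delta_{uv}^*-1)(vR)=(qu-v)R$. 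Thus $\mathrm{Im}(\Delta_{uv}^*-1)=\{(qu-v)R:R\in W^{m-1}\text{ symmetric}\}$.

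The last step is to glue the two pictures together using Proposition \ref{Delprop}. The identity $(\Delta_{uv}^*-1)(\Delta_{uv}^*+q)=0$ immediately gives the inclusions $\mathrm{Im}(\Delta_{uv}^*+q)\subseteq \ker(\Delta_{uv}^*-1)$ and $\mathrm{Im}(\Delta_{uv}^*-1)\subseteq \ker(\Delta_{uv}^*+q)$. Since $1+q$ is a unit in $\mathbb{Q}(q,t)$, the polynomials $x-1$ and $x+q$ are coprime, so the primary decomposition applies and $W^m=\ker(\Delta_{uv}^*-1)\oplus\ker(\Delta_{uv}^*+q)$. Explicitly, for any $P\in W^m$ one can write
\[
P=\tfrac{1}{1+q}(\Delta_{uv}^*+q)P\;+\;\tfrac{-1}{1+q}(\Delta_{uv}^*-1)P,
\]
which shows both inclusions above are equalities. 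The identification $\ker(\Delta_{uv}^*+q)=\{(qu-v)R:R\text{ symmetric}\}$ then follows from the explicit description of $\mathrm{Im}(\Delta_{uv}^*-1)$; as a direct sanity check one verifies $\Delta_{uv}^*(qu-v)=-q(qu-v)$ using (\ref{e-Delta*s}a) and then applies the commuting property in Lemma \ref{Dellem} to a symmetric multiplier.

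There is no real obstacle: the statement is a linear-algebra consequence of the minimal-polynomial factorization, and the only substantive computation is the one-line formula for $(\Delta_{uv}^*-1)P$. The only potential subtlety is remembering that the splitting requires $1+q$ to be invertible, which holds in our ground field $\mathbb{Q}(q,t)$.
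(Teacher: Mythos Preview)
Your proof is correct and follows essentially the same approach as the paper. Both hinge on the explicit computation
\[
(\Delta_{uv}^*-1)P(u,v)=(qu-v)\,\frac{P(u,v)-P(v,u)}{v-u},
\]
which the paper also records verbatim. The only cosmetic difference is that the paper finishes with a dimension count ($\dim W_1+\dim W_2=\dim W^m$), whereas you use the explicit idempotent decomposition $P=\tfrac{1}{1+q}(\Delta_{uv}^*+q)P-\tfrac{1}{1+q}(\Delta_{uv}^*-1)P$; these are interchangeable routine endgames once the minimal polynomial $(x-1)(x+q)$ is in hand.
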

\begin{proof}
Let $W_1$ and $W_2$ be the right hand side of the two formulas respectively. It is easy to check that
$Im(\Delta_{uv}^*+q)\supseteq W_1 $ and $ker(\Delta_{uv}^*+q) \supseteq W_2$.
Since $W_1+W_2$ is a direct sum, it is sufficient to show that $\dim W^m =\dim W_1+\dim W_2$, which is easy.

The other parts are easy consequence of linear algebra. It is worth mentioning that $Im(\Delta_{uv}^*-1)$ is easy to compute:
\begin{align*}
(\Delta^*_{uv}-1) P(u,v) &=  \frac{(q-1) u P(u,v) + (v - q u) P(v,u)}{v - u}-P(u,v)\\
  &=\frac{(qu-v)P(u,v)+(v-qu)P(v,u) }{v-u}\\
  &=(qu-v) \frac{P(u,v)-P(v,u)}{v-u}.
\end{align*}
This completes the proof.
\end{proof}

\subsection{The space $V_k$ and some operators}
\begin{defn}
Let $V_k=\Sym[X]\otimes \Q[y_1,y_2,\ldots,y_{k}]$,
and let
\[T_i = \Delta_{y_i y_{i+1}}^* : V_{k} \rightarrow
V_{k},\quad i=1,\ldots,k-1.\]
Define operators
\[d_{+} : V_k \rightarrow V_{k+1},
\quad d_- : V_{k} \rightarrow V_{k-1}\]
by
\begin{equation}
\label{dpdef}
(d_{+} F)[X] = T_1 T_2\cdots T_k \left(F[X+(q-1)y_{k+1}]\right),
\end{equation}
%
for $F\in V_k$,
 and
\begin{equation}
\label{dmdef}
d_-(y_k^i F)= -B_{i+1} F
\end{equation}
when $F$ does not depend on $y_k$.
\end{defn}

We have the following alternative description of $d_-$.
\begin{lem}\label{l-d-}
For $G\in V_k$, we have
$$ d_- G =- y_k G[X-(q-1)y_k] \pExp[-X/y_k]\Big|_{y_k^{0}} .$$
In other words, $d_-G$ is obtained by expanding $G[X-(q-1)y_k]$ as a
power series in $y_k$ and then replacing $y_k^i$ by $-h_{i+1}[-X]$ for all $i$.
\end{lem}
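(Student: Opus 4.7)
By linearity, the plan is to reduce to the case $G = y_k^i F$ with $F\in\Sym[X]$ independent of $y_k$, in which case the definition \eqref{dmdef} together with the definition of $B_r$ yields
\[
d_-(y_k^i F) \;=\; -B_{i+1}F \;=\; -\,F[X-(q-1)z^{-1}]\,\pExp[-zX]\Big|_{z^{i+1}}.
\]
The goal is to show the right-hand side of the lemma evaluates to the same expression. The strategy is a direct unfolding of both sides as formal (Laurent) series followed by matching coefficients under the correspondence $z \leftrightarrow y_k^{-1}$.

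Concretely, I would first expand $F[X-(q-1)y_k] = \sum_{m\geq 0} a_m(X)\, y_k^m$ as a formal power series in $y_k$ (well-defined because plethystic substitution of a polynomial in $\Sym[X]$ produces such a series in $y_k$). Since $y_k$ is a monomial, plethystic multiplicativity gives
\[
\pExp[-X/y_k] \;=\; \sum_{n\geq 0} h_n[-X]\, y_k^{-n}.
\]
Plugging these into the right-hand side of the lemma with $G = y_k^i F$ yields
\[
-y_k\, y_k^i \sum_{m\geq 0} a_m(X)\, y_k^m \sum_{n\geq 0} h_n[-X]\, y_k^{-n}\,\Big|_{y_k^0}
\;=\; -\sum_{n\geq i+1} a_{n-i-1}(X)\, h_n[-X].
\]
On the other hand, substituting $y_k \mapsto z^{-1}$ into the same expansion gives $F[X-(q-1)z^{-1}] = \sum_{m\geq 0} a_m(X)\, z^{-m}$, and $\pExp[-zX] = \sum_{n\geq 0} h_n[-X]\, z^n$. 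Extracting the coefficient of $z^{i+1}$ produces precisely $\sum_{n\geq i+1} a_{n-i-1}(X)\, h_n[-X]$, which matches.

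For the ``in other words'' reformulation: write $G[X-(q-1)y_k] = \sum_{m\geq 0} b_m(X)\, y_k^m$ (for arbitrary $G\in V_k$, not just $y_k^iF$). Then the same coefficient extraction computation gives
\[
-y_k\, G[X-(q-1)y_k]\, \pExp[-X/y_k]\,\Big|_{y_k^0} \;=\; -\sum_{m\geq 0} b_m(X)\, h_{m+1}[-X],
\]
which is exactly the stated replacement rule $y_k^m \mapsto -h_{m+1}[-X]$. There is no real obstacle here; the only subtlety is bookkeeping with the negative powers of $y_k$ arising from $\pExp[-X/y_k]$ and ensuring the coefficient extraction matches the $z^{i+1}$ extraction in the definition of $B_{i+1}$.
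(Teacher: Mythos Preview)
Your proof is correct and follows essentially the same approach as the paper: reduce by linearity to $G=y_k^iF$ with $F$ free of $y_k$, then identify the right-hand side of the lemma with $-B_{i+1}F$ via the correspondence $y_k\leftrightarrow z^{-1}$. The paper carries this out by a direct change of variables $y_k=z^{-1}$ inside the coefficient extraction, whereas you expand both sides as series and match coefficients, but the content is the same.
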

\begin{proof}
 By linearity, it is sufficient to assume $G=y_k^i F$ for $F$ free of $y_k$.
By direct computation, we have
\begin{align*}
  d_- y_k^i F &= - y_k^{i+1} F[X-(q-1)y_k] \pExp[-X/y_k]\Big|_{y_k^{0}} \\
              &= - F[X-(q-1)y_k] \pExp[-X/y_k]\Big|_{y_k^{-i-1}} \\
(\text{by } y_k=z^{-1}) &= -  F[X-(q-1)/z] \pExp[-Xz]\Big|_{z^{i+1}} \\
                   &=-B_{i+1} F
\end{align*}
This completes the proof.
\end{proof}

We now claim the following theorem:
\begin{thm}
\label{dpthm}
For any Dyck path $\pi$ of size $n$,
let $\eps_1\cdots \eps_{2n}$ denote the
corresponding sequence
of plus and minus symbols where a plus denotes an
east step, and a minus denotes a north step reading
$\pi$ from bottom left to top right. Then
\[\chi(\pi)=d_{\eps_1}\cdots d_{\eps_{2n}}(1)\]
as an element of $V_0=\Sym[X]$.
\end{thm}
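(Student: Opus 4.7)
The plan is to proceed by induction on the length $2n$ of $\pi$. Since the iterated operator product $d_{\eps_1}\cdots d_{\eps_{2n}}(1)$ passes through every space $V_k$ before landing in $V_0$, the induction requires first extending $\chi$ from full Dyck paths ($k=0$) to partial Dyck paths $\pi\in\DD_k$, with $\chi(\pi)\in V_k$. Once extended, it suffices to prove the two recursive identities
\[\chi(E\pi)=d_+\chi(\pi),\qquad \chi(N\pi)=d_-\chi(\pi),\]
together with the base case $\chi(\emptyset)=1\in V_0$. Applied to $\pi=\eps_1\cdots\eps_{2n}$, iterating these gives the theorem immediately.

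To extend $\chi$ to $\DD_{k,n}$, one thinks of $\pi$ as sitting inside the full Dyck path $N^k\pi\in\DD_n$ obtained by prepending $k$ North steps along the left edge. The integer labels $w_{k+1},\ldots,w_n$ of the real North steps play the same role as before, while the $k$ virtual North steps in rows $1,\ldots,k$ are marked by the formal variables $y_1,\ldots,y_k$. Inversions among integer labels contribute the usual $q^{\inv(\pi,w)}x_w$, while attacks involving a virtual label are packaged combinatorially via polynomial factors in $y_i$ and via plethystic substitution in $X$. The precise shape of the extension is dictated by the two recursions and reduces to the old $\chi$ when $k=0$.

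For the East recursion, prepending $E$ shifts the virtual block up by one row and inserts a new virtual label $y_{k+1}$ at its top. The plethystic substitution $F\mapsto F[X+(q-1)y_{k+1}]$ in the definition of $d_+$ produces precisely the right weighting for attacks between $y_{k+1}$ and the integer labels, in the spirit of Proposition~\ref{prop.q1}, where a factor $(q-1)$ encodes ``no-attack'' behaviour. The Demazure--Lusztig-style operators $T_i=\Delta^*_{y_iy_{i+1}}$ then propagate $y_{k+1}$ through $y_k,y_{k-1},\ldots,y_1$, each $T_i$ installing the correct contribution of attacks between the new virtual label and the pre-existing virtual labels; the explicit actions computed in (\ref{e-Delta*s}) are what make the braiding yield the right polynomial.

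For the North recursion, prepending $N$ promotes the topmost virtual label $y_k$ into an actual integer label $w$. The sum $\sum_{w\in\Z_{>0}}$ collects a geometric series weighted by the new attacks, and by Lemma~\ref{l-d-} this is packaged exactly into the action of $-B_{i+1}$ on the $y_k^i$-coefficient of $\chi(\pi)$. Concretely, writing $\chi(\pi)=\sum_i y_k^i F_i$ with $F_i$ free of $y_k$, the new label adds inversion weight $q^{\#\text{attacks with integer labels below}}$ and a factor $x_w$, whose generating function in $w$ is precisely the plethystic exponential appearing in the $B$-operator formula. The hard step will be the East recursion: it requires careful bookkeeping of how attack relations change when a virtual East step is inserted between the virtual and real blocks, and a verification that the chain of $T_i$'s precisely implements this bookkeeping after the plethystic shift by $(q-1)y_{k+1}$. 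The North recursion, by contrast, reduces to a direct plethystic-exponential computation already set up by Lemma~\ref{l-d-}.
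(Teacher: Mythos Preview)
Your high-level plan matches the paper's: extend $\chi$ to partial Dyck paths $\pi\in\DD_k$ with values in $V_k$, then prove the two recursions $\chi_{k+1}(E\pi)=d_+\chi_k(\pi)$ and $\chi_{k-1}(N\pi)=d_-\chi_k(\pi)$. But the proposal has two genuine gaps.

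\textbf{First, you never define the extension.} Saying ``the precise shape of the extension is dictated by the two recursions'' is circular: you cannot prove the recursions without a definition to check them against. The paper's definition (equation~\eqref{chikdef}) is not at all obvious: one works with the \emph{no-attack} characteristic function $\chi'_\sigma(\pi)$ of~\eqref{eq:chiprime}, where the first $k$ labels are forced to be the special values $1,\ldots,k$, and then sets
\[
\chi_k(\pi)[X]=\frac{1}{y_1\cdots y_k}(q-1)^{|\pi|}\chi'_{\Id_k}(\pi)\!\left[\tfrac{X}{q-1}\right].
\]
The $(q-1)$-rescaling and the division by $y_1\cdots y_k$ are essential and must be guessed in advance (via Proposition~\ref{prop.q1}); they are not recoverable from the recursions alone.

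\textbf{Second, you have the difficulty of the two recursions backwards.} The North step is \emph{not} the direct plethystic computation you sketch. Promoting $y_k$ to an honest label $w_k$ and summing does not immediately give $-B_{i+1}$ acting on the $y_k^i$-coefficient: the new label must avoid the remaining special labels $1,\ldots,k-1$, so one must sum over the value $r=w_k-k\geq 0$, and for each $r$ the relevant object $\chi'_{k,r}(\pi)$ is obtained from $\chi'_k(\pi)$ by a chain of \emph{swapping operators} $\Delta_{x_{r-1},x_r}\cdots\Delta_{y_k,x_1}$ (Proposition~\ref{prop:swapping}). The infinite sum over $r$ then requires the explicit closed form for $f_{i,r}$ (the Proposition in \S4.6) before it telescopes to~\eqref{sumfir}. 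Your proposed shortcut, ``write $\chi(\pi)=\sum_i y_k^i F_i$ and sum a geometric series,'' fails because the attack pattern of the new label against the existing integer labels depends on its \emph{value}, not just its position, and this dependence is exactly what the swapping operators and the $f_{i,r}$-computation organize. The East step, by contrast, is comparatively clean once Proposition~\ref{prop:swapping} is in hand: the bijection $U_{\pi,\Id_k}\to U_{E\pi,(k+1,1,\ldots,k)}$ is immediate, and the chain of $T_i$'s arises from swapping the new special label down past the old ones. Both steps rest on Proposition~\ref{prop:swapping}, which is the real combinatorial core and which your sketch does not isolate.
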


\begin{example}
Let $\pi$ be the Dyck path from Example \ref{easydyck}.
We have that
\[d_- d_- d_+d_+d_-d_+(1)=
d_- d_- d_+d_+d_-(1)=d_- d_- d_+d_+(s_1)=\]
\[d_- d_- d_+\left(s_1+(q-1)y_1\right)=
d_- d_- \left(s_1+(q-1)(y_1+y_2)\right)=\]
\[d_- \left(s_2+s_{11}+(q-1)s_1y_1\right)=s_3+(1+q)s_{21}+qs_{111},\]
which agrees with the value calculated for $\chi(\pi)$.
\end{example}

Combining this result with equation \eqref{chi02chi1} implies the
following:
\begin{cor}
\label{chi0cor}
The following procedure computes $\chi(\pi,0)$: start with $1\in \Sym[X]=V_0$, follow the path from right to left applying $\frac{1}{q-1}[d_{-}, d_{+}]$ for each corner of $w$, and $d_{-}$ ($d_{+}$) for each North (resp. East) step which is not a side of a corner.
\end{cor}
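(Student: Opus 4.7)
The plan is to combine Theorem~\ref{dpthm} with identity \eqref{chi02chi1} and observe that the resulting alternating sum over subsets of corners factorizes corner by corner into a commutator. Concretely, Theorem~\ref{dpthm} lets me rewrite each $\chi(\pi_S)$ appearing on the right hand side of \eqref{chi02chi1} as an operator word $d_{\eps_1^S}\cdots d_{\eps_{2n}^S}(1)$ in the $d_{\pm}$, and the whole content of the corollary will come from combining that alternating sum with the prefactor $(1-q)^{-|c(\pi)|}$.

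First I would pin down the local effect of flipping a single corner at the level of operator factors. A corner $(i,j)\in c(\pi)$ corresponds locally to an East step followed immediately by a North step meeting at the lattice vertex $(i,j-1)$ in the bottom-left-to-top-right reading of $\pi$, and flipping it replaces that EN pair by NE while leaving every other step of $\pi$ unchanged. Since function composition applies $d_{\eps_{2n}}$ first and $d_{\eps_1}$ last, reading the path from right to left is exactly the order in which the operators act; the two operator slots of the corner are precisely what the corollary calls the ``sides of the corner.'' In those two slots the local factor is $d_+d_-$ for $\pi$ and $d_-d_+$ for the path obtained by flipping only that corner, while all other factors are identical. Moreover distinct corners of $\pi$ correspond to disjoint pairs of slots, since every step of $\pi$ is a side of at most one corner.

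Second, I would substitute these operator expressions into \eqref{chi02chi1}. Because distinct corners occupy disjoint slots, the alternating sum $\sum_{S\subseteq c(\pi)}(-1)^{|S|}\chi(\pi_S)$ factorizes over corners: at each corner one independently chooses to keep $d_+d_-$ (sign $+1$) or swap to $d_-d_+$ (sign $-1$), yielding the local operator $d_+d_- - d_-d_+$ at every corner. Distributing one factor of $(1-q)^{-1}$ from the overall prefactor onto each corner gives
\[
\frac{d_+d_- - d_-d_+}{1-q}\eq\frac{d_-d_+ - d_+d_-}{q-1}\eq\frac{[d_-,d_+]}{q-1},
\]
while each non-corner North (respectively East) step still contributes $d_-$ (respectively $d_+$) in the right-to-left order inherited from Theorem~\ref{dpthm}. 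This matches the procedure stated in the corollary exactly. The main content beyond Theorem~\ref{dpthm} and \eqref{chi02chi1} is purely bookkeeping about which slots are affected by flipping which corners, and I expect no significant obstacle once that identification is carefully laid out.
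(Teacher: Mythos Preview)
Your proposal is correct and follows exactly the route the paper indicates: the paper's own justification is the single sentence ``Combining this result with equation \eqref{chi02chi1} implies the following,'' and you have simply supplied the bookkeeping that makes this implication explicit. Your identification of the corner slots as the $d_+d_-$ pairs in the operator word, the disjointness of distinct corner slots, and the distributive factorization of the alternating sum into the local commutators $(1-q)^{-1}(d_+d_- - d_-d_+)=\frac{1}{q-1}[d_-,d_+]$ are all accurate.
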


\begin{rem}
Before we proceed to the proof of Theorem \ref{dpthm}, we would like to
explain why we expected such a result to hold and how we obtained it. First note that the number of Dyck paths of length $n$ is given by the Catalan number $C_n=\frac{1}{n+1} \binom{2n}{n}$ which grows exponentially with $n$. The dimension of the degree $n$ part of $\Sym[X]$ is the number of partitions of size $n$, which grows subexponentially. For instance, for $n=3$ we have $5$ Dyck paths, but only $3$ partitions. Thus there must be linear dependences between different $\chi(\pi)$. Now fix a partial Dyck path $\pi_1\in \DD_{k,n}$. For each partial Dyck path $\pi_2\in\DD_{k,n'}$ we can reflect $\pi_2$ and concatenate it with $\pi_1$ to obtain a full Dyck path $\pi_2^{op}\pi_1$ of length $n+n'-k$. Then we take its character $\chi(\pi_2^{op}\pi_1)$. We keep $n$, $\pi_1$ fixed and vary $n'$, $\pi_2$. Thus we obtain a map $\varphi_{\pi_1}:\DD_k\to\Sym[X]$. The map $\pi_1\to \varphi_{\pi_1}$ is a map from $\DD_k$ to the vector space of maps from $\DD_k$ to $\Sym[X]$, which is very high dimensional, because both the set $\DD_k$ is infinite and $\Sym[X]$ is infinite dimensional. A priori it could be the case that the images of the elements of $\DD_{k,n}$ in $\mathrm{Maps}(\DD_k,\Sym[X])$ are linearly independent. But computer experiments convinced us that it is not so, that there should be a vector space $V_{k,n}$ whose dimension is generally smaller than the size of $\DD_{k,n}$, so that we have a commutative diagram
\[
\begin{tikzcd}
\DD_{k,n} \arrow{r}{\chi_{k,n}} \arrow{d}{\varphi} & V_{k,n}\arrow{ld}\\
\mathrm{Maps}(\DD_k,\Sym[X]) &
\end{tikzcd}
\]
We then guessed that $V_{k,n}$ should be the degree $n$ part of $V_k=\Sym[X]\otimes\Q[y_1,\ldots,y_k]$, and from that conjectured a definition of $\chi_k:\DD_k\to V_k$ defined below, and verified on examples that partial Dyck paths that are linearly dependent after applying $\chi_{k,n}$ satisfy the same linear dependence after applying $\varphi$. Once this was established, the computation of the operators $d_{-}$, $d_{+}$ and proof of Theorem \ref{dpthm} turned out to be relatively straightforward.
\end{rem}


The proof of Theorem \ref{dpthm} will be divided in several parts.

\subsection{Characteristic functions of partial Dyck paths}

The following definition is motivated by Proposition \ref{prop.q1}. Let $\pi \in \DD_{k,n}$. Let $\sigma=(\sigma_1, \sigma_2,\ldots,\sigma_k)\in \Z_{>0}$ be a tuple of distinct numbers. The elements of $\Im(\sigma)\subset \Z_{>0}$ will be called \emph{special}. Let
\[
U_{\pi,\sigma}=\left\{w\in \mathbb{Z}^{n}_{>0} :
w_i=\sigma_i \mbox{ for } i \leq k,\
w_i \neq w_j \mbox{ for } (i,j) \in \Area(\pi)\right\}.
\]
The second condition on $w$ is the ``no attack'' condition as before, where $\Area(\pi)$ refers to $\Area(N^k \pi)$. The first condition says that we put the special labels in the positions $1,2,\ldots,k$ as prescribed by $\sigma$.
Let
\begin{equation}\label{eq:chiprime}
\chi_\sigma'(\pi)=\sum_{w \in U_{\pi,\sigma}} q^{\inv(\pi,w)} z_w.
\end{equation}
Here we use variables $z_1,z_2,\ldots$.

Suppose $\sigma$ is a permutation, i.e., $\sigma_i\leq k$ for all $i$. Set $z_i=y_i$ for $i\leq k$ and $z_i=x_{i-k}$ for $i>k$.
We denote
\[
\chi'_{k}(\pi)=\chi_{(1,2,\ldots,k)}'(\pi).
\]

Let us group the summands in \eqref{eq:chiprime} according to the positions of
special labels. More precisely, let $S\subset\{1,\ldots,n\}$ such that $\{1,\ldots,k\}\subset S$ and $w^S:S\to \{1,\ldots,k\}$ such that $w^S_i=\sigma_i$ for $i=1,2,\ldots,k$ and $w_i\neq w_j$ for $i,j\in S$, $(i,j)\in\Area(\pi)$. Set
\[
U_{\pi,\sigma}^{S,w^S}:=\left\{w\in U_{\pi,\sigma} :
w_i=w^S_i\mbox{ for } i \in S,\; w_i>k\mbox{ for } i \notin S\right\},
\]
\[
\Sigma_{\pi,\sigma}^{S,w^S}:=\sum_{w \in U_{\pi,\sigma}^{S,w^S}} q^{\inv(\pi,w)} z_w,\qquad \chi_\sigma'(\pi) = \sum_{S, w^S} \Sigma_{\pi,\sigma}^{S,w^S}.
\]
Let $m_1<m_2<\cdots<m_r$ be all the positions not in $S$.
Let $\pi_S$ be the unique Dyck path\footnote{The shape of $\pi_S$ is the Dyck path obtained by removing all rows and columns indexed by $S$.} of length $r$ such that $(i,j)\in\Area(\pi_S)$ if and only if $(m_i, m_j)\in \Area(\pi)$. We have
\[
\Sigma_{\pi,\sigma}^{S,w^S} = q^A \prod_{i\in S} y_{w_i} \sum_{w \in \Z_{>k}^{r} \text{no attack}} q^{\inv(\pi_S, w)} z_w,
\]
where
\[
A=\#\{(i,j)\in\Area(\pi) : (i\in S, j\in S, w^S_i>w^S_j) \mbox{ or } (i\notin S, j\in S)\}.
\]

By Proposition \ref{prop.q1} we have
\begin{equation}\label{eq:chitochik}
\Sigma_{\pi,\sigma}^{S,w^S} = q^A (q-1)^{|S|-n} \chi(\pi_S)\left[(q-1)X\right] \prod_{i\in S} y_{w_i}.
\end{equation}
In particular, $\chi_\sigma(\pi)$ is a symmetric function in $x_1, x_2,\ldots$ and it makes sense to define
\begin{equation}
\label{chikdef}
\chi_\sigma(\pi)[X] := \frac{1}{y_1 y_2\cdots y_k} (q-1)^{|\pi|}\chi_\sigma'(\pi)\left[\frac{X}{q-1}\right]\in V_k,\quad \chi_k(\pi):=\chi_{\Id_k}(\pi).
\end{equation}

\begin{rem}
The identity (\ref{eq:chitochik}) also implies that the coefficients of $\chi_\sigma(\pi)[X]$ are polynomials in $q$ and gives a way to express $\chi_\sigma$ in terms of the characteristic functions $\chi(\pi_S)$ for all $S$. Do we have a simple description of $\chi_k(\pi)$?
\end{rem}

For $k=0$ we recover $\chi(\pi)$:
\[
\chi_0(\pi)=\chi(\pi)\qquad (\pi\in \mathbb{D}_0=\mathbb{D}).
\]
Thus, it suffices to prove that
\begin{equation}\label{eq:recs1}
\chi_{k+1}(E\pi)=d_{+} \chi_{k}(\pi),\quad
\chi_{k-1}(N\pi)=d_{-} \chi_{k}(\pi)
\quad (\pi \in \DD_k).
\end{equation}

\subsection{Raising operator}
We begin with the first equality in \eqref{eq:recs1}. Let $\pi\in\DD_{k,n}$ so that $E\pi \in \DD_{k+1, n+1}$, and we need to express $\chi_{k+1}(E\pi)$ in terms of $\chi_k(\pi)$. Let $\sigma$ be the following sequence:
\[
\sigma=(k+1, 1, 2, \ldots, k).
\]
Then we have the natural bijection $f:U_{\pi,\Id_k} \to U_{E\pi,\sigma}$ obtained
by sending
\[
w=(1, 2, \ldots, k, w_{k+1},\ldots,w_n)
\]
to
\[
f(w):=(k+1, 1, 2, \ldots,k, w_{k+1},\ldots,w_n).
\]
To see this is well-defined, we observe that $\Area(E\pi)$ has some new cells in the first column. The ``no attack" condition is satisfied because $1$ does not attack $k+i$ in $E\pi$ for $i> 1$.
We clearly have $\inv({E\pi}, f(w)) = \inv(\pi, w) + k$, which implies
\begin{equation*}
\chi'_{\sigma}(E\pi) = z_{k+1} q^k \chi'_k(\pi),
\end{equation*}
where both sides are written in terms of the variables $z_i$. When we pass to the variables $x_i$, $y_i$ on the left, we have
\[
(z_1,z_2,\ldots)=(y_1,y_2,\ldots,y_{k+1},x_1,x_2,\ldots),
\]
but on the right we have
\[
(z_1,z_2,\ldots)=(y_1,y_2,\ldots,y_{k},x_1,x_2,\ldots),
\]
thus we need to perform the substitution $X=y_{k+1}+X$:
\[
\chi'_{\sigma}(E\pi)[X] = y_{k+1} q^k \chi'_k(\pi)[X+y_{k+1}],
\]
Performing the transformation \eqref{chikdef} we obtain
$$\chi_{\sigma}(E\pi)[(q-1)X] = q^k \chi_k(\pi)\left[(q-1)X+(q-1)y_{k+1}\right].
$$
By setting $X=X/(q-1)$, we obtain
\begin{equation}
\label{chi1dp}
\chi_{\sigma}(E\pi)[X] = q^k \chi_k(\pi)\left[X+(q-1)y_{k+1}\right].
\end{equation}
To finish the computation we need to relate $\chi_{k+1}=\chi_{\Id_{k+1}}$ and $\chi_\sigma$. We first note that $\sigma$ can be obtained from $\Id_{k+1}$ by successively swapping neighboring labels. Let $\sigma^{(1)}=\Id_{k+1}$ and
\[
\sigma^{(i)} = (i, 1, 2,\ldots,i-1,i+1,\ldots,k+1)\qquad (i=2,3,\ldots,k+1),
\]
so that $\sigma = \sigma^{(k+1)}$. It is clear that $\sigma^{(i+1)}$ can be obtained from $\sigma^{(i)}$ by interchanging the labels $i$ and $i+1$.

We show below (Proposition \ref{prop:swapping}) that this kind of interchange is controlled by the operator $\Delta_{y_i, y_{i+1}}$:
\begin{equation}
\label{chisigrec}
\chi_{\sigma^{(i+1)}}(E\pi)=\Delta_{y_i,y_{i+1}}
\chi_{\sigma^{(i)}}(E\pi).
\end{equation}

This implies
\[
\chi_\sigma(E\pi) = \Delta_{y_{k-1},y_{k}}\cdots \Delta_{y_{1},y_{2}} \chi_{k+1}(E\pi).
\]
When we insert this equation
into \eqref{chi1dp} and apply the first formula in Proposition \eqref{Delprop}, we arrive at
\[
\chi_{k+1}(E\pi) = T_{1}\cdots T_{k}
\left(\chi_k(\pi)\left[X+(q-1) y_{k+1}\right]\right) = d_+ \chi_k(\pi).
\]

\subsection{Swapping operators}
\begin{prop}\label{prop:swapping}
For any $\pi \in \DD_k$, $\sigma$ as above and $m$ special suppose that
$m+1$ is not special or $\sigma^{-1}(m)<\sigma^{-1}(m+1)$. Then we have
\[
\chi'_{\tau_m \sigma}(w) = \Delta_{z_{m}, z_{m+1}} \chi'_{\sigma},
\]
where $\tau_m$ is the transposition $m\leftrightarrow m+1$, $(\tau_m\sigma)_i = \tau_m(\sigma_i)$ for $i=1,\ldots,k$.
\end{prop}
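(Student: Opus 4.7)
My plan is to reduce the identity $\chi'_{\tau_m\sigma}(\pi) = \Delta_{z_m, z_{m+1}} \chi'_\sigma(\pi)$ to a purely local statement at the positions carrying labels in $\{m, m+1\}$, and then to verify this local statement by a symmetry argument together with an explicit computation of $\Delta_{z_m, z_{m+1}}$ on monomials.

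The key preliminary observation is that $m$ and $m+1$ are consecutive integers, so for any $w_i \in \{m, m+1\}$ and any $w_j \notin \{m, m+1\}$ the inequality $w_i > w_j$ depends only on whether $w_j < m$ or $w_j > m+1$, and never on which of $m$ or $m+1$ sits at position $i$. Partitioning the sum $\chi'_\sigma(\pi) = \sum_{w \in U_{\pi,\sigma}} q^{\inv(\pi,w)} z_w$ by the subset $T = \{i : w_i \in \{m, m+1\}\}$, the contribution from $\{1,\dots,n\}\setminus T$ together with every cross-inversion with $T$ therefore factors off as a common polynomial $G_T$ that is independent of $z_m, z_{m+1}$ and identical for $\sigma$ and $\tau_m \sigma$. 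By Lemma~\ref{Dellem}, $G_T$ commutes with $\Delta_{z_m, z_{m+1}}$, reducing the proposition to the local identity
\[
\Delta_{z_m, z_{m+1}} F_T(\sigma) = F_T(\tau_m\sigma)
\]
for every admissible $T$, where $F_T(\sigma) = \sum q^{\inv_T(w)} z_m^{|T_m(w)|} z_{m+1}^{|T_{m+1}(w)|}$ sums over admissible $\{m,m+1\}$-colorings of $T$ respecting no-attack on $\pi$ and the forcing conditions $w_p = m$ in Case~1, or $w_p = m, w_q = m+1$ with $p < q$ in Case~2.

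I would then decompose the attack subgraph on $T$ into connected components. Each component disjoint from the forced positions admits both of its bipartite colorings, and I would argue that the resulting contribution $q^{I_1}z_m^{|C_1|}z_{m+1}^{|C_2|} + q^{I_2}z_m^{|C_2|}z_{m+1}^{|C_1|}$ is symmetric in $z_m, z_{m+1}$, using the fact that the restrictive shape of a Dyck-path area region forces the two inversion counts $I_1, I_2$ to coincide whenever $|C_1|\ne|C_2|$. By the second part of Lemma~\ref{Dellem}, these symmetric factors commute with $\Delta_{z_m, z_{m+1}}$ and cancel from both sides, reducing the local identity to its statement on the single constrained component containing the forced positions. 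There the forcing picks out a unique admissible coloring, so each side collapses to a single monomial $z_m^a z_{m+1}^b$ (or $z_m^{a'} z_{m+1}^{b'}$) weighted by an explicit power of $q$.

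The remaining verification is a direct computation: using \eqref{e-Delta*s} and $\Delta_{uv}=\Delta^*_{uv}-1+q$ from Proposition~\ref{Delprop}, I would check that $\Delta_{z_m, z_{m+1}}$ maps the $\sigma$-monomial to the $\tau_m\sigma$-monomial with the correct $q$-weight. The ordering hypothesis, either $\sigma^{-1}(m) < \sigma^{-1}(m+1)$ in Case~2 or $p=\sigma^{-1}(m) \le k <$ all non-special positions in Case~1, is essential: it fixes the orientation of the inversion correction produced by swapping the forced labels so that $\Delta_{z_m, z_{m+1}}$, rather than its inverse, implements the swap. The main obstacle is the structural symmetry claim of the previous paragraph: showing that sub-Dyck path attack graphs have no bipartite components with $|C_1|\ne|C_2|$ and $I_1\ne I_2$, which I would prove by a case analysis that rules out asymmetric tree configurations via the area condition $a_{j+1}\le a_j+1$.
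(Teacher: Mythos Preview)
Your overall architecture matches the paper's proof: factor by the set $T=\{i:w_i\in\{m,m+1\}\}$, peel off everything outside $T$ as a factor independent of $z_m,z_{m+1}$, then analyse the $\{m,m+1\}$-labellings of $T$ component by component, showing that unconstrained components contribute symmetric factors while the single constrained component is transformed correctly by $\Delta_{z_m,z_{m+1}}$.

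The substantive difference is that you treat the attack graph on $T$ as a general bipartite graph and identify as your ``main obstacle'' a structural symmetry claim to be proved by case analysis on tree configurations. The paper bypasses this entirely with one observation: the components are \emph{paths}. Indeed, if $j_1<j_2<j_3$ all lie in $T$ and $j_1$ attacks $j_3$, then by the staircase shape of $\Area(\pi)$ also $j_1$ attacks $j_2$ and $j_2$ attacks $j_3$, forcing $w_{j_1},w_{j_2},w_{j_3}$ pairwise distinct---impossible with two labels. Hence within each component the elements $j_1<\cdots<j_l$ satisfy: $j_a$ attacks $j_{a+1}$, $j_a$ does not attack $j_{a+2}$, and the labels strictly alternate. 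The paper calls these \emph{runs}. Once you know this, the contribution of a run of length $l$ with first label $c$ is the explicit monomial $a(l,c)$ listed in the proof, and one checks by inspection that $a(l,m)+a(l,m+1)$ is symmetric (your $I_1=I_2$ claim becomes a one-line computation).

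For the constrained run, note that $p=\sigma^{-1}(m)\le k$ is the minimum of $T$ (all non-special positions are $>k$, and if $m+1$ is special then $q=\sigma^{-1}(m+1)>p$), so the first run begins with label $m$; if $q$ exists it is the second element of that same run and its label $m+1$ is already forced by alternation. After stripping the symmetric factor $(z_mz_{m+1})^{l'}$, the $\Delta$-computation reduces to the two identities $\Delta_{u,v}1=q$ and $\Delta_{u,v}u=v$, so you do not need the full formula \eqref{e-Delta*s}.
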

\begin{proof}
We decompose both sides as follows. For any  $w \in U_{\pi, \sigma}$ let $S(w)$ be the set of indices $j$ where $w_j\in\{m,m+1\}$. For $w,w'\in U_{\pi, \sigma}$ write $w\sim w'$ if $S(w)=S(w')$ and $w_i=w_i'$ for all $i\notin S(w)$.
This defines an equivalence relation on $U_{\pi, \sigma}$.
The sum \eqref{eq:chiprime} is then decomposed as follows:
\begin{equation}\label{eq:rundecomp}
\chi'_\sigma(\pi) = \sum_{[w]\in U_{\pi, \sigma}/\sim} q^{\inv_1(\pi, w)} \prod_{i\notin S} z_{w_i} \sum_{w'\sim w} a(w'),
\end{equation}
where
\[
\inv_1(\pi, w) = \#\{(i,j)\in\Area(\pi): w_i>w_j, i\notin S(w) \mbox{ or } j\notin S(w)\},
\]
which does not depend on the choice of a representative $w$ in the equivalence class $[w]$, and
\[
a(w) = q^{\inv_2(\pi, w)} \prod_{i\in S} z_{w_i},
\]
\[
\inv_2(\pi, w) = \#\{(i,j)\in\Area(\pi): w_i>w_j, i,j \in S(w)\}.
\]

Let $f:U_{\pi, \sigma}\to U_{\pi, \tau_m\sigma}$ be the bijection defined by $f(w)_i = \tau_m(w_i)$. This bijection respects the equivalence relation $\sim$ and we have $S(f(w))=S(w)$. Moreover, we have $\inv_1(\pi, w) = \inv_1(\pi, f(w))$.
We now make the stronger claim that for any $w\in U_{\pi, \sigma}$
\begin{equation}
\label{chisigrecref}
\sum_{w'\sim f(w)} a(w') = \Delta_{z_m,z_{m+1}}
\sum_{w'\sim w} a(w')
\end{equation}
which would imply the statement by summing over all equivalence classes.

For each $w\in U_{\pi, \sigma}$ the set $S(w)$ is decomposed into a disjoint union of \emph{runs}, i.e. subsets
\[R=\{j_1,...,j_l\} \subset \{1,...,n\},\quad j_1<\cdots <j_l\]
such that in each run $j_a$ attacks $j_{a+1}$ for all $a$ and
elements of different runs do not attack each other.
Because of the ``no attack" condition, the labels $w_{j_a}$ must alternate between $m,m+1$ and $j_a$ does not attack $j_{a+2}$. Thus to fix $w$ in each equivalence class it is enough to fix $w_{j_1}$ for each run. Suppose the runs of $S(w)$ have lengths $l_1, l_2,\ldots, l_r$ and the first values of $w$ in each run are $c_1, c_2,\ldots, c_r$ respectively.

With this information $a(w)$ can be computed as follows:
\[
a(w) = \prod_{i=1}^r a(l_i, c_i),
\]
where
\[
a(l,c):=
\begin{cases}
q^{l'-1} z_{m}^{l'} z_{m+1}^{l'} & l=2l', c=m\\
q^{l'} z_{m}^{l'+1} z_{m+1}^{l'} & l=2l'+1, c=m \\
q^{l'} z_{m}^{l'} z_{m+1}^{l'} & l=2l', c=m+1 \\
q^{l'} z_{m}^{l'} z_{m+1}^{l'+1} & l=2l'+1, c=m+1.
\end{cases}
\]

For instance, let $k=3$ and $\pi$ be the Dyck path in Figure \ref{dyckproof},
and let
\[w=(1,3,2,7,1,7,1,2) \in U_{\pi,(132)}.\]
Let $m=1$. Then we have $S(w)=\{1,3,5,7,8\}$, which decomposes into
two runs
$\{1,3,5\}$ and $\{7,8\}$. So we have $r=2$, $(l_1, l_2)=(3, 2)$, $(c_1, c_2) = (1, 1)$ and we obtain
\[
a(w)=a(3, 1) a(2, 1) = q z_1^2 z_2 z_1 z_2 = q z_1^3 z_2^2.
\]
\begin{figure}
\begin{tikzpicture}
\draw[help lines] (0,0) grid (8,8);
\draw[dashed, color=gray] (0,0)--(8,8);
\draw[->,very thick] (0,3)--(0,4);
\draw[->,very thick] (0,4)--(1,4);
\draw[->,very thick] (1,4)--(2,4);
\draw[->,very thick] (2,4)--(2,5);
\draw[->,very thick] (2,5)--(3,5);
\draw[->,very thick] (3,5)--(4,5);
\draw[->,very thick] (4,5)--(4,6);
\draw[->,very thick] (4,6)--(5,6);
\draw[->,very thick] (5,6)--(5,7);
\draw[->,very thick] (5,7)--(5,8);
\draw[->,very thick] (5,8)--(6,8);
\draw[->,very thick] (6,8)--(7,8);
\draw[->,very thick] (7,8)--(8,8);
\node at (.7,.3) {1};
\node at (1.7,1.3) {3};
\node at (2.7,2.3) {2};
\node at (3.7,3.3) {7};
\node at (4.7,4.3) {1};
\node at (5.7,5.3) {7};
\node at (6.7,6.3) {1};
\node at (7.7,7.3) {2};
\end{tikzpicture}
\caption{}
\label{dyckproof}
\end{figure}

 Note that by the assumption on $\sigma$ we have $c_1=m$, while $c_i$ can take arbitrary values $\{m, m+1\}$ for $i>1$. This implies
\[
\sum_{w'\sim w} a(w') = a(l_1, m) \prod_{i=2}^r (a(l_i, m) + a(l_i, m+1)).
\]
On the other hand we have
\[
\sum_{w'\sim f(w)} a(w') = \sum_{w'\sim w} a(f(w')) =  a(l_1, m+1) \prod_{i=2}^r (a(l_i, m) + a(l_i, m+1)).
\]

Now notice that for all $l$ the sum $a(l, m) + a(l, m+1)$ is symmetric in $z_m, z_{m+1}$. The operator $\Delta_{z_m,z_{m+1}}$ commutes with multiplication by symmetric functions and satisfies
\[\Delta_{z_m,z_{m+1}}(a(l, m)) = a(l, m+1),\]
which is reduced to check the straightforward formulas $\Delta_{u,v} 1=q$ and $\Delta_{u,v} u=v$.
This establishes \eqref{chisigrecref} and the proof is complete.
\end{proof}
\begin{rem}
The arguments used in the proof can be used to show that in the case when $m, m+1$ are both not special the function $\chi'_\sigma(\pi)$ is symmetric in $z_m, z_{m+1}$. In particular, we can obtain a direct proof of the fact that $\chi'_\sigma$ is symmetric in the variables $z_m, z_{m+1}, z_{m+2},\ldots$ for $i=\max(\sigma)+1$, without use of Proposition \ref{prop.q1}.
\end{rem}

\subsection{Lowering operator}

We now turn to the remaining identity $\chi_{k-1}(N\pi)=d_-\chi_k(\pi)$. Assume $\pi\in\DD_{k,n}$, so that $N\pi\in\DD_{k-1,n}$. We observe that
\[\chi'_{k-1}(N\pi)[X + y_{k}] =
\sum_{r\geq 0} \chi'_{k,r}(\pi)[X],\]
where
\[\chi'_{k,r}(\pi)=\chi'_{\sigma}(\pi),\quad \sigma=(1,2,...,k-1,k+r)\]
and to get to the second equality of \eqref{eq:recs1} we have summed over all possible values
of $r=w_k-k$ that do not result in an attack. It is convenient to set $x_0=y_k$.
Using Proposition \ref{prop:swapping} we can characterize $\chi'_{k,r}(\pi)$ by
\begin{equation}
\label{chi1r}
\chi'_{k,0}(\pi) = \chi'_{k}(\pi),\quad
\chi'_{k,r+1}(\pi) = \Delta_{x_r,x_{r+1}} \chi'_{k,r}(\pi)\quad(r\geq 0).
\end{equation}

Now notice that there is a unique expansion
\[\chi'_{k}(\pi)[X]=\sum_{j \geq 1} y_k^j g_j(\pi)[X+y_k],\quad g_j(\pi)\in V_{k-1}.\]
The advantage over the more obvious expansion in powers of $y_k$ is that each
coefficient $g_j[X+y_k]$ is symmetric in the variables $y_k,x_1,...$.
As a result, we have that
\[\chi'_{k,r}(\pi)[X]=\Delta_{x_{r-1},x_{r}}\cdots \Delta_{x_2,x_1}\Delta_{y_k,x_1} \sum_{i\geq 1} y_k^i g_i(\pi)[X+y_k]=
\sum_{i\geq 1} f_{i,r} g_i(\pi)[X+y_k]\]
where by writing $x_0=y_k$, we have
\[f_{i,r}=\Delta_{x_{r-1},x_{r}}\cdots
\Delta_{x_1,x_2}\Delta_{x_0,x_1}(y_k^i)\quad(i\geq 1, r\geq 0),  \qquad f_{i,0}=y_k^i.
\]
The extra symmetry in the $y_k$ variable is used to
pass $\Delta_{y_k,x_1}$ by multiplication by $g_i(\pi)[X+y_k]$.

Now we need an explicit formula for $f_{i,r}$:
\begin{prop}
Denote $X_r=y_k+x_1+\cdots+x_r$, $X_{-1}=0$, $X_0=y_k:=x_0$. We have
\[
f_{i,r}=\frac{h_i[(1-q)X_r] - h_i[(1-q)X_{r-1}]}{1-q} \quad(i\geq 1, r\geq 0).
\]
\end{prop}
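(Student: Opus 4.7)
The plan is to prove the formula by induction on $r \geq 0$, packaging the $f_{i,r}$ into a generating function. Define
\[F_r(z) := \sum_{i \geq 1} z^i f_{i,r} \quad\text{and}\quad G_r(z) := \sum_{i \geq 0} z^i h_i[(1-q)X_r] = \prod_{j=0}^{r} \frac{1 - q x_j z}{1 - x_j z},\]
with the convention $x_0 := y_k$ and $G_{-1}(z) := 1$. The identity to be proved is equivalent to the closed form $F_r(z) = (G_r(z) - G_{r-1}(z))/(1-q)$. Using the telescoping relation $G_r = G_{r-1} \cdot \tfrac{1 - q x_r z}{1 - x_r z}$, a one-line manipulation reduces this further to the cleaner shape
\[F_r(z) = G_{r-1}(z) \cdot \frac{x_r z}{1 - x_r z},\]
which is the form that I would actually verify.

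For the base case $r=0$, one has $X_{-1} = 0$ and $X_0 = y_k$, so Lemma \ref{l-h1-u} gives $h_i[(1-q)y_k] = (1-q)\, y_k^i$ for $i \geq 1$, and the proposed formula collapses to $f_{i,0} = y_k^i$, which is the defining value. For the inductive step I would show that $\Delta_{x_r, x_{r+1}} F_r(z) = F_{r+1}(z)$ by computing directly with the closed form above. Since $G_{r-1}(z)$ involves none of the variables $x_r, x_{r+1}$, it is trivially symmetric in them, so by Lemma \ref{Dellem} the operator $\Delta_{x_r, x_{r+1}}$ pulls $G_{r-1}(z)$ outside. The problem therefore reduces to the single rational identity
\[\Delta_{x_r, x_{r+1}} \frac{x_r z}{1 - x_r z} = \frac{(1 - q x_r z)\, x_{r+1} z}{(1 - x_r z)(1 - x_{r+1} z)},\]
which is verified by substituting $P(u,v) = uz/(1-uz)$ into the definition of $\Delta_{uv}$ and checking that the factor $v - u$ in the denominator divides the numerator of $(q-1)v\tfrac{uz}{1-uz} + (v-qu)\tfrac{vz}{1-vz}$ after clearing denominators.

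The main obstacle is just this rational function calculation, which is routine but requires careful bookkeeping. Once it is in hand, multiplying both sides back by $G_{r-1}(z)$ and using $G_r(z) = G_{r-1}(z) \cdot \tfrac{1 - q x_r z}{1 - x_r z}$ gives $F_{r+1}(z) = G_r(z) \cdot \tfrac{x_{r+1} z}{1 - x_{r+1} z}$, which is exactly the claimed formula for $r+1$, closing the induction.
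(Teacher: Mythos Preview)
Your proof is correct and follows essentially the same route as the paper: induction on $r$, the same base case via Lemma~\ref{l-h1-u}, and for the step you pull out the factor symmetric in $x_r,x_{r+1}$ and compute $\Delta_{x_r,x_{r+1}}$ on the remaining piece. The only difference is packaging: the paper works coefficient-by-coefficient, rewriting $f'_{i,r}=\sum_{j=1}^i x_r^j\,h_{i-j}[(1-q)X_{r-1}]$ and computing $\Delta_{x_r,x_{r+1}}x_r^j=x_{r+1}h_{j-1}[(1-q)x_r+x_{r+1}]$, whereas you bundle all $i$ into the generating function $F_r(z)=G_{r-1}(z)\cdot\frac{x_rz}{1-x_rz}$ and do the single rational identity --- these are the same computation, with your version arguably tidier.
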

\begin{proof}
Denote the right hand side by $f_{i,r}'$. The proof goes by induction on $r$. For $r=0$ both sides are equal to $y_k^i$:
$h_i[(1-q)y_k]=(1-q) y_k^i$ for $i\ge 1$ by Lemma \ref{l-h1-u}.


Thus it is enough to show
that for $r\ge 0$ we have
\begin{equation}
\label{fjrrec}
\Delta_{x_{r},x_{r+1}}(f'_{i,r})=f'_{i,r+1}.
\end{equation}
Use $X_r=X_{r-1}+x_{r}$ to write
\begin{align*}
 h_i[(1-q) X_r] &= \sum_{a=0}^i h_a[(1-q)x_{r}]  h_{i-a}[(1-q) X_{r-1}] \\
&=h_{i}[(1-q) X_{r-1}]+ \sum_{a=1}^i (1-q) x_r^a h_{i-a}[(1-q) X_{r-1}],
\end{align*}
by which we can write $f_{i,r}'$ as follows:
\begin{align}
\label{fjreq}
f_{i,r}' &=\sum_{j=1}^i x_r^j h_{i-j}[(1-q) X_{r-1}]. 
\end{align}
Now $X_{r-1}$ does not contain the variables $x_r$, $x_{r+1}$, so we have
\[
\Delta_{x_{r},x_{r+1}}(f'_{i,r}) = \sum_{j=1}^i h_{i-j}[(1-q) X_{r-1}]  \Delta_{x_{r},x_{r+1}} x_r^j.
\]
We need the formula
\begin{align*}
 \Delta_{x_r, x_{r+1}} x_r^j &= \frac{(1-q) x_{r+1} x_r^j + (x_{r+1}-qx_r) x_{r+1}^j}{x_{r+1}-x_r} \\
                             &=x_{r+1} h_{j-1}[x_r+x_{r+1}] -q x_{r}x_{r+1} h_{j-2}[x_r+x_{r+1}] \\
                             &=x_{r+1} h_{j-1}[(1-q)x_r + x_{r+1}].
\end{align*}

Now we can evaluate
\begin{align*}
\Delta_{x_r x_{r+1}} f'_{i,r} &=
x_{r+1}\sum_{j=1}^i h_{j-1}[(1-q)x_r + x_{r+1}] h_{i-j}[(1-q)X_{r-1}]\\
&=
x_{r+1} h_{i-1}[(1-q) X_{r} + x_{r+1}]\\
&=\sum_{j=1}^i x_{r+1}^j h_{i-j}[(1-q) X_{r}],
\end{align*}
which matches $f_{i,r+1}'$ by \eqref{fjreq}.
\end{proof}

Now, if we sum over all $r$, we obtain
\begin{equation}
\label{sumfir}
\sum_{r \geq 0} f_{i,r}=
(1-q)^{-1}h_i\left[(1-q)(X+y_k)\right].
\end{equation}
Thus
\[\chi_{k-1}'(N \pi)[X + y_{k}] =
(1-q)^{-1} \sum_{i\geq 1} h_i[(1-q)(X+y_k)] g_i(\pi)[X+y_k].\]
This implies
\begin{equation}
\label{chistep}
\chi_{k-1} (N\pi)[X] = - \frac{(q-1)^{n-k}}{y_1\cdots y_{k-1}} \sum_{i\geq 1} h_i[-X] g_i(\pi)\left[\frac{X}{q-1}\right].
\end{equation}
On the other hand $g_i(\pi)$ were defined in such a way that
\[
\chi_k(\pi)[(q-1)X] = \frac{(q-1)^{n-k}}{y_1\cdots y_{k}} \sum_{i\geq 1} y_k^i g_i(\pi)[X+y_{k}].
\]
Substituting $\frac{1}{q-1}X-y_k$ for $X$ gives
\begin{equation}
\label{chistep2}
\chi_k(\pi)[X-(q-1)y_k] = \frac{(q-1)^{n-k}}{y_1\cdots y_{k}} \sum_{i\geq 1}  y_k^i g_i(\pi)\left[\frac{X}{q-1}\right].
\end{equation}
Comparing \eqref{chistep} and \eqref{chistep2} we obtain
\[
\chi_{k-1} (N\pi)[X] = \sum_{i\geq 0} -h_{i+1}[-X]\left(\chi_k(\pi)[X-(q-1)y_k]\big|_{y_k^i}\right).
\]

This is just $d_- \chi_k(\pi)$ by Lemma \ref{l-d-}. Hence we
established the second case of \eqref{eq:recs1}. Therefore the proof of Theorem \ref{dpthm} is complete.

\subsection{Main recursion}
We now show how to express all of $D_{\alpha}(q,t)$ using our operators:
\begin{thm}\label{thm:recN}
If $\alpha$ is a composition of length $l$, we have
\[D_{\alpha}(q,t)=d_-^l(N_\alpha).\]
where $N_\alpha\in V_l$ is defined by the recursion relations
\begin{equation}
\label{receqs}
N_{\emptyset}=1,\quad N_{1,\alpha}=d_+N_{\alpha},\quad
N_{a, \alpha} = \frac{t^{a-1}}{q-1} [d_{-},d_{+}] \sum_{\beta\models
  a-1} d_{-}^{l(\beta)-1} N_{\alpha \beta}.
\end{equation}
\end{thm}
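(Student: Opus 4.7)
The plan is to proceed by strong induction on $n=|\alpha|$. The base case $\alpha=\emptyset$ is immediate: $N_\emptyset=1$ and $D_\emptyset(q,t)=1$ (the empty Dyck path contributes $t^0\chi(\emptyset,0)=1$), so $d_-^0 N_\emptyset=D_\emptyset(q,t)$. For the inductive step I distinguish the two clauses of the recursion \eqref{receqs} and match each with the combinatorial recursion of Proposition \ref{p-D'-rec}.

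When $\alpha=(1,\alpha')$, only $\beta=\emptyset$ contributes in \eqref{e-D'-rec}, so $\DD'_{(1,\alpha')}=\gamma_{l(\alpha')}(\DD'_{\alpha'})$ with the bounce preserved. A case-specific path-by-path identity $\chi(\gamma_{l(\alpha')}(\pi''),0)=d_-^{l(\alpha')+1}d_+ Y_{\pi''}$, with $Y_{\pi''}\in V_{l(\alpha')}$ satisfying $d_-^{l(\alpha')} Y_{\pi''}=\chi(\pi'',0)$, follows from a direct operator computation since in this degenerate case $\gamma_{l(\alpha')}$ introduces no new corner. Summing with bounce weights and combining the inductive hypothesis $\sum_{\pi''}t^{\bounce(\pi'')}Y_{\pi''}=N_{\alpha'}$ with $N_{(1,\alpha')}=d_+ N_{\alpha'}$ then yields $d_-^{l(\alpha)}N_{(1,\alpha')}=D_{(1,\alpha')}(q,t)$.

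The main case is $\alpha=(a,\alpha')$ with $a\geq 2$. Writing $l=l(\alpha')$, Proposition \ref{p-psi-r}(ii) and $\bounce(\zeta(\pi))=\area(\pi)$ give $\bounce(\gamma_l(\pi''))=\bounce(\pi'')+(a-1)$ for $\pi''\in\DD'_{\alpha'\beta}$ and $\beta\models a-1$, so Proposition \ref{p-D'-rec} produces
\[
D_{(a,\alpha')}(q,t)=t^{a-1}\sum_{\beta\models a-1}\sum_{\pi''\in\DD'_{\alpha'\beta}} t^{\bounce(\pi'')}\chi(\gamma_l(\pi''),0).
\]
The crux of the proof will be the path-by-path operator identity
\[
\chi(\gamma_l(\pi''),0)=d_-^{l+1}\cdot\tfrac{1}{q-1}[d_-,d_+]\cdot d_-^{l(\beta)-1}\cdot Y_{\pi''},
\]
where $Y_{\pi''}\in V_{l+l(\beta)}$ is the ``stopped'' characteristic obtained by applying the operators of Theorem \ref{dpthm} (with corner adjustments from Corollary \ref{chi0cor}) to the portion of $\pi''$ lying above its initial $l+l(\beta)$ North steps, so that $d_-^{l+l(\beta)} Y_{\pi''}=\chi(\pi'',0)$. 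Summing with bounce weights and invoking the inductive identity $\sum_{\pi''}t^{\bounce(\pi'')} Y_{\pi''}=N_{\alpha'\beta}$, substitution into \eqref{receqs} delivers $D_{(a,\alpha')}(q,t)=d_-^{l+1}N_{(a,\alpha')}$.

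The main obstacle will be establishing this key operator identity. The approach I would take is to compare the corner structures of $\pi''$ and $\gamma_l(\pi'')$: both paths share all the internal corners of the upper portion $\tilde{\pi}''$, while $\gamma_l$ produces exactly one additional $EN$-corner at its newly-positioned East step whenever $l(\beta)\geq 1$. By Corollary \ref{chi0cor}, this extra corner replaces the adjacent operator product $d_+ d_-$ by $\frac{1}{q-1}[d_-,d_+]$; bookkeeping the remaining $d_-$ operators, exactly $l(\beta)-1$ between the commutator and $Y_{\pi''}$ and $l+1$ below, then yields the stated identity.
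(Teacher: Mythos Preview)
Your path-by-path operator identities are exactly right and match the paper: your $Y_{\pi''}$ is the paper's $\chi^0_{l+l(\beta)}(\tilde\pi'')$, and the identity
\[
\chi(\gamma_l(\pi''),0)=d_-^{\,l+1}\cdot\tfrac{1}{q-1}[d_-,d_+]\cdot d_-^{\,l(\beta)-1}\cdot Y_{\pi''}
\]
is precisely what is used there. The combinatorial input (Proposition~\ref{p-D'-rec} and the bounce shift) is also correctly invoked.

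However, your induction is circular as stated. You set out to prove $D_\gamma=d_-^{\,l(\gamma)}N_\gamma$ by induction on $|\gamma|$, but what you actually \emph{use} as the inductive hypothesis is the stronger assertion
\[
\sum_{\pi''\in\DD'_{\gamma}} t^{\bounce(\pi'')}\,Y_{\pi''}=N_{\gamma},
\]
an equality in $V_{l(\gamma)}$. This does not follow from $D_\gamma=d_-^{\,l(\gamma)}N_\gamma$: applying $d_-^{\,l(\gamma)}$ to both sides of the displayed equation gives back the theorem's statement, but you would need $d_-^{\,l(\gamma)}:V_{l(\gamma)}\to V_0$ to be injective to go the other way, and no such fact is available. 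The same problem occurs in your $(1,\alpha')$ case.

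The fix is straightforward and is exactly what the paper does: make the stronger statement the one you prove by induction. Define $N'_\alpha:=\sum_{\pi\in\DD'_\alpha} t^{\bounce(\pi)}\chi^0_{l}(\tilde\pi)\in V_l$ (this is your $\sum Y_\pi$), and show that $N'_\alpha$ satisfies the recursion \eqref{receqs}. Your operator identities already do all the work for this. Since the recursion determines $N_\alpha$ uniquely, $N'_\alpha=N_\alpha$; then applying $d_-^{\,l}$ yields $D_\alpha=d_-^{\,l}N_\alpha$ as a corollary, rather than as the inductive statement itself.
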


\begin{example}
Using Theorem \ref{thm:recN}, we find that
\[N_{31}=\frac{t^3}{(q-1)^2}
\left(d_{-++-++}-d_{-+++-+}-d_{+-+-++}+d_{+-++-+}\right)+\]
\[\frac{t^2}{q-1}\left(d_{-+-+++}-d_{+--+++}\right)=
qt^3y_1^2-qt^2y_1 e_1\in V_2,\]
where $d_{\epsilon_1\cdots \epsilon_n}=d_{\epsilon_1}\cdots d_{\epsilon_n} (1).$
We may then check that
\[d_-^2 N_{31}=qt^3B_{3}B_1(1)+qt^2B_{2}B_1B_1(1)=\nabla C_{3}C_1(1).\]
\end{example}

\begin{proof}
For any $k> 0$ let $\DD_k^0\subset \DD_k$ denote the subset of partial Dyck paths that begin with an East step. For $k=0$ let $\DD_0^0=\{\emptyset\}$.
Define functions $\chi^0:\DD_k^0\rightarrow V_k$ by
\[\chi^0(\emptyset)=1,\quad \chi^0(EN^i\widetilde{\pi})=\frac{1}{q-1}[d_{-},
d_{+}] d_-^{i-1}\chi^0(\widetilde{\pi}),\]
\[\chi^0(E\widetilde{\pi})=d_+\chi^0(\widetilde{\pi}), \widetilde{\pi} \in \DD_k^0.\]

Given a composition $\alpha$ of length $l$, recall
\[
\DD'_{\alpha}=\left\{\pi \in \DD : \touch'(\pi)=\alpha\right\}.
\]
By the definition of $\touch'$ every element of $\DD'_{\alpha}$ is of the form $\pi=N^l \tilde{\pi}$ for a unique element $\tilde{\pi}\in \DD_l^0$ so that by Corollary \ref{chi0cor} we have
\[
\chi(\pi, 0) = d_-^l \chi^0_l(\tilde{\pi}).
\]
Let
\[
N'_\alpha = \sum_{\pi \in \DD'_\alpha}
t^{\bounce(\pi)}\chi_l^0(\tilde{\pi}) \in V_l,
\]
so that $D_{\alpha}(q,t)=d_-^l(N'_{\alpha})$.
It suffices to show that $N'_\alpha$ satisfies the relations
\eqref{receqs}, and so agrees with $N_\alpha$.

We have established the following recursion in Proposition \ref{p-D'-rec}:
$$\DD'_{a,\alpha} = \bigsqcup_{\beta\models a-1} \gamma_l(\DD'_{\alpha\beta}),    $$
where for $\tau \in \DD_{\alpha\beta}$ written as $N^{l+l(\beta)}\tilde{\tau}$, we have $\gamma_{l}(\tau) = N^{l+1} E N^{l(\beta)} \tilde{\tau}$ and
\[
\bounce(\gamma_{l}(\tau)) = \bounce(\tau) + |\beta|=\bounce(\tau)+a-1.
\]
The case $a=1$ has to be distinguished since $\gamma_{\ell}(\tau) = N^{l+1} E \tilde{\tau}$. Thus we have
$$N'_{1,\alpha}=\sum_{\touch( \tau) =\alpha } t^{\bounce(\gamma_l(\tau))}\chi_{l+1}^0(E\tilde\tau)=d_+ N'_\alpha.$$

For $a>1$ we have $\gamma_{\ell}(\tau) = N^{l+1} (E N) N^{l(\beta)-1} \tilde{\tau}$ with a corner $EN$.
Thus
\begin{align*}
 N'_{a,\alpha} &=\sum_{i\ge 1} \sum_{\beta\models a-1, l(\beta)=i } \sum_{\touch(\tau) =\alpha\beta} t^{\bounce(\gamma_l(\tau))}\chi_{l+1}^0(ENN^{i-1}\tilde{\tau}) \\
&=\sum_{i\ge 1} \sum_{\beta\models a-1, l(\beta)=i } t^{a-1} \frac{1}{q-1}[d_+,d_-] d_-^{i-1} N'_{\alpha\beta}.
\end{align*}
This agrees with the relations \eqref{receqs} for $N_\alpha'$.
\end{proof}





\section{Operator relations}

\subsection{Some Useful Formulas\label{s-twist}}

The following is a well-known formula for braid relations.
\begin{lem}\label{l-T1k-i}
  For $1\le i \le k-1$ we have
$$ T_1T_2\cdots T_k T_i =T_{i+1} T_1T_2\cdots T_k, \qquad  T_{i} T_k \cdots T_2 T_1= T_k \cdots T_2 T_1 T_{i+1}.   $$
\end{lem}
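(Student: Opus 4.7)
The plan is to prove both identities by the standard braid-group shuffle: move the out-of-place $T_i$ (or $T_{i+1}$) through the long word using commutation for distant indices and a single application of the braid relation for neighboring ones. The two ingredients are already at our disposal: first, since $T_i=\Delta^*_{y_i y_{i+1}}$ acts only on the pair $(y_i,y_{i+1})$, the operators $T_i$ and $T_j$ commute whenever $|i-j|\ge 2$; second, Proposition \ref{Delprop} gives the braid relation $T_i T_{i+1} T_i = T_{i+1} T_i T_{i+1}$ (taking $u=y_i$, $v=y_{i+1}$, $w=y_{i+2}$).

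For the first identity, I would start with $T_1 T_2 \cdots T_k T_i$ and commute the trailing $T_i$ leftward past each of $T_k, T_{k-1}, \ldots, T_{i+2}$ (all of which commute with $T_i$), obtaining
\[
T_1 T_2 \cdots T_{i-1} T_i T_{i+1} T_i T_{i+2} T_{i+3} \cdots T_k.
\]
Now the highlighted substring $T_i T_{i+1} T_i$ equals $T_{i+1} T_i T_{i+1}$ by the braid relation, giving
\[
T_1 T_2 \cdots T_{i-1} T_{i+1} T_i T_{i+1} T_{i+2} \cdots T_k.
\]
Since the leftmost $T_{i+1}$ here commutes with each of $T_{i-1}, T_{i-2}, \ldots, T_1$, we may slide it all the way to the front, arriving at $T_{i+1} T_1 T_2 \cdots T_k$, as desired.

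For the second identity the argument is the mirror image: begin with $T_i T_k T_{k-1} \cdots T_2 T_1$, commute the leading $T_i$ rightward past $T_k, T_{k-1}, \ldots, T_{i+2}$ to produce the substring $T_i T_{i+1} T_i$, apply the braid relation once to turn it into $T_{i+1} T_i T_{i+1}$, and finally commute the new rightmost $T_{i+1}$ past $T_{i-1}, \ldots, T_1$ to land at $T_k \cdots T_2 T_1 T_{i+1}$. (Equivalently, one can observe that reversing the word order is an anti-automorphism of the algebra generated by the $T_j$ that preserves both the commutation and the braid relations, so the second identity is literally the reverse of the first.)

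There is no real obstacle here; the only point requiring any care is the bookkeeping at the endpoints, namely making sure the indices that commute with $T_i$ really are $\{1,\ldots,i-1\}\cup\{i+2,\ldots,k\}$, which is exactly the hypothesis $1\le i\le k-1$.
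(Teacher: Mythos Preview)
Your proof is correct and follows exactly the same approach as the paper: commute $T_i$ past the distant generators, apply the braid relation once to turn $T_iT_{i+1}T_i$ into $T_{i+1}T_iT_{i+1}$, then commute the resulting $T_{i+1}$ out to the front. The paper likewise says the second identity ``follows similarly,'' so your mirror argument matches theirs as well.
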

\begin{proof}
  \begin{align*}
  T_1T_2\cdots T_k T_i &=T_1T_2\cdots T_{i-1} T_i T_{i+1} T_i T_{i+2}\cdots T_k \\
&=T_1T_2\cdots T_{i-1}  T_{i+1} T_i T_{i+1} T_{i+2}\cdots T_k \\
&=T_{i+1} T_1T_2\cdots T_k.
\end{align*}
The second equation follows similarly.
\end{proof}

When working in $V_k =\Sym[X] \otimes \Q[y_1,\dots,y_k]$, it is convenient to write $Y_i=y_1+\cdots +y_i$, and
$y^a=y_1^{a_1}\cdots y_k^{a_k}$ for short.
We will use two bases of $V_k$ for convenience:
\begin{enumerate}
 \item $\{ s_\lambda[X] y^a : \lambda \text{ is a partition}, a \in \N^k \}$.


\item $\{ s_\lambda[X+(q-1)Y_k] y^a : \lambda \text{ is a partition}, a \in \N^k \}$.
\end{enumerate}
The first basis is natural. The expansion by the second basis is simple when using plethystic notation. For instance,
$$ s_{\lambda}[X]= s_\lambda[X+(q-1)Y_k -(q-1) Y_k]=\sum_{\mu\subseteq \lambda } s_\mu[X+(q-1)Y_k] s_{\lambda/\mu}[-(q-1)Y_k],$$
and $s_{\lambda/\mu}[-(q-1)Y_k]$ is clearly a polynomial in the $y$'s.

The advantage of the latter basis is that $s_\lambda[X+(q-1) Y_k]$ commutes with the $T_i$ operators.
Moreover, when working with the $d_+, d_-$ operators, and $d_+^*$ operators in later section, it is natural to define
a family of twisted multiplications:
For $F\in\Sym[X]$, $G\in V_k$, $m=0,1,2,\ldots,k$ put
\[(F \ast_m G)[X] = F\left[X + (q-1)\left(\sum_{i=1}^m t y_i + \sum_{i=m+1}^k y_i\right)\right] G. \]
In particular,
$$(F\ast G)[X] =(F\ast_0 G)[X] = F\left[X + (q-1)Y_k\right] G.   $$
Thus our basis elements of $V_k$ are simply $s_\lambda \ast y^a$.


We have the following twisted commuting properties, where we only need the $m=0$ case in this section.
\begin{lem}\label{l-twistedeq}
For $F\in \Sym[X]$ and $G\in V_k$, we have
\begin{equation}
\label{twistedeq}
d_{+} (F\ast G) = F\ast d_{+} G, \quad d_{-} (F\ast_m G) = F\ast_m d_{-} G\ (0\le m <k).
\end{equation}
\end{lem}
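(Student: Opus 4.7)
The plan is to prove both identities by directly unfolding the definitions of $d_+$ and $d_-$ (via \eqref{dpdef} and Lemma \ref{l-d-}) and observing that in each case the plethystic shift coming from $\ast_m$ interacts cleanly with the definition. Each proof is a short computation; the key observations are (a) for $d_+$, that $F[X+(q-1)Y_{k+1}]$ is symmetric in all the $y$-variables, and (b) for $d_-$, that the coefficient $1$ (not $t$) on $y_k$ in $\ast_m$ when $m<k$ is exactly what makes the $y_k$'s cancel.

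For the first identity, I would compute
\[
d_+(F\ast G) \;=\; T_1T_2\cdots T_k\bigl((F\ast G)[X+(q-1)y_{k+1}]\bigr)
\;=\; T_1\cdots T_k\bigl(F[X+(q-1)Y_{k+1}]\,G[X+(q-1)y_{k+1}]\bigr).
\]
Since $Y_{k+1}=y_1+\cdots+y_{k+1}$ is symmetric in all the $y_i$, the factor $F[X+(q-1)Y_{k+1}]$ is symmetric in $y_i,y_{i+1}$ for each $i=1,\dots,k$. By Lemma \ref{Dellem}, each $T_i=\Delta^{*}_{y_i,y_{i+1}}$ acts on this factor as the identity and commutes with multiplication by it. Pulling it out gives $F[X+(q-1)Y_{k+1}]\cdot T_1\cdots T_k G[X+(q-1)y_{k+1}] = F\ast d_+ G$, where on the right $\ast=\ast_0$ is taken in $V_{k+1}$.

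For the second identity, set $A_m=\sum_{i=1}^m t y_i+\sum_{i=m+1}^k y_i$ (the shift inside $V_k$), and $A'_m=\sum_{i=1}^m t y_i+\sum_{i=m+1}^{k-1} y_i$ (the shift inside $V_{k-1}$). The crucial point is that $m<k$ places $y_k$ in the second sum with coefficient $1$, so $A_m=A'_m+y_k$. Using Lemma \ref{l-d-},
\[
d_-(F\ast_m G) = -y_k\,(F\ast_m G)[X-(q-1)y_k]\,\pExp[-X/y_k]\Big|_{y_k^{0}},
\]
and inside the bracket
\[
(F\ast_m G)[X-(q-1)y_k] = F[X+(q-1)(A_m-y_k)]\,G[X-(q-1)y_k] = F[X+(q-1)A'_m]\,G[X-(q-1)y_k].
\]
The first factor no longer involves $y_k$, so it may be pulled outside both the $y_k$-series and the coefficient extraction, yielding $F[X+(q-1)A'_m]\cdot d_- G = F\ast_m d_-G$.

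I do not expect a significant obstacle; the only place where care is required is the bookkeeping for $\ast_m$, and specifically the observation that the hypothesis $m<k$ is exactly what is needed so that the $-(q-1)y_k$ shift from $d_-$ cancels the $(q-1)y_k$ inside $A_m$. If $m=k$ were allowed, the $y_k$ contribution in $A_m$ would carry a $t$, leaving an uncancelled $(q-1)(t-1)y_k$ and breaking the identity; this explains why the second half of Lemma \ref{l-twistedeq} is restricted to $0\le m<k$.
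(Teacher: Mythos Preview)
Your proof is correct and follows the same approach as the paper: both identities are verified by unfolding the definitions of $d_+$ (via \eqref{dpdef}) and $d_-$ (via Lemma \ref{l-d-}), using in the first case that $F[X+(q-1)Y_{k+1}]$ is symmetric in the $y_i$ so the $T_i$'s pass over it, and in the second that for $m<k$ the shift $-(q-1)y_k$ from $d_-$ cancels the $y_k$-term in $A_m$. Your added remark explaining why $m=k$ is excluded is a nice clarification the paper leaves implicit.
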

\begin{proof}
For the first equality, we have
\begin{align*}
  d_+ (F\ast G) [X] &= d_+ \left(F\left[X+(q-1)Y_k \right] G[X] \right) \\
  &= T_1T_2\cdots T_k  F\left[X+(q-1)Y_{k+1}\right] G[X+(q-1)y_{k+1}] \\
  &= F\left[X+(q-1)Y_{k+1}\right]\cdot \left( T_1T_2\cdots T_k   G[X+(q-1)y_{k+1}]\right) \\
  &=F \ast d_+ G[X].
\end{align*}

The second equality follows by direct application of Lemma \ref{l-d-}:
\begin{align*}
d_-(F\ast G) &= y_k F\left[X -(q-1)y_k+ (q-1)\left(\sum_{i=1}^m t y_i + \sum_{i=m+1}^k y_i\right)\right] G[X-(q-1)y_k]\pExp[-X/y_k] \Big|_{y_k^0} \\
             &= F\left[X + (q-1)\left(\sum_{i=1}^m t y_i + \sum_{i=m+1}^{k-1} y_i\right)\right] \left( y_k G[X-(q-1)y_k]\pExp[-X/y_k] \Big|_{y_k^0}  \right) \\
&=F\ast_m (d_- G),
\end{align*}
as desired.
\end{proof}

\subsection{The Operator Algebra}

We have operators 
\begin{equation}
\label{operators}
e_k,d_{\pm},T_i \acts V_*=V_0\oplus V_1\oplus \cdots
\end{equation}
where $e_k$ is the projection onto $V_k$, the others are
defined as above. More precisely, an element $v\in V^*$ can be uniquely written as
$v=\sum_{k\ge 0} v_k$, where $v_k \in V_k$ for all $k$. Then
$e_k v=v_k$, and $d_\pm, T_i$ act componentwise.
It is natural to ask for a complete set of relations between them.
They are formalized in the following algebra:
\begin{defn}\label{defn:dpa}
The Dyck path algebra $\AA=\AA_q$ (over $R$) is the path algebra of the quiver with vertex set $\Z_{\geq 0}$, arrows $d_+$ from $i$ to $i+1$, arrows $d_-$ from $i+1$ to $i$ for $i\in\Z_{\geq 0}$, and loops $T_1, T_2, \ldots, T_{k-1}$ from $k$ to $k$ subject to the following relations:
$$
(T_i-1)(T_i+q)=0,\quad T_i T_{i+1} T_i = T_{i+1} T_i T_{i+1},
\quad T_i T_j = T_j T_i\quad(|i-j|>1),
$$
$$
T_i d_- = d_- T_i\ (i<k-1),\quad d_+ T_i = T_{i+1} d_+, \quad T_1 d_+^2 =
d_+^2,\quad d_-^2 T_{k-1} = d_-^2,
$$
$$
d_-(d_+ d_- - d_- d_+) T_{k-1} = q (d_+ d_- - d_- d_+) d_- \quad(k\geq2),
$$
$$
T_1 (d_+ d_- - d_- d_+) d_+ = q d_+(d_+ d_- - d_- d_+),
$$
where in each identity $k$ denotes the index of the vertex where the respective paths begin.
We have used the same letters $T_i$ to label the $i$-th loop at every
node $k>i$ to match with the previous notation. To distinguish between
different nodes, we will use $T_i e_k$ where $e_k$ is the idempotent
associated with node $k$.
\end{defn}

Observe that this is almost a complete set of relations already. The first line of equations gives the braid relations. The second line gives commuting rules for $T_i$ and $d_\pm$, except for: i) $T_{k-1}$ and $d_-$, but $T_{k-1} d_-$ is not valid on $V_k$; ii) $T_1$ and $d_+$. The last two lines gives commuting rules for $[d_+,d_-]$ and $d_\pm$.

Indeed, we will prove
\begin{thm}\label{lem:mainlemma}
The operators \eqref{operators} define
a representation of $\AA$ on $V_*$. Furthermore, we have an isomorphism
of representations
\[\varphi:\AA e_0 
\xrightarrow{\sim}
V_*\]
which sends $e_0$ to $1\in V_0$, and maps $e_k \AA e_0$ isomorphically
onto $V_k$.
\end{thm}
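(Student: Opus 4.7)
The plan is two-fold: first, verify that the operators $e_k, d_\pm, T_i$ on $V_*$ satisfy every defining relation of the Dyck path algebra $\AA$, so that the action descends to a representation; second, show that the resulting evaluation map $\varphi:\AA e_0 \to V_*$, $a \mapsto a(1)$, is an isomorphism of representations restricting to a bijection $e_k\AA e_0 \to V_k$ for each $k$.

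The relations split into three tiers of difficulty. The quadratic and braid relations for the $T_i$ are exactly Proposition \ref{Delprop}. The commutations $T_i d_- = d_- T_i$ for $i<k-1$ are immediate since $d_-$ (via Lemma \ref{l-d-}) only touches $y_k$ and $X$, which $T_i$ leaves alone; the relation $d_+ T_i = T_{i+1} d_+$ is a direct consequence of Lemma \ref{l-T1k-i} applied to the chain $T_1\cdots T_k$ inserted by $d_+$. For the ``killing'' relations $T_1 d_+^2 = d_+^2$ and $d_-^2 T_{k-1} = d_-^2$, the key observation is that two consecutive $d_+$'s shift $X$ by $(q-1)(y_{k+1}+y_{k+2})$, which is symmetric in $y_{k+1},y_{k+2}$; after propagating the chains of $T_i$'s past this shift using Lemma \ref{l-T1k-i}, the outermost $T_1 = \Delta^*_{y_{k+1},y_{k+2}}$ acts on a symmetric polynomial, hence as the identity by Lemma \ref{Dellem}. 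The dual relation for $d_-^2$ follows by the parallel analysis using Lemma \ref{l-d-}.

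The two commutator relations
\[ d_-[d_+,d_-]T_{k-1} = q[d_+,d_-]d_-, \qquad T_1[d_+,d_-]d_+ = q\,d_+[d_+,d_-] \]
are the subtlest. My strategy is to first compute $[d_+,d_-]$ in closed form using the twisted multiplication $\ast$ introduced in Section \ref{s-twist}. By combining Lemma \ref{l-d-} (which describes $d_-$ as extraction of $y_k^0$ after a plethystic shift) with Lemma \ref{l-twistedeq} (which says $d_+,d_-$ both pass through $\ast$-multiplication), I expect $[d_+,d_-]$ to factor into a ``multiplication by an explicit element of $\Sym[X]\otimes\Q[y_k]$'' composed with controlled $T_i$-operators. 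Once this normal form is obtained, both relations reduce to polynomial identities among Demazure-Lusztig operators which follow from Proposition \ref{Delprop} and Lemma \ref{Dellem}; this is essentially the content identified in Theorem \ref{dpthm} translated from paths into generator-relation form.

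For the isomorphism, $\varphi$ is well-defined by step one. Surjectivity onto $V_k$: a direct computation gives $d_-d_+(1) = X \in V_0 = \Sym[X]$, and iterated applications of $d_\pm$ produce compositions of the creation operators $B_r$, which are known to span $\Sym[X]$; lifting by $d_+^k$ then gives $F\ast 1 \in V_k$ for any $F\in\Sym[X]$, and combining with the $T_i$-action and further alternations of $d_\pm$ generates the full basis $\{s_\lambda \ast y^a\}$ of $V_k$ exhibited in Section \ref{s-twist}. For injectivity, the plan is to use the relations of $\AA$ to put each element of $\AA e_0$ into a PBW-type normal form -- pushing all $T_i$'s to the left, then grouping the $d_\pm$'s into a canonical alternating word -- and then to check that the dimension count of normal forms in each bidegree matches that of $V_k$, and that distinct normal forms have linearly independent images in $V_*$. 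The main obstacle is the explicit computation of $[d_+,d_-]$ and the bookkeeping needed to reduce the two commutator relations to identities in $\Delta^*$; once that calculation is in hand, the isomorphism claim is essentially linear algebra relative to the compatible basis provided by the twisted multiplication formalism.
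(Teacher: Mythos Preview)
Your overall architecture matches the paper's: first verify the relations, then produce a basis of $\AA e_0$ whose image is a basis of $V_*$. Most of your relation checks are right, but two points are more delicate than you indicate.

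First, $d_-^2 T_{k-1}=d_-^2$ is \emph{not} parallel to $T_1 d_+^2=d_+^2$. For $d_+^2$ the symmetry in $y_{k+1},y_{k+2}$ of the shifted argument kills the last $T_{k+1}$ in the chain (not $T_1$, by the way --- your indexing slipped). For $d_-^2$, one instead uses Lemma \ref{l-im-ker}: the image of $T_{k-1}-1$ consists of $(qy_{k-1}-y_k)Q$ with $Q$ symmetric in $y_{k-1},y_k$, and then $d_-^2$ sends $(qy_{k-1}-y_k)y_{k-1}^a y_k^b F$ to $(qB_{a+2}B_{b+1}-B_{a+1}B_{b+2})F$. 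The vanishing of this on symmetric $Q$ is exactly the creation-operator relation $qB_{a+2}B_{b+1}=B_{a+1}B_{b+2}+(\text{antisymmetric in }a,b)$ from \cite{haglund2012compositional}; it is not a formal consequence of the Demazure--Lusztig identities.

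Second, your isomorphism argument is too vague to succeed as stated. The paper's route is to \emph{define} $y_k\in e_k\AA e_k$ by $y_k=\tfrac{1}{q-1}T_{k-1}^{-1}\cdots T_1^{-1}[d_+,d_-]$ (after first proving the key commutator identity $[d_+,d_-]=(q-1)T_1\cdots T_{k-1}y_k$ on $V_*$), then show purely from the $\AA$-relations that these internal $y_i$ commute with each other and interact correctly with $T_j,d_\pm$ (Lemma \ref{lem:addingback}). This yields the specific spanning set $d_-^m y_1^{a_1}\cdots y_{k+m}^{a_{k+m}} d_+^{k+m} e_0$ with $a_{k+1}\geq\cdots\geq a_{k+m}$, whose images in $V_*$ are $(-1)^m y^a B_{a_{k+1}+1}\cdots B_{a_{k+m}+1}(1)$ --- products of Hall--Littlewood polynomials, hence a basis. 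A generic ``push $T_i$'s left, group $d_\pm$'s'' PBW is not enough: without the $y_i$ you cannot cleanly absorb commutators, and without the partition condition you cannot identify the images as linearly independent. The commutator formula is therefore doing double duty: it both proves the last two relations and supplies the basis.
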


The proof will occupy the rest of this section.

We begin by establishing that we have a defined a representation of the algebra.
\begin{lem}
\label{replemma}
The operators $T_i$ and $d_{\pm}$ satisfy the relations of Definition \ref{defn:dpa}.
\end{lem}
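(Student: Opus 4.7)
The plan is to verify each defining relation of $\AA$ by direct computation using the explicit formulas for $T_i = \Delta^*_{y_i y_{i+1}}$, $d_+$ (via \eqref{dpdef}), and $d_-$ (via Lemma \ref{l-d-}). I would organize the verification by grouping relations according to difficulty: first the purely-$T$ relations, then the mixed intertwining relations, then the quadratic relations in $d_\pm$, and finally the two commutator relations.

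First, the three relations among the $T_i$'s, namely $(T_i-1)(T_i+q)=0$, the braid relation $T_i T_{i+1} T_i = T_{i+1} T_i T_{i+1}$, and the distant commutativity $T_i T_j = T_j T_i$ for $|i-j|>1$, are immediate from Proposition \ref{Delprop} (noting that the $\Delta^*$ acting on disjoint pairs of variables obviously commute). Next, $T_i d_- = d_- T_i$ for $i < k-1$ follows because $d_-$ operates only in $y_k$ (see Lemma \ref{l-d-}) while $T_i$ operates in $y_i, y_{i+1}$. The intertwining $d_+ T_i = T_{i+1} d_+$ is exactly Lemma \ref{l-T1k-i}: inside the definition $d_+ F = T_1 T_2 \cdots T_k(F[X+(q-1)y_{k+1}])$ we slide a $T_i$ past the product of $T_j$'s and observe the plethystic substitution $F[X+(q-1)y_{k+1}]$ contains $y_{k+1}$ only, which is untouched by $T_i$ ($i\le k$).

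For the quadratic relations $T_1 d_+^2 = d_+^2$ and $d_-^2 T_{k-1} = d_-^2$ I would use Lemma \ref{l-im-ker}. Since $(T_1-1)$ annihilates polynomials symmetric in $y_1, y_2$, the first relation reduces to showing that $d_+^2 F$ is symmetric in $y_1, y_2$ for every $F\in V_k$. Unwinding the definition, $d_+^2 F = T_1 \cdots T_{k+1} T_1 \cdots T_k \bigl(F[X+(q-1)(y_{k+1}+y_{k+2})]\bigr)$; using the braid identity $T_1(T_2 T_1) = (T_2 T_1) T_2$ together with $(T_1-1)(T_1+q)=0$, the leading $T_1$ gets absorbed, yielding symmetry. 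The second relation is dual: $d_-^2 T_{k-1} = d_-^2$ amounts to $d_-^2$ vanishing on the image of $T_{k-1}-1$, which by Lemma \ref{l-im-ker} is spanned by elements $(qy_{k-1}-y_k)Q(y_{k-1},y_k)$ with $Q$ symmetric. A direct calculation with Lemma \ref{l-d-} (applied twice, peeling off $y_k$ then $y_{k-1}$) shows that the residue pairing built into $d_-$ produces a symmetric combination that cancels, making $d_-^2$ vanish on such elements.

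Finally, the two commutator relations $d_-(d_+d_- - d_-d_+)T_{k-1} = q(d_+d_- - d_-d_+) d_-$ and $T_1(d_+d_- - d_-d_+)d_+ = q d_+(d_+d_- - d_-d_+)$ are the heart of the lemma. My approach is to compute $[d_+, d_-]$ explicitly on the basis $s_\lambda \ast y^a$ using the twisted commutation Lemma \ref{l-twistedeq}: because $d_+$ commutes with $\ast$-multiplication while $d_-$ commutes with $\ast_m$-multiplication, one obtains a clean formula for $[d_+, d_-](F\ast G)$ in terms of $F$ and $[d_+, d_-]G$ plus an error term supported in low $y$-degree. One then reduces to checking the identities on monomials $y^a$, where everything becomes an explicit rational function identity that can be matched against the formulas \eqref{e-Delta*s} for $\Delta^*_{y_{k-1}y_k}$ (for the first relation) and for $\Delta^*_{y_1 y_2}$ after a $d_+$-shift (for the second). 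The main obstacle is precisely this bookkeeping: tracking how the $y_{k+1}$ introduced by $d_+$ interacts with the evaluation-at-$y_k$ hidden in $d_-$, and verifying that the terms reorganize into $q$ times the expected product. I would expect to need an auxiliary lemma giving a closed form for $d_+ d_-(y_k^i F)$ and $d_- d_+(y_k^i F)$ when $F$ is free of $y_k$, after which both commutator identities become finite-dimensional identities in $\Q(q,t)[y_1,\ldots,y_k]$.
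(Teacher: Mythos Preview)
Your plan matches the paper's for the first two lines of relations. Two comments on the rest.

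For $d_-^2 T_{k-1} = d_-^2$: you correctly reduce via Lemma~\ref{l-im-ker} to showing $d_-^2$ annihilates $(qy_{k-1}-y_k)Q$ with $Q$ symmetric in $y_{k-1},y_k$. But the step you describe as ``a direct calculation \ldots\ produces a symmetric combination that cancels'' hides a nontrivial identity. Writing $Q = (y_{k-1}^a y_k^b + y_k^a y_{k-1}^b)F$ with $F$ free of $y_{k-1},y_k$, one finds
\[
d_-^2\bigl((qy_{k-1}-y_k)\, y_{k-1}^a y_k^b F\bigr) = (qB_{a+2}B_{b+1} - B_{a+1}B_{b+2})F,
\]
and the claim that this is antisymmetric in $a,b$ is Corollary~3.4 of \cite{haglund2012compositional}, a commutation relation for the Hall--Littlewood creation operators $B_r$. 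You should cite this or prove it separately; the residue computation by itself will not make it evident.

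For the two commutator relations: your strategy of reducing to monomials and checking a finite identity would work, but the paper takes a much cleaner route. It first establishes (Lemma~\ref{lem:commutator}) the closed form
\[
(d_+ d_- - d_- d_+)F = (q-1)\, T_1 T_2 \cdots T_{k-1}(y_k F),\qquad y_i = \tfrac{1}{q}\, T_i\, y_{i+1}\, T_i,
\]
proved by exactly the reduction you describe (use \eqref{twistedeq} to pass to $F = y^a$, then compute). This is the ``auxiliary lemma'' you anticipated. Once it is in hand, both commutator relations become three-line algebraic manipulations using only the already-verified intertwining relations and $T_{k-1} y_k T_{k-1} = q y_{k-1}$; see \eqref{eq:dminusrel} and \eqref{eq:dplusrel}. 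So the missing idea is the target, not the method: compute the commutator $[d_+,d_-]$ itself first, rather than attacking the longer compositions $d_-[d_+,d_-]T_{k-1}$ and $T_1[d_+,d_-]d_+$ directly. Incidentally, the ``error term'' you mention in $[d_+,d_-](F\ast G)$ is zero, since both $d_+$ and $d_-$ commute with $\ast$-multiplication by Lemma~\ref{l-twistedeq}; this is precisely what makes the reduction to monomials clean.
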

Recall
\begin{align*}
  d_+ F[X] &=T_1 T_2\cdots T_k F[X+(q-1) y_{k+1}],\quad  F\in V_k,\\
  d_- F[X] &\text{ sends } y_k^i F  \to -B_{i+1} F[X], \\
  T_i &= \Delta_{y_i y_{i+1}}^*, \qquad (\Delta_{uv}^* P)(u,v) = \frac{(q-1) u P(u,v) + (v - q u) P(v,u)}{v - u}.
\end{align*}
\begin{proof}
The first line is just Proposition \ref{Delprop}.

For the second line,
the first identity $d_-T_i=T_i d_- (i<k-1)$ is easy, since $T_i$ does not affect $y_k$.

The second identity follows by applying Lemma \ref{l-T1k-i}:
\begin{align*}
  d_+ T_i(F)&=T_1\cdots T_k\left((T_i F)[X+(q-1)y_{k+1}]\right) \\
 &=T_{i+1}T_1\cdots T_k F[X+(q-1)y_{k+1}] =T_{i+1} d_+(F).
\end{align*}

For the third identity,
by iterative application of the second identity, we have
\begin{align*}
  d_+^2 F &= d_+ T_1 T_2 \cdots T_k(F[X+(q-1) y_{k+1}+(q-1) y_{k+2}])\\
&=T_2 T_3\cdots T_{k+1} d_+F[X+(q-1) y_{k+1}]\\
&=T_2 T_3\cdots T_{k+1}
T_1 T_2\cdots T_{k+1} (F[X+(q-1) y_{k+1}+(q-1) y_{k+2}]).
\end{align*}
The last $T_{k+1}$ can be removed because its argument is symmetric in
$y_{k+1}$ and $y_{k+2}$, and we obtain $T_1^{-1} d_+^2 F$.

The fourth identity is more technical.
By Lemma \ref{l-im-ker}, the operator image of $T_{k-1}-1$ consists of elements of the form $(q y_{k-1} - y_k) Q$, where $Q$ is symmetric in $y_{k-1}$ and $y_k$. Thus we need to check that $d_-^2$ vanishes on such elements. By linearity, it is sufficient to assume $Q=(y_{k-1}^a y_{k} ^b+y_{k}^a y_{k-1} ^b)F$, where $F$ does not contain the variables $y_{k-1}$ and $y_k$, and $a,b\in\Z_{\geq0}$.
We have
\begin{align*}
  d_-^2\left((q y_{k-1} - y_k) y_{k-1}^a y_{k} ^b F\right) =(q B_{a+2} B_{b+1} - B_{a+1} B_{b+2}) F.
\end{align*}
This expression is antisymmetric in $a$, $b$ by Corollary 3.4 of \cite{haglund2012compositional}, which implies our identity.

For the third line, by using the previous relations and Lemma \ref{lem:commutator} below we can write
\begin{align}
  d_- (d_+ d_- - d_- d_+) T_{k-1} &= (q-1) d_- T_1 T_2 \cdots
T_{k-1} y_k T_{k-1} \nonumber\\
&=q (q-1) d_- T_1 T_2 \cdots T_{k-2} y_{k-1} \nonumber\\
&=q (q-1) T_1 T_2 \cdots T_{k-2} y_{k-1} d_- = q (d_+ d_- - d_- d_+) d_-.
\label{eq:dminusrel}
\end{align}
Similarly for the fourth line, we have
\begin{align}
  T_1 (d_+ d_- - d_- d_+) d_+ 
&=(q-1) q^k T_1 y_1 T_1^{-1} T_2^{-1} \cdots T_k^{-1} d_+ \nonumber \\
&=
 (q-1) q^k T_1 y_1 T_1^{-1} d_+ T_1^{-1} \cdots T_{k-1}^{-1}\nonumber \\
\label{eq:dplusrel}
&= (q-1) q^k d_+ y_1 T_1^{-1} \cdots T_{k-1}^{-1}
= q d_+(d_+ d_- - d_- d_+).
\end{align}

Here we used the fact
\begin{align*}
  ( y_1 T_1^{-1} d_+) F[X] &= y_1 T_2\cdots T_{k}F[X+(q-1)y_{k+1}]\\
                         &=T_2\cdots T_{k} y_1 F[X+(q-1)y_{k+1}]\\
                         &=(T_1^{-1} d_+y_1) F[X].
\end{align*}
\end{proof}

To establish the isomorphism, we first show that we can produce the
operators of multiplication by $y_i$ from $\AA$.
\begin{lem}\label{lem:commutator}
For $F\in V_k$ we have
\begin{equation}
\label{addyeq}
(d_{+} d_{-}-d_{-} d_{+} ) F = (q-1) T_1 T_2\cdots T_{k-1} (y_k
F),\quad
y_i = \frac{1}{q} T_i y_{i+1} T_i.
\end{equation}
Consequently,
\begin{equation}\label{e-d+--y1}
 (d_{+} d_{-}-d_{-} d_{+} ) F=(q-1)q^{k-1} y_1 T_1^{-1}\cdots T_{k-1}^{-1} F.
\end{equation}
\end{lem}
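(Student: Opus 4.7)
The lemma has three assertions: the commutator formula $(d_+d_--d_-d_+)F = (q-1)T_1\cdots T_{k-1}(y_k F)$, the conjugation identity $y_i = T_i y_{i+1}T_i/q$, and the ``consequently'' formula \eqref{e-d+--y1}. My plan is to prove the conjugation identity first, derive the consequent formula in one line by iterating it together with the commutator formula, and devote the bulk of the work to the commutator formula.

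For the conjugation identity, the statement $y_i = T_i y_{i+1} T_i/q$ is equivalent to the operator identity $T_i y_{i+1} T_i = q\,y_i$. Since $T_i = \Delta^*_{y_i y_{i+1}}$ acts only on $y_i, y_{i+1}$, it suffices to show $\Delta^*_{uv}\,v\,\Delta^*_{uv} = qu$ on $\Q[u,v]$. The approach is to decompose $\Delta^*_{uv} = (q-1)u\pi + s$, where $s(P)(u,v) = P(v,u)$ is the swap and $\pi(P)(u,v) = (P(u,v)-P(v,u))/(v-u)$ is the divided difference. The essential properties are that $\pi(P)$ is always symmetric in $u,v$ (so $\pi$ annihilates symmetric polynomials), $\pi(v)=1$, $s\pi = \pi$ and $\pi s = -\pi$, the twisted Leibniz rule $\pi(FG)=F\pi(G)+s(G)\pi(F)$, and $svs$ equals multiplication by $u$. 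Expanding $\Delta^*_{uv}\,v\,\Delta^*_{uv}$ into its four bilinear terms: the $u\pi v u\pi$ piece vanishes because $vu\pi(P)$ is symmetric, $svs$ contributes $uP$, and the cross terms $u\pi vs$ and $svu\pi$ together contribute $(q-1)uP$ (their $\pi(P)$-contributions cancel). The total is $quP$. The consequent \eqref{e-d+--y1} then follows by rewriting the identity as $T_i y_{i+1} = q\,y_i\,T_i^{-1}$ and inducting on $k$ to obtain $T_1\cdots T_{k-1}\,y_k = q^{k-1}\,y_1\,T_1^{-1}\cdots T_{k-1}^{-1}$ as an operator identity; substituting into the commutator formula yields \eqref{e-d+--y1}.

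The bulk of the argument is the commutator formula. By Lemma \ref{l-twistedeq}, the operators $d_\pm$ intertwine with multiplications of the form $F \mapsto s_\lambda[X+(q-1)Y_m]\cdot F$ on each $V_m$, and the $T_i$ for $i<k$ commute with $s_\lambda[X+(q-1)Y_k]\cdot$ because $Y_k$ is symmetric in $y_i,y_{i+1}$. Hence both sides of the commutator formula respect the decomposition of $F$ in the basis $\{s_\lambda[X+(q-1)Y_k]\,y^a\}$, and the identity reduces to the case of a pure monomial $F = y^a = y_k^{a_k}\,y^{a'}$, where $y^{a'} = y_1^{a_1}\cdots y_{k-1}^{a_{k-1}}$ is free of $X$. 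For such $F$, compute $d_-(y^a) = -B_{a_k+1}(y^{a'}) = (-1)^{a_k}\,e_{a_k+1}[X]\,y^{a'}$, using the fact that $B_r(G)=(-1)^r e_r[X]\,G$ when $G$ is free of $X$ (which follows from $\pExp[-zX] = \sum_r(-1)^r e_r[X] z^r$), and $d_+(y^a) = T_1\cdots T_k(y^a)$. Expanding $e_{a_k+1}[X+(q-1)y_k]$ via Lemma \ref{l-h1-u} and $T_k(y_k^{a_k})$ via \eqref{e-Delta*s}, the contributions to $d_+d_-(y^a)$ and $d_-d_+(y^a)$ cancel term by term (after reindexing) except for a single ``boundary'' term that yields precisely $(q-1)T_1\cdots T_{k-1}(y_k\cdot y^a)$. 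The main obstacle is the careful sign-and-index bookkeeping in this telescoping computation.
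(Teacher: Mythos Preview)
Your proposal is correct and follows essentially the same route as the paper's proof. Both arguments reduce the commutator identity, via the twisted action $\ast$ of $\Sym[X]$ (Lemma~\ref{l-twistedeq}) and the fact that the $T_i$'s commute with multiplication by $s_\lambda[X+(q-1)Y_k]$, to the case $F=y^a$; then both factor out $T_1\cdots T_{k-1}\,y^{a'}$ and verify the remaining scalar identity by expanding $h_{a_k+1}[-X-(q-1)y_k]$ and $T_k(y_k^{a_k})$, obtaining a telescoping sum whose sole surviving term is $(q-1)y_k^{a_k+1}$. Your use of $e_r[X]$ in place of $h_r[-X]$ is just a sign-equivalent rewriting.

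The only place where your write-up differs in substance is the conjugation identity $T_i\,y_{i+1}\,T_i = q\,y_i$. The paper dispatches this in one line by reducing to $\Delta_{uv} y_i = \Delta^*_{uv} y_{i+1}$ (equivalently: use $T_i y_{i+1} = y_i T_i + (q-1)y_i$ together with the quadratic relation $(T_i-1)(T_i+q)=0$). Your expansion $\Delta^*_{uv}=(q-1)u\,\pi+s$ and direct four-term computation is a perfectly valid alternative, just more hands-on; it has the advantage of being self-contained without invoking the Hecke quadratic relation. Either approach is fine, and the derivation of \eqref{e-d+--y1} by iterating $T_i y_{i+1}=q\,y_i T_i^{-1}$ matches the paper's ``consequence'' remark exactly.

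One small point to make explicit when you write the details: in computing $d_-d_+(y^a)$, you should note (as the paper does) that $d_-$ on $V_{k+1}$ involves only $y_{k+1}$, hence commutes past $T_1\cdots T_{k-1}$ and $y^{a'}$; this is what allows the factorization $d_-\,T_1\cdots T_k\,y^a = T_1\cdots T_{k-1}\,y^{a'}\,(d_-\,T_k\,y_k^{a_k})$.
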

\begin{proof}
The second relation is equivalent to $\Delta_{y_{i},y_{i+1}} y_i = \Delta_{y_{i},y_{i+1}}^* y_{i+1} $, which follows easily by definition.
Equation \eqref{e-d+--y1} is a consequence of the first two relations.

Now we prove the first relation. Since the operators on both sides are linear and commute with the twisted action of
$\Sym[X]$ (see Section \ref{s-twist}, Equation \ref{twistedeq}),
we may assume without loss of generality that $F=y^a$ for some $a\in \mathbb{N}^k$.

If we write $a'=(a_1,\dots, a_{k-1},0)$, then  $d_- F = -B_{a_k+1} y^{a'}=-y^{a'}h_{a_k+1}[-X].$
Now the left hand side of the first identity becomes
\[
LHS=T_1\cdots T_{k-1} \left(-y^{a'}h_{a_k+1} [-(X+(q-1) y_k)]\right)-d_{-} T_1 \cdots T_{k-1} T_k y^a.\]
The operator $d_{-}$ in the second summand involves only the variable $y_{k+1}$. Thus we have
$$
LHS=T_1 \cdots T_{k-1} y^{a'} (- h_{a_k+1} [-X-(q-1) y_k)]-d_{-} T_k y_k^{a_k} ).
$$
Hence it is enough to prove
$$
d_{-} T_k y_k^{a_k} + h_{a_k+1} [-X-(q-1) y_k)] = (1-q) y_k^{a_k+1}.
$$
Write $a_k=i$. By Equation (\ref{e-Delta*s}c), the left hand side of the above equation equals
\begin{align*}
  -h_{i+1}[-X] &- (1-q)\sum_{j=1}^i y_k^j h_{i-j+1}[-X] + h_{i+1}[-X-(q-1)y_k]\\
&=- (1-q)\sum_{j=1}^i y_k^j h_{i-j+1}[-X] + (1-q)\sum_{j=1}^{i+1} y_k^j h_{i-j+1}[-X]
= (1-q) y_k^{i+1}.
\end{align*}
\end{proof}

The operators of multiplication by $y_i$ are characterized by these
relations, and therefore come from elements of $\AA$.
We next establish the relations that these operators satisfy within $\AA$:
\begin{lem}\label{lem:addingback}
For $k\in\Z_{>0}$ define elements $y_1,\ldots,y_k\in e_k \AA e_0$ by
solving for $y_iF$ in the identities \eqref{addyeq}, so that
$$y_k=\frac{1}{q-1} T_{k-1}^{-1}\cdots T_1^{-1} (d_+d_--d_-d_+) \quad \text{ and } y_i=\frac{1}{q} T_i y_{i+1} T_i.    $$
Then the following identities hold in $\AA$:
$$
y_i T_j = T_j y_i\qquad\text{for $i\notin\{j,j+1\}$,}
$$
$$
y_i d_- = d_- y_i\ (i<k),\qquad d_+ y_i = T_1 T_2 \cdots T_i y_i (T_1 T_2 \cdots T_i)^{-1} d_+,
$$
$$
y_i y_j = y_j y_i\quad\text{for any $i,j$.}
$$
\end{lem}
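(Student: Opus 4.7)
My plan is to prove the four identities in sequence, starting from the closed form
\[
y_i = q^{-(k-i)}\, T_i T_{i+1} \cdots T_{k-1}\, y_k\, T_{k-1} \cdots T_{i+1} T_i
\]
obtained by iterating the defining recursion, combined with the explicit
\[
y_k = \tfrac{1}{q-1}\, T_{k-1}^{-1} \cdots T_1^{-1}\, (d_+d_- - d_-d_+)
\]
from Lemma~\ref{lem:commutator}.

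For $y_i T_j = T_j y_i$ with $j \notin \{i-1, i\}$, I start with the base case $i=k$ and $j \leq k-2$. The defining relations $d_+ T_j = T_{j+1} d_+$ and $d_- T_j = T_j d_-$ (for $j<k-1$) give $(d_+d_- - d_-d_+) T_j = T_{j+1}(d_+d_- - d_-d_+)$ on $V_k$; combined with the braid-shift identity $T_{k-1}^{-1}\cdots T_1^{-1} T_{j+1} = T_j T_{k-1}^{-1}\cdots T_1^{-1}$ (a direct variant of Lemma~\ref{l-T1k-i}), this yields the base case. Induction on $k-i$ using $y_i = q^{-1} T_i y_{i+1} T_i$ then covers the rest, splitting into the sub-cases $j \leq i-2$, $j = i+1$, and $i+2 \leq j \leq k-1$, each dispatched by braid plus distant-commutation moves.

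For $y_i d_- = d_- y_i$ with $i < k$, the base case $i = k-1$ simplifies $y_{k-1}^{(k)} = \tfrac{1}{q(q-1)}\, T_{k-2}^{-1} \cdots T_1^{-1} (d_+d_- - d_-d_+) T_{k-1}$, commutes $d_-$ past the $T_j^{-1}$ (freely for $j \leq k-2$), and applies the defining relation $d_-(d_+d_- - d_-d_+) T_{k-1} = q(d_+d_- - d_-d_+)\,d_-$ to land at $y_{k-1}^{(k-1)} d_-$. The inductive step to smaller $i$ is immediate from $d_- T_i = T_i d_-$ and the recursion.

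For $d_+ y_i = T_1\cdots T_i\, y_i\, (T_1\cdots T_i)^{-1} d_+$, I mirror the previous argument using $d_+ T_j^{-1} = T_{j+1}^{-1} d_+$ together with the dual defining relation $T_1(d_+d_- - d_-d_+) d_+ = q\, d_+(d_+d_- - d_-d_+)$. Induction on $i$, aided by braid identities such as $T_{i+1} T_1\cdots T_{i+1} = T_1\cdots T_{i+1} T_i$, reshuffles the conjugation into the required form.

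The main obstacle is the commutativity $y_i y_j = y_j y_i$, since naive substitution of the recursion into one factor becomes circular precisely when $|i-j| = 1$. My plan is to reduce to the nearest-neighbor case $y_{k-1} y_k = y_k y_{k-1}$ by exploiting the closed form together with the $T$-commutations just established; commuting a single $y_i$ through the product $T_i\cdots T_{k-1} y_k T_{k-1}\cdots T_i$ for $y_j$ collapses the problem to this one equality. For the nearest-neighbor case, direct calculation using $T_{k-1} y_k = y_{k-1} T_{k-1} + (q-1) y_{k-1}$ and $y_k T_{k-1} = T_{k-1} y_{k-1} + (q-1) y_{k-1}$ yields the unconditional identity $y_{k-1} y_k T_{k-1} = T_{k-1} y_k y_{k-1}$, while an analogous expansion of $T_{k-1} y_{k-1} y_k$ produces a system of relations for $C := y_{k-1}y_k - y_k y_{k-1}$. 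Coupling these with the auxiliary identity $d_- C = 0$ (which itself follows from part~(b) and the explicit action of $d_- y_k$ on monomials $y_k^i H$, so that $d_- y_k y_{k-1}$ and $d_- y_{k-1} y_k$ both collapse to $-B_{i+2}(y_{k-1} H)$) and the quadratic $T_{k-1}^2 = (1-q) T_{k-1} + q$ forces $C = 0$. Once $y_l y_k = y_k y_l$ is in hand for all $l$, general commutativity $y_i y_j = y_j y_i$ follows by induction on $|i - j|$ using the closed form for $y_j$ together with part~(a).
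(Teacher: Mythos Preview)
Your treatment of the first three identities is essentially the paper's: both arguments reduce via the recursion $y_i = q^{-1} T_i y_{i+1} T_i$ to a single base case ($i=k$ for part~(a), $i=k-1$ for part~(b), $i=1$ for part~(c)) and then discharge that case by combining the braid-shift of Lemma~\ref{l-T1k-i} with the relevant defining relation of $\AA$.

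The commutativity argument, however, has a genuine gap. Your reduction to the nearest-neighbour case $y_{k-1}y_k = y_k y_{k-1}$ is fine, and the identity $y_{k-1} y_k T_{k-1} = T_{k-1} y_k y_{k-1}$ is correct in $\AA$. But your justification of the ``auxiliary identity $d_- C = 0$'' appeals to the explicit action of $d_- y_k$ on monomials $y_k^i H$ in $V_*$. That is a statement about the representation, not an identity in $\AA$; and the representation is not yet known to be faithful---indeed Lemma~\ref{lem:algbasis}, which establishes faithfulness, \emph{uses} the present lemma. So the argument is circular. Moreover, even granting $d_- C = 0$, the Hecke relations you cite only yield $T_{k-1} C T_{k-1} = -qC$ (equivalently $T_{k-1} C + C T_{k-1} = (1-q)C$), and in an abstract path algebra these constraints do not by themselves force $C=0$.

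The paper sidesteps the nearest-neighbour difficulty entirely by reducing instead to the \emph{extreme} case $y_1 y_k = y_k y_1$. Writing $y_k y_1 = \tfrac{1}{q-1}\, T_{k-1}^{-1}\cdots T_1^{-1}\,[d_+,d_-]\, y_1$ and using parts~(b) and~(c) with $i=1$ gives $[d_+,d_-]\, y_1 = (T_1 y_1 T_1^{-1})\,[d_+,d_-]$; the leading $T_1^{-1}$ cancels, and part~(a) then lets $y_1$ commute past $T_{k-1}^{-1}\cdots T_2^{-1}$ to the front. Three lines, entirely within $\AA$. The point is that $i=1$ makes the conjugation $T_1\cdots T_i\, y_i\, (T_1\cdots T_i)^{-1}$ from part~(c) as short as possible, so it interacts cleanly with the string $T_{k-1}^{-1}\cdots T_1^{-1}$ in the definition of $y_k$.
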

\begin{proof}
The second defining relation allows us to reduce some cases by applying the $T$-operators.

We explain in detail for the first relation. We claim that for $i>j+2$ if $T_j^{-1} y_i T_j = y_i$  then $T_j^{-1} y_{i-1}  T_j=y_{i-1}$. This reduces the $i>j+1$ case to the $i=k$ case.
The reason is simply due to the commutation of $T_j$ and $T_{i-1}$. We have
$$  T_j^{-1} y_{i-1}  T_j =\frac{1}{q} T_j^{-1} T_{i-1}y_{i} T_{i-1} T_j =\frac{1}{q} T_{i-1} T_j^{-1} y_i T_j T_{i-1}=\frac{1}{q} T_{i-1} y_i  T_{i-1}=y_{i-1}.$$
Similarly the $i<j$ case reduce to the $i=1$ case.

For $j>1$ (and $j\le k-1$ since $T_k$ does not act on $V_k$), by \eqref{e-d+--y1} we have
\begin{align*}
  y_1 T_j &= \frac{1}{q^{k-1}(q-1)} (d_+ d_- - d_- d_+) T_{k-1} \cdots T_1 T_j\\
(\text{by Lemma \ref{l-T1k-i}})  &=\frac{1}{q^{k-1}(q-1)} (d_+ d_- - d_- d_+) T_{j-1} T_{k-1} \cdots T_1 \\
by (*)  &=T_j \frac{1}{q^{k-1}(q-1)} (d_+ d_- - d_- d_+) T_{k-1} \cdots T_1 \\
&= T_j y_1.
\end{align*}
where at $(*)$ we have used the fact $T_i d_-=d_-T_i (i<k-1)$ and $d_+ T_i =T_{i+1}d_+(i\le k)$, so that
$$[d_+,d_-] T_j =T_{j+1}[d_+,d_-], \qquad j\le k-2. \eqno{(*)}.$$

The $i=k$ case (hence $j<k-1$) follows in a similar way:
\begin{align*}
  y_k T_j^{-1} &= \frac{1}{q-1} T_{k-1}^{-1}\cdots T_1^{-1}[d_+,d_-] T_j^{-1} =\frac{1}{q-1} T_{k-1}^{-1}\cdots T_1^{-1} T_{j+1}^{-1} [d_+,d_-]\\
&=T_{j}^{-1}\frac{1}{q-1} T_{k-1}^{-1}\cdots T_1^{-1}  [d_+,d_-] = T_j^{-1} y_k.
\end{align*}

A similar reasoning shows that it is enough to check the second identity for $i=k-1$, the third one for $i=1$ and the last one for $i=1$, $j=k$. The other cases can be deduced from these by applying the $T$-operators.

The second identity is similar to reversing the arguments in (\ref{eq:dminusrel}):
\begin{align*}
  d_-y_{k-1} & = d_- \frac{q^{-1}}{q-1} T_{k-1} T_{k-1}^{-1}\cdots T_1^{-1} (d_+d_--d_-d_+) T_{k-1} \\
              & =\frac{q^{-1}}{q-1} T_{k-2}^{-1}\cdots T_1^{-1} (d_+d_--d_-d_+) T_{k-1} \\
& = \frac{1}{q-1} T_{k-2}^{-1}\cdots T_1^{-1}(d_+d_--d_-d_+)d_-  =y_{k-1} d_-.
\end{align*}

The third identity is similar to reversing the argument in (\ref{eq:dplusrel}):
\begin{align*}
  T_1 y_1 T_1^{-1} d_+ &= \frac{1}{q^{k}(q-1)} T_1 (d_+ d_- - d_- d_+) T_{k} \cdots T_1 T_1^{-1} d_+ \\
&= \frac{1}{q^{k}(q-1)} T_1 (d_+ d_- - d_- d_+) d_+ T_{k-1} \cdots T_1  \\
&= \frac{1}{q^{k-1}(q-1)} d_+ (d_+ d_- - d_- d_+)  T_{k-1} \cdots T_1  =d_+ y_1
\end{align*}

Thus it is left to check that $y_k y_1 = y_1 y_k$ for $k\geq 2$. Write the left hand side as
\begin{align*}
y_k y_1 &=\frac{1}{q-1} T_{k-1}^{-1} \cdots T_1^{-1} (d_+ d_- - d_- d_+) y_1 \\
&= \frac{1}{q-1} T_{k-1}^{-1} \cdots T_1^{-1} (T_1 y_1 T_1^{-1}) (d_+ d_- - d_- d_+)  \\
(\text{by the first relation})\quad  &= y_1 \frac{1}{q-1} T_{k-1}^{-1} \cdots   T_1^{-1}) (d_+ d_- - d_- d_+)=y_1y_k.
\end{align*}
\end{proof}

The following lemma completes the proof of Theorem \ref{lem:mainlemma}:
\begin{lem}
\label{lem:algbasis}
The elements of the form
\begin{equation}
\label{basiseq}
d_-^m y_1^{a_1} \cdots y_{k+m}^{a_{k+m}} d_+^{k+m} e_0
\end{equation}
with $a_{k+1}\geq a_{k+2}\geq\cdots \geq a_{k+m}$ form a basis of $\AA e_0$.
Furthermore, the representation $\varphi$ maps these elements to a basis of $V_*$.
\end{lem}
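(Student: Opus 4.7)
My plan is to prove the two assertions (basis of $\AA e_0$ and basis of $V_*$ under $\varphi$) together, by first computing $\varphi$ on the proposed elements to get linear independence, and then verifying spanning by a normal-form reduction inside $\AA$.

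First, I would compute $\varphi$ on the family. Starting from $e_0 \mapsto 1 \in V_0$, an easy induction shows $d_+^{k+m}(1) = 1 \in V_{k+m}$, since each $T_i$ acts trivially on the constant $1$. Multiplying by $y_1^{a_1}\cdots y_{k+m}^{a_{k+m}}$ gives this monomial in $V_{k+m}$, and then iterating Lemma \ref{l-d-} (using that each $B_r$ acts only on $\Sym[X]$ and so commutes past the remaining $y$-monomial) yields
\[
\varphi\bigl(d_-^m\, y_1^{a_1}\cdots y_{k+m}^{a_{k+m}}\, d_+^{k+m} e_0\bigr) \;=\; (-1)^m \bigl(B_{a_{k+1}+1}\,B_{a_{k+2}+1}\cdots B_{a_{k+m}+1}(1)\bigr)\cdot y_1^{a_1}\cdots y_k^{a_k}\;\in V_k.
\]
Under the constraint $a_{k+1}\geq\cdots\geq a_{k+m}\geq 0$, the tuple $(a_{k+1}+1,\ldots,a_{k+m}+1)$ is a partition $\lambda$ with parts $\geq 1$; it is a standard property of these Hall--Littlewood-type creation operators that $\{B_\lambda(1):\lambda \text{ a partition}\}$ is a basis of $\Sym[X]$ (by upper-triangularity against the Schur basis). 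Together with the monomial basis $\{y_1^{a_1}\cdots y_k^{a_k}\}$ of $\Q[y_1,\ldots,y_k]$, this identifies the images with a basis of $V_k$ for each $k$, so the proposed family is linearly independent in $\AA e_0$.

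Next, I would show the span $W$ of the proposed family is a left $\AA$-submodule of $\AA e_0$. Since $e_0 = d_-^0\,y^{\emptyset}\,d_+^0 e_0 \in W$, this forces $W = \AA e_0$. It suffices to check closure under left multiplication by each of $d_+$, $d_-$, and $T_i$. For $d_+$, I commute it rightward past $d_-^m$ using $d_+d_- - d_-d_+ = (q-1)T_1\cdots T_{j-1}y_j$ from Lemma \ref{lem:commutator}; the resulting $T_i$'s are pushed past the $y$-monomial using $y_iT_j=T_jy_i$ for $i\notin\{j,j+1\}$ and $y_i=q^{-1}T_iy_{i+1}T_i$, then absorbed into the trailing $d_+^{k+m+1}e_0$ via $d_+T_i=T_{i+1}d_+$ and $T_1d_+^2=d_+^2$, which together imply $T_i\,d_+^{k+m+1}e_0 = d_+^{k+m+1}e_0$ for all admissible $i$. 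Closure under $T_i$ is similar but easier. Closure under $d_-$ produces $d_-^{m+1}y^a d_+^{k+m}e_0$, which is almost in standard form except that the exponent condition now requires $a_k\geq a_{k+1}\geq\cdots\geq a_{k+m}$; reducing to this stronger ordering uses commutativity of the $y_i$'s (Lemma \ref{lem:addingback}) together with the relation $d_-^2 T_{k-1}=d_-^2$, which encodes the antisymmetry $qB_{a+2}B_{b+1}\equiv B_{a+1}B_{b+2}$ modulo symmetric rearrangement, i.e., the algebraic lift of Corollary 3.4 of \cite{haglund2012compositional}.

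The main obstacle will be this last step: verifying that iterated applications of $d_-^2 T_{k-1}=d_-^2$ combined with $y$-commutativity constitute a terminating straightening algorithm that reduces an arbitrary tail composition $(a_{k+1},\ldots,a_{k+m})$ to a partition, entirely inside $\AA$ (rather than only after passing to the representation). Once this reduction is established, the combination of spanning with the linear independence from the image computation shows that the proposed elements form a basis of $\AA e_0$, and simultaneously that $\varphi$ is an isomorphism $\AA e_0 \xrightarrow{\sim} V_*$ mapping $e_k\AA e_0$ onto $V_k$ for each $k$, completing Theorem \ref{lem:mainlemma}.
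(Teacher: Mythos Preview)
Your approach is essentially the same as the paper's, with the same three ingredients: the image computation giving $(-1)^m y_1^{a_1}\cdots y_k^{a_k}\, B_{a_{k+1}+1}\cdots B_{a_{k+m}+1}(1)$ and the Hall--Littlewood basis argument; reduction rules that bring an arbitrary word in the generators to the form $d_-^m y^a d_+^{k+m} e_0$; and a straightening step based on $d_-^2 T_{k-1}=d_-^2$ to impose the partition condition on the tail exponents. The paper organizes these slightly differently: it first shows that the \emph{unnormalized} elements $d_-^m y^a d_+^{k+m} e_0$ (no ordering on $a$) span $\AA e_0$, and only then straightens. This separation is cleaner than your plan of showing directly that the span of the \emph{normalized} elements is a submodule, because your closure checks under $d_+$ and $T_i$ also produce non-partition tails once the $T$'s are pushed through the $y$'s (not just the $d_-$ case you singled out), so you would be invoking the straightening inside every closure verification rather than once at the end.

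On the obstacle you flag: the paper's resolution is exactly what you anticipate, and it is carried out entirely in $\AA$. From the defining relation $d_-^2 T_{k-1}=d_-^2$ together with $T_i d_-=d_- T_i$ for $i<k-1$ one gets $d_-^m(1-T_j)=0$ as an element of $e_k\AA e_{k+m}$ for every $k<j<k+m$; applying this to $y^a d_+^{k+m} e_0$ and expanding $T_j$ through the monomial via $T_j y_{j+1}=y_j T_j+(q-1)y_j$, $T_j y_r=y_r T_j$ ($r>j+1$), $T_j d_+^{k+m} e_0=d_+^{k+m} e_0$ expresses any term with $a_j<a_{j+1}$ as a linear combination of lexicographically greater terms, so the reduction terminates. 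With that in hand, your argument and the paper's coincide.
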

\begin{proof}
We first show that elements of the form \eqref{basiseq}, with no
condition
on the $a_i$ span $\AA$.
It suffices to check that the span of these elements is
invariant under $d_-$, $T_i$ and $d_+$ when the action is well-defined. This can be done by applying
the following reduction rules that follow
from the definition of $\AA$ and Lemma \ref{lem:addingback}:
\[T_i d_- \to d_- T_i,\quad T_j y_i \to y_i T_j \quad(i\notin \{j,j+1\}),\]
\[T_i y_i \to y_{i+1} T_i + (1-q) y_i, \quad T_i y_{i+1} \to y_{i} T_i + (q-1) y_i,\]
\[T_i d_+^{k+m} e_0 \to d_+^{k+m} e_0,\]
\[d_+ d_- \to d_- d_+ + (q-1) T_1 T_2 \cdots T_{k-1} y_k,\quad y_i d_- \to d_- y_i. \]
Note that $T_i d_- \neq d_-T_i$ when apply to $F\in V_{i+1}$, but we will never need to deal with this situation, because then $d_-F \in V_i$ and $T_i$ does not act on $V_i$.

The second row of identities are easy if we use the operator definition of $T_i$. To prove in $\AA$, we use the relation $0=(T_i-1)(T_i+q)=T_i^2+(q-1)T_i-q$, which implies that
$T_i(T_i+q-1)=q$. Thus we have $T_i^{-1}={q}^{-1}(T_i+q-1)$.

Now the relation $y_i=q^{-1} T_i y_{i+1} T_i$ can be rewritten as
\begin{align*}
  qT_i^{-1} y_i &=y_{i+1} T_i \Leftrightarrow T_i y_i = y_{i+1} T_i -(q-1)y_i,\\
  T_i y_{i+1} &= y_i qT_{i}^{-1}=y_i(T_i+q-1)=y_i T_i +(q-1)y_i.
\end{align*}
These are the identities in row two.

For the third row to be well-defined, we need $i< k+m$. The rules we are using is $T_jd_+=d_+T_{j-1}$ and $T_1 d_+^2=d_+^2$.
Then we have
$$T_i d_+^{k+m} e_0= d_+^{i-1}T_1 d_+^{k+m-i+1}=d_+^{k+m}.$$

The next step is to reduce the spanning set:
We can use the following identity, which follows from $d_-^2 T_{k-1}=d_-^2$ when acting on $V_k$:
\[d_-^m (1-T_j) y_1^{a_1}\cdots y_{k+m}^{a_{k+m}} d_+^{k+m} e_0 = 0 \qquad (k<j<k+m,\ m\ge 2).\]

Indeed, we are proving the claim that $d_-^m(1-T_j)$ acts by $0$ on $V_{k+m}$ by induction on $m$.

The base case is $m=2$. Then $j=k+1$. Since we are acting on $V_{k+2}$, the claim holds true.

Now assume the claim holds for $m-1$, we want to show that it holds for $m$.

Since $d_-^m(1-T_{j})$ acts on $V_{k+m}$, it is already $0$ if $j=k+m-1$. Otherwise
$j<k+m-1$ and we have
$$ d_-^m (1-T_{j}) =d_-^{m-1} (1-T_{j}) d_- .$$
So $d_-^{m-1} (1-T_{j})$ acts on $V_{k+m-1}$ which is thus acts by $0$ by the induction hypothesis when $k<j<k+m-1$.

Note that $T_j$ commutes with $y_j y_{j+1}$. Suppose $a_j<a_{j+1}$. Then we can rewrite the above identity as
\[0=d_-^m y_1^{a_1}\cdots y_j^{a_j} y_{j+1}^{a_j}
(1-T_j) y_{j+1}^{a_{j+1}-a_j} y_{j+2}^{a_{j+2}} \cdots y_{k+m}^{a_{k+m}} d_+^{k+m} e_0.\]
Using $T_j y_{j+1} = y_j (T_j+(q-1))$, $T_j y_r = y_r T_j$ for
$r>j+1$, and $T_j d_+^{k+m} e_0=d_+^{k+m} e_0$ we can rewrite the
identity as vanishing of a linear combination of terms of the form
\eqref{basiseq},
and the lexicographically smallest term is precisely
\[d_-^m y_1^{a_1}\cdots y_{k+m}^{a_{k+m}} d_+^{k+m} e_0.\]
Thus we can always reduce terms of the form \eqref{basiseq} which
violate the condition $a_{k+1}\geq a_{k+2}\geq\cdots \geq a_{k+m}$ to
a linear combination of lexicographically greater terms, showing that
the subspace in the lemma at least spans $\AA e_0$.

We now show that they map to a basis of $V_*$, which also establishes
that they are independent, completing the proof.
Consider the image of the
elements of our spanning set
\[d_-^m y_1^{a_1} \cdots y_{k+m}^{a_{k+m}} d_+^{k+m} (1)
= d_-^m (y_1^{a_1} \cdots y_{k+m}^{a_{k+m}}) \]
which is equal to
\begin{equation}
\label{eq:elements}
(-1)^m y_1^{a_1} y_2^{a_2} \cdots y_k^{a_k} B_{a_{k+1}+1}
B_{a_{k+2}+1} \cdots B_{a_{k+m}+1} (1).
\end{equation}
Notice that $\lambda:=(a_{k+1}+1,a_{k+2}+1,\ldots,a_{k+m}+1)$ is a partition, so
\[B_{a_{k+1}+1} B_{a_{k+2}+1} \cdots B_{a_{k+m}+1} (1) \]
is a multiple of the Hall-Littlewood polynomial
$H_{\lambda'}[-X;1/q,0]$. These polynomials form a basis of the space of
symmetric functions, thus the elements \eqref{eq:elements} form a
basis of $\bigoplus_{k\in\Z_{\geq0}} V_k$.

\end{proof}

\section{Conjugate structure}
It is natural to ask if there is a way to extend
$\nabla$ to the spaces $V_k$, recovering the original operator at $k=0$.
What we have found is that it is better to extend the composition
\begin{equation}
\label{omega1}
\mathcal{N}(F)=\nabla \bar\omega F=\nabla \omega\overline{F}
\end{equation}
where the conjugation simply makes the substitution
$(q,t)=(q^{-1},t^{-1})$, and $\omega(F)=F[-X]$ is the Weyl involution
up to a sign, $\bar\omega$ denotes the composition of these.
This is a very interesting operator, which in fact is an antilinear
involution on $\Sym[X]$ corresponding to dualizing vector bundles
in the Haiman-Bridgeland-King-Reid picture, which identifies $\Sym[X]$
with the equivariant $K$-theory of the Hilbert scheme of points in the
complex plane \cite{BKR}. The key to our proof is to extend this operator
to an antilinear involution on every $V_k$, suggesting that $V_k$
should have some undiscovered geometric interpretation as well.

We will define the operator, which was discovered experimentally to
have nice properties, by explicitly constructing the action of
$\AA$ conjugated by the conjectural involution $\mathcal{N}$.
Let $\AA^*=\AA_{q^{-1}}$, and label the corresponding generators by
$d^*_{\pm},T_i^*,e_i^*$. Denote by $z_i$ the image of $y_i$ under the
isomorphism from $\AA$ to $\AA^*$ which sends generators to
generators, and is antilinear with respect to $q\mapsto q^{-1}$.

%

\begin{thm}
\label{Astarthm}
There is an action of $\AA^*$ on $V_*$ given by the assignment
\begin{equation}
\label{eq:dplusstar}
T_i^*=T^{-1}_i,\quad d^*_{-}=d_-,\quad e_i^*=e_i,\quad
(d_+^* F)[X] = \gamma F[X+(q-1)y_{k+1}],
\end{equation}
where $F\in V_k$ and $\gamma$ is the operator which sends $y_i$ to $y_{i+1}$ for $i=1,\ldots,k$ and $y_{k+1}$ to $t y_1$.
Furthermore, it satisfies the additional relations
\begin{equation}
\label{addreleq}
z_{i+1} d_+ = d_+ z_i,\quad y_{i+1} d_+^* = d_+^* y_i,\quad z_1 d_+ = -y_1 d_+^* t q^{k+1},\quad
{d_+^{*}}^m (1) = d_+^{m}(1)
\end{equation}
for any $m\geq 1$.
\end{thm}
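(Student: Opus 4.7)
The proof splits into two parts: verify that the operators defined in \eqref{eq:dplusstar} satisfy the relations of $\AA^* = \AA_{q^{-1}}$ (obtained from Definition \ref{defn:dpa} by substituting $q \mapsto q^{-1}$ throughout), and verify the four identities in \eqref{addreleq}. For the first part, most relations follow mechanically from the corresponding identities for $\AA$ established in Lemma \ref{replemma}. The quadratic, braid, and disjoint-commutativity relations for $T_i^* = T_i^{-1}$ are formal inversions of the corresponding identities for $T_i$; similarly $T_i^* d_-^* = d_-^* T_i^*$ for $i < k-1$. The identity $d_+^* T_i^* = T_{i+1}^* d_+^*$ reduces to $d_+^* T_i = T_{i+1} d_+^*$, which holds because $T_i$ commutes with the plethystic substitution $X \mapsto X + (q-1)y_{k+1}$ (acting only on $y_i, y_{i+1}$) and because $\gamma T_i = T_{i+1} \gamma$ on $V_{k+1}$ (since $T_i = \Delta^*_{y_i, y_{i+1}}$ and $\gamma$ shifts indices by one). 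Iterating the definition of $d_+^*$ yields $(d_+^*)^2 F[X] = F(y_3, \ldots, y_{k+2})[X + (q-1)t(y_1 + y_2)]$, whose dependence on $y_1, y_2$ is symmetric (only through $y_1 + y_2$): hence $T_1^*(d_+^*)^2 = (d_+^*)^2$. Finally, $(d_-^*)^2 T_{k-1}^* = (d_-^*)^2$ follows from $d_-^2 T_{k-1} = d_-^2$ combined with $T_{k-1}^{-1} = q^{-1}(T_{k-1} + q - 1)$.

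The heart of the proof is the computation of the commutator $[d_+^*, d_-]$ on $V_k$, needed for the two remaining $\AA^*$-relations
\[
d_-^*[d_+^*, d_-^*] T_{k-1}^* = q^{-1}[d_+^*, d_-^*] d_-^*, \qquad T_1^*[d_+^*, d_-^*] d_+^* = q^{-1} d_+^*[d_+^*, d_-^*].
\]
Expanding $F = \sum_i y_k^i F_i$ with $F_i$ free of $y_k$, I would compute $d_+^* d_- F$ and $d_- d_+^* F$ term-by-term using Lemma \ref{l-d-} and the definition of $d_+^*$, and subtract. The key plethystic input is Lemma \ref{l-h1-u} applied to $(1-q)ty_1 z$, which controls the interaction between the $B_r$-operator (with its substitution $X \mapsto X - (q-1)z^{-1}$) and the substitution $X \mapsto X + (q-1)ty_1$ coming from $\gamma$. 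Once an explicit form for $[d_+^*, d_-]$ is established, the two relations follow by the same sliding maneuver as in the derivations of \eqref{eq:dminusrel} and \eqref{eq:dplusrel}: the $T_i^{-1}$-factors slide across $d_-^*$ (respectively $d_+^*$) using the already-verified commutation rules, and the quadratic relation $T_i^2 = (1-q)T_i + q$ combines the resulting expressions.

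For the second part, the identity ${d_+^*}^m(1) = d_+^m(1)$ is trivial: both sides equal $1 \in V_m$ for every $m \ge 1$, since $1[X + (q-1)y_{k+1}] = 1$ and both $T_1 \cdots T_k$ and $\gamma$ fix the constant $1$. For the remaining three identities, the plan is to use the commutator formula from the previous paragraph combined with the $\AA^*$-analog of Lemma \ref{lem:addingback} (namely $z_k = (q^{-1}-1)^{-1} T_{k-1} \cdots T_1 [d_+^*, d_-]$, and $z_i = q T_i^{-1} z_{i+1} T_i^{-1}$ for $i < k$) to obtain explicit formulas for each $z_i$ acting on $V_*$. The identity $y_{i+1} d_+^* = d_+^* y_i$ is the structural mirror of Lemma \ref{lem:addingback}'s $d_+ y_i = T_1 \cdots T_i y_i (T_1 \cdots T_i)^{-1} d_+$, reflecting the fact that $d_+^*$ implements a clean index-shift via $\gamma$. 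The identity $z_{i+1} d_+ = d_+ z_i$ reduces after substitution to an identity within the $\AA$-subalgebra that is provable from Lemma \ref{lem:addingback} and the braid relations. Finally, $z_1 d_+ = -y_1 d_+^* \cdot tq^{k+1}$ is verified by checking both sides on the spanning elements of Lemma \ref{lem:algbasis}; the factor $tq^{k+1}$ emerges from tracking the single $t$-factor from $\gamma(y_{k+1}) = ty_1$ together with the cumulative $q$-powers accumulated by sliding $T_i^{-1}$'s across the $y_j$'s.

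The main obstacle will be the commutator calculation in the second paragraph. Unlike the clean formula $[d_+, d_-] = (q-1) T_1 \cdots T_{k-1} y_k$ of Lemma \ref{lem:commutator}, the operator $[d_+^*, d_-]$ is genuinely more complicated — already for $k = 1$ one can check that it is not simply multiplication by a polynomial. Identifying the precise form that cleanly yields the two $\AA^*$-commutator relations through the sliding pattern, and then choosing the right way to repackage it so that the additional relations in \eqref{addreleq} emerge from a short computation on the spanning set of Lemma \ref{lem:algbasis}, will require careful plethystic bookkeeping.
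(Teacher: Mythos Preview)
Your treatment of the easy relations (braid, quadratic, $d_+^* T_i^* = T_{i+1}^* d_+^*$, $T_1^*(d_+^*)^2 = (d_+^*)^2$, the $d_-$-side relations, and ${d_+^*}^m(1)=d_+^m(1)$) is fine and matches the paper. The relation $y_{i+1} d_+^* = d_+^* y_i$ is even simpler than you suggest: it is immediate from $\gamma(y_i)=y_{i+1}$.

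The genuine gap is your plan for the two commutator relations. You propose to first obtain a usable closed form for $[d_+^*,d_-]$ and then slide the $T_i^{-1}$'s through as in \eqref{eq:dminusrel}--\eqref{eq:dplusrel}. As you yourself note, no analogue of Lemma \ref{lem:commutator} exists here, and without one the sliding argument has nothing to slide. The paper never computes the commutator. Instead, each relation is rewritten so that one side becomes $A(T_j+q)$ for a suitable threefold composite $A$ of $d_+^*$ and $d_-$ (for the first relation, $A = d_+^* d_-^2 - (1+q)\,d_- d_+^* d_- + q\,d_-^2 d_+^*$), and one must show that $A$ annihilates elements symmetric in $y_j,y_{j+1}$. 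The tool you are missing is the twisted multiplication $\ast_m$ of Lemma \ref{l-astm}: one checks $A(F\ast G)=F\ast_m A(G)$ and $A y_i = y_{i+1} A$, which reduces the verification to $A$ acting on monomials in the last one or two $y$-variables alone; this residual computation is then done via the operator identity $\Gamma_+(u) B_i = (B_i - u B_{i-1})\Gamma_+(u)$ and the antisymmetry of $q B_{a+1}B_b - B_a B_{b+1}$ from \cite{haglund2012compositional}.

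The same $\ast_m$ reduction, not a check on the basis of Lemma \ref{lem:algbasis}, is how the paper handles the remaining identities in \eqref{addreleq}. For $z_1 d_+ = -t q^{k+1} y_1 d_+^*$ one shows both sides intertwine $y_i\mapsto y_{i+1}$ and $F\ast{}\mapsto F\ast_1{}$, reducing to a one-line check on $1\in V_k$. Your claim that $z_{i+1}d_+=d_+z_i$ reduces to an identity in the $\AA$-subalgebra is also not correct: after reducing to $i=1$ the equation becomes $T_1^{-1} d_+(d_+^* d_- - d_- d_+^*) = (d_+^* d_- - d_- d_+^*) d_+$, which still involves $d_+^*$ and is again handled by the $\ast_m$ reduction followed by an explicit plethystic evaluation.
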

Note the appearence of $t$ in $\gamma$.

The first part of the statement is equivalent to say that the set of operators satisfy the set of relations
in Definition \ref{defn:dpa} but with $q$ replaced by $1/q$. We list them here for reader's convenience.
$$
(T_i^*-1)(T_i^*+1/q)=0,\quad T_i^* T^*_{i+1} T^*_i = T^*_{i+1} T^*_i T^*_{i+1},
\quad T_i^* T_j^* = T_j^* T_i^*\quad(|i-j|>1),
$$
$$
T_i^* d_- = d_- T_i^*\ (i<k-1),\quad d_+^* T_i^* = T^*_{i+1} d^*_+, \quad T^*_1 {d^*_+}^2 =
{d^*_+}^2,\quad d_-^2 T^*_{k-1} = d_-^2,
$$
$$
d_-(d_+^* d_- - d_- d^*_+) T^*_{k-1} = \frac1q (d_+^* d_- - d_- d_+^*) d_- \quad(k\geq2),
$$
$$
T_1^* (d_+^* d_- - d_- d_+^*) d_+^* = \frac1q d_+^*(d_+^* d_- - d_- d_+^*),
$$

These will be verified in the following propositions.

\begin{prop}
We have the following identities on $V_k$:
$$
(T_i^*-1)(T_i^*+1/q)=0,\quad T_i^* T^*_{i+1} T^*_i = T^*_{i+1} T^*_i T^*_{i+1},
\quad T_i^* T_j^* = T_j^* T_i^*\quad(|i-j|>1).
$$
\end{prop}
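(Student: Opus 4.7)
The plan is to derive all three relations for $T_i^* = T_i^{-1}$ directly from the corresponding relations already established for $T_i = \Delta_{y_iy_{i+1}}^*$ in Proposition \ref{Delprop}. Since each identity in the statement involves only the starred operators $T_i^*$ (and no $d_\pm$ or $d_\pm^*$), this is purely a formal consequence of inverting the known relations.

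First I would handle the braid and commutation relations, which are essentially cost-free. Applying the anti-automorphism $X \mapsto X^{-1}$ to the braid identity $T_i T_{i+1} T_i = T_{i+1} T_i T_{i+1}$ gives $T_i^{-1} T_{i+1}^{-1} T_i^{-1} = T_{i+1}^{-1} T_i^{-1} T_{i+1}^{-1}$, which is exactly $T_i^* T_{i+1}^* T_i^* = T_{i+1}^* T_i^* T_{i+1}^*$. The same inversion trick applied to $T_iT_j = T_jT_i$ for $|i-j|>1$ yields $T_i^* T_j^* = T_j^* T_i^*$.

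For the quadratic relation, I would start from $(T_i - 1)(T_i + q) = 0$, i.e.\ $T_i^2 = (1-q)T_i + q$. Multiplying through by $T_i^{-2}/q$ rearranges this to
\[
T_i^{-2} = \tfrac{1}{q} + \left(1 - \tfrac{1}{q}\right) T_i^{-1},
\]
so substituting $T_i^* = T_i^{-1}$ gives
\[
(T_i^*)^2 + \left(\tfrac{1}{q} - 1\right)T_i^* - \tfrac{1}{q} = 0,
\]
which factors as $(T_i^* - 1)(T_i^* + 1/q) = 0$. Equivalently, one can observe that the polynomial $(x-1)(x+1/q)$ equals $-x^2/q$ times $(x^{-1}-1)(x^{-1}+q)$ after clearing denominators, so vanishing of $(T_i-1)(T_i+q)$ is equivalent to vanishing of $(T_i^{-1}-1)(T_i^{-1}+1/q)$.

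There is no real obstacle here; the only minor subtlety is verifying that $T_i$ is invertible as an operator on $V_k$, but this is immediate from $T_i(T_i + q - 1) = q$ (a rewriting of the quadratic relation), so $T_i^{-1} = (T_i + q - 1)/q$ is a well-defined operator on $V_k$, and all three identities reduce to formal algebraic manipulations inside the unital algebra generated by $T_1,\ldots,T_{k-1}$.
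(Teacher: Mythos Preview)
Your proposal is correct and follows essentially the same approach as the paper: the braid and commutation relations are immediate from inverting the known relations for $T_i$, and the quadratic relation is obtained by the same factorization trick (the paper writes it compactly as $(T_i^{-1}-1)(T_i^{-1}+1/q)=q^{-1}T_i^{-2}(1-T_i)(T_i+q)=0$, which is exactly your ``equivalently'' observation). Your added remark on the invertibility of $T_i$ via $T_i^{-1}=(T_i+q-1)/q$ is a nice clarification but not a departure from the paper's argument.
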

\begin{proof}
  The braid relations are obvious since $T_i^*=T_i^{-1}$.
The first equality can be verified as follows.

\begin{align*}
  (T_i^{-1}-1)(T_i^{-1}+1/q) =q^{-1}T_i^{-2}(1-T_i)(T_i+q) =0.
\end{align*}
\end{proof}

\begin{prop} We have to following identities on $V_k$.
$$
d_+^* T_i^{-1} = T_{i+1}^{-1} d_+^*, \qquad T_1^{-1} d_+^{*2} = d_+^{*2},
\qquad d_+^* y_i = y_{i+1} d_+^*.
$$
$$
T_i^* d_- = d_- T_i^*\ (i<k-1), \quad d_-^2 T^*_{k-1} = d_-^2,
$$
\end{prop}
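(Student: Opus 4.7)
The plan is to dispatch each of the five identities in a short separate argument. The two identities involving only $d_-$ are immediate from results already established in the proof of Lemma \ref{replemma}: since $T_i^* := T_i^{-1}$ by definition \eqref{eq:dplusstar}, the relation $T_i^* d_- = d_- T_i^*$ for $i < k-1$ is obtained by inverting the previously proved $T_i d_- = d_- T_i$, and $d_-^2 T_{k-1}^* = d_-^2$ is obtained by multiplying $d_-^2 T_{k-1} = d_-^2$ on the right by $T_{k-1}^{-1}$.

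For the $d_+^*$ identities I would first derive an explicit closed form for $d_+^*$. Unwinding \eqref{eq:dplusstar}: applying the plethystic shift $X \mapsto X + (q-1) y_{k+1}$ and then the renaming $\gamma$ (which sends $y_i \mapsto y_{i+1}$ for $i \le k$ and $y_{k+1} \mapsto t y_1$) yields, for any $F \in V_k$,
\[
(d_+^* F)(X;\, y_1, \ldots, y_{k+1}) \;=\; F\bigl(X + (q-1)\,t\,y_1;\; y_2, y_3, \ldots, y_{k+1}\bigr).
\]
This formula is the key to everything that follows.

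From this closed form the three remaining identities become transparent. For $d_+^* y_i = y_{i+1} d_+^*$ with $i \le k$: multiplication by $y_i$ commutes with the $X$-shift, and $\gamma$ converts that extra factor into $y_{i+1}$. For $d_+^* T_i = T_{i+1} d_+^*$ with $i \le k-1$ (equivalent to the claimed identity on the inverses): the operator $T_i = \Delta^*_{y_i, y_{i+1}}$ involves only $y_i, y_{i+1}$, both of index $\le k$, so it commutes with the $X$-shift; and $\gamma$ conjugates $\Delta^*_{y_i, y_{i+1}}$ into $\Delta^*_{y_{i+1}, y_{i+2}} = T_{i+1}$ because it sends $y_i \mapsto y_{i+1}$ and $y_{i+1} \mapsto y_{i+2}$. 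For $T_1^{-1} (d_+^*)^2 = (d_+^*)^2$: iterating the closed form gives
\[
(d_+^*)^2 F(X;\, y_1, \ldots, y_{k+2}) \;=\; F\bigl(X + (q-1)\,t\,(y_1 + y_2);\; y_3, \ldots, y_{k+2}\bigr),
\]
which is manifestly symmetric in $y_1$ and $y_2$; Lemma \ref{Dellem} then says $T_1 = \Delta^*_{y_1, y_2}$ acts as the identity on such polynomials, so $T_1 (d_+^*)^2 = (d_+^*)^2$ and hence $T_1^{-1}(d_+^*)^2 = (d_+^*)^2$.

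No step is expected to present any real obstacle; the only substantive work is correctly unwinding the definition of $d_+^*$ to produce the closed form, after which each of the five identities is a one-line verification. Conceptually, the presence of the factor $t$ in $\gamma$ is exactly what makes $(d_+^*)^2$ symmetric in $y_1, y_2$ through the combination $t(y_1 + y_2)$, and this is the one place where the twist built into $\gamma$ plays a nontrivial role.
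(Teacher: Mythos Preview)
Your proposal is correct and follows essentially the same approach as the paper. The paper's own proof is extremely terse---it says the first-line relations ``are easy from the definition,'' verifies only $d_+^* y_i = y_{i+1} d_+^*$ explicitly (via $\gamma(y_i F[X+(q-1)y_{k+1}]) = y_{i+1}\gamma F[X+(q-1)y_{k+1}]$), and dismisses the second line as ``just a rewrite of the original one''---so your write-up, with the explicit closed form for $d_+^*$ and the symmetry argument for $(d_+^*)^2$, is a cleaner and more detailed version of the same idea.
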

\begin{proof}
The relations in the first line are easy from the definition. For instance,
\begin{align*}
  d_+^*y_i F &= \gamma (y_i F[X+(q-1)y_{k+1}])\\
           &=y_{i+1} \gamma F[X+(q-1)y_{k+1}] =y_{i+1} d_+^* F.
\end{align*}
The second line is just a rewrite of the original one.
\end{proof}

To verify the rest of the relations, we need to define a family of twisted multiplications:
For $F\in\Sym[X]$, $G\in V_k$, $m=0,1,2,\ldots,k$ put
\[(F \ast_m G)[X] = F\left[X + (q-1)\left(\sum_{i=1}^m t y_i + \sum_{i=m+1}^k y_i\right)\right] G. \]
Then $F\ast G$ is just $F\ast_0 G$. We have the following properties.

\begin{lem}\label{l-astm}
For $F\in \Sym[X]$, and $G\in V_k$, we have
\[d_+^*(F\ast_m G) = F \ast_{m+1} d_+^* G \ (0\leq m\leq k) ,\quad
d_-(F\ast_m G) = F\ast_m d_- G \ (0\leq m<k).\]
\end{lem}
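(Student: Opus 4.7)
The second identity is essentially the one already established in Lemma~\ref{l-twistedeq} for $m=0$; the argument there is purely formal and works verbatim for general $m<k$, since $d_-$ acts only through the variable $y_k$ (via $B_{i+1}$ on the $x$-variables) and the coefficient $F[X+(q-1)(\sum_{i\le m}ty_i+\sum_{m<i\le k-1}y_i)]$ does not involve $y_k$ when $m<k$. So my plan is to just remark that this identity reduces to Lemma~\ref{l-twistedeq} by replacing $X$ with $X+(q-1)(\sum_{i\le m}ty_i+\sum_{m<i\le k-1}y_i)$ inside the computation.

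For the first identity, I would argue by direct unwinding of the definitions. By definition,
\[
d_+^*(F\ast_m G)[X]=\gamma\Bigl((F\ast_m G)\bigl[X+(q-1)y_{k+1}\bigr]\Bigr),
\]
and plugging in the definition of $\ast_m$ gives
\[
(F\ast_m G)\bigl[X+(q-1)y_{k+1}\bigr]
=F\!\left[X+(q-1)y_{k+1}+(q-1)\Bigl(\sum_{i=1}^{m}ty_i+\sum_{i=m+1}^{k}y_i\Bigr)\right]G[X+(q-1)y_{k+1}].
\]
Now apply $\gamma$. Since $\gamma$ sends $y_i$ to $y_{i+1}$ for $1\le i\le k$ and $y_{k+1}$ to $ty_1$, the arguments of $F$ get shifted so that $y_{k+1}$ becomes $ty_1$, each $y_i$ in the first group becomes $y_{i+1}$ (still multiplied by $t$), and each $y_i$ in the second group becomes $y_{i+1}$. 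Collecting terms, the bracket becomes
\[
X+(q-1)\Bigl(ty_1+\sum_{i=2}^{m+1}ty_i+\sum_{i=m+2}^{k+1}y_i\Bigr)=X+(q-1)\Bigl(\sum_{i=1}^{m+1}ty_i+\sum_{i=m+2}^{k+1}y_i\Bigr),
\]
which is exactly the shift appearing in $\ast_{m+1}$ on $V_{k+1}$. Meanwhile $\gamma(G[X+(q-1)y_{k+1}])=d_+^*G$ by definition. Hence
\[
d_+^*(F\ast_m G)[X]=F\!\left[X+(q-1)\Bigl(\sum_{i=1}^{m+1}ty_i+\sum_{i=m+2}^{k+1}y_i\Bigr)\right]\cdot d_+^*G[X]=(F\ast_{m+1}d_+^*G)[X].
\]

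There is no real obstacle here: both identities are matters of bookkeeping the arguments. The only point worth care is to keep track of the index shift produced by $\gamma$ on the $y$-variables and to make sure that the new variable $y_{k+1}$, when fed into $\gamma$, yields the factor $ty_1$ which occupies the $(m+1)$-th slot in the twisted multiplication on $V_{k+1}$. This is the mechanism by which the parameter $m$ increases by one under $d_+^*$ while it is preserved under $d_-$.
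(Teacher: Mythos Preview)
Your proof is correct and follows essentially the same approach as the paper: a direct unwinding of the definition of $d_+^*$ via $\gamma$, tracking the index shift on the $y_i$'s, and a reference back to Lemma~\ref{l-twistedeq} for the $d_-$ identity. One small remark: Lemma~\ref{l-twistedeq} already states and proves $d_-(F\ast_m G)=F\ast_m d_-G$ for general $0\le m<k$ (not just $m=0$), so you can simply cite it rather than say the argument ``works verbatim for general $m$.''
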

\begin{proof}
The formula for $d_-$ appeared in Lemma \ref{l-twistedeq}.

We verify the formula for $d_+^*$ as follows.
\begin{align*}
  d_+^*(F\ast_m G) &=d_+^* F\left[X + (q-1)\left(\sum_{i=1}^m t y_i + \sum_{i=m+1}^k y_i\right)\right] G\\
  &=\gamma\left( F\left[X+(q-1)y_{k+1} + (q-1)\left(\sum_{i=1}^m t y_i + \sum_{i=m+1}^k y_i\right)\right] G[X+(q-1)y_{k+1}]\right)\\
  &= F\left[X+(q-1)ty_{1} + (q-1)\left(\sum_{i=2}^{m+1} t y_i + \sum_{i=m+2}^{k+1} y_i\right)\right] \gamma G[X+(q-1)y_{k+1}]\\
  &=F \ast_{m+1} d_+^* G.
\end{align*}
\end{proof}

Let us verify the following property.
\begin{prop}
\begin{equation}\label{eq:conjrel1}
d_-(d_+^* d_- - d_- d_+^*) T_{k-1}^{-1} = q^{-1} (d_+^* d_- - d_- d_+^*) d_-\quad(k\geq 2).
\end{equation}
\end{prop}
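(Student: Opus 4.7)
The plan is to parallel the proof of the analogous unstarred identity \eqref{eq:dminusrel} in Lemma \ref{replemma}. Recall that the essential ingredients there were the commutator formula $(d_+d_--d_-d_+)F = (q-1)T_1\cdots T_{k-1}(y_kF)$ from Lemma \ref{lem:commutator}, the relation $T_{k-1}y_kT_{k-1}=qy_{k-1}$, and the commutation $d_-y_{k-1}=y_{k-1}d_-$ on $V_k$.

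I will first derive the starred analog of Lemma \ref{lem:commutator}. A direct calculation on a $y$-monomial $F=y_1^{a_1}\cdots y_k^{a_k}\in V_k$ gives
\[d_+^*d_- F = -y_2^{a_1}\cdots y_k^{a_{k-1}}h_{a_k+1}[-X-(q-1)ty_1],\qquad d_-d_+^*F = -y_2^{a_1}\cdots y_k^{a_{k-1}}h_{a_k+1}[-X],\]
and therefore
\[[d_+^*,d_-](y_1^{a_1}\cdots y_k^{a_k}) = (q-1)\,t\,y_1\,y_2^{a_1}\cdots y_k^{a_{k-1}}\,h_{a_k}[-X+ty_1],\]
where the final simplification uses the expansion of $h_{a_k+1}[-X-(q-1)ty_1]$ supplied by Lemma \ref{l-h1-u}. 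The factor of $t$ arises precisely from the cyclic rule $y_{k+1}\mapsto ty_1$ in $\gamma$ and is the only structural difference from the unstarred computation. This formula may be rewritten as
\[(d_+^*d_-{-}d_-d_+^*)F = (q^{-1}-1)\,T_1^{-1}T_2^{-1}\cdots T_{k-1}^{-1}(z_kF),\]
for a suitable operator $z_k$ on $V_k$ playing the role of $y_k$ for the starred action of $\AA^*$.

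Next I will verify the two auxiliary relations (a) $T_{k-1}^{-1}z_kT_{k-1}^{-1}=q^{-1}z_{k-1}$ on $V_k$, and (b) $d_-z_{k-1}=z_{k-1}d_-$ from $V_k$ to $V_{k-1}$, which are the starred counterparts of $T_{k-1}y_kT_{k-1}=qy_{k-1}$ and $d_-y_{k-1}=y_{k-1}d_-$. Both should follow by direct computation from the explicit form of $z_k$, in conjunction with the $\ast_m$-twisted identities of Lemma \ref{l-astm}.

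With these pieces in hand, \eqref{eq:conjrel1} follows by the same chain of manipulations as \eqref{eq:dminusrel}:
\begin{align*}
d_-(d_+^*d_-{-}d_-d_+^*)T_{k-1}^{-1}
&= (q^{-1}-1)\,d_-T_1^{-1}\cdots T_{k-1}^{-1}z_kT_{k-1}^{-1}\\
&= q^{-1}(q^{-1}-1)\,d_-T_1^{-1}\cdots T_{k-2}^{-1}z_{k-1}\\
&= q^{-1}(q^{-1}-1)\,T_1^{-1}\cdots T_{k-2}^{-1}z_{k-1}d_-\\
&= q^{-1}(d_+^*d_-{-}d_-d_+^*)d_-,
\end{align*}
invoking the commutator formula, relation (a), the commutations $d_-T_i^{-1}=T_i^{-1}d_-$ for $i<k-1$ together with (b), and finally the commutator formula on $V_{k-1}$.

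The main obstacle will be identifying $z_k$ in a form clean enough that (a) and (b) are easy to check, since unlike $y_k$ the operator $z_k$ is not simply a multiplication operator (it mixes the action of $ty_1$ with a shift of $X$). A more brute-force alternative, avoiding the introduction of $z_k$ altogether, would be to verify both sides of \eqref{eq:conjrel1} directly on the monomial basis $y_1^{a_1}\cdots y_k^{a_k}$, reducing the general case via the twisted multiplication of Lemma \ref{l-astm}.
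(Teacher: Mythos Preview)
Your main approach has a genuine circularity. Relation (a), $T_{k-1}^{-1}z_kT_{k-1}^{-1}=q^{-1}z_{k-1}$, is simply the \emph{definition} of $z_{k-1}$ in $\AA^*$ (the image of $y_{k-1}=q^{-1}T_{k-1}y_kT_{k-1}$ under the antilinear isomorphism). But then relation (b), $d_-z_{k-1}=z_{k-1}d_-$, when you unwind it through the definition of $z_{k-1}$ via the commutator, becomes exactly the identity \eqref{eq:conjrel1} you are trying to prove. In the unstarred proof \eqref{eq:dminusrel} this circularity does not arise because $y_{k-1}$ is honest multiplication by a variable, so $d_-y_{k-1}=y_{k-1}d_-$ is a one-line check independent of the commutator formula. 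Here $z_{k-1}$ has no such independent description; your own closing remark that ``$z_k$ is not simply a multiplication operator'' is precisely the obstruction, and it is fatal to the strategy, not merely a technicality to be worked out.

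The paper's proof proceeds quite differently. It first rewrites \eqref{eq:conjrel1} algebraically as $A(T_{k-1}+q)=0$ with $A=d_+^*d_-^2-(1+q)d_-d_+^*d_-+qd_-^2d_+^*$, using $d_-^2T_{k-1}=d_-^2$ and $d_-^2d_+^*T_{k-1}=d_-^2d_+^*$. By Lemma~\ref{l-im-ker} the image of $T_{k-1}+q$ consists of elements symmetric in $y_{k-1},y_k$, so it suffices to show $A$ vanishes on such elements. After the twisted-multiplication reduction of Lemma~\ref{l-astm} (your alternative proposal gets this far), one is left with showing $A(y_{k-1}^a y_k^b)+A(y_{k-1}^b y_k^a)=0$, which the paper handles by expressing $A(y_{k-1}^a y_k^b)$ in terms of the Hall--Littlewood creation operators $B_i$ and invoking the antisymmetry $qB_{a+2}B_{b+1}-B_{a+1}B_{b+2}=-(qB_{b+2}B_{a+1}-B_{b+1}B_{a+2})$ from \cite{haglund2012compositional}. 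Your brute-force alternative would have to rediscover this antisymmetry to close the argument.
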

\begin{proof}
Rewrite it as
\begin{align*}
q d_-(d_+^* d_- - d_- d_+^*) - (d_+^* d_- - d_- d_+^*) d_-T_{k-1}&=0\\
qd_-d_+^*d_- -q d_-^2 d_+^* -d_+^*d_-^2 -d_-d_+^*d_-T_{k-1} &=0\quad (\text{by }d_-^2T_{k-1}=d_-^2) \\
d_+^* d_-^2 - d_- d_+^* d_- (T_{k-1} + q) + q d_-^2 d_+^* &= 0\quad (\text{by }\times -1).
\end{align*}

Multiplying both sides by $q+1$ from right we get
\begin{align*}
d_+^* d_-^2 (q+1)- d_- d_+^* d_- (T_{k-1} + q)(q+1) + q d_-^2 d_+^*(q+1)&=0\\
\end{align*}
Now the $q+1$ in the first and the last term can be replaced by $q+T_{k-1}$. This is because
$$d_-^2=d_-^2 T_{k-1}, \qquad \text{and }\qquad d_-^2 d_+^* T_{k-1} =d_-^2T_k d_+^*=d_-^2d_+^*.$$

Thus the equality we need to prove becomes
$$A (T_{k-1}+q)=0,$$
where we have set
$$A =d_+^* d_-^2 - (1 + q)d_-d_+^* d_- + q d_-^2 d_+^*.
$$

By Lemma \ref{l-im-ker} the image of $T_{k-1}+q$ is symmetric in $y_{k-1}$, $y_k$.
It is enough to show that $A$ vanishes on elements of $V_k$ that are symmetric in $y_{k-1}$, $y_k$.
By linearity, we need only show that $A$ vanishes on $s_\lambda\ast y^a$.

We have (recall that $k\geq 2$), by Lemma \ref{l-astm},
$$
A(F\ast G) = F \ast_1 A (G),\quad A y_i  = y_{i+1} A \quad (F\in\Sym[X],\, G\in V_k,\, i<k-1).
$$

Thus it is enough to verify the vanishing of $A$ on symmetric polynomials of $y_{k-1}, y_k$. We evaluate $A$ on $y_{k-1}^a y_k^b$:
\[
A (y_{k-1}^a y_k^b) = (\Gamma_+(t(q-1)y_1) B_{a+1} B_{b+1} - (q+1) B_{a+1} \Gamma_+(t(q-1)y_1) B_{b+1}
\]
\[
+ q B_{a+1} B_{b+1} \Gamma_+(t(q-1)y_1) )\ (1),
\]
where $\Gamma_+(Z)$ is the operator $F[X]\to F[X+Z]$. For any monomial $u$ and integer $i$ we have operator identities
$$
\Gamma_+(u) B_i = (B_i - u B_{i-1}) \Gamma_+(u),\quad
B_i \Gamma_+(-u) = \Gamma_+(-u)(B_i - u B_{i-1}),
$$
which can be checked as follows.
\begin{align*}
  \Gamma_+(u) B_i F[X] &= \Gamma_+(u) F[X-(q-1)/z] \pExp[-zX]\Big|_{z^i} \\
                       &= F[X+u-(q-1)/z] \pExp[-z(X+u)] \Big|_{z^i} \\
                       &= F[X+u-(q-1)/z] \pExp[-zX] \pExp[-zu] \Big|_{z^i} \\
                       &= F[X+u-(q-1)/z] \pExp[-zX] (1-zu) \Big|_{z^i} \\
                       &=(B_i-uB_{i-1}) \Gamma_+(u) F[X]
\end{align*}
\begin{align*}
  B_i \Gamma_+(-u) F[X] &= B_i F[X-u] = F[X-u-(q-1)/z] \pExp[-zX] \Big|_{z^i} \\
  &=\Gamma_+(-u) \left(F[X-(q-1)/z] \pExp[-zX]\right) \pExp(-zu)  \Big|_{z^i}\\
  &=\Gamma_+(-u) \left(F[X-(q-1)/z] \pExp[-zX] (1-zu)  \Big|_{z^i} \right)\\
  &=\Gamma_+(-u) (B_i-uB_{i-1}) F[X].
\end{align*}

Thus by $\Gamma_+(t(q-1)y_1)=\Gamma_+(-ty_1) \Gamma_+(tqy_1)$, we have
\begin{align*}
\Gamma_+(t(q-1)y_1) B_{a+1} B_{b+1} &= \Gamma_+(-ty_1) (B_{a+1} - qty_1 B_a) (B_{b+1} - qty_1 B_b) \Gamma_+(qty_1), \\
B_{a+1} \Gamma_+(t(q-1)y_1) B_{b+1} &= \Gamma_+(-ty_1) (B_{a+1} - ty_1 B_a) (B_{b+1} - qty_1 B_b) \Gamma_+(qty_1),\\
B_{a+1} B_{b+1} \Gamma_+(t(q-1)y_1) &= \Gamma_+(-ty_1) (B_{a+1} - ty_1 B_a) (B_{b+1} - ty_1 B_b) \Gamma_+(qty_1).
\end{align*}
Performing the cancellations we arrive at
$$
A (y_{k-1}^a y_k^b) = \Gamma_+(-ty_1) (t y_1 (1-q) (B_{a} B_{b+1} - q B_{a+1} B_b)) (1).
$$
This expression is antisymmetric in $a$, $b$ by Corollary 3.4, \cite{haglund2012compositional}. Thus \eqref{eq:conjrel1} is true.
\end{proof}

Next we have to check that
\begin{prop}
\[
T_1^{-1} (d_+^* d_- - d_- d_+^*) d_+^* = q^{-1} d_+^*(d_+^* d_- - d_- d_+^*).
\]
\end{prop}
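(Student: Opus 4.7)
The identity to verify is the $*$-analog of the relation $T_1(d_+d_- - d_-d_+)d_+ = q\,d_+(d_+d_- - d_-d_+)$ from Lemma \ref{replemma}; my approach closely follows that of the immediately preceding conjugate relation \eqref{eq:conjrel1}. Multiplying the target through by $q$ and collecting terms, then using the already-verified $T_1^{-1}(d_+^*)^2 = (d_+^*)^2$ together with the algebraic identity $qT_1^{-1}+1 = T_1+q$ (from $(T_1-1)(T_1+q)=0$), and observing that $d_-(d_+^*)^2 F$ is symmetric in $y_1,y_2$ (so $T_1^{-1}$ acts as the identity on it), the statement becomes equivalent to the vanishing of
$$A'(F) := (T_1+q)\,d_+^* d_- d_+^* F \;-\; q\,d_-(d_+^*)^2 F \;-\; (d_+^*)^2 d_- F$$
as an operator $V_k \to V_{k+1}$ for $k \geq 1$. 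By Lemma \ref{l-im-ker} all three summands lie in the subspace of $V_{k+1}$ symmetric in $y_1, y_2$.

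By Lemma \ref{l-astm}, $A'(F \ast G) = F \ast_2 A'(G)$ for $F \in \Sym[X]$, and combining the additional relation $d_+^* y_i = y_{i+1} d_+^*$ from \eqref{addreleq} with $d_- y_i = y_i d_-$ for $i<k$ gives $A' y_i = y_{i+2} A'$ for $1 \leq i \leq k-1$. It therefore suffices to verify $A'(y_k^b) = 0$ for all $b \geq 0$. Evaluating the three terms on $y_k^b$ using $d_- y_k^b = -B_{b+1}(1) = (-1)^b e_{b+1}[X]$ (Lemma \ref{l-d-}), the formula $d_+^* G = G[X+(q-1)ty_1]$ for $G \in \Sym[X]$, and its iterate $(d_+^*)^2 G = G[X+(q-1)t(y_1+y_2)]$ (obtained by tracking that $\gamma$ sends $y_{k+1} \to ty_1$), the vanishing of $A'(y_k^b)$ reduces to the functional identity
$$(T_1+q)\,e_{b+1}[X+(q-1)ty_1] \;=\; q\,e_{b+1}[X] \;+\; e_{b+1}[X+(q-1)t(y_1+y_2)],$$
which I would verify directly from the plethystic expansion $e_j[(q-1)ty_1] = (-1)^{j-1}(q-1)t^j y_1^j$, the standard rule $e_n[A+B] = \sum_{i+j=n} e_i[A] e_j[B]$, and the formula $(T_1+q)y_1^k = y_1^k + y_2^k + (1-q)\sum_{i=1}^{k-1} y_1^{k-i} y_2^i$ obtained from \eqref{e-Delta*s}.

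The principal subtlety lies in the final functional identity: it is genuinely specific to elementary symmetric functions and already fails if one substitutes, e.g., $H = p_2[X]$ in place of $e_{b+1}[X]$. What rescues the argument is the fact that $B_r(1)$ is proportional to $e_r[X]$, so that only the $e_n$-case of the identity is required. Once this is recognized, the remainder of the proof is a routine adaptation of the argument used for \eqref{eq:conjrel1}.
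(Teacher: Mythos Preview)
Your proof is correct and follows essentially the same route as the paper. The paper multiplies by $qT_1$ on the left (rather than invoking $qT_1^{-1}+1=T_1+q$ and the $y_1,y_2$-symmetry of $d_-(d_+^*)^2F$) to arrive at the same operator $A = d_+^{*2}d_- - (T_1+q)d_+^*d_-d_+^* + qd_-d_+^{*2}$ (your $-A'$), reduces to $Ay_k^a=0$ via the same twisted-multiplication and $y_i$-shift arguments, and then verifies the identical functional identity phrased in terms of $h_{a+1}[-X]$ rather than $e_{b+1}[X]$ --- these differ only by the sign $(-1)^{a+1}$.
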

\begin{proof}
Multiplying both sides by $q T_1$ from the left and use $T_1d_+^{*2}=d_+^{*2}$, we can rewrite it as
$$d_+^{*2} d_- - (T_1+q) d_+^* d_- d_+^* + q d_- d_+^{*2} = 0.$$
Denote the left hand side by $A$. By linearity, we only need to show that
$A$ vanishes on $F\ast y^a$ for $F\in \Sym[X]$.

We claim that $A (F\ast G)= F\ast_2 AG$ for $F\in \Sym[X]$. We
must be careful about the $T_1+q$ in the middle term, which is only known to commute with polynomials symmetric in $y_1,y_2$.
The argument pass through naturally:
\begin{align*}
  (T_1+q) d_+^* d_-d_+^* F\ast G &=(T_1+q) F\ast_2 d_+^* d_-d_+^* G \\
  &=(T_1+q) F\left[X+(q-1)(ty_1+ty_2) +(q-1)\sum_{i=3}^{k+1} y_i\right] d_+^* d_-d_+^* G\\
&= F\left[X+(q-1)(ty_1+ty_2) +(q-1)\sum_{i=3}^{k+1} y_i\right] (T_1+q)d_+^* d_-d_+^* G\\
&=F\ast_2 (T_1+q)d_+^* d_-d_+^* G.
\end{align*}
It is also easy to check that
$A y_i =y_{i+2} A$ for $i\le k-1$ (be careful about the $T_1+q$ in the middle term).

These twisted commute relation reduce the vanishing of $A$ to $A y_k^a=0$ for all $a\in\Z_{\geq0}$. We obtain
\begin{align*}
A y_k^a &=- h_{a+1}[-X - t(q-1)(y_1+y_2)] + (T_1+q) h_{a+1}[-X-t(q-1)y_1] - q h_{a+1}[-X]\\
&=- h_{a+1}[-X]+(1+q)h_{a+1}[-X]-q h_{a+1}[-X]\\
&\quad +\sum_{b=1}^{a+1} h_{a+1-b}[-X] \left( - h_{b}[t(1-q)(y_1+y_2)] + (T_1+q) h_{b}[t(1-q)y_1] \right),
\end{align*}
where we have used the identity $h_n[X+Y]=\sum_{i+j=n} h_i[X] h_j[Y]$ and collected terms. We need to show each summand for $b>0$ vanishes.

By a direct computation (using (\ref{e-Delta*s}b) and Lemma \ref{l-h1-u}):
\begin{align*}
  (T_1+q) h_{b}[t(1-q)y_1] &= (T_1+q) (1-q) t^b y_1^b
=(1-q) t^b  \left(y_2^b + (1-q)\sum_{i=0}^{b-1} y_2^i y_1^{b-i} + qy_1^b\right)\\
&=(1-q) t^b  (y_2^b + (1-q)\sum_{i=1}^{b-1} y_2^i y_1^{b-i} + y_1^b),
\end{align*}
while
\begin{align*}
h_{b}[t(1-q)(y_1+y_2)]&=t^b\sum_{i=0}^b h_i[(1-q)y_2] h_{b-i}[(1-q)y_1]\\
&=t^b(1-q)\left(y_1^b+y_2^b +(1-q)\sum_{i=1}^{b-1} y^i y_1^{b-i}  \right).
\end{align*}
This completes the proof.
\end{proof}

At this point, we have established the fact that the operators given by \eqref{eq:dplusstar} define an action of $\AA^*$ on $V_*$.  Also we have established the second relation in \eqref{addreleq}. The last relation is obvious. The first and the third are verified below:

\begin{prop}
$$
z_1 d_+ = -t q^{k+1}y_1 d_+^*,\qquad z_{i+1} d_+ = d_+ z_i.
$$
\end{prop}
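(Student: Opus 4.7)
I would prove the two claimed identities separately, starting with the structurally simpler second identity $z_{i+1} d_+ = d_+ z_i$.

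For the second identity, I use descending induction on $i$ starting from $i = k$ (on $V_k$). The inductive step combines $z_i = q T_i^{-1} z_{i+1} T_i^{-1}$ with $T_{i+1}^{-1} d_+ = d_+ T_i^{-1}$ (the inverse of $d_+ T_i = T_{i+1} d_+$), yielding
\[z_{i+1} d_+ = q T_{i+1}^{-1} z_{i+2} T_{i+1}^{-1} d_+ = q T_{i+1}^{-1} z_{i+2} d_+ T_i^{-1} = q T_{i+1}^{-1} d_+ z_{i+1} T_i^{-1} = q d_+ T_i^{-1} z_{i+1} T_i^{-1} = d_+ z_i\]
by the inductive hypothesis $z_{i+2} d_+ = d_+ z_{i+1}$. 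For the base case $z_{k+1} d_+ = d_+ z_k$, substituting $z_k = -\tfrac{q}{q-1} T_{k-1}\cdots T_1(d_+^* d_- - d_- d_+^*)$ and the analog for $z_{k+1}$, and using $d_+ T_j = T_{j+1} d_+$ to move $d_+$ through the $T$-string, reduces the claim (after cancelling a common $T_k\cdots T_2$) to
\[T_1(d_+^* d_- - d_- d_+^*) d_+ = d_+(d_+^* d_- - d_- d_+^*).\]

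The key intermediate tool is the commutation $d_+ d_+^* = T_1 d_+^* d_+$, which I would verify by directly applying both sides to $F\in V_k$ using the explicit formulas $d_+F = T_1\cdots T_k F[X+(q-1)y_{k+1}]$ and $d_+^*F = \gamma F[X+(q-1)y_{k+1}]$; the two agree save for an initial $T_1$ factor. Combining this with the $\mathscr{A}$-commutator $(d_+d_- - d_-d_+) F = (q-1) T_1\cdots T_{k-1}(y_k F)$ on both $V_k$ and $V_{k+1}$, the commutation $d_- T_1 = T_1 d_-$ on $V_{k+2}$ (valid for $k\ge 1$ since $1<k+1$), and the relations $d_+^* T_j = T_{j+1} d_+^*$ and $d_+^* y_k = y_{k+1} d_+^*$, both sides of the reduced identity simplify to the common expression $(q-1) T_1\cdots T_k y_{k+1} d_+^*$.

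For the first identity $z_1 d_+ = -tq^{k+1} y_1 d_+^*$, I iterate the recurrences for $z_i$ and $y_i$ to obtain
\[z_1 = q^k T_1^{-1}\cdots T_k^{-1} z_{k+1} T_k^{-1}\cdots T_1^{-1}, \qquad y_1 = q^{-k} T_1\cdots T_k y_{k+1} T_k\cdots T_1.\]
Using $T_k^{-1}\cdots T_1^{-1} d_+ F = F[X+(q-1)y_{k+1}]$ simplifies $z_1 d_+$, while the relation $y_{k+1} d_+^* = d_+^* y_k$ from \eqref{addreleq} propagates the $y$-multiplication past $d_+^*$ on the other side. Expanding $z_{k+1}$ through its $\mathscr{A}^*$-commutator $z_{k+1} G = -\tfrac{q}{q-1} T_k\cdots T_1(d_+^*d_- - d_-d_+^*) G$ on $V_{k+1}$ and invoking $d_+ d_+^* = T_1 d_+^* d_+$ once more, the scalar $-tq^{k+1}$ assembles as follows: the sign and one power of $q$ from the prefactor $-\tfrac{q}{q-1}$ of $z_{k+1}$, the factor $t$ from the cyclic substitution $\gamma(y_{k+1}) = ty_1$ hidden inside $d_+^*$, and the remaining powers of $q$ from the accumulated $T$-conjugations together with the extra $T_1$ introduced by $d_+ d_+^* = T_1 d_+^* d_+$.

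\textbf{Main obstacle.} The first identity is the real obstacle: it is the only relation in \eqref{addreleq} carrying a $t$-twist, and its scalar $-tq^{k+1}$ must be tracked precisely through iterated conjugations and the cyclic $t$-substitution inside $\gamma$. The second identity reduces cleanly to intra-algebra commutations and proceeds uniformly by descending induction, but the first requires genuine computation with the explicit shift formulas and the nontrivial crossover $d_+ d_+^* = T_1 d_+^* d_+$. It is exactly this identity that later makes the antilinear involution $\mathcal{N}$ genuinely depend on $t$.
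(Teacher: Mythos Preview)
Your approach to the second identity is correct and genuinely different from the paper's. The paper reduces to the base case $i=1$ (you use $i=k$; these are equivalent reductions), obtaining the same identity $T_1(d_+^* d_- - d_- d_+^*) d_+ = d_+(d_+^* d_- - d_- d_+^*)$, but then proceeds by establishing twisted commutation with $\ast_1$ and with $y_i$ to reduce to an explicit plethystic computation on $y_k^a \in V_k$. Your route via the crossover relation $d_+ d_+^* = T_1 d_+^* d_+$ (which is easy to check directly from the formulas, and which the paper does not isolate) together with the known $\AA$-commutator $(d_+d_- - d_-d_+) = (q-1)T_1\cdots T_{k-1}y_k$ gives a clean algebraic verification with no plethystic computation at all: both $T_1 d_+^*[d_+,d_-]$ and $[d_+,d_-]d_+^*$ collapse to $(q-1)T_1\cdots T_k\, y_{k+1}\, d_+^*$. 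This is slicker than the paper's argument for this half.

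For the first identity your sketch is incomplete, and the missing idea is exactly the one the paper uses. You correctly simplify $z_1 d_+ G$ to $-\tfrac{q^{k+1}}{q-1}(d_+^*d_- - d_-d_+^*)\,G[X+(q-1)y_{k+1}]$, which matches the paper. But your proposed next step---expressing $y_1$ via $T$-conjugations of $y_{k+1}$ and ``propagating past $d_+^*$''---runs into the problem that $T_1 d_+^*$ does not commute in any useful way (there is no $T_0$), and invoking $d_+d_+^* = T_1 d_+^* d_+$ does not obviously help here either. The paper's key observation is that \emph{both} operators $z_1 d_+$ and $y_1 d_+^*$ satisfy the same intertwining relations
\[
A\, y_i = y_{i+1}\, A,\qquad A(F\ast G) = F\ast_1 A(G),
\]
so the identity $z_1 d_+ = -tq^{k+1} y_1 d_+^*$ reduces to checking it on $1\in V_k$. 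That check is then a two-line computation: $z_1 d_+(1) = -\tfrac{q^{k+1}}{q-1}(d_+^* - 1)e_1[X] = -\tfrac{q^{k+1}}{q-1}\cdot t(q-1)y_1 = -tq^{k+1}y_1$, while $y_1 d_+^*(1) = y_1$. Your heuristic description of how $-tq^{k+1}$ ``assembles'' does not substitute for this reduction; without it, you would have to verify the identity on a full basis of $V_k$, which is essentially the same explicit computation the paper avoids by the $\ast_1$-trick.
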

\begin{proof}
By definition (on $V_{k}$)
$$
z_1 = \frac{q^{k-1}}{q^{-1}-1} (d_+^* d_- - d_- d_+^*) T_{k-1}^{-1} \cdots T_1^{-1},
$$
thus for $G\in V_k$ ($z_1$ is acting on $V_{k+1}$ below)
\begin{align*}
z_1 d_+ G&=  \frac{q^{k}}{q^{-1}-1} (d_+^* d_- - d_- d_+^*) T_{k}^{-1} \cdots T_1^{-1} d_+ G \\
& = \frac{q^{k}}{q^{-1}-1} (d_+^* d_- - d_- d_+^*) G[X+(q-1)y_{k+1}].
\end{align*}
From this expression the following two properties of $z_1 d_+$ are evident, so the operator commute with $y_i$ for $i\le k$:
$$
z_1 d_+ y_i = y_{i+1} z_1 d_+,\qquad z_1 d_+ (F\ast G) = F \ast_1 z_1 d_+(G)
$$
for $F\in \Sym[X]$, $G\in V_k$, $i=1,\ldots,k$.
The following similar properties for $y_1 d_+^*$ can be easily checked.
$$
y_1 d_+^* y_i = y_{i+1} y_1 d_+^*,\qquad y_1 d_+^* (F\ast G) = F \ast_1 y_1 d_+^*(G).
$$
Thus it is enough to verify the first identity when acting on $1\in V_k$. The right hand side is $-t q^{k+1} y_1$. The left hand side is
\begin{align*}
\frac{q^k}{q^{-1}-1}(d_+^*-1)d_-(1)
&= \frac{q^k}{q^{-1}-1}(d_+^*-1) e_1[X] \\
&=\frac{q^k}{q^{-1}-1}(e_1[X]+t (q-1)y_1-e_1[X]) = -t q^{k+1} y_1,
\end{align*}
so the first identity holds.

It is enough to verify the second identity for $i=1$ because the general case can be deduced  from this one by applying the $T$-operators. For the identity $z_2 d_+ = d_+ z_1$, expressing $z_1$, $z_2$ in terms of $d_-$, $d_+^*$ and the $T$-operators, we arrive at the following equivalent identity:
$$
T_1^{-1} d_+(d_+^* d_- - d_- d_+^*) = (d_+^* d_- - d_- d_+^*) d_+.
$$
If we denote by $A$ either of the two sides, we can check that
$$
A (F\ast G) = F\ast_1 A(G),\qquad A y_i = T_2 T_3\cdots T_{i+1} y_{i+1} (T_2 T_3 \cdots T_{i+1})^{-1} A
$$
for $F\in \Sym[X]$, $G\in V_k$, $i=1,\ldots,k-1$. (Note that $d_+$ does not intertwine with the $\ast_1$ but $T_1^{-1} d_+$ does.)
Thus it is enough to verify the identity on $y_k^a\in V_k$ ($a\in\Z_{\geq0}$). Applying $T_k^{-1} T_{k-1}^{-1} \cdots T_2^{-1}$ to both sides, the identity to be verified is
$$
T_k^{-1} T_{k-1}^{-1} \cdots T_1^{-1} d_+ (d_+^* d_- - d_- d_+^*)(y_k^a)
=
(d_+^* d_- - d_- d_+^*) T_{k-1}^{-1} \cdots T_1^{-1} d_+(y_k^a).
$$
Denote by $LHS$ (resp. $RHS$) the left (resp right) hand side of the above identity. Then
$$
LHS=-h_{a+1}[-X-t(q-1)y_1-(q-1)y_{k+1}] + h_{a+1}[-X-(q-1)y_{k+1}].
$$
Detail computation is given as follows.
\begin{align*}
  d_+^* d_-(y_k^a) &=-d_+^* h_{a+1}[-X] =-h_{a+1}[-(X+(q-1)ty_1)],\\
  T_k^{-1} T_{k-1}^{-1} \cdots T_1^{-1} d_+  d_+^* d_- (y_k^a)&=-h_{a+1}[-X-(q-1) ty_1-(q-1)y_{k+1}];\\
  - d_- d_+^*(y_k^a) &= -d_- y_{k+1}^a = h_{a+1}[-X],\\
-T_k^{-1} T_{k-1}^{-1} \cdots T_1^{-1} d_+ d_- d_+^*(y_k^a) &=h_{a+1}[-X-(q-1)y_{k+1}].
\end{align*}

Next we evaluate the right hand side as follows.

\begin{align*}
RHS&= (d_+^* d_- - d_- d_+^*) T_k(y_{k}^a) (\text{ use (\ref{e-Delta*s}b)})\\
   &= (d_+^* d_- - d_- d_+^*) \left( y_{k+1}^a +(1-q)\sum_{i=0}^{a-1} y_k^{a-i} y_{k+1}^{i} \right)\\
   &= d_+^* \left( -h_{a+1}[-X]   -(1-q)\sum_{i=0}^{a-1} y_k^{a-i} h_{i+1}[-X] \right)\\
&\qquad      -d_-\left( y_{k+2}^a +(1-q)\sum_{i=0}^{a-1} y_{k+1}^{a-i} y_{k+2}^{i} \right) \\
    &=  \left( -h_{a+1}[-X-(q-1)ty_1]   -(1-q)\sum_{i=0}^{a-1} y_{k+1}^{a-i} h_{i+1}[-X-(q-1)ty_1] \right)\\
&\qquad     - \left( -h_{a+1}[-X] -(1-q)\sum_{i=0}^{a-1} y_{k+1}^{a-i} h_{i+1}[-X] \right) \\
&=F[X+t(q-1)y_1] - F[X],
\end{align*}
where
\begin{align*}
F[X] &= -h_{a+1}[-X] - (1-q)\sum_{i=0}^{a-1} y_{k+1}^{a-i} h_{i+1}[-X]  \\
&= (1-q)y_{k+1}^{a+1}-h_{a+1}[-X] - (1-q)\sum_{j=0}^{a} y_{k+1}^{a+1-j} h_{j}[-X]\\
&=-h_{a+1}[-X+(1-q)y_{k+1}] +(1-q)y_{k+1}^{a+1},
\end{align*}
and the identity follows.
\end{proof}
This completes our proof of Theorem \ref{Astarthm}.

We also have the following Proposition, which we will use to connect the conjugate
action with $N_{\alpha}$.
\begin{prop}\label{prop:recy}
For a composition $\alpha$ of length $k$ let
$$
y_\alpha = y_1^{\alpha_1-1}\cdots y_k^{\alpha_k-1} \in V_k.
$$
Then the following recursions hold:
$$
y_{1\alpha} = d_+^* y_\alpha,\quad y_{a \alpha} = \frac{t^{1-a}}{q-1}(d_+^* d_- - d_- d_+^*) \sum_{\beta\models a-1} q^{1-l(\beta)} d_-^{l(\beta)-1}(y_{\alpha\beta})\quad(a>1).
$$
\end{prop}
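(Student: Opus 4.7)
The first identity $y_{1\alpha} = d_+^* y_\alpha$ is immediate from the definitions: since $y_\alpha = y_1^{\alpha_1-1}\cdots y_k^{\alpha_k-1}$ has no $X$-dependence, the plethystic shift $X \mapsto X + (q-1) y_{k+1}$ built into $d_+^*$ acts trivially, and the subsequent index shift $\gamma$ (which sends $y_i \mapsto y_{i+1}$ for $1 \le i \le k$) converts $y_\alpha$ into $y_2^{\alpha_1-1}\cdots y_{k+1}^{\alpha_k-1} = y_{1\alpha}$ (the leading $1$ in $1\alpha$ contributing the trivial factor $y_1^0 = 1$).

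For the second identity with $a \ge 2$, I first compute the inner sum explicitly. Since $y_\alpha$ is free of $y_{k+1}, \ldots, y_{k+r}$ and of $X$ (and each $B_n$ acts only on $X$, hence commutes past the prefix $y_\alpha$), iterating the defining rule $d_-(y_{k+j}^i F) = -B_{i+1}F$ yields by induction on $r = l(\beta)$:
\[
d_-^{r-1}(y_{\alpha\beta}) \;=\; (-1)^{r-1}\, y_\alpha\, y_{k+1}^{\beta_1-1}\, B_{\beta_2}\cdots B_{\beta_r}(1) \;\in\; V_{k+1}.
\]
Setting $\Delta^* := d_+^* d_- - d_- d_+^*$ and $\Sigma_a := \sum_{\beta\models a-1}(-q)^{1-l(\beta)} y_{k+1}^{\beta_1-1} B_{\beta_2}\cdots B_{\beta_{l(\beta)}}(1)$, the proposed recursion reduces to $\Delta^*(y_\alpha \Sigma_a) = (q-1)\,t^{a-1}\,y_{a\alpha}$. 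Using Lemma \ref{l-astm} together with the transparent action of $\gamma$ on the $X$-free monomial $y_\alpha$, the prefix $y_\alpha$ can be pulled out (becoming $y_{1\alpha}$), reducing the problem to the $\alpha = \emptyset$, $k = 0$ case in $V_1$.

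In the reduced setting, a short auxiliary induction on $m$ using the definition of $B_r$ establishes
\[
\sum_{\gamma\models m}(-q)^{-l(\gamma)}\, B_\gamma(1) \;=\; q^{-m}\,h_m[X] \qquad (m \ge 0),
\]
giving the closed form $\Sigma_a = \sum_{i=0}^{a-2} q^{-i}\, y_1^{a-2-i}\, h_i[X]$, which in particular satisfies the three-term recursion $\Sigma_a = y_1\Sigma_{a-1} + q^{-(a-2)}\,h_{a-2}[X]$. The identity $\Delta^*(\Sigma_a) = (q-1)t^{a-1}y_1^{a-1}$ is then proved by induction on $a$: the base case $a = 2$ is the direct computation $\Delta^*(1) = (q-1)ty_1$, and the inductive step uses the commutation
\[
B_r\,\Gamma_+(c) \;=\; \Gamma_+(c)\sum_{n=0}^{r} h_n[c]\, B_{r-n},\qquad c := (q-1) t y_1,
\]
derived from $\pExp[zc] = (1-zty_1)/(1-zqty_1)$, with the explicit values $h_n[c] = (q-1) q^{n-1} t^n y_1^n$ for $n \ge 1$. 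Applied to each summand of $\Sigma_a$, the $n=0$ term cancels with the contribution of $d_+^* d_-$, leaving an $n \ge 1$ correction that combines with the $q^{-(a-2)}\Delta^*(h_{a-2}[X])$ contribution to collapse, by telescoping, onto $(q-1) t^{a-1} y_1^{a-1}$.

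The main obstacle is verifying this final telescoping. The $X$-dependent terms that arise from $\Delta^*(y_1\Sigma_{a-1})$ and from $q^{-(a-2)}\Delta^*(h_{a-2}[X])$ must cancel precisely, leaving only the pure monomial $(q-1)t^{a-1}y_1^{a-1}$ — a cancellation that is essentially forced by the fact that $y_{a\alpha}$ is itself $X$-free, which serves as a strong a-priori constraint on the computation and is what dictates the particular weights $q^{1-l(\beta)}$ appearing in the recursion.
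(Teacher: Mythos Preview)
Your reduction to the key identity is essentially the same as the paper's: both arrive (after pulling the $y_\alpha$-prefix through via $(d_+^*d_- - d_-d_+^*)y_i = y_{i+1}(d_+^*d_- - d_-d_+^*)$ for $i<k$, which is the correct justification rather than Lemma~\ref{l-astm}) at the closed form $\Sigma_a = \sum_b y_1^b\,h_{a-2-b}[q^{-1}X]$ and must show $\Delta^*(\Sigma_a)=(q-1)t^{a-1}y_1^{a-1}$. Your commutation identity for $B_r$ and $\Gamma_+(c)$ is also exactly what the paper uses, in the equivalent form $\Delta^*(y_1^iF)=\Gamma_+(-ty_1)\,(q-1)ty_1\,B_i\,\Gamma_+(qty_1)\,F$.

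The genuine gap is the endgame. Your proposed ``induction on $a$'' via $\Sigma_a = y_1\Sigma_{a-1} + q^{-(a-2)}h_{a-2}[X]$ does not furnish an inductive step: there is no clean relation between $\Delta^*(y_1G)$ and $\Delta^*(G)$, since the $B$-index shifts by one in each summand and these are not related by any recursion you have available. What you call a ``telescoping'' is precisely the nontrivial content, and you have not proved it; the observation that $y_{a\alpha}$ is $X$-free is a consistency check, not an argument. The paper closes this gap directly: after the $\Gamma_+$ manipulation one is left needing
\[
\sum_{b=0}^{m} B_b\big(h_{m-b}[q^{-1}X]\big)=0\qquad(m\ge 1),
\]
and this is established by reusing the same identity $h_n[q^{-1}X]=\sum_{\gamma\models n}(-q)^{-l(\gamma)}B_\gamma(1)$ to collapse the double sum to $B_0(h_m[q^{-1}X])-q\,h_m[q^{-1}X]$, and then invoking the commutation $B_0C_m=qC_mB_0$ from \cite{haglund2012compositional}. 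That last step is the missing ingredient in your argument.
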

\begin{proof}
The first identity easily follows from the explicit formula for $d_+^*$.

For the second equality, we rename the parameter $a$ by $a-1$ and let $l(\alpha)=k-1$ so that both sides are in $V_k$. Observe that for $i=1,2,\ldots,k-1$ we have
$$
(d_- d_+^* - d_+^* d_-) y_i = y_{i+1} (d_- d_+^* - d_+^* d_-).
$$
Therefore it is enough to verify the following identity for any $a\in\Z_{\geq1}$:
\begin{equation}\label{eq:y1a}
(q-1) t^a y_1^a = (d_+^* d_- - d_- d_+^*) \sum_{\beta\models a}
q^{1-l(\beta)} d_-^{l(\beta)-1}(y_k^{\beta_1-1} \cdots y_{k+l(\beta)-1}^{\beta_{l(\beta)}-1}) \in V_k.
\end{equation}
We group the terms on the right hand side by $b=\beta_1-1$ and the sum becomes
\begin{align*}
\sum_{b=0}^{a-1} &y_k^b \sum_{\beta\models a-b-1} q^{-l(\beta)} d_-^{l(\beta)}\left(y_{k+1}^{\beta_1-1}\cdots y_{k+l(\beta)}^{\beta_{l(\beta)}-1}\right)\\
&=\sum_{b=0}^{a-1} y_k^b \sum_{\beta\models a-b-1} q^{-l(\beta)} (-1)^{l(\beta)} B_{\beta_1} \cdots B_{\beta_{l(\beta)}}(1)\\
&=\sum_{b=0}^{a-1} y_k^b h_{a-b-1}[q^{-1}X],
\end{align*}
where we have used the identity
\begin{equation}\label{eq:hnBn}
h_n[q^{-1}X] = \sum_{\alpha\models n} q^{-l(\alpha)} (-1)^{l(\alpha)} B_\alpha(1),
\end{equation}
which can be obtained by applying $\bar\omega$ to Proposition 5.2 of \cite{haglund2012compositional}:
$$
h_n[-X] = \sum_{\alpha\models n} C_\alpha(1).
$$
Thus the right hand side of (\ref{eq:y1a}) is evaluated to the following expression:
\begin{align*}
(d_+^* d_- -& d_- d_+^*) \sum_{b=0}^{a-1} y_k^b q^{-(a-b-1)} h_{a-b-1}[X]  \\
&
= -\sum_{b=0}^{a-1}\left(\Gamma_{+}(t(q-1)y_1) B_{b+1} - B_{b+1} \Gamma_{+}(t(q-1)y_1)\right) h_{a-b-1}[q^{-1} X]\\
&= - \sum_{b=0}^{a-1} \Gamma_+(-t y_1) \left((B_{b+1} - qt y_1 B_{b}) - (B_{b+1} - t y_1 B_{b})\right) (h_{a-b-1}[q^{-1}X + ty_1])\\
&= (q-1) t y_1 \Gamma_+(-t y_1) \sum_{b=0}^{a-1} B_{b} ( h_{a-b-1}[q^{-1}X+t y_1]).
\end{align*}
Thus we need to prove
$$
\sum_{b=0}^{a-1} B_b (h_{a-b-1}[q^{-1} X+ty_1]) = t^{a-1} y_1^{a-1}.
$$
Then the left hand side as a polynomial in $y_1$ indeed has the right coefficient of $y_1^{a-1}$. The coefficient of $y_1^i$ for $i<a-1$ is
$$
t^i \sum_{b=0}^{a-1-i} B_b(h_{a-b-1-i}[q^{-1} X]).
$$
So it is enough to show:
$$
\sum_{b=0}^m B_b(h_{m-b}[q^{-1}X]) = 0 \quad (m\in\Z_{>0}).
$$
Using (\ref{eq:hnBn}) again we see that the left hand side equals
\begin{align*}
LHS&= B_0(h_m[q^{-1}X]) +\sum_{b=1}^m B_b \sum_{\alpha \models m-b} q^{-l(\alpha)}(-1)^{l(\alpha)} B_\alpha (1)\\
&= B_0(h_m[q^{-1}X])-q  \sum_{\beta \models m} q^{-l(\beta)}(-1)^{l(\beta)} B_\beta(1) \\
&=B_0(h_m[q^{-1}X]) - q h_m[q^{-1}X].
\end{align*}
Finally
$$
B_0(h_m[q^{-1}X]) = B_0 (-q^{-1} C_m(1)) = q (-q^{-1} C_m(B_0(1))) =q h_m[q^{-1} X],
$$
because $B_0 C_m = q C_m B_0$ by Proposition 3.5 of \cite{haglund2012compositional} and $B_0(1)=1$.
\end{proof}

\section{The Involution}
\begin{defn}
Consider $\AA$ and $\AA^*$ as algebras over $\Q(q,t)$, and let
$\tilde{\AA}=\tilde{\AA}_{q,t}$ be the quotient of the free product of
$\AA$ and $\AA^*$ by the relations
\[d_-^* = d_-,\quad T_i^*=T_i^{-1},\quad e_i^*=e_i,\]
\[z_{i+1} d_+ = d_+ z_i,\quad y_{i+1} d_+^* = d_+^* y_i,\quad z_1 d_+ = -t q^{k+1}y_1 d_+^*.\]
\end{defn}
We now prove
\begin{thm}
\label{Atildethm}
The operations $T_i$, $d_-$, $d_+$, $d_+^*$, $e_i$ define an action of $\tilde{\AA}$ on
$V_*$. Furthermore, the kernel of the natural map $\tilde{\AA} e_0 \rightarrow V_*$
that sends $f e_0$ to $f(1)$ is given by $I e_0$ where $I \subset
\tilde{\AA}$ is the left ideal generated by
\begin{equation}
\label{defI}
I=\langle {d_+^*}^m-d_+^{m}|\quad m \geq 1\rangle.
\end{equation}
In particular, we have an isomorphism $V_* \cong \tilde{\AA} e_0/Ie_0$.
\end{thm}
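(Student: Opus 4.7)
I will prove the two assertions of the theorem in turn. The first, that $T_i, d_\pm, d_+^*, e_i$ define an action of $\tilde{\AA}$ on $V_*$, is a bookkeeping check. The $\AA$-relations were verified in Lemma \ref{replemma}, the $\AA^*$-relations follow from Theorem \ref{Astarthm}, and the identifications $d_-^* = d_-$, $T_i^* = T_i^{-1}$, $e_i^* = e_i$ together with the linking relations $z_{i+1}d_+ = d_+ z_i$, $y_{i+1}d_+^* = d_+^* y_i$, $z_1 d_+ = -tq^{k+1}y_1 d_+^*$ are all recorded in Theorem \ref{Astarthm} and equation \eqref{addreleq}.

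For the second assertion, write $\pi\colon\tilde{\AA}e_0\to V_*$ for the evaluation map $fe_0\mapsto f(1)$. The inclusion $Ie_0 \subseteq \ker\pi$ is immediate from the identity $d_+^{*m}(1) = d_+^m(1)$ in \eqref{addreleq} together with the $\tilde{\AA}$-linearity of $\pi$. For the reverse inclusion, the plan is to reduce to the key identity
\[
\tilde{\AA}e_0 \;=\; \AA e_0 + Ie_0,
\]
via the following diagram chase: any $fe_0\in\ker\pi$ then splits as $ge_0+h$ with $g\in\AA$ and $h\in Ie_0$; from $\pi(h)=0=\pi(fe_0)$ we get $g(1) = 0$, and by the isomorphism $\AA e_0\cong V_*$ of Theorem \ref{lem:mainlemma} we conclude $ge_0 = 0$, so $fe_0 = h \in Ie_0$.

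The key identity reduces further, since $\AA e_0 + Ie_0$ is automatically stable under left multiplication by any $\AA$-generator (because $\AA e_0$ is, and $Ie_0$ is a left $\tilde{\AA}$-submodule), to the single inclusion $d_+^*\cdot\AA e_0 \subseteq \AA e_0 + Ie_0$. By linearity and Lemma \ref{lem:algbasis}, this amounts to reducing $d_+^*\cdot d_-^m y^a d_+^{k+m}e_0$ modulo $Ie_0$ to an $\AA e_0$-element for each $m, a, k$. My strategy is to replace $d_+^{k+m}e_0$ by $d_+^{*(k+m)}e_0$ modulo $Ie_0$ (using the left-ideal property) and then propagate the leftmost $d_+^*$ rightward to merge with this cluster, forming $d_+^{*(k+m+1)}$, which is finally replaced by $d_+^{k+m+1}$ modulo $Ie_0$ to land back in $\AA e_0$. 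The available moves are $d_+^* T_i^{-1} = T_{i+1}^{-1}d_+^*$, the shifting rule $y_{i+1}d_+^* = d_+^* y_i$ (which handles $y_j$ with $j\geq 2$), and the linking relation $z_1 d_+ = -tq^{k+1}y_1 d_+^*$ rewritten as $y_1 d_+^* = -t^{-1}q^{-k-1}z_1 d_+$, the only mechanism for absorbing an obstructing $y_1$ adjacent to $d_+^*$.

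The main obstacle is commuting $d_+^*$ past $d_-$: by the $\AA^*$-analog of Lemma \ref{lem:commutator}, the commutator $[d_+^*, d_-]$ equals $(q^{-1}-1)T_1^{-1}\cdots T_{k-1}^{-1}z_k$ with $z_k\in\AA^*$ still containing $d_+^*$'s, so naive commutation does not visibly reduce complexity. The plan is to handle this by a nested induction (outer on the total number of $d_+^*$'s in the word, inner on the number of $d_-$'s to the right of the leftmost $d_+^*$), in which each application of the linking relation trades an outer $d_+^*$ for a $z_1$-factor, and the residual $z$-terms are reabsorbed into $\AA e_0$-elements modulo $Ie_0$ via the auxiliary identity
\[
z_i\,d_+^N e_0 \;\equiv\; -tq^{N-i+1}\, d_+^{i-1}y_1 d_+^{N-i+1}\, e_0 \pmod{Ie_0}\qquad (1\leq i\leq N),
\]
which follows by iterating $z_{j+1}d_+ = d_+ z_j$ to migrate $z_i$ across $i-1$ copies of $d_+$, applying the linking relation at the final $d_+$, and using $d_+^*e_0\equiv d_+ e_0\pmod{Ie_0}$. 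Closing this nested induction so that every application of $d_+^*$ to an $\AA e_0$-basis element is ultimately reduced, modulo $Ie_0$, to an element of $\AA e_0$ is where the genuine work of the proof lies.
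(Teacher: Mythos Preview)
Your overall architecture---reduce the kernel computation to the equality $\tilde{\AA}e_0 = \AA e_0 + Ie_0$, and reduce that in turn to $d_+^*\cdot\AA e_0 \subseteq \AA e_0 + Ie_0$---is exactly what the paper does. The gap is in the induction you propose to close the last step.

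Your ``nested induction'' (outer on the number of $d_+^*$'s, inner on the number of $d_-$'s to the right of the leftmost $d_+^*$) is not well-founded as stated. When you commute $d_+^*$ past a $d_-$ the commutator produces a $z_k$, and $z_k$ itself unpacks into a word containing one $d_+^*$ and one $d_-$; neither of your counters decreases. You recognise this (``closing this nested induction\ldots is where the genuine work of the proof lies''), but without a terminating measure the argument is not a proof. There is also a secondary issue: your auxiliary identity $z_i d_+^N e_0\equiv\cdots$ only fires when $z_i$ sits immediately to the left of the $d_+$-block, whereas the $z_k$ produced by the commutator has the remaining $d_-$'s and $y$'s between it and that block; pushing $z_k$ past the $y$'s would require $z_j y_i = y_i z_j$ in $\tilde{\AA}$, which is not among the defining relations and would itself need to be established.

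The paper avoids both problems with one observation: every defining relation of $\tilde{\AA}$ is homogeneous for the grading that gives $d_+, d_+^*, d_-$ degree $1$ (so $y_i, z_i$ have degree $2$ and $T_i$ degree $0$). One then proves by induction on total degree $m$ that the degree-$m$ parts of $\AA e_0$ and $\tilde{\AA}e_0/Ie_0$ coincide. For $F$ of degree $m$ in the spanning form of Lemma~\ref{lem:algbasis}, the case $F=d_+^m e_0$ uses $I$, the case $F=y_i G$ uses $d_+^* y_i = y_{i+1} d_+^*$, and the case $F=d_- G$ uses the commutator. In that last case the extra $z_k G$ term is handled by writing $G$ in the $\AA$-\emph{generators} $T_i, d_\pm$ (not in $y$'s), pushing $z_j$ past $d_-$ and $d_+$ via $z_j d_- = d_- z_j$, $z_j d_+ = d_+ z_{j-1}$, and finally invoking the linking relation $z_1 d_+ = -tq^{k}y_1 d_+^*$ on a $G'$ of degree $\leq m-2$: the resulting $d_+^* G'$ lies in degree $\leq m-1$, squarely inside the induction hypothesis. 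This degree grading is the missing ingredient that makes your sketch into a proof.

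A minor point: your remark that the shift rule ``handles $y_j$ with $j\geq 2$'' and that an ``obstructing $y_1$'' needs the linking relation reflects a direction mix-up. To push $d_+^*$ \emph{rightward} across a $y$-monomial one uses $d_+^* y_i = y_{i+1} d_+^*$, which is valid for all $i\geq 1$; there is no $y_1$-obstruction there. The linking relation enters only to absorb $z_1$ once it has been commuted down to the rightmost $d_+$.
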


\begin{proof}
Theorem \ref{Astarthm} shows that we have a map of modules
$\tilde{\AA} e_0 \rightarrow V_*$, that restricts to the isomorphism
of Theorem \ref{lem:mainlemma} on the subspace $\AA e_0$, so in
particular is surjective. Furthermore, the last relation
of \eqref{addreleq} shows that it descends to a map
$\tilde{\AA} e_0/Ie_0 \mapsto V_*$, which must still be surjective. We have the following commutative diagram:

\[
\begin{tikzcd}
\tilde{\AA} e_0/Ie_0 \arrow{r} & V_*\\
\AA e_0 \arrow{u} \arrow{ur}{\sim} &
\end{tikzcd}
\]

Thus we have an inclusion $\AA e_0 \subset \tilde{\AA}e_0/Ie_0$ and it remains to show that
the image of $\AA e_0$ in $\tilde{\AA}e_0/Ie_0$ is the entire space.
We do so by induction: notice that both $\AA e_0$ and $\tilde{\AA}e_0/Ie_0$ have a grading by the total degree in $d_+$, $d_+^*$ and $d_-$, as all the relations are homogeneous.
For instance, $y_i$ and $z_i$ have degree $2$, and $T_i$ has degree $0$ for all
$i$. Denote the space of elements of degree $m$ in $\AA e_0$, $\tilde{\AA}e_0/Ie_0$ by $V^{(m)}$, $W^{(m)}$ respectively. We need to prove $V^{(m)}=W^{(m)}$. The base cases $m=0$, $m=1$ are clear (since $I$ contains $d_+^*-d_+$).

For the induction step, suppose $m>0$, $V^{(i)}=W^{(i)}$ for $i\leq m$ and let $F\in V^{(m)}$. It is enough to show that $d_+^* F \in V^{(m+1)}$.
By Lemma \ref{lem:algbasis}, we can assume that
$F$ is in the canonical form \eqref{basiseq}.
We therefore must check three cases: i) $F=d_+^m (1)$ for $1\in V_0$;
ii) $F=y_i G$ for $G\in V^{(m-2)}$; and iii) $F=d_- (G)$ for $G\in V^{(m-1)}$.

Case i) is easy: we have $d_+^* F =d_+^*d_+^{m}(1)=d_+^*{d_{+}^*}^m(1) =  d_+^{m+1} (1) \in V^{(m+1)}$.
For Case ii) we have $d_+^* (F) = y_{i+1} d_+^* (G)$, which is also in $V^{(m+1)}$ by induction hypothesis.

For Case iii) we assume $G \in V_k$. Then we have
\[d_+^* F = d_+^* d_- G = d_- d_+^* G + (q^{-1} - 1) T_1^{-1}\cdots T_{k-1}^{-1} z_k G.\]
The first term is in $V^{(m+1)}$ by induction hypothesis. Since $T_i^{-1}= q^{-1}(T_i+q-1)$, it is sufficient to show $z_j G \in V^{(m+1)}$ for all $j\le k$ (here we only need the $j=k$ case but $z_j$ may appear when commuting with $T_i$).

Now we use expansion of $G\in V_k$ in terms of the generators $T_i$, $d_+$ and $d_-$. Because of the commutation
relations between $T_i$ and $z_j$ it is enough to consider two cases: $G=d_+ G'$ and $G=d_- G'$ for $G'\in V^{(m-2)}$. In the first case we have
$z_j G = d_+ z_{j-1} G'$ if $j>1$ and $z_1 G = -t q^{k} y_1 d_+^*  G'$ if $j=1$ (since $G' \in V_{k-1}$).
In the second case we have $z_j G = d_- z_j G'$. In all cases the claim is reduced to the induction hypothesis.
\end{proof}

%
%

Now by looking at the defining relations of $\tilde{\AA}$, we make
the remarkable observation that there exists an
involution $\iota$ of $\tilde{\AA}$ that permutes $\AA$ and $\AA^*$
and is antilinear with respect to
the conjugation $(q,t)\mapsto (q^{-1},t^{-1})$ on the ground field
$\Q(q,t)$! Furthermore, this involution preserves the ideal $I$\footnote{In the original paper, $I$ was defined to be the left ideal $I'=\langle d_+^* d_+^m-d_+^{m+1}|\quad m \geq 0\rangle$.
It was unclear why $I'$ is preserved under the involution, though
it is not hard to show that
$I'=I$.
},
and therefore induces an involution on $V_*$ via the isomorphism of Theorem
\ref{Atildethm}.

\begin{thm}
\label{mainthm}
There exists a unique antilinear degree-preserving automorphism $\mathcal{N} : V_*
\rightarrow V_*$ satisfying
\[\mathcal{N}(1)=1,\quad \mathcal{N} T_i = T_i^{-1} \mathcal{N},
\quad \mathcal{N} d_- = d_- \mathcal{N},\quad \mathcal{N} d_+ = d_+^*
\mathcal{N},
\quad \mathcal{N} y_i = z_i \mathcal{N}.\]
Moreover, we have
\begin{enumerate}[label=(\roman*)]
\item \label{invpart} $\mathcal{N}$ is an involution, i.e. $\mathcal{N}^2=\Id$.
\item \label{Nalphapart} For any composition $\alpha$ we have
$$
\mathcal{N}(y_\alpha) = q^{\sum(\alpha_i-1)} N_\alpha.
$$
\item \label{nabpart} On $V_0=\Sym[X]$, we have $\mathcal{N} = \nabla
  \bar\omega$, where $\bar\omega$ is the involution sending $q$, $t$, $X$ to $q^{-1}$, $t^{-1}$, $-X$ resp. (see \eqref{omega1}).
\end{enumerate}
\end{thm}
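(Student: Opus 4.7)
The plan is to construct $\mathcal{N}$ as the descent of an explicit antilinear algebra involution on $\tilde{\AA}$, then verify (i)–(iii). Define $\iota:\tilde\AA\to\tilde\AA$ as the $\Q$-algebra homomorphism, antilinear over $(q,t)\mapsto(q^{-1},t^{-1})$, given on generators by $\iota(T_i)=T_i^{-1}$, $\iota(d_+)=d_+^*$, $\iota(d_-)=d_-$, $\iota(e_i)=e_i$ (which forces $\iota(y_i)=z_i$ and vice versa). Well-definedness amounts to checking that the relations of $\AA$ swap with those of $\AA^*$ under $\iota$ and that the three cross-relations $z_{i+1}d_+=d_+z_i$, $y_{i+1}d_+^*=d_+^*y_i$, $z_1d_+=-tq^{k+1}y_1d_+^*$ map to themselves (the last after using $\overline{-tq^{k+1}}=-t^{-1}q^{-k-1}$ and the $y\leftrightarrow z$ swap, which give an equivalent rearrangement of the same identity). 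Because $\iota$ interchanges $d_+^m$ with $d_+^{*m}$, the ideal $I$ of \eqref{defI} is stable, so $\iota$ descends to $\tilde\AA e_0/Ie_0\cong V_*$ via Theorem \ref{Atildethm}. The resulting antilinear operator $\mathcal{N}$ sends $1$ to $1$ and satisfies all five commutation relations by construction; uniqueness is automatic since $V_*=\tilde\AA e_0\cdot 1$.

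Property (i) is immediate: $\iota^2=\mathrm{id}$ on generators and hence on all of $\tilde\AA$. For (ii), I would induct on $|\alpha|$. The case $\alpha=1\beta$ follows at once from $\mathcal{N}d_+^*=d_+\mathcal{N}$ combined with $y_{1\beta}=d_+^*y_\beta$ (Proposition \ref{prop:recy}) and $N_{1\beta}=d_+N_\beta$ (Theorem \ref{thm:recN}), noting $|1\beta|-l(1\beta)=|\beta|-l(\beta)$. The case $\alpha=a\beta$ with $a>1$ is the main calculation: applying $\mathcal{N}$ to the explicit formula from Proposition \ref{prop:recy} turns $(d_+^*d_--d_-d_+^*)$ into $[d_+,d_-]\mathcal{N}$, conjugates the scalar $t^{1-a}/(q-1)$ to $-qt^{a-1}/(q-1)$, and replaces each $q^{1-l(\gamma)}$ by $q^{l(\gamma)-1}$; the inductive hypothesis $\mathcal{N}(y_{\beta\gamma})=q^{|\beta\gamma|-l(\beta\gamma)}N_{\beta\gamma}$ then collapses all $\gamma$-dependent exponents into a single global factor $q^{|\alpha|-l(\alpha)}$, matching the recursion for $N_{a\beta}$ in Theorem \ref{thm:recN}.

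Property (iii) is where the real work sits. Using (ii) together with the identity $d_-^ly_\alpha=(-1)^lB_\alpha(1)$ (obtained by iterating $d_-(y_k^iF)=-B_{i+1}F$), and commuting $\mathcal{N}$ past $d_-^l$, one finds
\[\mathcal{N}(B_\alpha(1)) = (-1)^l q^{|\alpha|-l(\alpha)} D_\alpha(q,t)\]
on the spanning set $\{B_\alpha(1)\}$ of $V_0$ from Lemma \ref{lem:algbasis}. Separately, a direct plethystic computation of $\bar\omega B_r\bar\omega$ from the definitions of $B_r$, $C_r$ and $\bar\omega$ yields the conjugation identity $\bar\omega B_r\bar\omega=-q^{r-1}C_r$, which iterates to $\bar\omega B_\alpha(1)=(-1)^lq^{|\alpha|-l(\alpha)}C_\alpha(1)$ and hence $\nabla\bar\omega B_\alpha(1)=(-1)^lq^{|\alpha|-l(\alpha)}\nabla C_\alpha(1)$. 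Comparing the two, (iii) on this basis is equivalent to the combinatorial identity $D_\alpha(q,t)=\nabla C_\alpha(1)$, which is exactly the compositional shuffle conjecture.

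The main obstacle is therefore to establish (iii) without circularity. My plan is to verify it on the independent modified Macdonald basis $\{H_\mu\}$: $\nabla$ acts diagonally with eigenvalue $(-1)^{|\mu|}q^{n(\mu')}t^{n(\mu)}$, and $\bar\omega H_\mu$ is computable from the $(q,t)$-duality of Macdonald polynomials. The delicate step is evaluating $\mathcal{N}(H_\mu)$ intrinsically, by realizing each $H_\mu$ as the image of a canonical element of $\tilde\AA e_0$ (via Garsia–Haiman-type creation operators expressed in $d_\pm$, $T_i$, $y_i$) and then pushing the involution $\iota$ through that presentation to confirm it returns the right diagonal scalar. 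Once this eigenvalue match is established, (iii) holds, and the identification $D_\alpha=\nabla C_\alpha(1)$ — and thus the compositional shuffle conjecture — falls out as a corollary, as promised in the introduction.
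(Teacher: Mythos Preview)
Your construction of $\mathcal{N}$ via the involution $\iota$ on $\tilde{\AA}$, and your arguments for (i) and (ii), are essentially the paper's proof. The gap is in (iii).

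Your first reduction correctly identifies that comparing $\mathcal{N}$ and $\nabla\bar\omega$ on the $B_\alpha(1)$ basis is circular: it is literally equivalent to the compositional shuffle conjecture, which is supposed to be a \emph{corollary} of this theorem. Your fallback plan---realize $H_\mu$ as an explicit word in $d_\pm,T_i,y_i$ and push $\iota$ through---is not a proof but a hope. No such presentation of $H_\mu$ is available in the paper (or, to my knowledge, anywhere in a form that would make $\mathcal{N}(H_\mu)$ directly computable), and ``Garsia--Haiman-type creation operators expressed in $d_\pm,T_i,y_i$'' is not something you can simply write down. So as it stands (iii) is unproved.

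The paper's argument for (iii) avoids any explicit basis computation. On $V_0$ one has the operators
\[
\underline{e}_1 = d_- d_+,\qquad D_1 = -\,d_- d_+^*,
\]
where $D_1$ is the familiar Macdonald eigenoperator $(D_1 F)[X]=F[X+(1-q)(1-t)u^{-1}]\pExp[-uX]\big|_{u^1}$; both identities are one-line plethystic checks. Since $\mathcal{N}$ swaps $d_+\leftrightarrow d_+^*$ and fixes $d_-$, one gets immediately
\[
\mathcal{N}\,\underline{e}_1 = -D_1\,\mathcal{N},\qquad \mathcal{N}\,D_1 = -\underline{e}_1\,\mathcal{N}.
\]
Now set $\nabla' := \mathcal{N}\bar\omega$, which is $\Q(q,t)$-linear. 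Using $\bar\omega D_1 = D_1^*\bar\omega$ and $\bar\omega\,\underline{e}_1 = -\underline{e}_1\,\bar\omega$ (again direct checks), this translates to
\[
\nabla'(1)=1,\qquad \nabla'\,\underline{e}_1 = D_1\,\nabla',\qquad \nabla'\,D_1^* = -\underline{e}_1\,\nabla'.
\]
These are exactly the commutation relations that characterize $\nabla$ (Garsia--Haiman \cite{garsia1998explicit}), and since $1$ together with repeated applications of $\underline{e}_1$ and $D_1^*$ generates all of $\Sym[X]$, one concludes $\nabla'=\nabla$, i.e.\ $\mathcal{N}=\nabla\bar\omega$ on $V_0$. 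This is the missing idea: identify $\mathcal{N}|_{V_0}$ not on a basis but through operator commutation relations that $\nabla$ is already known to satisfy.
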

\begin{proof}

The automorphism is induced from the involution of $\tilde{\AA}$, from which
part \ref{invpart} follows immediately. Part \ref{Nalphapart} follows
from applying $\mathcal{N}$ to the relations of Proposition \ref{prop:recy}.
The resulting recursion is the same as the one in Theorem \ref{thm:recN}.

Finally, let $D_1, D_1^*: V_0\to V_0$ be the operators
\[(D_1 F)[X] = F[X+(1-q)(1-t) u^{-1}] \pExp[-uX] |_{u^1},\]
\[(D_1^* F)[X] = F[X-(1-q^{-1})(1-t^{-1}) u^{-1}] \pExp[uX] |_{u^1},\]
and let $\underline{e}_1: V_0\to V_0$ be the operator of multiplication by $e_1[X]=X$.
It is easy to verify that
\[D_1 = -d_- d_+^*,\quad \underline{e}_1 = d_- d_+,\quad \bar\omega D_1 = D_1^*\bar\omega.\]

We check as follows.
\begin{align*}
  -d_-d_+^* F[X] &= -d_- F[X+(q-1)ty_1] \\
                         &= y_1 F[X-(q-1)y_1+(q-1)ty_1] \pExp[-X/y_1] \Big|_{y_1^0} \\
                         &=F[X-(q-1)(t-1) u^{-1}]\pExp[-uX]  \Big|_{u^1} \\
                         &=D_1 F[X].
\end{align*}

\begin{align*}
  d_-d_+ F[X] &= d_- F[X+(q-1)y_1] \\
              &= y_1 F[X] \pExp[-X/y_1] \Big|_{y_1^0} \\
              &=e_1[X] F[X].
\end{align*}

\begin{align*}
  (D_1^* \bar \omega F)[X;q,t] &= (D_1^*  F)[-X;q^{-1},t^{-1}]\\
&=F[-X+(1-q^{-1})(1-t^{-1}) u^{-1};q^{-1},t^{-1}] \pExp[-uX] |_{u^1}\\
&=\bar\omega F[X+(1-q)(1-t) u^{-1};q,t] \pExp[-uX] |_{u^1}\\
&=\bar\omega D_1 F[X].
\end{align*}

Thus it follows that
\[\mathcal{N} D_1 = -\underline{e}_1 \mathcal{N},\quad \mathcal{N} \underline{e}_1 = -D_1 \mathcal{N}.\]
Let $\nabla'=\mathcal{N}\bar\omega$ on $V_0$.
Then
\[\nabla'(1) = 1,\quad \nabla' \underline{e}_1 =
D_1 \nabla',\quad \nabla' D_1^* = -\underline{e}_1 \nabla'.\]
It was shown in \cite{garsia1998explicit} that $\nabla$ satisfies the
same commutation relations,
and that one can obtain all symmetric functions starting from $1$ and
successively applying $\underline{e}_1$ and $D_1^*$. Thus
$\nabla=\nabla'$,
proving part \ref{nabpart}.
\end{proof}

%
%
%

The compositional shuffle conjecture now follows easily:
\begin{thm}
\label{shuffthm}
For a composition $\alpha$ of length $k$, we have
\[\nabla C_{\alpha_1}\cdots C_{\alpha_k}(1)= D_{\alpha}(X;q,t).\]
\end{thm}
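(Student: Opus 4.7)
The plan is to assemble the conjecture from the machinery already established. First I would invoke Theorem \ref{thm:recN}, which identifies $D_\alpha(X;q,t) = d_-^k(N_\alpha)$ for $k=\ell(\alpha)$, so the task reduces to verifying $d_-^k(N_\alpha) = \nabla C_\alpha(1)$. To bring $\nabla$ into the picture I would rewrite $N_\alpha$ via the involution $\mathcal{N}$: since $\mathcal{N}$ is antilinear and involutive, Theorem \ref{mainthm}(ii) can be inverted to $N_\alpha = q^{-\sum(\alpha_i-1)}\mathcal{N}(y_\alpha)$. The commutation $\mathcal{N} d_- = d_- \mathcal{N}$ then lets me slide $d_-^k$ through the involution, yielding $d_-^k N_\alpha = q^{-\sum(\alpha_i-1)}\mathcal{N}(d_-^k y_\alpha)$.

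Next I would compute $d_-^k y_\alpha$ by a short induction on $k$. Using the defining rule $d_-(y_k^i F) = -B_{i+1}F$ (for $F$ free of $y_k$) together with the fact that each $B_r$ acts only on $X$ and hence commutes past the residual $y$-monomial, one obtains $d_-^k(y_\alpha) = (-1)^k B_{\alpha_1}\cdots B_{\alpha_k}(1)\in V_0$. Applying $\mathcal{N}|_{V_0} = \nabla\bar\omega$ from Theorem \ref{mainthm}(iii), and observing that the integer sign $(-1)^k$ is fixed under the antilinear conjugation, I obtain $d_-^k N_\alpha = (-1)^k q^{-\sum(\alpha_i-1)}\nabla\,\bar\omega B_\alpha(1)$.

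The last ingredient is the operator identity $\bar\omega\,B_r\,\bar\omega = -q^{r-1} C_r$, which I would verify by a direct plethystic manipulation of the definitions: conjugating $q\mapsto q^{-1}$ and substituting $X\mapsto -X$ inside $B_r$ converts $X-(q-1)/z$ into $X+(q^{-1}-1)/z$ and $\pExp[-zX]$ into $\pExp[zX]$, matching $C_r$ up to the prefactor $-q^{1-r}$. Inserting copies of $\bar\omega^2=\id$ between consecutive $B$-factors and using $\bar\omega(1)=1$ then gives $\bar\omega B_\alpha(1) = (-1)^k q^{\sum(\alpha_i-1)} C_\alpha(1)$. Substituting this back into the previous expression cancels the signs and $q$-powers exactly, leaving $d_-^k N_\alpha = \nabla C_\alpha(1)$, as required.

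I do not expect a serious obstacle at this stage: all the genuine difficulty was overcome already in constructing $\mathcal{N}$ and verifying its commutation relations (Theorem \ref{mainthm}) and in establishing the recursion for $N_\alpha$ (Theorem \ref{thm:recN}). The present theorem falls out as an elementary consequence of properties (i)--(iii) summarized in the introduction, and the only real care required is consistent tracking of signs, powers of $q$, and the antilinearity of $\mathcal{N}$ (so that $\mathcal{N}(q^aF)=q^{-a}\mathcal{N}(F)$).
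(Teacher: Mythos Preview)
Your proposal is correct and follows essentially the same argument as the paper's own proof: invoke $D_\alpha=d_-^k N_\alpha$, rewrite $N_\alpha$ via $\mathcal{N}(y_\alpha)$, commute $d_-^k$ past $\mathcal{N}$, evaluate $d_-^k y_\alpha=(-1)^k B_\alpha(1)$, and finish with $\mathcal{N}|_{V_0}=\nabla\bar\omega$ together with the conjugation identity between $B_r$ and $C_r$. The only cosmetic difference is that the paper keeps the factor $q^{|\alpha|-k}$ inside $\mathcal{N}$ (and computes $\bar\omega C_a\bar\omega=-q^{a-1}B_a$ rather than the equivalent $\bar\omega B_r\bar\omega=-q^{r-1}C_r$), whereas you pull it outside using antilinearity; the bookkeeping of signs and $q$-powers checks out either way.
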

\begin{proof}
Using Theorems \ref{thm:recN} and \ref{mainthm}, we have
\[D_{\alpha}(q,t)=d_-^{k} (N_\alpha) = d_-^k(\mathcal{N}(q^{|\alpha|-k} y_\alpha))=\mathcal{N}(q^{|\alpha|-k} d_-^k(y_\alpha))=\]
\[\mathcal{N}\left(q^{|\alpha|-k} (-1)^{k} B_{\alpha}(1)\right) =
\mathcal{N}\bar\omega C_{\alpha}(1)=\nabla
C_{\alpha}(1).\]
Here we used the following fact.
\begin{align*}
  \bar\omega C_a \bar\omega F[X] &=\bar\omega C_a \bar F[-X]\\
 &=\bar\omega -q^{1-a} \bar F[-(X+(q^{-1}-1)z^{-1}] \pExp[zX]\big|_{z^a}\\
&= -q^{a-1} F[X-(q-1)z^{-1}] \pExp[-zX]\big|_{z^a}\\
&=-q^{a-1} B_a F[X].
\end{align*}
\end{proof}

\newcommand\qbinom[2]{\left[#1 \atop #2  \right]_q}

\section{Appendix: An elementary approach to $\chi(\pi)[(q-1)X]$}
We first review some concepts.
Let $\pi$ be a Dyck path of length $n$. Denote by $\Area(\pi)$ the set of area cells $(i,j)$ between $\pi$ and the diagonal.
Define
$$ \chi(\pi) = \sum_{w\in \Z_{>0}^n} q^{\inv(\pi,w)} x_w, \qquad  \chi'(\pi) = \sum_{w\in \Z_{>0}^n ``no\ attack"} q^{\inv(\pi,w)} x_w,$$
where $\inv(\pi,w)$ counts the number of inversion $\{(i,j): w_i>w_j, i<j\}$ with respect to $\pi$, i.e.,
we need the extra condition that $(i,j)\in \Area(\pi)$, or equivalently, the $(i,j)$ cell is under $\pi$; the ``no attack" condition means that for such $(i,j)$ cell we have $w_i\ne w_j$. For instance, if $\pi$ is given by $NNENEE$,
then $w_1>w_3$ is an inversion but is not an inversion with respect to $\pi$: the column 1, row 3 cell is not under $\pi$. See Section \ref{s-example} for examples.

Our goal here is to give an elementary proof of Proposition \ref{prop.q1}, i.e., the following identity:
\begin{align} \label{e-main0}
  \chi(\pi)[(q-1)X] =(q-1)^n \chi'(\pi)[X].
\end{align}

\begin{rem}\label{r-prod}
If $\pi=\pi_1\pi_2$, i.e., a Dyck path $\pi_1$ following by Dyck path $\pi_2$,
then it is easy to see that $\chi(\pi)=\chi(\pi_1)\chi(\pi_2)$ and similarly $\chi'(\pi)=\chi'(\pi_1)\chi'(\pi_2)$. Thus it is sufficient to prove
\eqref{e-main0} for prime Dyck path $\pi$, i.e., $\pi=N \pi' E$ where $\pi'$ is also a Dyck path.
\end{rem}

\subsection{Main idea}
For a word $w$ of length $n$, we denote by $\alpha(w)$ the weak composition $(\alpha_1,\alpha_2,\dots)$ where $\alpha_i$ denotes the number of $i$'s in $w$. We say $w$ is compact if $1,2,\dots, \max(w)$ each appears at least once. Alternatively, $w$ is compact when $\alpha(w)$ is a composition of $n$. Let $CW_n$ be the set of compact words of length $n$.

Let
$$c_\beta(\pi) = \sum_{\alpha(w)=\beta} q^{\inv(\pi, w)},\qquad  c'_\beta(\pi) = \sum_{\alpha(w)=\beta, ``no \ attack"} q^{\inv(\pi, w)}.$$
Then
$$\chi(\pi)[X]= \sum_{\lambda \vdash n} c_\lambda(\pi) m_\lambda[X], \qquad \chi'(\pi)[X]= \sum_{\lambda \vdash n} c'_\lambda(\pi) m_\lambda[X].$$

It is equivalent to show that for each $\mu \vdash n$ we have
\begin{align}
  \label{e-main-mu}
\sum_{\lambda \vdash n} c_\lambda(\pi) m_\lambda[(q-1)X] \big|_{m_\mu}=(q-1)^n c'_\mu(\pi).
\end{align}

Our proof is based on the following known expansion (see Section \ref{s-exp-mmu}).
\begin{align}\label{e-m-lambda}
  m_\lambda[(q-1)X]= \sum_{\mu} m_{\mu}[X] \sum_{\alpha\in R(\lambda), \alpha \text{ is compatible with } \mu} \prod_i (-1)^{l(\alpha^i)}(1-q^{\alpha^i_1}),
\end{align}
where the sum ranges over all rearrangements $\alpha$ of $\lambda$ that is \emph{compatible} with $\mu$, which means that $\alpha$ is the concatenation of $\alpha^1,\dots, \alpha^{l(\mu)}$ such that $|\alpha^i|=\mu_i$ for all $i$.

We present a proof by assuming the truth of Lemma \ref{l-singleton}, which corresponds to the $\mu=(n)$ case and will be proved in the next subsection.

\begin{proof}[Proof of Equation \eqref{e-main-mu}]
By \eqref{e-m-lambda}, the left hand side of equation \eqref{e-main-mu}
becomes
\begin{align*}
LHS &=  \sum_\lambda c_\lambda(\pi) \sum_{\alpha\in R(\lambda), \text{ compatible with } \mu} \prod_i (-1)^{l(\alpha^i)} (1-q^{\alpha^i_1}) \\
&=
  \sum_{\alpha\models n, \text{ compatible with } \mu} c_\alpha(\pi) \prod_i (-1)^{l(\alpha^i)} (1-q^{\alpha^i_1})
\end{align*}
This can be rewritten as
\begin{align}\label{e-mu-n-g}
LHS &= \sum_{w\in CW_n, \alpha(w) \text{ is compatible with } \mu} q^{\inv(\pi,w)}  (-1)^{\max(w)} \prod_i (1-q^{\alpha^i_1(w)}).
\end{align}
Since the sum is over $w$ compatible with $\mu$, we can group $w$
by tuple of sets $\mathcal{S}=(S_1,\dots, S_{l(\mu)})$, where $S_i$ consists of positions $j$ with $w_j$ corresponding to the $i$-th block $\alpha^i$.
More precisely, $S_1$ are positions of $1,2,\dots, l(\alpha^1)$, $S_2$ are positions of $l(\alpha^1)+1,\dots, l(\alpha^1)+l(\alpha^2)$, and so on.
Thus $|S_i|=\mu_i$ and we simply write $|\mathcal S|=\mu$.

Let $(\pi_S, w_S)$ be obtained from $(\pi,w)$ by removing all rows and columns with indices not in $S$.
Observe that if we denote by $\eta(\mathcal{S})$ the word with value $i$ at positions in $S_i$, then
$$ \inv(\pi,w)=\inv(\pi, \eta(\mathcal{S}))+   \sum_{i} \inv(\pi_{S_i},w_{S_i}).$$
We have, by applications of Lemma \ref{l-singleton},
\begin{align*}
  LHS &= \sum_{|\mathcal S|=\mu}  q^{\inv(\pi, \eta(\mathcal{S}))} \prod_i \sum_{w_{S_i}\in CW_{\mu_i}} q^{\inv(\pi_{S_i},w_{S_i})} (-1)^{l(\alpha(w_{S_i}))} (1-q^{\alpha_1(w_{S_i})})\\
&=\sum_{|\mathcal S|=\mu}  q^{\inv(\pi, \eta(\mathcal{S}))} \prod_i (q-1)^{\mu_i} \chi(\area(\pi_{S_i})=0)\\
&=(q-1)^n \sum_{\alpha(w)=\mu, \text{``no attack"}} q^{\inv(\pi,w)}.
\end{align*}
Here we used the standard notation that $\chi(S)$ equals $1$ if the statement $S$ is true and equals $0$ if otherwise. This will cause no confusion from the context. This completes the proof.
\end{proof}

\subsection{The case when $\mu$ is the singleton partition $(n)$}

When $\mu=(n)$ the right hand side of \eqref{e-mu-n-g} evaluates as
\begin{lem} \label{l-singleton}
We have
\begin{align}\label{e-main-g}
  \sum_{w\in CW_n} q^{\inv(\pi,w)}  (-1)^{\max(w)} (1-q^{\alpha_1(w)}) = (q-1)^n \chi(\area(\pi)=0) .
\end{align}
\end{lem}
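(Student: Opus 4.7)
My plan is to reduce Lemma \ref{l-singleton} to the case of a prime Dyck path via a multiplicativity argument, and then handle the prime case by induction. The starting point is to recognise the left-hand side of \eqref{e-main-g}, which I denote $S(\pi)$, as the coefficient of $m_{(n)}$ in $\chi(\pi)[(q-1)X]$. This follows by tracing through the argument that produced \eqref{e-mu-n-g} in the $\mu=(n)$ case. Since $m_\lambda[x]=0$ for partitions $\lambda$ with $l(\lambda)\ge 2$, the coefficient is extracted by one-variable specialisation: $S(\pi)=[x^n]\chi(\pi)[(q-1)x]$.

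From this reformulation, $S$ inherits multiplicativity from $\chi$. Indeed, $\chi(\pi_1\pi_2)=\chi(\pi_1)\chi(\pi_2)$ by Remark \ref{r-prod}, plethystic substitution preserves products, and each factor $\chi(\pi_i)[(q-1)x]$ is a single monomial in $x$ of degree $|\pi_i|$, so extracting the top-degree coefficient gives $S(\pi_1\pi_2)=S(\pi_1)S(\pi_2)$. A direct one-term calculation yields $S(NE)=q-1$; by multiplicativity $S((NE)^n)=(q-1)^n$, which matches the right-hand side of \eqref{e-main-g} when $\area(\pi)=0$, since $(NE)^n$ is the unique area-zero Dyck path of length $n$. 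For $\area(\pi)>0$, the path has at least one prime factor of length $\ge 2$, so it suffices to prove $S(\tilde\pi)=0$ for every prime Dyck path $\tilde\pi$ of length $n\ge 2$.

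For the prime case, write $\tilde\pi=N\pi'E$ and further $\pi'=N^{k_1}\pi''$. The column-$1$ area cells of $\tilde\pi$ are exactly $(1,2),\ldots,(1,k_1+1)$, so $w_1$ enters $\inv(\tilde\pi,w)$ only through $\#\{j\in[2,k_1+1]:w_1>w_j\}$. I would induct on $n$, splitting the sum defining $S(\tilde\pi)$ according to $v:=w_1$: summing over $v$ produces a $q$-geometric series that, when combined with $(1-q^{\alpha_1(w)})$, telescopes and reduces the identity to the analogous one on the shorter prime path $N\pi''E$. An alternative induction is on $\area(\tilde\pi)$: if $\pi_0$ is obtained from $\tilde\pi$ by flipping a peak area cell $(a,b)$, then
\[
S(\tilde\pi)-S(\pi_0)=(q-1)\sum_{\substack{w\in CW_n\\ w_a>w_b}} q^{\inv(\pi_0,w)}(-1)^{\max(w)}(1-q^{\alpha_1(w)}),
\]
and an involution swapping $w_a\leftrightarrow w_b$ on the constrained sum produces the required cancellation with $S(\pi_0)$ using the inductive hypothesis.

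The main obstacle is the bookkeeping in the prime step: the weight $(-1)^{\max(w)}(1-q^{\alpha_1(w)})$ depends non-trivially on whether $v=1$ (which shifts $\alpha_1$) and on whether $v$ exceeds $\max(w_2,\ldots,w_n)$ (which shifts $\max$), so the $v$-summation splits into three regimes ($v=1$, intermediate $v$, and $v>\max(w_2,\ldots,w_n)$) whose contributions must be combined carefully before the telescoping becomes visible.
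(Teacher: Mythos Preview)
Your reformulation $S(\pi)=[x^n]\,\chi(\pi)[(q-1)x]$ and the ensuing multiplicativity $S(\pi_1\pi_2)=S(\pi_1)S(\pi_2)$ are correct and constitute a genuine simplification the paper does not make: the area-zero case and the reduction to prime paths fall out immediately. The paper instead works with all Dyck paths uniformly and never isolates the prime case.

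However, the prime case is where the entire content lies, and your proposal does not prove it. Both of your sketched inductions are problematic. For approach (b), the involution $w_a\leftrightarrow w_b$ preserves $\max(w)$ and $\alpha_1(w)$ but does \emph{not} preserve $q^{\inv(\pi_0,w)}$: after the peak cell $(a,b)$ is removed, positions $a$ and $b$ still attack other positions through the remaining area cells $(c,a)$, $(a,c)$ with $a<c<b$, $(c,b)$ with $a<c<b$, and $(b,c)$, and these attack sets are different for $a$ and for $b$. Swapping $w_a$ with $w_b$ therefore changes $\inv(\pi_0,\cdot)$ by a $w$-dependent amount, so no cancellation results. Approach (a) may be salvageable, but you explicitly leave the three-regime bookkeeping undone, and that bookkeeping is exactly the substance of the argument; as stated it is a plan, not a proof.

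For comparison, the paper also uses a double induction (on $n$ and then on $\area$), but the key step you are missing is an intermediate \emph{rewriting}: the induction on $n$ (grouping by the set $S_2=\{i:w_i=\max(w)\}$ and applying the hypothesis to each $\pi_{S_1}$) converts the left side into
\[
-(1-q^n)\;-\!\!\sum_{\substack{\max(w)=2\\ \text{no attack on }1}} q^{\inv(\pi,w)}(q-1)^{\alpha_1(w)},
\]
a sum over two-valued words in which positions labelled $1$ are pairwise non-attacking. Only in this restricted form does flipping a peak cell $(a,b)$ lead to a manageable comparison: the change is localised to words with $w_a=2,\,w_b=1$ and the accompanying $w'_a=w'_b=1$ words that become admissible in $\pi'$, and one pairs $w\mapsto w'$ by setting $w'_a=1$. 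Your multiplicative reduction could be spliced onto this rewriting, but it does not replace it.
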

\begin{proof}
We prove by induction on the length of $\pi$. The base case $n=1$ is clear. Assume Lemma \ref{l-singleton} holds for all smaller $n$, we show that it holds for $n$.

Denote by $F(\pi)$ the left hand side of \eqref{e-main-g}. It is equivalent to show
that
$$F(\pi)=\sum_{w\in CW_n} q^{\inv(\pi,w)}  (-1)^{\max(w)} (1-q^{\alpha_1(w)}) = (q-1)^n \sum_{\alpha(w)=(n), \text{``no attack"}} q^{\inv(\pi,w)}.$$
Observe that this equality holds when $\area(\pi)=0$. Because this is equivalent to the truth of the lemma, and is equivalent to the truth of \eqref{e-main-mu} when $\mu=(n)$. But when $\area(\pi)=0$, we clearly have $\chi(\pi)=\chi'(\pi)=p_1[X]^n$ and thus
$\chi(\pi)[(q-1)X]=(q-1)^n \chi'(\pi)[X]$.

We group the sum according to $S_2=S_2(w)=\{i: w_i=\max(w)\}$. Denote by $S_1=[n] \setminus S_2$ the complement of $S_2$.
We use the same notation as in the proof of \eqref{e-main-mu}.
Observe that the inversion $\inv(\pi,w)$ is decomposed as $\inv(\pi_{S_2},w_{S_2})=0$, $\inv(\pi_{S_1},  w_{S_1})$, and $\inv(\pi, \eta(S_1,S_2))$.

By the induction hypothesis for each $\pi_{S_1}$, we have
\begin{align*}
F(\pi)&=-(1-q^n)+\sum_{k=1}^{n-1} \sum_{|S_2|=k } \sum_{S_2(w)=S_2}
 q^{\inv(\pi_{S_1}, w_{S_1})} q^{\inv(\pi,\eta(S_1,S_2))} (-1)^{\max(w_{S_1})+1} (1-q^{\alpha_1(w_{S_1})}) \\
&=-(1-q^n)-\sum_{k=1}^{n-1} \sum_{|S_2|=k } q^{\inv(\pi, \eta(S_1,S_2))}
 (q-1)^{n-k} \chi(\area(\pi_{S_1})=0).
\end{align*}
This can be written as
\begin{align*}
F(\pi)=  -(1-q^n)-\sum_{\max(w)=2, \text{``no attack on 1"}} q^{\inv(\pi,w)}
 (q-1)^{\alpha_1(w)},
\end{align*}
where ``no attack on 1" means if  $w_i=w_j=1$ then $(i,j)$ is not an area cell of $\pi$.

We prove by induction on $\area(\pi)$ that
$$F(\pi)=(q-1)^n \chi(\area(\pi)=0)$$
using this new formula. The base case $\area(\pi)=0$ has been justified.
Let $\pi'$ be obtained from $\pi$ by removing a peak cell $(a,b)$. Then
$\inv(\pi,w)=\inv(\pi',w)+\chi(w_a>w_b).$
Thus
\begin{align*}
 F(\pi)-F(\pi')&=(1-q)\!\!\!\!\!\!\!\!\!\!\!\! \sum_{w_a=2,w_b=1 \atop \max(w)=2,\text{``no attack on 1"}}\!\!\!\!\!\!\!\!\!\!\!\! q^{\inv(\pi',w)}
 (q-1)^{\alpha_1(w)}
 +\!\!\!\!\!\!\!\!\!\!\!\! \sum_{ w_a=1,w_b=1\atop \max(w)=2,\text{``no attack on 1"}} \!\!\!\!\!\!\!\!\!\!\!\! q^{\inv(\pi',w)}
 (q-1)^{\alpha_1(w)},
\end{align*}
where the second sum is over all $w$ that satisfy the ``no attack" condition in $\pi'$ but does not satisfy the ``no attack" condition in $\pi$.

Now we can pair $w$ in the first summand with $w'$ in the second summand where $w'$ is obtained from
$w$ by replacing $w_a$ by $1$. Then $\alpha_1(w)=\alpha_1(w')-1$ and
$$(1-q) q^{\inv(\pi',w)} (q-1)^{\alpha_1(w)} + q^{\inv(\pi',w')}
 (q-1)^{\alpha_1(w')}=0$$
for each such pair.

For the element $w$ with all $1$'s except for $w_a=2$, $w'$ does not appear in the second sum since $\max(w')=1$, and
$w$ is no attack only when $\area(\pi')=0$. Therefor we obtain
$$F(\pi)-F(\pi') =-(q-1)^n \chi(\area(\pi'=0)),$$
as desired.
\end{proof}

\subsection{Examples\label{s-example}}

We give detailed computation for the case $\pi=NNNEEE$. We need to check the left hand side of \eqref{e-main-g} reduces to $0$. We classify by compositions $\alpha(w)$ of $3$:
a) only $w=111$ has $\alpha(w)=3$ and it contributes $-(1-q^3)$;
b) $\alpha(w)=21$ for $w$ equals to $112, 121, 211$, so they contribute $(1-q^2)(1+q+q^2)$;
c) $\alpha(w)=12$ for $w$ equals to $122, 212, 221$, so they contribute $(1-q)(1+q+q^2)$;
d) $\alpha(w)=111$ for all 6 permutations of $123$, they contribute $-(1-q)(1+q)(1+q+q^2)$.
Case a) cancels with case c)  and case b) cancels with case d). Thus the total sum reduces to $0$.

Two special cases can be verified directly: i) If $\pi$ has area $0$, then $\inv(\pi,w)$ is always $0$, and the number of $w$ with
$\alpha(w)=\beta \models n$ is equal to $\binom{n}{\beta}=\frac{n!}{\beta_1!\beta_2!\cdots}$.
ii) If $\pi$ has the max area $\binom{n}{2}$, then $\inv(\pi,w)$ is the usual inversion on $w$, and it is well-known that
$$\sum_{\alpha(w)=\beta} q^{\inv(w)} = \qbinom{n}{\beta}=\frac{[n]!}{[\beta_1]![\beta_2]!\cdots},$$
where $[a]=1+q+\cdots +q^{a-1}$ and $[a]!=[a][a-1]\cdots [1]$.

Thus Lemma \ref{l-singleton} reduces to the following two identities, which can be checked directly.
\begin{align}\label{e-area-0}
  \sum_{\alpha \models n} (-1)^{l(\alpha)} (1-q^{\alpha_1})\binom{n}{\alpha} &=(q-1)^n.\\
  \sum_{\alpha \models n} (-1)^{l(\alpha)} (1-q^{\alpha_1})\qbinom{n}{\alpha} &=0. \label{e-area-max}
\end{align}
\begin{proof}
We prove \eqref{e-area-0} by using exponential generating function technique.
\begin{align*}
  \sum_{\alpha \models n} (-1)^{l(\alpha)} (1-q^{\alpha_1})\binom{n}{\alpha}
&=-(1-q^n)+\sum_{a=1}^{n-1} \sum_{\beta \models n-a} (-1)^{l(\beta)+1} (1-q^a)  \frac{n!}{a! \beta !} \\
&=-(1-q^n)- \sum_{a=0}^{n-1}(1-q^a) \frac{n!}{a!}  \sum_{\beta \models n-a} \prod_{i} \frac{-1}{\beta_i!} \\
&=-(1-q^n)-\sum_{a=0}^{n-1}(1-q^a) \frac{n!}{a!}  [x^{n-a}] \sum_{k\ge 1} (1-\exp(x))^k \\
&=-(1-q^n)-\sum_{a=0}^{n-1}(1-q^a) \frac{n!}{a!}  [x^{n-a}] \frac{1-\exp(x)}{1-(1-\exp(x))} \\
&=-(1-q^n)-\sum_{a=0}^{n-1}(1-q^a) (-1)^{n-a} \binom{n}{a} \\
&=\sum_{a=0}^n q^a (-1)^{n-a}\binom{n}{a} - \sum_{a=0}^n (-1)^{n-a}\binom{n}{a} \\
&=(q-1)^n.
\end{align*}

The proof of \eqref{e-area-max} is simpler.
\begin{align*}
  \sum_{\alpha \models n} (-1)^{l(\alpha)}& (1-q^{\alpha_1})\binom{n}{\alpha} = \sum_{\alpha \models n} (-1)^{l(\alpha)} (1-q) \frac{[n]!}{[\alpha_1-1]![\alpha_2]!\cdots} \\
&= (1-q) [n] \cdot \left(\sum_{\alpha\models n} (-1)^{l(\alpha)} \qbinom{n-1}{\alpha_1-1,\alpha_2,\alpha_3,\dots}  \right) \\
&= (1-q) [n] \cdot \left(\sum_{\beta\models n-1} (-1)^{l(\beta +1)} \qbinom{n-1}{\beta} + \sum_{\beta\models n-1} (-1)^{l(\beta)} \qbinom{n-1}{\beta} \right) ,
\end{align*}
where we have to split the sum according to whether $\alpha_1$ is equal to 1 or not.
Now the two sum clearly cancels with each other and \eqref{e-area-max} follows.
\end{proof}

\subsection{Expansion of $m_\lambda[(q-1)X]$\label{s-exp-mmu}}
We need some formulas.
\begin{align}
 h_m[-X] =\sum_{\nu \vdash m} (-1)^{l(\nu)} |R(\nu)|  h_\nu=\sum_{\alpha \models m} (-1)^{l(\alpha)} h_{\alpha},
\end{align}
where $R(\nu)$ denotes the set of compositions $\alpha$ with $h_\alpha=h_\nu$, or equivalently, the set of rearrangements of $\nu$.

\begin{proof}
There are many proofs. The following one might be the simplest one.
\begin{align*}
  h_m[-X]=\pExp[-tX] \Big|_{t^{m}}
         &=\frac{1}{1+\sum_{i\ge 1} h_i[X] t^i} \Big|_{t^{m}} \\
         &=\sum_{\alpha \models m} (-1)^{l(\alpha)} h_\alpha\\
         &=\sum_{\nu \vdash m} (-1)^{l(\nu)} |R(\nu)|  h_\nu.
\end{align*}
\end{proof}

\begin{lem}
  We have
\begin{align}
  m_\nu[q-1] =(-1)^{l(\nu)} \sum_{\alpha \in R(\nu)} (1-q^{\alpha_1}),
\end{align}
where $R(\nu)$ denotes the set of rearrangements of the parts of $\nu$.
\end{lem}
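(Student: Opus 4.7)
The plan is to compute $\pExp[(q-1)Y]$ in two different ways and compare. On the one hand, the Cauchy identity
\[ \pExp[XY] = \sum_\lambda m_\lambda[X]\,h_\lambda[Y] \]
(which can be read off $\pExp[XY]=\prod_{i,j}(1-x_iy_j)^{-1}$) upon the plethystic specialization $X=q-1$ becomes
\[ \pExp[(q-1)Y] = \sum_\lambda m_\lambda[q-1]\,h_\lambda[Y]. \]
Since $\{h_\lambda[Y]\}_{\lambda\text{ partition}}$ is a basis of $\Sym[Y]$, the scalar $m_\lambda[q-1]$ is just the coefficient of $h_\lambda[Y]$ on the left-hand side.

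On the other hand, $\pExp$ is multiplicative on sums of alphabets, so $\pExp[(q-1)Y]=\pExp[qY]\cdot\pExp[-Y]$. I would expand each factor. The first is $\pExp[qY]=\sum_{n\geq 0}q^n h_n[Y]$ because $p_k[qY]=q^k p_k[Y]$ implies $h_n[qY]=q^n h_n[Y]$. The second is obtained by summing the formula $h_m[-Y]=\sum_{\alpha\models m}(-1)^{l(\alpha)}h_\alpha[Y]$ (displayed immediately above the statement of the lemma) over all $m\geq 0$, producing $\pExp[-Y]=\sum_{\alpha}(-1)^{l(\alpha)}h_\alpha[Y]$, the sum taken over all compositions $\alpha$ (the empty one contributing $1$).

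Then I would multiply the two expansions term by term. A generic term is $q^n h_n[Y]\cdot(-1)^{l(\alpha)}h_\alpha[Y]$ with $n\geq 0$. For $n\geq 1$, I package $\beta=(n,\alpha_1,\dots,\alpha_{l(\alpha)})$, and the contribution becomes $q^{\beta_1}(-1)^{l(\beta)-1}h_\beta[Y]$; for $n=0$ (which means $h_n=1$), the contribution is just $(-1)^{l(\alpha)}h_\alpha[Y]$. For any fixed nonempty composition $\beta$ these combine to
\[ \bigl(q^{\beta_1}(-1)^{l(\beta)-1}+(-1)^{l(\beta)}\bigr)h_\beta[Y] \;=\; (-1)^{l(\beta)}(1-q^{\beta_1})\,h_\beta[Y]. \]
Grouping by the underlying partition $\lambda=\mathrm{sort}(\beta)$ and using $h_\beta[Y]=h_\lambda[Y]$ gives
\[ \pExp[(q-1)Y] \;=\; 1 + \sum_{\lambda\neq\emptyset}(-1)^{l(\lambda)} h_\lambda[Y]\sum_{\alpha\in R(\lambda)}\bigl(1-q^{\alpha_1}\bigr). \]
Matching coefficients of $h_\lambda[Y]$ with the Cauchy expansion from the first paragraph yields the stated formula.

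The argument is pure bookkeeping once the two expansions are on the table. The only point requiring mild care is correctly merging the $n=0$ boundary case with the $n\geq 1$ case so that each nonempty composition $\beta$ receives both pieces of its contribution before one sorts into partitions; I do not anticipate any genuine obstacle.
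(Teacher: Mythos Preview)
Your proof is correct and follows essentially the same approach as the paper. The only cosmetic difference is that the paper works degree by degree, expanding $h_n[(q-1)X]=\sum_{\nu\vdash n} h_\nu[X]\,m_\nu[q-1]$ on one side and $h_n[(q-1)X]=\sum_{i=0}^n q^i h_i[X]\sum_{\beta\models n-i}(-1)^{l(\beta)}h_\beta$ on the other, whereas you package all degrees at once into $\pExp[(q-1)Y]$; the coefficient comparison and the handling of the $n=0$ versus $n\ge 1$ split are identical.
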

\begin{proof}
we have
\begin{align*}
 \sum_{\nu \vdash n} h_\nu [X] m_{\nu}[q-1]&= h_n[(q-1)X] \\
&= \sum_{i=0}^n h_i[qX] h_{n-i}[-X] \\
&=\sum_{i=0}^n q^i h_i[X] \sum_{\beta \models n-i} (-1)^{l(\beta)} h_\beta.
\end{align*}
By comparing coefficients of $h_\nu[X]$ we obtain
\begin{align*}
  m_\nu[q-1] &= (-1)^{l(\alpha)} \sum_{\alpha \in R(\nu)} 1 +\sum_{(i,\beta)\in R(\nu)} (-1)^{l(\beta)-1} q^i\\
             &= (-1)^{l(\nu)} \sum_{\alpha \in R(\nu)} 1 +\sum_{\alpha \in R(\nu)} (-1)^{l(\nu)-1} q^{\alpha_1}\\
             &= (-1)^{l(\nu)} \sum_{\alpha \in R(\nu)} (1-q^{\alpha_1}).
\end{align*}
This completes the proof.
\end{proof}

To find an expansion of $m_\lambda[(q-1)X]$, we compute $h_n[(q-1)XY]$ in two ways.
On one hand, we have
\begin{align}
  h_n[(q-1)XY] = \sum_{\lambda} m_{\lambda}[(q-1)X] h_\lambda[Y].
\end{align}

On the other hand, we have
\begin{align*}
 h_n[(q-1)XY] &= \sum_{\mu} m_{\mu}[X] h_\mu[(q-1)Y] 
              = \sum_{\mu} m_{\mu}[X] \prod_{i} h_{\mu_i}[(q-1)Y] \\
              &= \sum_{\mu} m_{\mu}[X] \prod_{i} \left(\sum_{\nu^i \vdash \mu_i} h_{\nu^i}[Y] m_{\nu^i}[q-1] \right).
\end{align*}
Thus by equating coefficients, we have
\begin{align*}
 m_\lambda[(q-1)X] &= \sum_{\mu} m_{\mu}[X]  \sum_{\nu^i \vdash \mu_i,  (\nu^1, \nu^2,\dots) \in R( \lambda)}  \prod_{i} m_{\nu^i}[q-1] \\
&= \sum_{\mu} m_{\mu}[X] \sum_{\nu^i \vdash \mu_i,  (\nu^1, \nu^2,\dots) \in R( \lambda)} \prod_i m_{\nu^i}[q-1] \\
&= \sum_{\mu} m_{\mu}[X] \sum_{\nu^i \vdash \mu_i,  (\nu^1, \nu^2,\dots) \in R( \lambda)} \prod_i (-1)^{l(\nu^i)}\sum_{\alpha^i \in R(\nu^i)}(1-q^{\alpha^i_1}) \\
&= \sum_{\mu} m_{\mu}[X] \sum_{\alpha^i \models \mu_i,  (\alpha^1, \alpha^2,\dots) \in R( \lambda)} \prod_i (-1)^{l(\alpha^i)}(1-q^{\alpha^i_1}).
\end{align*}

There is at most one way to decompose $\alpha$ as $(\alpha^1,\alpha^2,\dots)$ such that $|\alpha^i|=\mu_i$ for all $i$. When such a decomposition exists, $\alpha$ is said to be compatible with $\mu$, and we denote by $\alpha^i(\mu)=\alpha^i$. Thus we obtain \eqref{e-m-lambda}.

\printbibliography


\end{document}